\documentclass[12pt,oneside]{amsart}
\usepackage{latexsym}
\usepackage{amsmath,amssymb,amsthm}
\usepackage{amscd}
\usepackage[dvips]{graphics}
\newcommand{\rp}{\mathbb{RP}}
\newcommand{\re}{\mathbb{R}}
\newcommand{\co}{\mathbb {C}}

\newcommand{\pgl}[1]{\mathbf{PGL}(#1,\mathbb{R})}
\newcommand{\psl}[1]{\mathbf{PSL}(#1,\mathbb{R})}
\newcommand{\Sl}[1]{\mathbf{SL}(#1,\mathbb{R})}

\newcommand{\na}{\nabla}
\newcommand{\sfrac}[2]{{\textstyle \frac{#1}{#2}}}

\newtheorem{prop}{Proposition} 
\newtheorem{cor}[prop]{Corollary}
\newtheorem{thm}{Theorem}

\newtheorem{lem}[prop]{Lemma}

\theoremstyle{remark}
\newtheorem*{rem}{Remark}

\begin{document}
\title[Convex $\rp^2$ Structures Under Neck Separation]{Convex $\rp^2$ Structures and Cubic Differentials Under Neck Separation}
\author{John Loftin}

\maketitle

\begin{abstract}
Let $S$ be a closed oriented surface of genus at least two. Labourie and the author have independently used the theory of hyperbolic affine spheres to find a natural correspondence between convex $\rp^2$ structures on $S$ and pairs $(\Sigma,U)$ consisting of a conformal structure $\Sigma$ on $S$ and a holomorphic cubic differential $U$ over $\Sigma$.  The pairs $(\Sigma,U)$, for $\Sigma$ varying in moduli space, allow us to define natural holomorphic coordinates on the moduli space of convex $\rp^2$ structures.
We consider geometric limits of convex $\rp^2$ structures on $S$ in which the $\rp^2$ structure degenerates only along a set of simple, non-intersecting, non-homotopic loops $c$.  We classify the resulting $\rp^2$ structures on $S-c$ and call them regular convex $\rp^2$ structures.  We put a natural topology on the moduli space of all regular convex $\rp^2$ structures on $S$ and show that this space is naturally homeomorphic to the total space of the vector bundle over $\overline{\mathcal M}_g$ each of whose fibers over a noded Riemann surface is the space of regular cubic differentials. In other words, we can extend our holomorphic coordinates to bordify the moduli space of convex $\rp^2$ structures along all neck pinches. The proof relies on previous techniques of the author, Benoist-Hulin, and Dumas-Wolf, as well as some details due to Wolpert of the geometry of hyperbolic metrics on conformal surfaces in $\overline{\mathcal M}_g$.
\end{abstract}

\section{Introduction}
A (properly) convex $\rp^2$ surface is given as a quotient $\Gamma\backslash \Omega$, where $\Omega$ is a bounded convex domain in $\re^2\subset \rp^2$ and $\Gamma$ is a discrete subgroup of $\pgl3$ acting discretely and properly discontinuously on $\Omega$. We assume our convex $\rp^2$ surfaces are oriented, and it is natural in this case to lift the action of $\pgl3$ to an action of $\Sl3$ acting on the convex cone over $\Omega$ in $\re^3$. Labourie and the author independently showed that a marked convex $\rp^2$ structure on a closed oriented surface $S$ of genus $g$ at least two is equivalent to a pair $(\Sigma,U)$, where $\Sigma$ is a marked conformal structure on $S$ and $U$ is a holomorphic cubic differential \cite{labourie97,labourie07,loftin01}.  This result relies on the geometry of hyperbolic affine spheres, in particular on results of C.P.\ Wang \cite{wang91} and deep geometric and analytic results of Cheng-Yau \cite{cheng-yau77,cheng-yau86}. This provides a complex structure on the deformation space $\mathcal G_S$ of marked convex $\rp^2$ structures on $S$.  Moreover, we can mod out by the mapping class group to find that the moduli space $\mathcal R_S$ of unmarked oriented convex $\rp^2$ structures is given by the total space of the bundle of holomorphic cubic differentials over the moduli space $\mathcal M_g$.  One may naturally extend the bundle of holomorphic cubic differentials to a (V-manifold) holomorphic vector bundle whose fiber is the space of \emph{regular cubic differentials} over each noded Riemann surface $\Sigma$ in the boundary divisor in the Deligne-Mumford compactification $\overline{\mathcal M}_g$.  In \cite{loftin02c}, for each pair $(\Sigma,U)$ of noded Riemann surface $\Sigma$ and regular cubic differential $U$ over $\Sigma$, we construct a corresponding $\rp^2$ structure on the nonsingular locus $\Sigma^{\rm reg}$, and specify the geometry near each node by the residue of the cubic differential there.  In this way, we may define a \emph{regular convex $\rp^2$ structure} on $\Sigma$.  There is a standard topology on the space of regular cubic differentials, and we define a topology on the space of regular convex $\rp^2$ structures under which geometric limits are continuous and which is similar in spirit to Harvey's use of the Chabauty topology to describe the Deligne-Mumford compactification \cite{harvey77}. Our main result is then

\begin{thm} \label{main-thm}
Let $S$ be a closed oriented surface of genus $g\ge2$.
The total space of the bundle of regular cubic differentials over the moduli space of $\overline{\mathcal M}_g$ is homeomorphic to the moduli space $\mathcal R^{\rm aug}_S$ of regular convex $\rp^2$ structures on $S$.
\end{thm}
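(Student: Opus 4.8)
The plan is to exhibit a natural continuous equivariant bijection $\Phi$ from the total space of the bundle of regular cubic differentials over $\overline{\mathcal M}_g$ to $\mathcal R^{\rm aug}_S$ and then to show that its inverse is continuous as well. Over a smooth point $\Sigma\in\mathcal M_g$, $\Phi$ is the homeomorphism of \cite{labourie97,labourie07,loftin01} sending $(\Sigma,U)$ to the convex $\rp^2$ structure obtained by solving Wang's equation for the Blaschke metric of the hyperbolic affine sphere associated with $(\Sigma,U)$ and reading off its developing map and holonomy from the Cheng--Yau affine sphere \cite{cheng-yau77,cheng-yau86} over the convex cone. Over a noded $\Sigma$ in the boundary divisor, $\Phi(\Sigma,U)$ is the structure constructed in \cite{loftin02c}: one solves Wang's equation on $\Sigma^{\rm reg}$ with the prescribed regular cubic differential, producing on each component a complete hyperbolic affine sphere whose ends at the nodes are asymptotically modeled, according to the order and residue of the pole of $U$, on the standard cusp or flat half-cylinder examples, and descends to a regular convex $\rp^2$ structure on $S-c$. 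I would run the argument first on the marked level, between the total space of regular cubic differentials over a suitable bordification of Teichm\"uller space and a marked augmented deformation space $\mathcal G^{\rm aug}_S$, so that well-definedness on moduli reduces to the evident equivariance of the affine sphere construction under the mapping class group.

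Next I would prove that $\Phi$ is a bijection. For injectivity, the complete hyperbolic affine sphere asymptotic to the convex cone over $\widetilde\Omega$ is unique by Cheng--Yau, so a regular convex $\rp^2$ structure determines the Blaschke metric, hence the conformal structure and the cubic differential, on each component of $S-c$; the orders and residues of the poles at the nodes are pinned down by the prescribed neck geometry, which is part of the data of a regular convex $\rp^2$ structure, so $(\Sigma,U)$ is recovered. For surjectivity, the classification of the limit structures on $S-c$ says precisely that each component carries a finite-type convex $\rp^2$ structure whose ends at the nodes are of cusp or flat half-cylinder type; the finite-type affine sphere/cubic differential correspondence, as developed by Benoist--Hulin, Dumas--Wolf, and the author, produces on each component a conformal structure and a meromorphic cubic differential with a pole of order at most three at each puncture, and the matching of residues across each loop of $c$ glues these into a single noded Riemann surface $\Sigma$ with a regular cubic differential $U$; stability of $\Sigma$ follows since every component of $S-c$ has negative Euler characteristic. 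One then checks that $\Phi(\Sigma,U)$ reproduces the given structure, since both are determined by the same affine sphere data.

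Since $\Phi$ restricts to a homeomorphism on each stratum — the interior case being \cite{labourie97,labourie07,loftin01} and each boundary stratum being handled by the finite-type correspondence on the pieces together with the continuity of the gluing in the residue parameters — the real content is continuity of $\Phi$ as a sequence passes from a less degenerate stratum to a more degenerate one. Suppose $(\Sigma_n,U_n)\to(\Sigma_\infty,U_\infty)$ in the bundle over $\overline{\mathcal M}_g$, with the $\Sigma_n$ possibly smooth and $\Sigma_\infty$ noded along $c$. Away from the nodes, Wolpert's estimates give $C^\infty_{\rm loc}$ convergence of the hyperbolic metrics on $\Sigma_n^{\rm reg}$, while in the plumbing collars they give the precise comparison with the model cylinder metrics. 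Writing the Blaschke metrics relative to these hyperbolic metrics, the conformal factors solve a semilinear elliptic equation of Wang type whose inhomogeneous term is built from $\|U_n\|^2$; uniform sub- and supersolutions assembled from the explicit model solutions, as in \cite{loftin01} and the work of Benoist--Hulin, yield uniform $C^0$, hence $C^\infty_{\rm loc}$, bounds on compact subsets of $\Sigma_\infty^{\rm reg}$, so the affine spheres converge there. In each degenerating collar one rescales, and the third-order pole of the limiting cubic differential forces the rescaled limits to be among the polynomial cubic differentials on $\co$ analyzed by Dumas--Wolf, for which the affine sphere, developing map, and holonomy are explicit and depend continuously on the residue; matching this local picture to the global one shows that the developing maps and holonomy representations converge in the Chabauty-type sense that defines the topology on $\mathcal R^{\rm aug}_S$, in the spirit of \cite{harvey77}. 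Finally, I would deduce that $\Phi^{-1}$ is continuous by the symmetric argument: a convergent sequence in $\mathcal R^{\rm aug}_S$ has convergent pieces, hence convergent conformal structures, so $\Sigma_n\to\Sigma_\infty$ in $\overline{\mathcal M}_g$, and the associated affine spheres form a precompact family by the same a priori estimates, forcing $U_n\to U_\infty$; equivalently, $\Phi$ is proper.

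I expect the principal obstacle to be exactly this collar analysis: controlling solutions of Wang's equation uniformly as the conformal structure degenerates — where one needs Wolpert's fine description of the hyperbolic metric in the collars — and simultaneously as the cubic differential acquires a third-order pole — where one needs the Dumas--Wolf model — and in particular verifying that the residue of $U_\infty$ is precisely the parameter governing the limiting neck geometry prescribed in \cite{loftin02c}. This requires reconciling the normalizations in the treatments of the author, Benoist--Hulin, and Dumas--Wolf, together with an interpolation/gluing estimate patching the model affine spheres in the collars to the essentially unchanged affine spheres on the thick parts.
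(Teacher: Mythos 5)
Your forward direction is essentially the paper's: uniform sub- and super-solutions to Wang's equation built from Wolpert's plumbing description (hyperbolic on the thick part, flat cylinders in the thin part), $C^\infty_{\rm loc}$ convergence of the Blaschke metrics, and identification of the limit via the Dumas--Wolf uniqueness theorem. Two remarks there. First, your proposal to rescale in each degenerating collar and invoke the Dumas--Wolf classification of polynomial cubic differentials is not what is needed and is not obviously sufficient: the limit metric is pinned down globally on $\Sigma_\infty^{\rm reg}$ by the uniqueness theorem (Theorem \ref{unique-thm}) for complete solutions of (\ref{u-eq}), with no collar rescaling, and the convergence of developing maps requires an extra device you do not mention --- the paper gets the inclusion $\Omega_j\subset N_\epsilon(\Omega_\infty)$ by passing to the \emph{dual} affine sphere and using Lemma \ref{proj-dual-inclusion}, since ODE convergence on compact sets of the universal cover only gives the other inclusion (Theorem \ref{conv-dev-map}). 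Second, convergence of holonomy needs the geodesic-representative argument of Theorem \ref{hol-limits}; uniform convergence of the loops alone is not enough to run the ODE-with-parameters argument.

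The genuine gap is in your last step, continuity of $\Phi^{-1}$. You assert that a convergent sequence in $\mathcal R^{\rm aug}_S$ has convergent conformal structures and that ``the associated affine spheres form a precompact family by the same a priori estimates, forcing $U_n\to U_\infty$; equivalently, $\Phi$ is proper.'' Properness does not settle the matter here: $\mathcal R^{\rm aug}_S$ is not locally compact, it is a quotient by $\Sl3$ and the mapping class group, and the issue is precisely whether a sequence of pairs $(\Omega_j,\Gamma_j)$ can converge, after \emph{different} choices of normalizing elements of $\Sl3$ (equivalently, base points drifting into the thin part or into a different component of the thick part), to a limit other than the one produced by $\Phi(\Sigma_\infty,U_\infty)$. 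Ruling this out is the paper's main new analytic ingredient: Proposition \ref{separate-O}, which shows that Benz\'ecri limits along sequences of points whose Blaschke distance diverges must be disjoint (combined with the bound Blaschke $\ge$ hyperbolic, Proposition \ref{blaschke-dominates-hyperbolic}, to convert thin-part drift into divergent Blaschke distance), and Proposition \ref{unique-regular-convex-limit}, which uses the uniform diameter of thick components, the ODE estimates, and Hausdorffness of Benz\'ecri space to show all limits of $(\Omega_j,\Gamma_j|_{S_k})$ agree up to $\Sl3$. Your sketch contains no substitute for these. In addition, the ``precompactness of the $U_n$'' you invoke requires a uniform bound on $\|U_n\|_{m_n}$ \emph{across the degenerating collars}, which does not follow from local convergence of the affine invariants on the thick part; the paper obtains it from a maximum-modulus (band-boundedness) argument, Lemma \ref{cubic-diff-bound}. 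Without these three ingredients the inverse-continuity step does not close, and the subsequence argument (every subsequence of $(\Sigma_j,U_j)$ has a further subsequence converging to the same limit, in a first-countable space) cannot be completed.
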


In \cite{loftin02c}, we constructed regular convex $\rp^2$ structures corresponding to regular cubic differentials over a noded Riemann surface, and gave some local analysis of the families of regular convex $\rp^2$ structures in the limit. In passing from regular convex $\rp^2$ structures to regular cubic differentials, Benoist-Hulin show that finite-volume convex $\rp^2$ structures correspond to regular cubic differentials of residue zero \cite{benoist-hulin13}.   Quite recently, as this paper was being finalized, Xin Nie has classified all convex $\rp^2$ structures corresponding to meromorphic cubic differentials on a Riemann surface \cite{nie15}.  In the present work, we only consider cubic differentials of pole order at most three (as these are the only ones which appear under neck separation), and the $\rp^2$ geometry of each end is determined by the residue $R$, where $U= R \frac{dz^3}{z^3} + \cdots$. It should be interesting to determine how Nie's higher-order poles relate to degenerating $\rp^2$ structures.

For poles of order 3, there are three cases to consider, as determined by the residue $R$. If $R=0$, then the ends are parabolic, which is locally the same structure as a parabolic element of a Fuchsian group.  If ${\rm Re}\,R\neq0$, then the holonomy of the end is hyperbolic, while if $R\neq0$ but ${\rm Re}\,R=0$, the holonomy is quasi-hyperbolic.  These two cases are not present in the theory of Fuchsian groups. In particular, the associated Blaschke metric is complete and asymptotically flat, and has finite diameter at these ends. The geometry of $\rp^2$ surfaces  contains both flat and hyperbolic geometry as  limits.

The proof of the main theorem involves several analytic and geometric prior results.  First of all, the principal new estimates in the proof are to find sub- and super-solutions to an equation of C.P.\ Wang \cite{wang91} which are uniform for convergent families $(\Sigma_j,U_j)$ of noded Riemann surfaces and regular cubic differentials.  These will allow us to take limits along the families.  A uniqueness result of Dumas-Wolf \cite{dumas-wolf14} then shows that the limits we find are the ones predicted in \cite{loftin02c}.  To analyze limits of regular $\rp^2$ structures, we use a powerful technique of Benoist-Hulin, which shows that natural projectively-invariant tensors on convex domains converge in $C^\infty_{\rm loc}$ when the domains converge in the Hausdorff topology \cite{benoist-hulin13}.  We also use many details about the structure of the Deligne-Mumford compactification $\overline{\mathcal M}_g$, and in particular, the analytic framework due to Masur and refined by Wolpert to relate the hyperbolic metric and with the plumbing construction near the singular curves in $\partial \mathcal M_g$.

\subsection{$\rp^2$ structures and higher Teichm\"uller theory}
Goldman \cite{goldman88a} and Hitchin \cite{hitchin87} prove that the Teichm\"uller space of conformal structures on a closed oriented marked surface $S$ of genus at least 2 is homeomorphic to a connected component of the space of representations $\pi_1S\to\psl2$ modulo conjugation in $\psl2$. Consider representations of $\pi_1S$ to higher-order Lie groups is then known as higher Teichm\"uller theory.  Choi-Goldman \cite{choi-goldman93} show that the deformation space $\mathcal G_S$ of convex $\rp^2$ structures on $S$ is homeomorphic to the Hitchin component of representations $\pi_1S\to\psl3$ \cite{hitchin92}.  Goldman provided in \cite{goldman90a} the analog of Fenchel-Nielsen coordinates on $\mathcal G_S$.  Fenchel-Nielsen coordinates play an important role in analyzing $\overline{\mathcal M}_g$ going back to Bers \cite{bers74} and Abikoff \cite{abikoff80}.  In particular, Wolf-Wolpert \cite{wolf-wolpert92} determine the real-analytic relationship between the between complex-analytic coordinates on $\overline {\mathcal M}_g$ as given by Masur \cite{masur76} and the Fenchel-Nielsen coordinates.  In our present work, we have related the complex-analytic data of regular cubic differentials to the projective geometry of the convex $\rp^2$ structures, but we have not addressed Goldman's Fenchel-Nielsen coordinates.  It should be possible to do so, as Marquis has already extended Goldman's coordinates to pairs of pants with non-hyperbolic holonomy \cite{marquis10}.

There have also been many other works on limits of convex $\rp^2$ structures.  Anne Parreau has analyzed limits of group representations into Lie groups in terms of group actions on $\re$-buildings \cite{parreau00,parreau12}. Parreau thus provides an analog of Thurston's boundary of Teichm\"uller space.  Inkang Kim \cite{inkang-kim05} applies Parreau's theory to construct a compactification of the deformation space of convex $\rp^2$ structure $\mathcal G_S$.  Limits of cubic differentials were related to Parreau's picture in \cite{loftin06} and recently in \cite{nie15a}.  Dumas-Wolf have recently studied polynomial cubic differentials on $\co$ \cite{dumas-wolf14}, and they show  that the space of polynomial cubic differentials up to holomorphic equivalence is isomorphic via the affine sphere construction to the space of bounded convex polygons in $\re^2\subset \rp^2$ up to to projective equivalence. Their construction has been used by Nie \cite{nie15} to analyze the $\rp^2$ geometry related to higher-order poles of cubic differentials, and should be useful in other contexts as well. Benoist-Hulin have also studied cubic differentials on the Poincar\'e disk, and have shown that the Hilbert metric on a convex domain is Gromov-hyperbolic if and only if it arises from a cubic differential on the Poincar\'e disk with bounded norm with respect to the Poincar\'e metric \cite{benoist-hulin14}.

Tengren Zhang has considered degenerating families of convex $\rp^2$ structures with natural constraints on Goldman's parameters \cite{zhang13}.  Ludovic Marquis has studied convex $\rp^2$ structures and their ends from a different point of view from this paper \cite{marquis10,marquis12}. Recently Choi has analyzed ends of $\rp^n$ orbifolds in any dimension \cite{choi15a,choi15b}.

Fix a conformal structure $\Sigma$ on a closed oriented surface of $S$ of genus at least two. Let $G$ be  a split real simple Lie group with trivial center and rank $r$.  Hitchin uses Higgs bundles to parametrize the Hitchin component of the representation space from $\pi_1S$ to $G$ by the set of $r$ holomorphic differentials, which always includes a quadratic differential \cite{hitchin92}.  For $\psl3$, Hitchin specifies a quadratic and a cubic differential.  If the quadratic differential vanishes in this case, then Labourie has shown that we can parametrize the Hitchin component by the affine sphere construction $(\Sigma,U)$ for $U$ Hitchin's cubic differential (up to a constant factor) \cite{labourie07}.  Labourie has also recently shown that Hitchin representations for other split real Lie groups of rank 2 ($\mathbf{PSp}(4,\re)$ and split real $G_2$) can be parametrized by pairs $(\Sigma,V)$, where $\Sigma$ varies in Teichm\"uller space and $V$ is a holomorphic differential of an appropriate order \cite{labourie14}.  It would be very interesting to analyze these Hitchin representations similarly as $\Sigma$ approaches a noded Riemann surface and $V$ is a regular differential.  The relationship between the Higgs bundles and the relevant geometric structures is not as well developed in this case. See
\cite{guichard-wienhard08,baraglia-thesis}.

\subsection{Outline}

Section \ref{top-section} begins by  defining the topological space of regular convex $\rp^2$ structures.  First of all, we recount Goldman's theory of building convex $\rp^2$ surfaces by gluing together simpler surfaces along principal geodesic boundary components. Then we prove a few general topology lemmas about spaces of orbits of homeomorphisms under the quotient topology. These lemmas are used to show that the topology of regular convex $\rp^2$ structures is first countable, and thus can be described in terms of convergent sequences.  Then we define regular separated necks and show in Theorem \ref{geom-limits-regular} that these regular separated necks encompass all geometric limits of convex $\rp^2$ structures on $S$ which degenerate to convex $\rp^2$ structures on $S-\ell$, for $\ell$ a simple non-peripheral loop in $S$. Next, we define the augmented Goldman space of marked regular convex $\rp^2$ structures on $S$, which, similarly to augmented Teichm\"uller space, is a non-locally-compact bordification of the Goldman space (the deformation space of marked convex $\rp^2$ structures).  The definition is based on pairs $(\Omega,\Gamma)$ and encodes both the Hausdorff limits of convex domains of $\Omega_j$  and also the convergence of representations $\Gamma_j$ of subgroups of the fundamental group, all modulo a natural action of $\Sl3$.  Then we take a quotient by the mapping class group to define the augmented moduli space of convex $\rp^2$ structures $\mathcal R_S^{\rm aug}$.

In the final part of Section \ref{top-section}, we recall the plumbing construction for neighborhoods of noded Riemann surfaces in the boundary of moduli space, largely following Wolpert, and its relation to the complete hyperbolic metric on the regular part of each surface.  We then use these constructions to construct the standard topology on the total space of the bundle of regular cubic differentials over $\overline{\mathcal M}_g$. Roughly, we define a metric $m$ on each noded Riemann surface $\Sigma$ which is equal to the hyperbolic metric on the thick part of $\Sigma$ and a flat cylindrical metric on the collar and cusp neighborhoods making up the thin part of $\Sigma^{\rm reg}$. Then convergence of a sequence $(\Sigma_j,U_j)$ is defined as convergence of $\Sigma_i$ in $\overline{\mathcal M}_g$, together with $L^\infty_{m_j,{\rm loc}}$ convergence of the cubic differentials $U_j$.

In the next section, we discuss the basics of hyperbolic affine spheres \cite{cheng-yau77,cheng-yau86}. Let $\mathcal H$ be a hyperbolic affine sphere, which is a surface in $\re^3$ asymptotic to a cone over a bounded convex domain $\Omega$. $\mathcal H$ is diffeomorphic to $\Omega$ under projection to $\rp^2$, and any projective action on $\Omega$ lift to a special linear action on $\mathcal H$.  Two basic invariant tensors, the Blaschke metric and the cubic tensor, thus descend to $\Omega$.  The Blaschke metric induces an invariant conformal structure on $\Omega$, and thus on the quotient $\Gamma\backslash \Omega$.  The cubic tensor induces a holomorphic cubic differential $U$.

Starting from a pair $(\Sigma,U)$, we can recover the picture of $(\Omega,\Gamma)$ by introducing a background metric $g$ and solving Wang's integrability condition (\ref{u-eq}) for a conformal factor $e^u$.  Then $e^ug$ is the Blaschke metric, and if it is complete, we recover the global hyperbolic affine sphere $\mathcal H$ and $(\Omega,\Gamma)$.

The hyperbolic affine sphere over $\Omega$ can be defined as the radial graph of $-\frac1v$ for  $v$ a convex solution to the Monge-Amp\`ere equation
$$ \det v_{ij} = \left(-\frac1v\right)^4$$ with zero Dirichlet boundary condition at $\partial \Omega$. We recall and give a new proof of Benoist-Hulin's result that the Blaschke metrics and cubic tensors converge in $C^\infty_{\rm loc}$ on convex domains converging in the Hausdorff sense \cite{benoist-hulin13}.  We also prove a new result, Proposition \ref{separate-O}, concerning sequences of pairs of points $x_j,y_j\in\Omega_j$, and show that if the Blaschke distance between $x_j$ and $y_j$ diverges to infinity, then any Benz\'ecri limits of the pointed space $\rho_j(\Omega_j,x_j)$ and $\sigma_j(\Omega_j,y_j)$ for sequences $\rho_j,\sigma_j\in\Sl3$, must be disjoint (in a sense made precise below).

We also discuss the two-dimensional case, due to Wang \cite{wang91}, and recall how to solve an initial-value problem to produce a hyperbolic affine sphere $\mathcal H$ from the data $(\Sigma,U,h)$, where $h$ is a complete Blaschke metric.

Finally, we begin the proof of Theorem \ref{main-thm} in Section \ref{cubic-to-rp2-sec}. In this section, we show that a convergent regular sequence $(\Sigma_j,U_j)\to(\Sigma_\infty,U_\infty)$ of pairs of noded Riemann surfaces and regular cubic differentials produces regular convex $\rp^2$ structures which converge in the limit to the convex $\rp^2$ structure corresponding to $(\Sigma_\infty,U_\infty)$.  The proof follows by the method of sub- and super-solutions.  We produced a locally bounded family of sub- and super-solutions to (\ref{u-eq}) uniform over the regular parts of $\Sigma_j^{\rm reg}$. This allows us to solve the equation (\ref{u-eq}) and to take the limit of Blaschke metrics as $j\to\infty$. A uniqueness theorem of Dumas-Wolf \cite{dumas-wolf14} shows that this limit is the Blaschke metric on $(\Sigma_\infty^{\rm reg}, U_\infty)$. We then use techniques of ordinary differential equations to show the holonomy and developing maps converge.

In the final Section \ref{rp2-to-cubic-sec}, we prove the remaining part of the main theorem, by showing that we can pass from convergent sequences of regular convex $\rp^2$ structures to convergent sequences of regular cubic differentials over noded Riemann surfaces.  The proof depends on the thick-thin decomposition of hyperbolic surfaces. In particular, we use Proposition \ref{separate-O} and lower bounds on the Blaschke metric in terms of the hyperbolic metric, to rule out Benz\'ecri sequences in which the point approaches the thin part in moduli.  Conversely, if we have points converging in the same component of the thick part of moduli, we use the uniform bounds on the diameter and the ODE theory from Section \ref{cubic-to-rp2-sec} to show the limit of the domains must be the same up to an $\Sl3$ action.

\subsection{Acknowledgments}

I am grateful to acknowledge stimulating conversations with Zeno Huang, Jane Gilman, Sara Maloni, and Bill Abikoff, and would like to give special thanks to Mike Wolf and David Dumas for many illuminating discussions, and to Scott Wolpert for many interesting discussions related to this work.  This work was supported in part by a Simons Collaboration Grant for Mathematicians 210124; U.S. National Science Foundation grants DMS 1107452, 1107263, 1107367 ``RNMS:
GEometric structures And Representation varieties" (the GEAR Network); and the IMS in at the National University of  Singapore, where part of this work was completed.

\section{Definitions and topology} \label{top-section}

\subsection{Goldman's attaching across a neck}

We recount some of the basic facts about $\rp^n$ manifolds. An $\rp^n$ manifold is a defined by a maximal atlas of coordinate charts in $\rp^n$ with gluing maps in $\pgl{n+1}$; in other words, there is an $(X,G)$ structure in the sense of Thurston and Ehresmann for $X=\rp^n$ and $G=\pgl{n+1}$.  A \emph{geodesic} in an $\rp^n$ manifold is a path which is a straight line segment in each $\rp^n$ coordinate chart.

See e.g.\ Goldman \cite{goldman90a} for details. An $\rp^n$ structure on an $n$-manifold $M$ can be described in terms of the development-holonomy pair. Choose a basepoint $p\in M$. The developing map is a local diffeomorphism from ${\rm dev}\!:\tilde M\to \rp^n$, while the holonomy ${\rm hol}\!:\pi_1M \to \pgl{n+1}$. Dev and hol are related by the following equivariance condition: if $\gamma\in\pi_1M$, then $${\rm dev}\circ \gamma = {\rm hol}(\gamma)\circ {\rm dev}.$$  The developing map is defined in terms of a choice of $\rp^n$ coordinate chart around $p\in M$. First lift this chart to a neighborhood in $\tilde M$, and then analytically continue to define dev on all of $\tilde M$.  For any other choice of coordinate chart and/or basepoint, there is a map $g\in\pgl{n+1}$ so that $$ {\rm dev}' = {\rm dev} \circ g, \qquad {\rm hol}'(\gamma) = g^{-1} \circ {\rm hol}(\gamma) \circ g.$$

An $\rp^n$ manifold $X$ is called \emph{convex} if the image of the developing map is a convex domain $\Omega$ in an inhomogeneous $\re^n\subset \rp^n$, and $X$ is a quotient  ${\rm hol}(\pi_1X) \backslash \Omega$. $X$ is \emph{properly convex} if $\Omega$ is in addition bounded in an inhomogeneous $\re^n\subset \rp^n$. All the manifolds we study in this paper are properly convex, and we often simply call them convex.

On any closed oriented convex $\rp^2$ surface of genus at least 2, the $\rp^2$ holonomy around any nontrivial simple loop is hyperbolic, in that it is conjugate to a diagonal matrix $D(\lambda,\mu,\nu)$, where $\lambda>\mu>\nu>0$ and $\lambda\mu\nu=1$. Choose coordinates in $\rp^2$ so that this holonomy action is given by $H=D(\lambda,\mu,\nu)$.  The three fixed points of this action are the attracting fixed point $[1,0,0]$, the repelling fixed point $[0,0,1]$, and the saddle fixed point $[0,1,0]$. Define the \emph{principal triangle} $T$ as the projection onto $\rp^2$ of the first octant in $\re^3$. The \emph{principal geodesic} $\tilde\ell$ associated to this holonomy matrix is the straight line in the boundary of $T$ from the repelling to the attracting fixed point.  Let $\bar T$ denote the triangle given by the reflection of $T$ across the principal geodesic given by the matrix $J=D(1,-1,1)$. The vertices of the principal triangle are the fixed points of the holonomy matrix.   The quotient $T/\langle H\rangle$ is called the \emph{principal half-annulus}, while the quotient of $(T\sqcup\tilde\ell\sqcup\bar T)/\langle H \rangle$ is called the \emph{$\pi$-annulus}.

We recall Goldman's theory of attaching $\rp^2$ surfaces across a principal geodesic. On a compact properly convex $\rp^2$ surface $S^a$ with principal geodesic boundary, an annular neighborhood of a principal geodesic boundary component $\ell$ is called a \emph{principal collar neighborhood}. We may choose coordinates so that a lift $\tilde\ell$ of $\ell$ is the standard principal geodesic mentioned above.  Assume the image $\Omega^a$ of the developing map is then a subset of the principal triangle $\bar T$.  The principal collar neighborhood then develops to be a neighborhood $\mathcal N$ of $\tilde\ell$ in $\bar T$ which is invariant under the action of the holonomy matrix $H=D(\lambda,\mu,\nu)$.  Let $\gamma\in\pi_1S^a$ represent the loop $\ell$.  Then the quotient $\mathcal N^a= \langle H\rangle \backslash \mathcal N$ is the principal collar neighborhood.

Now consider a second convex $\rp^2$ surface $S^b$ with principal geodesic boundary, together with a principal geodesic boundary component. Choose local $\rp^2$ coordinates so that the lift of this geodesic boundary loop is $-\tilde \ell$ (the minus sign denoting the opposite orientation), and the image $\Omega^b$ of the developing map of $S^b$ is contained in $T$. If the holonomy around $-\tilde\ell$ is $H^{-1}$, then $H$ acts on $\Omega^a\sqcup \tilde\ell \sqcup \Omega^b$.  (In order to glue the surfaces along $\ell$, we need to glue across all the lifts of $\ell$, which we may describe as $ {\rm hol}(\beta) \circ \tilde\ell$ for $\beta$ in the coset space $\pi_1(S^a)/\langle \gamma\rangle$, where $\gamma$ is the element in $\pi_1$ determined by the loop $\ell$.)

\begin{thm}[Goldman] \label{glue-rp2}
Let $M$ be a properly convex $\rp^2$ surface with principal geodesic boundary. (In particular, we assume the boundary is compact and each component of the boundary is the quotient of a principal geodesic under a holonomy action of hyperbolic type). $M$ is not assumed to be either connected or compact. Let $B_1,B_2$ be two boundary components, and assume that they have principal collar neighborhoods $N_1,N_2$  respectively which are projectively isomorphic under an orientation-reversing map across the boundary. This induces a projective structure on a full neighborhood of the geodesic formed from gluing $B_1$ and $B_2$. The resulting $\rp^2$ surface $\bar M$ is also convex. Moreover, $\bar M$ is properly convex except in the case of gluing two principal half-annuli together to make a $\pi$-annulus.
\end{thm}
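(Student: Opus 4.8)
The plan is to work entirely with developing maps and holonomy, reducing the theorem to a convexity statement about a domain in $\rp^2$. That the glued structure is a genuine $\rp^2$ structure on a full neighborhood of the new geodesic is immediate: one uses the given orientation-reversing projective isomorphism $N_1\to N_2$ as the gluing map, and adjoins to the two atlases the charts of a single collar chart straddling the resulting geodesic $\ell$. So the content of the theorem is that $\bar M$ is convex, and it suffices to show that the developing map of each connected component of $\bar M$ is a homeomorphism onto a convex domain $\Omega$ in an inhomogeneous $\re^2\subset\rp^2$, and then to decide when $\Omega$ is bounded in such an $\re^2$.

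I would first normalize on the component $\bar M_0$ of $\bar M$ containing $\ell$. Fix a lift $\tilde\ell$ of $\ell$ in the universal cover $\widetilde{\bar M}_0$ and choose projective coordinates so that $\tilde\ell$ is the standard principal geodesic from $[0,0,1]$ to $[1,0,0]$, the holonomy of $\ell$ is $H=D(\lambda,\mu,\nu)$ with $\lambda>\mu>\nu>0$ --- which is legitimate because by hypothesis this holonomy is of hyperbolic type --- the developing image of the side meeting $B_1$ lies in $\bar T$, and the developing image of the side meeting $B_2$ lies in $T$. The crucial point is that $N_1,N_2$ are \emph{principal} collar neighborhoods: each develops onto an $H$-invariant neighborhood of $\tilde\ell$ inside $\bar T$, respectively inside $T$, so that after gluing, the developing image of $\widetilde{\bar M}_0$ contains a full $H$-invariant neighborhood $V$ of $\tilde\ell$ inside $T\sqcup\tilde\ell\sqcup\bar T$. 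Since $T\sqcup\tilde\ell\sqcup\bar T$ is projectively equivalent to an infinite strip, it is convex, so $V$ is a convex neighborhood of $\tilde\ell$; the same holds for every lift of $\ell$. Together with the convex developing images of the pieces of $M$ (convex by hypothesis) and their $\pi_1\bar M_0$-translates, this exhibits the developing image $\Omega$ of $\widetilde{\bar M}_0$ as a union of convex sets in which consecutive members overlap in a convex ``bridge'' containing a segment crossing a lift of $\ell$ transversally.

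The heart of the matter is to upgrade this to convexity of $\Omega$ itself. Given $p,q\in\Omega$, I would join preimages of $p$ and $q$ in $\widetilde{\bar M}_0$ by a path meeting only finitely many pieces and bridges, producing a finite chain $C_1,\dots,C_k$ of convex sets with $p\in C_1$, $q\in C_k$, and consecutive members overlapping in a bridge. One then verifies that the union of such a chain is convex --- equivalently, that the segment from $p$ to $q$ whose direction is compatible with the chain lies in $\Omega$. This is where the hypotheses genuinely enter: because the holonomy across each glued geodesic is of \emph{hyperbolic} type, the developing map does not wind a geodesic around projectively, and because each piece lies strictly on one side of the line carrying each of its principal geodesic boundary components, no non-convex ``kink'' can appear where the chain crosses a glued geodesic. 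I expect this propagation step to be the main obstacle. Granting it, $\Omega$ is convex, and the developing map of $\widetilde{\bar M}_0$ is a homeomorphism onto $\Omega$ by the standard fact that a convex real projective surface is the quotient of its convex developing image by its holonomy; hence $\bar M_0$ is convex, and the same applies to any component of $\bar M$ not meeting $\ell$, which is just a component of $M$.

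Finally, for proper convexity I would show that $\overline\Omega$ contains a projective line only in the exceptional case. Since $\Omega\subset T\sqcup\tilde\ell\sqcup\bar T$, whose closure contains exactly the two projective lines $\{x=0\}$ and $\{z=0\}$, and since $\overline\Omega$ meets each of these lines in a convex arc, $\overline\Omega$ can contain such a line only if $\Omega$ accumulates on all of it. By convexity of $\Omega$ and the containment of the two glued pieces in $\bar T$ and $T$, a short argument shows this forces both pieces to equal their full triangles $\bar T$ and $T$; but a surface developing onto a full principal triangle is a principal half-annulus $\bar T/\langle H\rangle$ or $T/\langle H\rangle$, which has only the one principal geodesic boundary component, so $M$ must be precisely the disjoint union of two such half-annuli, and $\bar M = (T\sqcup\tilde\ell\sqcup\bar T)/\langle H\rangle$ is the $\pi$-annulus. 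In every other case $\overline\Omega$ contains no projective line, and $\bar M$ is properly convex.
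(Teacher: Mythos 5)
The paper does not actually reprove this statement: it is quoted as Goldman's Theorem 3.7 from \cite{goldman90a}, and the paper's contribution is the remark immediately following, which observes that Goldman's proof never uses compactness of $M$ but only proper convexity (via the fact that a properly convex piece with hyperbolic boundary holonomy develops into a principal triangle). So a from-scratch proof is already a departure from what the paper does.

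Your attempt has a real gap, and you flag it yourself: the ``propagation step.'' Writing $\Omega$ as a union of a chain of convex pieces $C_1,\dots,C_k$ that overlap pairwise in convex ``bridges'' does not, by itself, make the union convex --- a staircase of overlapping rectangles is the standard counterexample. The real content of Goldman's theorem is precisely that no such staircase occurs, and this is usually established by a completeness/Kuiper-type argument showing the developing map is a homeomorphism onto a convex domain, not by assembling local convexity. Saying ``the holonomy does not wind a geodesic around projectively'' and ``no non-convex kink appears'' is a description of the desired conclusion, not an argument for it, and ``granting it'' is exactly granting the theorem. Relatedly, your last step invokes ``the standard fact that a convex real projective surface is the quotient of its convex developing image by its holonomy'' to get that the developing map is a homeomorphism onto $\Omega$ --- but injectivity of the developing map is part of what has to be proved, not a consequence of $\Omega$ being convex (a developing map can be a non-injective local diffeomorphism with convex image). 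There is a secondary unjustified claim in the proper-convexity paragraph: the containment $\Omega \subset T\sqcup\tilde\ell\sqcup\bar T$ is asserted for the full developing image of $\widetilde{\bar M}_0$, but $\widetilde{\bar M}_0$ contains infinitely many copies of the pieces of $M$ glued along all lifts of $\ell$, and only the single copy of $\widetilde{M_1}$ adjacent to $\tilde\ell$ is a priori contained in $\bar T$; the other translates need their own argument (which again essentially is the theorem). The cleanest repair is the one the paper implicitly endorses: follow Goldman's Theorem 3.7 and check that nothing in his proof invokes compactness --- only that each piece is properly convex, so its developing image lies strictly inside a principal triangle.
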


\begin{rem}
Goldman states this theorem (Theorem 3.7 in \cite{goldman90a}) a bit differently, in that the hypothesis is that $M$ is compact as an $\rp^2$ surface with boundary. However, the compactness condition is not used in Goldman's proof.  What is used is the condition of being properly convex.  The image of the developing map of a properly convex $M$ with a hyperbolic holonomy along the principal boundary geodesic must be properly contained in a principal triangle.    \emph{Proper} convexity is essential for us, as the $\pi$-annulus, which has principal boundary geodesics and is convex but not properly convex, cannot be glued to another $\rp^2$ surface while maintaining convexity.  In fact, Choi \cite{choi94a,choi94b} has cut non-convex closed $\rp^2$ surfaces along principal geodesics into a union of properly convex pieces and $\pi$-annuli. Choi-Goldman use this construction to classify all closed $\rp^2$ surfaces \cite{choi-goldman97}.
\end{rem}

\begin{rem}
To stay within the category of properly convex $\rp^2$ surfaces, we should avoid principal half-annuli.  This is not a serious restriction on us, as principal half-annuli are prime in the sense that they cannot be formed by gluing together two smaller properly convex $\rp^2$ surfaces along a principal geodesic boundary. Moreover, all the surfaces we consider have negative Euler characteristic.
\end{rem}

\begin{cor}
Let $M$ be a properly convex $\rp^2$ surface with principal geodesic boundary.  Assume the hyperbolic holonomies along two boundary components $B_1,B_2$ are, up to conjugation, inverses of each other.  Then we may glue $B_1$ to $B_2$ as above to make the $\rp^2$ surface $\bar M$ properly convex.
\end{cor}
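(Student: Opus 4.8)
The plan is to deduce the corollary from Goldman's gluing theorem, Theorem~\ref{glue-rp2}. Beyond proper convexity, the only hypothesis there is that $B_1$ and $B_2$ admit principal collar neighborhoods $N_1,N_2$ which are projectively isomorphic under an orientation-reversing map across the boundary. So it suffices to manufacture such a pair from the holonomy data alone, treating each $N_i$ as an $\rp^2$ annulus with its own developing--holonomy data, and then to check that the single exceptional (non-properly-convex) case of that theorem does not occur.

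The heart of the argument is to normalize each collar. Let $H_1,H_2\in\pgl3$ be the hyperbolic holonomies along $B_1,B_2$. Conjugating a collar of $B_1$, we may take $H_1=D(\lambda,\mu,\nu)$ with $\lambda>\mu>\nu>0$ and $\lambda\mu\nu=1$; by the observation in the Remark after Theorem~\ref{glue-rp2} --- proper convexity together with hyperbolic holonomy along a principal boundary geodesic forces the developed image to lie in a principal triangle --- a sufficiently thin principal collar of $B_1$ develops diffeomorphically onto an $H_1$-invariant neighborhood of the standard principal geodesic $\tilde\ell$ inside the reflected triangle $\bar T$, with boundary circle $\tilde\ell/\langle H_1\rangle$. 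Now using the hypothesis that $H_2$ is conjugate to $H_1^{-1}$, and conjugating further if necessary by a sign-change diagonal matrix such as the reflection $J=D(1,-1,1)$ (which centralizes $\langle H_1\rangle$, fixes $\tilde\ell$ pointwise, and interchanges the principal triangles $T$ and $\bar T$), we may arrange that the holonomy of a collar of $B_2$ is $H_1^{-1}$ and that a thin principal collar of $B_2$ develops onto an $H_1$-invariant neighborhood of $\tilde\ell$ inside $T$, carrying the orientation opposite, across $\tilde\ell$, to that of the collar of $B_1$. This is precisely the configuration set up before Theorem~\ref{glue-rp2}, in which $H_1$ acts on $\Omega^a\sqcup\tilde\ell\sqcup\Omega^b$ with $\Omega^a\subset\bar T$ and $\Omega^b\subset T$.

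Shrinking both collars until they develop onto standard $H_1$-invariant collars of equal thickness on the two sides of $\tilde\ell$, the identity map of $\tilde\ell/\langle H_1\rangle$ extends to an orientation-reversing projective isomorphism between the resulting quotient collar neighborhoods. Theorem~\ref{glue-rp2} then produces a convex $\rp^2$ structure on $\bar M$. Finally, $\bar M$ is properly convex: the only exception in Theorem~\ref{glue-rp2} is the gluing of two principal half-annuli into a $\pi$-annulus, and this does not arise here, because a principal half-annulus is prime --- it is not a nontrivial gluing of properly convex $\rp^2$ surfaces along a principal geodesic --- and, in any case, all surfaces under consideration have negative Euler characteristic, unlike the $\pi$-annulus.

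The step I expect to be the main obstacle is the normalization: the assertion that a principal collar is determined, up to projective isomorphism and shrinking, by its hyperbolic holonomy conjugacy class. This is really a rigidity statement about convex $\rp^2$ collars, and proper convexity is essential to it --- the $\pi$-annulus has principal geodesic boundary with hyperbolic holonomy but is not properly convex, and its collars do not embed into a principal triangle in the way used above, which is exactly why the corollary, like Goldman's theorem, must stay within the properly convex category.
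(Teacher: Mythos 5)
Your proposal takes essentially the same route as the paper: normalize so the holonomies become diagonal $H$ and $H^{-1}$ acting on opposite sides of the standard principal geodesic $\tilde\ell$, shrink to $H$-invariant one-parameter-orbit collars, identify them via the reflection $J=D(1,-1,1)$ (which commutes with $H$), and invoke Theorem~\ref{glue-rp2}. Your explicit exclusion of the $\pi$-annulus exception is a sensible addition; the paper handles it instead in the remark that principal half-annuli are prime and outside the Euler-characteristic range under consideration.
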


\begin{proof}
Choose local $\rp^2$ coordinates near $B_1,B_2$ so that there is a lift of each to the standard principal geodesic $\tilde \ell$, so that the neighborhoods of $B_1,B_2$ are respectively on opposite sides of the $\tilde\ell$, and so that the holonomies around $B_1,B_2$ are diagonal. Since they are both in canonical form, they must actually be inverses of each other, say $H$ and $H^{-1}$.  Now for a point $p$ in $T$ close enough to the interior of $\tilde \ell$, we may form a neighborhood of $\ell$ by moving $p$ by the one-parameter group $H^t$ corresponding to the holonomy.  The region between $\{H^tp\}$ and $\ell$ is then a principal collar neighborhood $\mathcal N^a$.  But now we can do the same thing on the other side of $\ell$ to find a principal collar neighborhood $\mathcal N^b$.  Since their holonomies are inverses of each other, we see that, after possibly shrinking the collar neighborhoods, $\mathcal N^a = J \mathcal N^b $ with the holonomy equivariant under the action of $J$. This means that $J$ descends to the quotient, and the hypotheses in Theorem \ref{glue-rp2} are satisfied.  Note $J$ commutes with the holonomy matrix.
\end{proof}

Goldman's construction in  Theorem \ref{glue-rp2} involves a choice of a orientation-reversing projective map $J$ to glue the collar neighborhoods across the principal geodesic boundary components. If standard coordinates are chosen on $\re^3$ as above, then we may choose $J=D(1,-1,1)$.  Note $J$ commutes with the holonomy $H$.  But there are other possible choices determined by generalized twist parameters $\sigma,\tau$. For $M_{\sigma,\tau} = D(e^{-\sigma-\tau},e^{2\tau},e^{\sigma-\tau})$, consider the projective involution $J_{\sigma,\tau}=M_{\sigma,\tau}J$, which still commutes with $H$. $\tau$ is called the \emph{twist parameter}, as it corresponds to the usual twist parameter on a hyperbolic surface.  We call $\sigma$ the \emph{bulge parameter}.  (In \cite{loftin02c}, these are called the horizontal and vertical twist parameters respectively.) Our choice of $J$ is not canonical, as it depends on a choice of coordinates; the twist and bulge parameters then are relative to this choice of $J$.  The results and proofs in this paper do not depend on the choice of $J$, as we are primarily interested in the cases when the bulge parameters go to $\pm\infty$.

On a neck with hyperbolic holonomy, with coordinates on $\re^3$ so that the holonomy is given by the diagonal matrix $H=D(\lambda,\mu,\nu)$ with $\lambda>\mu>\nu>0$ and $\lambda\mu\nu=1$.  A \emph{Dehn twist} is a generalized twist which corresponds exactly to the holonomy along the geodesic loop.  The Dehn twist is transverse to the family of bulge parameters, but is not typically strictly a twist parameter as defined above.

\subsection{General topology of orbit spaces}

\begin{lem} \label{first-countable-quotient}
Let $X$ be a first countable topological space, and let $\Phi$ be a set of homeomorphisms acting on $X$. Then the quotient space $\Phi\backslash X$ is first countable with respect to the quotient topology.
\end{lem}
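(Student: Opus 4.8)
The plan is to reduce the statement to the standard fact that a continuous \emph{open} surjection pushes countable neighborhood bases forward; the hypothesis on $\Phi$ enters only to establish that the quotient map is open.

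First I would arrange that $\Phi$ is a group. If it is not already, replace it by the subgroup $\langle\Phi\rangle\subseteq\mathrm{Homeo}(X)$ generated by $\Phi$ together with inverses and finite compositions; this does not change the orbit equivalence relation (two points are related by $\Phi$-moves through a finite chain precisely when they lie in the same $\langle\Phi\rangle$-orbit), hence does not change the quotient $\Phi\backslash X$, and its members are still homeomorphisms. Let $\pi\colon X\to\Phi\backslash X$ be the quotient map, with the quotient topology. Then $\pi$ is open: for open $U\subseteq X$ the saturation of $U$ is
\[
\pi^{-1}(\pi(U))=\bigcup_{\varphi\in\Phi}\varphi(U),
\]
which is open since each $\varphi$ is a homeomorphism, so $\pi(U)$ is open by definition of the quotient topology. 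Thus $\pi$ is a continuous open surjection.

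Next, fix a point $q\in\Phi\backslash X$ and choose any $x\in X$ with $\pi(x)=q$. Since $X$ is first countable, pick a countable basis of open neighborhoods $\{U_n\}_{n\in\mathbb{N}}$ at $x$. I claim $\{\pi(U_n)\}_{n\in\mathbb{N}}$ is a countable neighborhood basis at $q$. Each $\pi(U_n)$ is open, by openness of $\pi$, and contains $q=\pi(x)$. Conversely, if $V$ is any open neighborhood of $q$ in $\Phi\backslash X$, then $\pi^{-1}(V)$ is an open neighborhood of $x$, so $U_n\subseteq\pi^{-1}(V)$ for some $n$, and applying $\pi$ and using surjectivity gives $\pi(U_n)\subseteq\pi(\pi^{-1}(V))=V$. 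Since $q\in\Phi\backslash X$ was arbitrary, $\Phi\backslash X$ is first countable.

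I do not expect any real obstacle here: the entire content is the openness of $\pi$, which is immediate from the elements of $\Phi$ being homeomorphisms, and the remaining argument is a formal chase with the definitions of quotient topology and neighborhood basis. The reason for isolating the lemma is that first countability is what will later justify describing the topology on the moduli space of regular convex $\rp^2$ structures entirely in terms of convergent sequences.
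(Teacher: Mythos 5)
Your proof is correct and is essentially the paper's argument: both hinge on the fact that the saturation $\bigcup_{\phi\in\Phi}\phi(U)$ of an open set is open because each $\phi$ is a homeomorphism, so the quotient map is open and pushes a countable neighborhood basis at $x$ forward to one at its image. Your explicit reduction to the generated group is a small tidying of a point the paper leaves implicit, not a different route.
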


\begin{proof}
Let $x\in X$ and denote the projection to the quotient by $f$.  Since $X$ is first countable, we may choose $V_i$ a countable collection of neighborhoods of $x$ for which any open set $U$ containing $x$ satisfies $U\supset V_i$ for some $i$.

Let $\mathcal O$ be an open neighborhood of $f(x)$ in the quotient space.  The inverse image $f^{-1}(\mathcal O)$ is open in $X$, and so there is a $V_i$ so that $f^{-1}(\mathcal O) \supset V_i$.  Now we claim that $f(V_i)$ is open and that $\mathcal O\supset f(V_i)$. The openness of $f(V_i)$ is equivalent to the openness of $f^{-1}(f(V_i))$, which is the union of orbits $\Phi V_i = \bigcup_{\phi\in\Phi}\phi V_i$. The openness of $f^{-1}(V_i)$ follows since each $\phi$ is a homeomorphism.  Finally, $\mathcal O \supset f(V_i)$ is equivalent to $f^{-1}(\mathcal O) \supset f^{-1}(f(V_i)) = \Phi V_i$. This follows since $f^{-1}(\mathcal O) = \Phi f^{-1}(\mathcal O) \supset \Phi V_i$.  Thus the open sets $f(V_i)$ are the required open neighborhoods around $f(x)$.
\end{proof}

\begin{lem}\label{orbit-sequence}
Let $X$ be a first countable topological space, and let $\Phi$ be a set of homeomorphisms acting on $X$, and let $f$ be the projection to the quotient space. Then $y_i\to y$ in $\Phi\backslash X$ if and only if there is a sequence $x_i\to x$ in $X$ with $y_i=f(x_i)$ and $y=f(x)$.
\end{lem}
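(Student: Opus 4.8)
The plan is to prove the two implications separately, with essentially all the work in the reverse direction. For the forward implication, if $x_i\to x$ in $X$ with $y_i=f(x_i)$ and $y=f(x)$, then $y_i\to y$ is immediate from continuity of the quotient map $f$; no first countability is needed here.

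For the reverse implication, suppose $y_i\to y$ in $\Phi\backslash X$. First I would fix an arbitrary preimage $x\in f^{-1}(y)$. Using first countability of $X$, choose a countable neighborhood basis $\{V_i\}$ at $x$ and pass to the nested basis obtained by replacing $V_i$ with $V_1\cap\cdots\cap V_i$, so that $V_1\supseteq V_2\supseteq\cdots$ and every open set containing $x$ contains some $V_i$. The key input is Lemma \ref{first-countable-quotient} (more precisely its proof): each $f(V_i)$ is open in the quotient, and $\{f(V_i)\}$ is a countable neighborhood basis at $y=f(x)$.

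Next I would extract indices: since $y_i\to y$, for each $k$ there is $N_k$ with $y_i\in f(V_k)$ for all $i\ge N_k$, and I may take $N_1<N_2<\cdots$. Then I build the sequence $x_i$ by choices: for $i<N_1$ take $x_i$ to be any point of $f^{-1}(y_i)$; for $N_k\le i<N_{k+1}$, use $y_i\in f(V_k)$ to pick $x_i\in V_k$ with $f(x_i)=y_i$. By construction $f(x_i)=y_i$ and $f(x)=y$. Finally, to check $x_i\to x$: for any open $U\ni x$, choose $m$ with $V_m\subseteq U$; then for $i\ge N_m$ we have $i\in[N_k,N_{k+1})$ for some $k\ge m$, hence $x_i\in V_k\subseteq V_m\subseteq U$, so $x_i$ is eventually in $U$.

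There is no serious obstacle here; the statement is a routine diagonal-style extraction. The two points that need care are (i) passing to a nested neighborhood basis at $x$, so that eventual membership of $y_i$ in $f(V_k)$ for large $k$ converts into membership of $x_i$ in all the earlier $V_m$; and (ii) correctly invoking Lemma \ref{first-countable-quotient}, which is exactly where the homeomorphism hypothesis on $\Phi$ (needed for $f(V_i)$ to be open) and first countability of $X$ enter. One should also note that the construction of $x_i$ uses countable choice, which is harmless in this setting.
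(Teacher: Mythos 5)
Your proof is correct and follows essentially the same route as the paper: invoke the proof of Lemma \ref{first-countable-quotient} to see that the sets $f(V_n)$ form an open neighborhood basis at $y$, then lift the $y_i$ to representatives lying in these basic neighborhoods. You simply make explicit (via the nested basis and the increasing indices $N_k$) the diagonal extraction that the paper leaves implicit, and you also record the trivial continuity direction, which the paper omits.
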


\begin{proof}
Let $y_i\to y$ in $\Phi\backslash X$. Let $x\in f^{-1}(y)$. Let $V_n$ be a neighborhood basis for $x\in X$. The proof of the previous lemma implies that $f(V_n)$ is a neighborhood basis for $y\in \Phi\backslash X$. Then $y_i\to y$ implies that for all $n$, there is an $I$ so that if $i\ge I$, then $y_i\in f(V_n)$. This is equivalent to $f^{-1}(y_i) \subset f^{-1}(f(V_n)) = \Phi(V_n)$, which in turn means that for all $x_i\in y_i$ there is a $\phi_i\in\Phi$ so that $x_i\in\phi_i V_n$. In turn, $\phi_i^{-1} x_i \in V_n$ and so $\phi_i^{-1}x_i\to x$ in $X$.
\end{proof}

\subsection{Markings on convex $\rp^2$ surfaces}\label{mark-subsec}

In this subsection, we consider a connected oriented surface $S$ which admits a complete hyperbolic metric. A marked convex $\rp^2$ structure on $S$ is given by the quotient $\{(\Omega,\Gamma)\}/\sim$, where $\Omega$ is a properly convex domain in $\rp^2$ and, for a basepoint $x_0\in S$,
$$\Gamma\!:\pi_1(S,x_0)\to \Sl3$$
is a discrete embedding which acts on $\Omega$ so that $\Gamma\backslash\Omega$ is diffeomorphic to $S$. The equivalence relation $\sim$ is given by $(\Omega,\Gamma)\sim (A\Omega,A\Gamma A^{-1})$ for $A\in\Sl3$.  Note that $\Omega$ is the image of a developing map for this $\rp^2$ structure on $S$, and $\Gamma$ is the corresponding holonomy representation.

\begin{lem} \label{indep-basepoint}
A change of the basepoint does not change the marked convex $\rp^2$ structure $\{(\Omega,\Gamma)\}/\sim$.
\end{lem}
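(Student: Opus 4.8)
The plan is to follow how each auxiliary choice used to build the pair $(\Omega,\Gamma)$ from an $\rp^2$ structure on $S$ — the basepoint $x_0$, the germ of the developing map there, and, when two basepoints are to be compared, a path joining them — affects the pair, and to check that every such change stays inside one $\sim$-class. Throughout I would work with a fixed universal cover $p\colon\tilde S\to S$ and a developing map $\mathrm{dev}\colon\tilde S\to\rp^2$ with holonomy homomorphism $\mathrm{hol}$ of the deck group into $\Sl3$ satisfying $\mathrm{dev}\circ g=\mathrm{hol}(g)\circ\mathrm{dev}$; proper convexity makes $\mathrm{dev}$ an embedding onto $\Omega$, and a basepoint $x_0$ with a chosen lift $\tilde x_0$ identifies $\pi_1(S,x_0)$ with the deck group, so $\Gamma$ is the composite with $\mathrm{hol}$. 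The freedom in the chart at $x_0$ changes $(\mathrm{dev},\mathrm{hol})$, hence $(\Omega,\Gamma)$, by $(\Omega,\Gamma)\mapsto(A\Omega,A\Gamma A^{-1})$ for a fixed $A\in\Sl3$, which is exactly the content of $\sim$; so the only genuinely new feature when passing to $x_1$ is that the source group $\pi_1(S,x_1)$ must be identified with $\pi_1(S,x_0)$.

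First I would dispose of the case in which $x_0$ and $x_1$ are joined by a chosen path $\eta$ and the lift $\tilde x_1$ and the chart at $x_1$ are taken compatibly with $\eta$ (lift $\eta$ at $\tilde x_0$ and continue $\mathrm{dev}$ along that lift). Then the same global map $\mathrm{dev}\colon\tilde S\to\rp^2$ serves as a developing map based at $x_1$, so $\Omega$ is unchanged; and a short path-lifting computation shows that the change-of-basepoint isomorphism $\eta_\ast\colon\pi_1(S,x_0)\to\pi_1(S,x_1)$ intertwines the two identifications with the deck group, so the $x_1$-holonomy pulled back along $\eta_\ast$ is literally $\Gamma$. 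Thus for compatible choices the marked structure is unchanged on the nose.

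Next I would remove the auxiliary choices. Replacing $\eta$ by another path changes $\eta_\ast$ by an inner automorphism $\gamma\mapsto\delta\gamma\delta^{-1}$ with $\delta\in\pi_1(S,x_0)$, so the pulled-back holonomy becomes $\gamma\mapsto\Gamma(\delta)\,\Gamma(\gamma)\,\Gamma(\delta)^{-1}$; since $\Gamma(\delta)$ lies in the holonomy group, which preserves $\Omega$ by equivariance, setting $A=\Gamma(\delta)$ gives $A\Omega=\Omega$, so the new pair is $(A\Omega,A\Gamma A^{-1})\sim(\Omega,\Gamma)$. Changing the lift $\tilde x_1$ is subsumed in changing $\eta$, and changing the chart at $x_1$ alters the pair by an $\Sl3$ action, again within the $\sim$-class. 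Combining these, the class $\{(\Omega,\Gamma)\}/\sim$ attached to $x_1$ equals the one attached to $x_0$.

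The one genuine point, and what the lemma is really about, is the bookkeeping of the inner-automorphism ambiguity in identifying $\pi_1(S,x_0)$ with $\pi_1(S,x_1)$: one must see that this ambiguity is absorbed precisely by $\sim$, and this reduces to the observation that conjugating a holonomy representation by an element of its own image fixes the convex set $\Omega$ and so does not leave the $\sim$-class. Everything else is routine, provided a convention for the side of the deck action and the order of path composition is fixed at the outset and kept consistent.
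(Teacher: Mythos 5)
Your proof is correct and follows essentially the same route as the paper: for a fixed path the developed image $\Omega$ is unchanged and the change-of-basepoint isomorphism matches the holonomies, while two different paths alter the holonomy only by conjugation by an element of $\Gamma$ itself, which preserves $\Omega$ and hence stays within the $\sim$-class. The extra bookkeeping you include about chart and lift choices is consistent with, and subsumed by, the paper's argument.
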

\begin{proof}
If $x_0,x\in S$, then each homotopy class of paths from $x_0$ to $x$ induces an isomorphism of $\pi_1(S,x_0)$ and $\pi_1(S,x)$. If $P$ is a path from $x_0$ to $x$, consider a lift of $x_0$ to $\Omega$, and consider the developing map along a lift of $P$.  In this case, the image $\Omega$ of the developing map $\Omega$ is unchanged, and the induced isomorphism along $P$ identifies $\Gamma_{x_0}$ and $\Gamma_{x,P}$. The images of $\Gamma_{x_0}$ and $\Gamma_{x,P}$ are still the same, and thus the quotient $\Gamma\backslash \Omega$ is unchanged.

Our definition is potentially ambiguous if we consider two non-homotopic paths $P$ and $Q$ from $x_0$ to $x$. In this case, $\Gamma_{x,P}$ and $\Gamma_{x,Q}$ are related by conjugating by the element $\Gamma_x(P^{-1}\cdot Q)\in\Sl3$. Since $\Gamma_x(P^{-1}\cdot Q)$ acts on $\Omega$, $(\Omega,\Gamma_{x,P})\sim (\Omega,\Gamma_{x,Q})$.
\end{proof}

Define the \emph{Goldman space} of $S$ by $\mathcal G_S = \{(\Omega,\Gamma)\}/\sim$ with the following topology.  For convex domains in $\rp^2$, consider the Hausdorff distance with respect to the Fubini-Study metric in $\rp^2$.  For the representation $\Gamma$, use the product topology of one copy of $\Sl3$ for each element of $\Gamma(\gamma)$ for $\gamma\in\pi_1(S,x_0)$ (note we consider only surfaces of finite type, for which $\pi_1(S,x_0)$ is finitely generated). Since $\Gamma$ is countable, this topology is first countable.
Now the equivalence relation $\sim$ represents the orbits of a group action of $\Sl3$, which acts by homeomorphisms on the space of all $(\Omega,\Gamma)$.   Then Lemma \ref{first-countable-quotient} shows the quotient topology on $\mathcal G_S$ is also first countable.

In the case that $S$ is a closed surface of genus $g\ge2$, we may define Goldman space $\mathcal G_g =\mathcal G_S$. This deformation space is the analog of Teichm\"uller space for convex $\rp^2$ structures on $S$.  Goldman \cite{goldman90a} proved that $\mathcal G_g$ is homeomorphic to $\re^{16g-16}$.  For augmented Goldman space, which we define below, we will need the case of noncompact $S$ as described above.

It will be useful for us to allow the case in which $S=\sqcup_{i=1}^n S_i$ has finitely many connected components.  In this case, define $\mathcal G_S$ as the product of the $\mathcal G_{S_i}$ with the product topology.

\begin{rem}
The topology we consider is related to the Chabauty topology consider by Harvey \cite{harvey77}. See also Wolpert \cite{wolpert08}.   Harvey's work is concerned solely with limits of Fuchsian groups under the Chabauty topology.  In particular, the image of the developing map for a Fuchsian group is always the hyperbolic plane, while our analogous domains $\Omega_j$ can and do vary.  Moreover, for noncompact convex $\rp^2$ surfaces which natural appear as limits in our case (regular convex $\rp^2$ surfaces), the holonomy representations $\Gamma_j$ do not determine the geometry.  Distinct pairs of convex $\rp^2$ surfaces with a isomorphic holonomy  naturally arise: consider a surface with an end, put hyperbolic holonomy on the end, and vary the bulge parameter from $-\infty$ to $+\infty$.
(For compact convex $\rp^2$ surfaces, a rigidity theorem for the holonomy spectrum holds \cite{cooper-delp10,inkang-kim01,inkang-kim10}.)  Our definition is in a sense a little less general than Harvey's, as we specify the loops along which the degeneration occurs.

Our topology is also analogous to the geometric topology on hyperbolic manifolds (see e.g.\ \cite{benedetti-petronio92}), in which sequences of pairs of points whose hyperbolic distance diverges to infinity cannot reside in the same geometric limit space.  Although our definitions are phrased differently, we do see below in Proposition \ref{separate-O} that a similar property holds with respect to the projectively-invariant Blaschke metrics on $\Omega$.
\end{rem}

\subsection{Separated necks and the pulling map} \label{sep-neck-subsection}
Let $S$ be a connected oriented surface of finite hyperbolic type.
Define $C(S)$ to be the set whose elements consist of sets of free homotopy classes of simple loops on $S$ so that each loop is nonperipheral and no two loops intersect ($C(S)$ may be identified with the set of simplices of the complex of curves on $S$). Let $c\in C(S)$.  Denote the connected components of $S- c$ by $S_1,\dots,S_n$.  Note that each surface $S_i$ admits a finite-area hyperbolic metric.  (In the notation below, we will not be careful to distinguish between $c\in C(S)$ as a collection of nonintersecting loops in $S$ as opposed to a collection of homotopy classes.)

If  $c\in C(S)$, we define the \emph{pulling map}
$$ {\rm Pull}_c\!: \mathcal G_S \to \mathcal G_{S-c}$$
as follows.  Let $S-c = \sqcup_{i=1}^n S_i$.  Then for $X\in \mathcal G_S$, take a representative $(\Omega,\Gamma)$ in the equivalence relation for $X$.  Then represent ${\rm Pull}_c(X)$ by the ordered $n$-tuple with $i^{\rm th}$ element represented by $(\Omega,\Gamma^i)$, where $\Gamma^i=\Gamma|_{S_i}$ is a sub-representation of $\Gamma$ corresponding to $\pi_1(S_i,x_i)$ for a basepoint $x_i$. (Recall Lemma \ref{indep-basepoint} shows the marked convex $\rp^2$ structure is unchanged if the choice of basepoint changes.) To be precise, for the subsurface $S_i\subset S$, the fundamental group of $S_i$ is naturally a conjugacy class of subgroups of $\pi_1(S,x_i)$. We choose $\Gamma^i$ to be the composition of the injection $\pi_1(S_i,x_i)\to \pi_1(S,x_0)$ with $\Gamma$. The element of $\mathcal G_S$ is independent of the conjugacy class though: Let $\eta=\Gamma(\gamma)$ for $\gamma\in\pi_1(S,x_i)$, and consider $(\Omega,\Gamma^i)\sim\eta(\Omega,\Gamma^i) = (\Omega,\eta\Gamma^i\eta^{-1})$.  Thus choosing a particular sub-representation in the conjugacy class to identify as $\Gamma^i$ is harmless.

Note for each $\rp^2$ surface $X_i=\Gamma^i\backslash \Omega$, the developing map still has image equal to all of $\Omega$, while the group of deck transformations $\Gamma^i(\pi_1S_i)$ is smaller than $\Gamma(\pi_1S)$.  In terms of Goldman's attaching map, the domain attached across the principal geodesic still remains attached under the pulling map.  This map is called \emph{pulling} in part because it not simply \emph{cutting} along the principal geodesic.  Instead, one can imagine a viscous liquid being pulled apart, and the material on either side of the principal geodesic remains attached to the other side after the neck is pulled.

Now consider a collection $\{S_i,e_{i1},\dots,e_{ij_i}\}$, where each $S_i$ is an oriented surface with genus $g_i$ and $j_i$ ends with $2g_i+j_i\ge3$ (so that $S_i$ admits a complete hyperbolic metric). Assume the number of ends is even, and partition the set of ends $\{e_{11},\dots, e_{nj_n}\}$ into pairs.  We will study the situations in which these pairs of ends are in the closure of the image of the pulling map.
A pairs of ends in $\{e_{ij}, e_{k\ell}\}$ is said to be a \emph{trivial separated neck} if it is (locally) isomorphic as an $\rp^2$ surface to a neck which is the image of the pulling map across a loop $\ell$.  A pair of ends forms a \emph{regular separated neck}  if it is one of these three cases:
\begin{itemize}
\item The holonomy around each end is parabolic.
\item The holonomy around each end is quasi-hyperbolic, and the oriented holonomies around each end are, up to conjugation in $\Sl3$, inverses of each other.
\item The holonomy around each end is hyperbolic; the oriented holonomies around each end are, up to conjugation, inverses of each other; and the $\rp^2$ structure about one of the two ends has bulge parameter $+\infty$, while the other end has bulge parameter $-\infty$.
\end{itemize}
A simple end of a convex $\rp^2$ surface is \emph{regular} if it forms half of a regular separated neck.

\begin{thm} \label{geom-limits-regular}
Let $S$ be a compact surface, and let $c\in C(S)$.  Then under the topology defined above, the closure of the image ${\rm Pull}_c(\mathcal G_S)$ consists of convex $\rp^2$ structures for which the neck across each loop in $c$ is either a regular or a trivial separated neck.
\end{thm}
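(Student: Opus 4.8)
The plan is to prove that $\overline{\mathrm{Pull}_c(\mathcal G_S)}$ (closure taken inside $\mathcal G_{S-c}$) is exactly the set of structures whose neck across each $\ell\in c$ is regular or trivial, with the substantive direction being the inclusion "$\subseteq$''. Since $\mathcal G_{S-c}$ is first countable (Lemma \ref{first-countable-quotient}), closure is sequential closure, so it suffices to treat a convergent sequence $Y_j=\mathrm{Pull}_c(X_j)\to Y_\infty$, and since the loops of $c$ are disjoint, the neck across each $\ell\in c$ can be analyzed one at a time: fix $\ell$, and let $e_a\subset S_a$, $e_b\subset S_b$ be the two ends of $S-c$ adjacent to $\ell$ (possibly $S_a=S_b$). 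Applying Lemma \ref{orbit-sequence} in each of the finitely many factors $\mathcal G_{S_i}$, I would lift to convergent representatives $(\Omega_j^a,\Gamma_j^a)\to(\Omega_\infty^a,\Gamma_\infty^a)$ and $(\Omega_j^b,\Gamma_j^b)\to(\Omega_\infty^b,\Gamma_\infty^b)$; here $\Omega_\infty^a,\Omega_\infty^b$ are properly convex and $\Gamma_\infty^a,\Gamma_\infty^b$ are discrete, faithful, and act freely and properly, because $Y_\infty\in\mathcal G_{S-c}$ by definition of the closure. Each $X_j$ is a closed convex $\rp^2$ surface, so $\Gamma_j^a(\ell)$ is hyperbolic, conjugate to $D(\lambda_j,\mu_j,\nu_j)$ with $\lambda_j>\mu_j>\nu_j>0$, $\lambda_j\mu_j\nu_j=1$; as it converges, the eigenvalues converge to $\lambda\ge\mu\ge\nu>0$ with $\lambda\mu\nu=1$, while $\Gamma_j^b(\ell)$ has the reciprocal eigenvalues, since $e_a$ and $e_b$ arise from $\ell$ with opposite boundary orientation.

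The limits split into exactly three cases: (H) $\lambda>\mu>\nu$; (P) $\lambda=\mu=\nu=1$; and (Q) exactly one coincidence, forcing $(\lambda,\lambda,\lambda^{-2})$ or $(\lambda,\lambda^{-1/2},\lambda^{-1/2})$ with $\lambda>1$. In case (H) the limiting end holonomy $\Gamma_\infty^a(\ell)$ is still hyperbolic, hence regular semisimple, so after conjugating the limit by a fixed projective transformation and absorbing the converging diagonalizing conjugations, I can arrange a projective frame in which $\Gamma_j^a(\ell)=D(\lambda_j,\mu_j,\nu_j)$ while $\Omega_j^a\to\Omega_\infty^a$; then Goldman's normal form identifies the neck of $Y_j$ with a principal collar inside $\bar T$ recorded by a twist parameter and a bulge parameter $\sigma_j$ (the twist is irrelevant to the neck type), and $\sigma_j$ converges to the bulge of $\Omega_\infty^a$, a value in $\re\cup\{\pm\infty\}$. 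If this limit is finite, the limiting neck is a nondegenerate glued principal collar, hence locally isomorphic to the image of the pulling map — a trivial separated neck. If the limit is $+\infty$ (resp.\ $-\infty$), then the $e_a$ end acquires bulge $+\infty$ and the $e_b$ end bulge $-\infty$ (resp.\ the reverse): the two bulges cannot both tend to $+\infty$ because they are governed by a single neck bulge with opposite reference framings on the two sides; and the limiting oriented holonomies are $D(\lambda,\mu,\nu)$ and its conjugate inverse, so this is a regular separated neck of the hyperbolic type.

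Cases (P) and (Q) are where the limiting end holonomy has a repeated eigenvalue and I must show that it is \emph{not} diagonalizable — parabolic (a single $3\times3$ unipotent Jordan block) in case (P), quasi-hyperbolic (a nontrivial $2\times2$ Jordan block on the repeated eigenvalue) in case (Q) — since these nondiagonalizable forms are exactly what the definition of a regular separated neck demands. Here I would use that $\Gamma_\infty^a(\ell)$ is a nontrivial element of $\Gamma_\infty^a$ (the loop $\ell$ is nonperipheral in $S$, hence a nontrivial end loop in $S_a$, and $\Gamma_\infty^a$ is faithful) acting freely on the properly convex domain $\Omega_\infty^a$: a diagonalizable element with such eigenvalues, or the rank-one unipotent in case (P), fixes a projective line pointwise, so $\Gamma_\infty^a(\ell)$ would either fix an interior point of $\Omega_\infty^a$, contradicting freeness, or be disjoint from the fixed line and act on that affine chart as a nontrivial scaling (resp.\ translation) preserving the properly convex set $\Omega_\infty^a$; the classification of ends of finite-type properly convex $\rp^2$ surfaces (Marquis \cite{marquis10}), or a direct argument using proper convexity and the geometry of the principal triangle, rules this out. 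Having established that $\Gamma_\infty^a(\ell)$ and, symmetrically, $\Gamma_\infty^b(\ell)$ are quasi-hyperbolic (resp.\ parabolic), and noting that for each of these eigenvalue patterns there is a unique nondiagonalizable conjugacy class, the relation $\Gamma_j^b(\ell)\sim\Gamma_j^a(\ell)^{-1}$ valid for all $j$ passes to the limit to give that $\Gamma_\infty^b(\ell)$ is conjugate to $\Gamma_\infty^a(\ell)^{-1}$; this yields the second (resp.\ first) case in the definition of a regular separated neck.

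For the reverse inclusion, a structure on $S-c$ all of whose necks are trivial lies in $\mathrm{Pull}_c(\mathcal G_S)$ itself: glue the pieces along those necks using Goldman's construction (the Corollary to Theorem \ref{glue-rp2}, since trivial necks have hyperbolic conjugate-inverse holonomy) to obtain a closed $X\in\mathcal G_S$ with $\mathrm{Pull}_c(X)$ the given structure. If some necks are regular, one approximates each regular neck by a trivial one — running the bulge to a large finite value for the hyperbolic type, and splitting the repeated eigenvalue (with appropriately tuned bulge) for the quasi-hyperbolic and parabolic types — glues to get $X_j\in\mathcal G_S$, and checks $\mathrm{Pull}_c(X_j)$ converges to the given structure; the needed continuity is elementary once the developing maps are seen to converge and also follows from the analysis of the later sections and \cite{loftin02c}. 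I expect the main obstacle to be the case (P)/(Q) analysis, specifically giving a rigorous proof that the limiting end holonomies cannot be diagonalizable with a repeated eigenvalue (or, in case (P), a rank-one unipotent): this is the one step that requires genuine convex projective geometry rather than eigenvalue bookkeeping, and it is tied up with the fact that in these degenerations the domains $\Omega_j$ fail to converge in the naive frame, so one must control an unbounded family of normalizing projective transformations and the resulting Hausdorff limits of the $\Omega_j$ near the pinching geodesic — the place to invoke proper convexity of the limit together with Benzécri-type compactness and Proposition \ref{separate-O}.
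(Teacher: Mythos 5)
Your outline follows the paper's own route quite closely: sequential closure via Lemmas \ref{first-countable-quotient} and \ref{orbit-sequence}, hyperbolicity of the neck holonomy for the closed surfaces $X_j$, positivity of the limiting eigenvalues, an appeal to the classification of end holonomies to force maximal Jordan blocks in the degenerate cases, and Goldman's Theorem \ref{glue-rp2} when the gluing data converges. The genuine gap is in your case (H), which is where the paper does its real work. When the bulge parameters diverge you conclude that the $e_a$ end acquires bulge $+\infty$ and the $e_b$ end bulge $-\infty$ ``because they are governed by a single neck bulge with opposite reference framings.'' That is the assertion to be proved, not an argument: the bulge of a limit end is a property of the Hausdorff limit domains (namely $\Omega_\infty^a\supset T$ on one side and $\Omega_\infty^b\cap \bar T=\emptyset$ on the other), and tying it to the divergence of the gluing parameter requires controlling the unbounded normalizations. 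The paper does this by first conjugating so the holonomies are exactly diagonal (eigenvector matrices $\phi_k\to I$), so the transition matrices are diagonal and factor into Dehn-twist and bulge parts; then, for a point $q$ of one limit domain lying in the principal triangle, the diverging bulge pushes its images to $[0,1,0]$ from within $T$, and since the principal geodesic $\tilde\ell$ lies in the boundary of the other limit domain, convexity forces that domain to contain all of $T$, while \emph{proper} convexity forbids it from also meeting $\bar T$ --- which is exactly what produces opposite signs on the two sides and rules out, e.g., a limit domain containing only part of $T$ with divergent gluing data. Without this (or an equivalent) your case (H) is incomplete. Note also that you locate the unbounded-normalization difficulty, Benz\'ecri compactness and Proposition \ref{separate-O} in cases (P)/(Q); in fact the paper needs none of those for this theorem, and the delicate normalization occurs in the hyperbolic case.

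A second, smaller problem: your proposed ``direct argument'' for nondiagonalizability in case (Q) would fail. A diagonalizable element with a repeated eigenvalue, say $D(\lambda,\lambda,\lambda^{-2})$ with $\lambda>1$, \emph{does} act freely and properly discontinuously on a properly convex domain: on the open principal triangle it acts by $[x,y,z]\mapsto[x,y,\lambda^{-3}z]$, with no interior fixed point and annulus quotient. So freeness plus proper convexity of the domain does not exclude this case; what excludes it is that the element is the holonomy of an end of a properly convex surface of finite hyperbolic type, i.e.\ the classification of such ends, which is how the paper handles it (citing Choi \cite{choi94b} and \cite{loftin04}). Your citation fallback is therefore the right move, and your direct argument does work for the partially unipotent subcase of (P) (an invariant properly convex domain would contain the convex hull of a translation orbit, hence an affine line), but not for the repeated-eigenvalue diagonalizable case. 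Finally, the paper's proof of this theorem only establishes the forward inclusion (limits have regular or trivial necks); the reverse inclusion you sketch is obtained from the constructions of the later sections, so your last paragraph is a reasonable pointer but is not needed here and is not carried out at the level of proof.
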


\begin{proof}
Since $S$ is compact, the holonomy around each loop  $\ell\in c$ is hyperbolic.

Recall that a hyperbolic element in $\Sl3$ has three distinct positive eigenvalues.  Any nonhyperbolic limit $A$ of such holonomies must still have all positive eigenvalues.  Moreover, it must have maximal Jordan blocks (the other cases are ruled out by Choi \cite{choi94b} and the author \cite{loftin04}; see also \cite{nie15}).  These nonhyperbolic limits are exactly the quasi-hyperbolic and parabolic cases, which are regular.  For the quasi-hyperbolic case, the inverse property follows from the fact that the holonomy around the two ends of a neck are inverses of each other (since these loops are freely homotopic in $S$ with opposite orientations).

Now we consider the cases of hyperbolic limits, and show that any limits which are not trivial must have infinite bulge parameter.  In order to do this, consider a sequence $X_i\in \mathcal G_S$.

There are two cases to consider.  First of all, assume that $S-\ell = S_1\sqcup S_2$ is disconnected.  Then the hypothesis shows that there are sequences $(\Omega_k,\Gamma_k)$ and $\rho_k,\sigma_k\in \Sl3$ so that $\rho_k(\Omega_k,\Gamma_k|_{S_1}) \to (\mathcal O, H)$ and $\sigma_k(\Omega_k,\Gamma_k|_{S_2})\to (\mathcal U, G)$. The quotient $H\backslash \mathcal O$ gives the $\rp^2$ structure on $S_1$, while $G\backslash \mathcal U$ gives the structure on $S_2$.  Now pick a based loop $\ell_0$ in $S$ which is freely homotopic to $\ell$, and let $\gamma_k$ be the corresponding element  $\Gamma_k(\ell_0)$. Assume $\ell$ is oriented in the same direction as the boundary of $S_1$.  This implies $\ell$ is oriented in the opposite direction to the boundary of $S_2$.  Let $\gamma_{\mathcal O}$ and $\gamma_{\mathcal U}$ be the limits of  $\rho_k\gamma_k \rho_k^{-1}$ and $\sigma_k\gamma_k\sigma_k^{-1}$ respectively.  We may choose coordinates (and modify $\rho_k$ and $\sigma_k$) so that the limiting hyperbolic holonomies around $\ell$ satisfy $$\gamma_{\mathcal O} = D(\lambda,\mu,\nu), \quad \gamma_{\mathcal U} = \gamma_{\mathcal O}^{-1},\quad \lambda>\mu>\nu>0,$$
where $D$ represents the diagonal matrix.
In other words, for $\gamma_{\mathcal O}$, the principal geodesic is the line segment from $[1,0,0]$ to $[0,0,1]$ with nonnegative entries.  Denote this principal geodesic by $\tilde\ell$.

We also can make a further normalization to assume that
\begin{equation}\label{conjug-diagonal}
 \sigma_k \gamma_k \sigma_k^{-1} = \rho_k \gamma_k \rho_k^{-1} = D(\lambda_k,\mu_k,\nu_k).
\end{equation}
Proof: The eigenvalues $\lambda_k,\mu_k,\nu_k$ of $\sigma_k\gamma_k\sigma_k^{-1}$ approach $\lambda,\mu,\nu$. For $k$ large enough, $\lambda_k,\mu_k,\nu_k$ are uniformly bounded, positive and separated from each other.  We may choose a matrix $\phi_k$ of eigenvectors of $\sigma_k \gamma_k \sigma_k^{-1}$ which approaches the identity matrix as $k\to\infty$ (for example, we may choose eigenvectors of unit length; note the identity matrix is a matrix of unit eigenvectors of the limit $\gamma_{\mathcal O}$ of $\sigma_k\gamma_k\sigma_k^{-1}$).  Then $(\sigma_k\Omega_k,\sigma_k\Gamma_k\sigma_k^{-1}) \to (\mathcal O,H)$ if and only if $\phi_k^{-1}(\sigma_k\Omega_k,\sigma_k\Gamma_k\sigma_k^{-1}) \to (\mathcal O,H)$.  Note our construction implies
$$\phi_k^{-1}\sigma_k\gamma_k\sigma_k^{-1}\phi_k = D(\lambda_k,\mu_k,\nu_k).$$
Thus we may replace $\sigma_k$ by $\phi_k^{-1}\sigma_k$, and we may assume (\ref{conjug-diagonal}).

Now Equation (\ref{conjug-diagonal}) implies the diagonal matrix $D(\lambda_k,\mu_k,\nu_k)$ commutes with $\sigma_k\rho_k^{-1}$, and so $\sigma_k\rho_k^{-1}$ is diagonal as well. Define $\alpha_k=\sigma_k \rho_k^{-1}$.  Thus we write
$\alpha_k = D(\lambda_k^{t_k},\mu_k^{t_k},\nu_k^{t_k})\cdot D(e^{-s_k},e^{2s_k},e^{-s_k})$ as a product of holonomy and bulge matrices. Since the neck is being separated, Dehn twists do not affect the geometry of $S-\ell$.  Now if $\alpha_k=\sigma_k\rho_k^{-1}$ has a subsequential finite limit $\alpha$ modulo Dehn twists, we have $\mathcal O = \lim_{j\to\infty} \alpha_{k_j} \rho_{k_j}(\Omega_{k_j})= \alpha \mathcal U$. Moreover, if we let $\hat\alpha_k$ be the matrix  given by the product of $\alpha_k$ by an integral power of a Dehn twist so that $$\hat\alpha_k = D(\lambda_k^{\hat t_k},\mu_k^{\hat t_k},\nu_k^{\hat t_k})\cdot D(e^{-s_k},e^{2s_k},e^{-s_k})$$ for $\hat t_k\in[0,1)$, then $\sigma_{k_j}\Gamma_{k_j}\sigma_{k_j}^{-1} = \hat\alpha_{k_j}\rho_k\Gamma_{k_j}\rho_k^{-1}\hat\alpha_{k_j}^{-1}$ converges to a limit $L$.  Then we apply Goldman's Theorem \ref{glue-rp2} above to show  the neck is trivial.  Thus we may assume the $s_k$ converge to $\pm\infty$.

(Here is how to apply Theorem \ref{glue-rp2} to the present situation.  Consider $\hat S_1$ as the convex $\rp^2$ surface homeomorphic to $S_1$ formed by cutting along the principal geodesic at the end $\ell$.  This can be constructed by letting $\hat{\mathcal O}$ be the convex domain formed by cutting along $h\tilde \ell$ for all $h\in H(\pi_1S_1)$. Then $\hat S_1$ is the quotient $H\backslash \hat{\mathcal O}$.  We can similarly form $\hat S_2$ with image $\hat{\mathcal U}$ of the developing map.  Then since $\alpha^{-1}\mathcal U =\alpha \mathcal O$, the domains $\hat {\mathcal O}$ and $\alpha^{-1}\hat{\mathcal U}$ are disjoint subsets of $\mathcal O$ with common boundary segment $\tilde\ell$.  Since the group actions also match up on all of $\mathcal O$, there are principal collar neighborhoods in $\hat{\mathcal O}$ and $\alpha^{-1}\hat{\mathcal U}$ which are invariant under holonomy along $\tilde\ell$ and are projectively equivalent via the principal reflection across $\tilde \ell$.  Thus Theorem \ref{glue-rp2} applies, and we may glue $\hat S_1$ and $\hat S_2$ together via this identification to form a convex $\rp^2$ surface.  This surface must be identical to the quotient $L\backslash \mathcal O$ by analytic continuation, and therefore uniqueness, of the developing map.)

Assume without loss of generality that $s_k \to +\infty$. To show the bulge parameter must be infinite in the limit, we recall the principal triangles with principal geodesic $\tilde\ell$. Let $T$ be the open triangle in $\rp^2$ given by the projection of the first octant in $\re^3$, and let $\bar T$ be the reflection of $T$ across the principal geodesic.  Since the surface $S$ is separated along this principal geodesic, we may assume that the universal covers $\mathcal O$ and $\mathcal U$ of $S_1$ and $S_2$ respectively are in part on opposite sides of $\tilde\ell$. Without loss of generality, assume that $\mathcal O\cap \bar T\neq\emptyset$ and $\mathcal U\cap T\neq\emptyset$.

Let $q\in\mathcal U \cap T$. Then for large $k$, $q\in \rho_k\Omega_k$ and so $\alpha_k q \in \sigma_k\Omega_k$.  This shows that the limit of $\alpha_kq$ is in the closure of $\mathcal O$.  But since we know $\alpha_k$ has bulge parameter $s_k$ going to $+\infty$, $\alpha_kq\to [0,1,0]$ from within $T$. Since the principal geodesic $\tilde\ell\subset\bar{\mathcal O}$, we see by convexity that $T\subset \mathcal O$, and thus that the bulge parameter of this end of $S_1$ is $+\infty$.

On the other hand, the same argument shows that if there is a $p\in\mathcal U \cap \bar T$, then $\bar T\subset \mathcal O$. This is impossible, as then $\mathcal O \supset \bar T \cup \tilde\ell\cup T$, which contains the coordinate line with infinite point $[0,1,0]$.  This contradicts the proper convexity of $\mathcal O$. Thus $\mathcal U \cap \bar T = \emptyset$, which means the bulge parameter of this end of $S_2$ is $-\infty$.  The limit then satisfies the condition for a separated neck with hyperbolic holonomy to be regular.

For the second case, assume $S-\ell = S_1$ is connected.   In this case, we have a sequence of marked $\rp^2$ structures $(\Omega_k,\Gamma_k)$ and distinguished hyperbolic elements $\gamma_k,\delta_k\in\Gamma_k(\pi_1S)$, together with attaching maps $T_k\in {\rm Aut}\,\Omega_k$ so that $T_k\gamma_k T_k^{-1} = \delta_k^{-1}$ (see for example Harvey \cite{harvey77}).  The hyperbolic elements $\gamma_k$ and $\delta_k$ both represent the holonomy (with opposite orientations) of the neck to be separated. We assume that $(\Omega_k,\Gamma_k|_{S_1}) \to (\mathcal O,H)$.  (In this case, since there is a single limit domain, we absorb the $\rho_k\in\Sl3$ into the definitions of $\Omega_k$ and $\Gamma_k$.  In the present case, $T_k$ will diverge instead of $\rho_k$.) See Figure \ref{attach-nonsep-figure}.

\begin{figure}
\begin{center}
\scalebox{.3}{\includegraphics{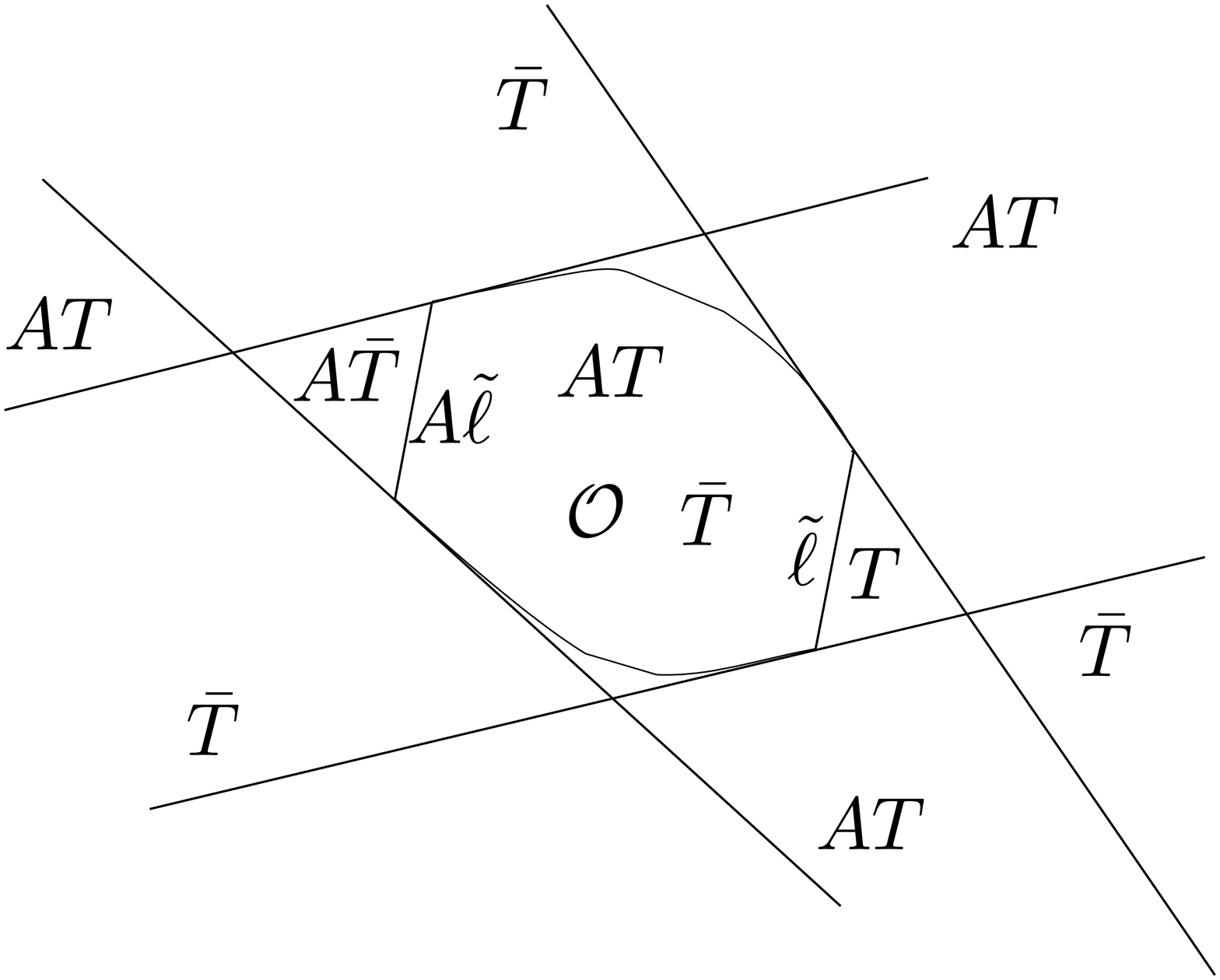}}
\end{center}
\caption{}
\label{attach-nonsep-figure}
\end{figure}

If $\gamma=\lim\gamma_k$ is parabolic or quasi-hyperbolic, then the holonomy is regular, and the holonomy type of $\delta_k$ is the inverse of that of $\gamma_k$.

If on the other hand, $\gamma$ is hyperbolic, we proceed as above.  Choose coordinates so that $\gamma = D(\lambda,\mu,\nu)$.  By the same arguments as above, we may slightly modify the $\Omega_k$ for $k$ large so that $\gamma_k = D(\lambda_k,\mu_k,\nu_k)$ as well. Let $\delta=\lim\delta_k$ and fix $A\in\Sl3$ so that $A\gamma A^{-1} = \delta^{-1}$.  Since $\delta_k\to\delta$, we find
$$ A^{-1}T_k\cdot D(\lambda_k,\mu_k,\nu_k) \cdot T_k^{-1} A \to D(\lambda,\mu,\nu).
$$
As above, there is a matrix $\phi_k$ of eigenvectors of $A^{-1}T_k \cdot D(\lambda_k,\mu_k,\nu_k) \cdot  T_k^{-1} A$ so that $\phi_k\to I$ and $$ \phi_k^{-1} A^{-1}T_k \cdot D(\lambda_k,\mu_k,\nu_k) \cdot  T_k^{-1} A \phi_k =  D(\lambda_k,\mu_k,\nu_k) .
$$
Thus, as above, $T_k^{-1}A\phi_k$ is diagonal, and may be written as the product of Dehn twist and bulge matrices $D(\lambda_k^{t_k},\mu_k^{t_k},\nu_k^{t_k}) \cdot D(e^{-s_k}, e^{2s_k}, e^{-s_k})$.  If $s_k$ has a finite limit, then we can show, as in the case above in which $S-\ell$ is not connected, that the separated neck is trivial. (Theorem \ref{glue-rp2} applies in a similar way.) The remaining cases to analyze are $s_k\to\pm\infty$.  Assume without loss of generality that $s_k\to+\infty$.
Recall the definitions of the principal geodesic $\tilde\ell$ and principal triangles $T,\bar T$ as above. Let $\tilde\ell$ be the line segment from $[1,0,0]$ to $[0,0,1]$ with nonnegative coordinate entries, and let $T$ be the open triangle with vertices $[1,0,0]$, $[0,1,0]$, and $[0,0,1]$ all of whose coordinates are nonnegative.  Finally, let $\bar T$ be the reflection of $T$ across $\tilde\ell$.  By the limiting attaching map $A$, we may assume that $\mathcal O \cap \bar T$ and $\mathcal O \cap AT$ are not empty.  For $q\in \mathcal O \cap \bar T$, we see that for large $k$, $q\in \Omega_k\cap \bar T$. Then $\phi_k A^{-1} T_k q\to [0,1,0]$ from within $\bar T$, which shows $T_k q \to A[0,1,0]$ from within $A\bar T$. Since $\bar{\mathcal O}\supset A\tilde\ell$, we see by convexity that $\mathcal O \supset A\bar T$, and so the bulge parameter of this end is $+\infty$.

On the other hand, if there is a $p\in \mathcal O\cap T$, then the same argument shows $\mathcal O\supset
AT$ as well.  This contradicts the proper convexity of $\mathcal O$, and so we see that $\mathcal O \cap T = \emptyset$, and thus the bulge parameter of this end is $-\infty$. This picture satisfies the definition of a regular separated neck.
\end{proof}

\subsection{Augmented Goldman space}

In order to introduce the augmented Goldman space, as a warmup, we first form a bordification of $\mathcal G_S$ by attaching singular $\rp^2$ structures which degenerate only along a single simple closed nonperipheral curve $\ell$.
Define $\mathcal G^\ell_S$ as the set of all properly convex $\rp^2$ structures on $S-\ell$ which form regular separated necks across $\ell$. We produce a topology on the bordification $$\mathcal G_S \sqcup \mathcal G_S^\ell.$$
First of all, if $X\in \mathcal G_S$, then we declare all open neighborhoods in $\mathcal G_S$ to form a neighborhood basis in the bordification.

Now let $X\in\mathcal G^\ell_S$.
First of all, consider open sets among $\rp^2$ structures on $S-\ell$ which form separated necks across $\ell$. Each such open set $\mathcal O\subset \mathcal G_{S-\ell}$ contains both regular and trivial necks across $\ell$.  Now we construct from $\mathcal O$ a subset $\tilde{\mathcal O}$ of $\mathcal G_S\sqcup \mathcal G_S^\ell$.  Let $\mathcal O_{\rm reg}$ consist of the $\rp^2$ structures with regular necks across $\ell$, and let $\mathcal O_{\rm triv}= \mathcal O - \mathcal O_{\rm reg}$ consist of the $\rp^2$ structures with trivial necks across $\ell$. Then define
$$\tilde{\mathcal O} = \mathcal O_{\rm reg} \sqcup {\rm Pull}_{S,\ell}^{-1}\mathcal O_{\rm triv}.$$
In other words, for each $\rp^2$ structure with a trivial neck in a neighborhood of $X$, we attach the neck by taking the inverse image of the pulling map.  All such $\tilde{\mathcal O}$ form a neighborhood basis for the topology of augmented Goldman space near $X$.   Note that this topology on $\mathcal G_S \sqcup \mathcal G^\ell_S$ is not locally compact, since the pulling map is unchanged under each Dehn twist around $\ell$.  But, by the arguments above in Subsection \ref{mark-subsec}, we may choose a countable collection of such neighborhoods, and so the topology is first countable.

If $c\in C(S)$, define $\mathcal G^c_S$ to be the set of all properly convex $\rp^2$ structures on $S-c$ with regular necks across each loop in $c$.  (If $c=\emptyset$, $\mathcal G^\emptyset_S=\mathcal G_S$.) As a set, augmented Goldman space
$$\mathcal G^{\rm aug}_S = \bigsqcup_{c\in C(S)} \mathcal G^c_S.$$
If $X\in \mathcal G^{\rm aug}_S$, then there is a unique $c\in C(S)$ so that $X\in\mathcal G^c_S$, and thus $X$ has a regular separated neck across each loop in $c$. As we deform $X$, some of these necks may remain separated, while others may be glued together.  As above, let $\mathcal O$ be a neighborhood of $X$ in the subset of $\mathcal G_{S- c}$ consisting of those $\rp^2$ structures which have separated necks across each loop in $c$.  Each $Y\in\mathcal O$ has either trivial or regular separated necks across each loop in $c$. Then $$\mathcal O = \bigsqcup_{d\subset c} \mathcal O_{{\rm triv},d},$$ where $Y\in\mathcal O_{{\rm triv},d}$ if and only $d$ is the set of loops across which the separated neck is trivial in $Y$ (thus the necks across the loops in $c- d$ are the regular separated necks).  Now define $$\mathcal {\tilde O} = \bigsqcup_{d\subset c} {\rm Pull}^{-1}_d \mathcal O_{{\rm triv},d},$$ where ${\rm Pull}_\emptyset$ is the identity map.  The set of such $\tilde{\mathcal O}$ forms a neighborhood basis for the topology of $\mathcal G^{\rm aug}_S$ around $X$.

\begin{rem}
It is instructive to compare the construction of $\mathcal G^{\rm aug}_S$ to the construction of augmented Teichm\"uller space; see e.g. \cite{abikoff80}.  Given a free simple loop $c$ in a surface closed $S$ of genus at least two, we may take the hyperbolic length parameter around $c$ to be zero.  Then no neighborhood of this point in the augmented Teichm\"uller space has compact closure, as the associated twist parameters around $c$  take all real values in the neighborhood.

As we must keep track of the developing map of a surface pulled across a loop $c$, each point in $\mathcal G_{S-c}$ \emph{a priori} has a neighborhood in $\mathcal G^{\rm aug}_S$ which contains all integral powers of Dehn twists along $c$.  This shows $\mathcal G^{\rm aug}_S$ is not locally compact.  We can say more, however. For the regular cases, which are of primary interest,  one may check that each neighborhood of an $\rp^2$ structure all of whose separated necks are regular includes $\rp^2$ structures on the glued necks which represent all real values of the twist parameters.

We have defined augmented Goldman space essentially in terms of the dev-hol pair $(\Omega,\Gamma)$ of convex $\rp^2$ structures, as opposed to the Fenchel-Nielsen parameters commonly used in study of augmented Teichm\"uller space.  It should be interesting to try to use Goldman's analog of Fenchel-Nielsen parameters on convex $\rp^2$ structures \cite{goldman90a} to put coordinates on augmented Goldman space.  Goldman's parameters have been extended by Marquis to the cases of parabolic and quasi-hyperbolic holonomy \cite{marquis10}.
\end{rem}

\subsection{Augmented moduli space}

Our main space of interest is in the quotient of augmented Goldman space by the mapping class group, which we call the augmented moduli space of convex $\rp^2$ structures.  Recall the mapping class group is the group of orientation-preserving homeomorphism modulo diffeomorphisms isotopic to the identity $MCG(S)={\rm Diff}^+(S)/ {\rm Diff}^0(S)$.

Consider a diffeomorphism $\phi$ of $S$.  If $x_0\in S$ is a basepoint, then $\phi$ induces a map $\phi_*\!: \pi_1(S,x_0) \to \pi_1(S,\phi(x_0))$.  We fix a holonomy representation $\Gamma\!: \pi_1(S,x_0)\to\Sl3$.  We assume that for the image of the developing map $\Omega$, that the quotient $\Gamma\backslash \Omega$ is diffeomorphic to $S$.

\begin{prop}
$MCG(S)$ acts on $\mathcal G_S$ by homeomorphisms.
\end{prop}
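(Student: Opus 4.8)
The plan is to show that the mapping class group acts on $\mathcal G_S$ in two stages: first define the action on the underlying set $\{(\Omega,\Gamma)\}/\!\sim$ and check it is well-defined, then verify continuity (and, applied to $\phi$ and $\phi^{-1}$, that it is a homeomorphism). Given $\phi\in \mathrm{Diff}^+(S)$ and a representative $(\Omega,\Gamma)$ with $\Gamma\!:\pi_1(S,x_0)\to\Sl3$ a discrete faithful representation such that $\Gamma\backslash\Omega\cong S$, I would define $\phi\cdot(\Omega,\Gamma) = (\Omega, \Gamma\circ\phi_*^{-1})$, where $\phi_*\!:\pi_1(S,x_0)\to\pi_1(S,\phi(x_0))$ and the change of basepoint is absorbed using Lemma \ref{indep-basepoint}. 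Geometrically this is just precomposing the convex $\rp^2$ structure with the diffeomorphism $\phi$: the developing image $\Omega$ is unchanged, while the holonomy gets reindexed by $\phi_*$. One checks directly that $(\phi\psi)\cdot(\Omega,\Gamma) = \phi\cdot(\psi\cdot(\Omega,\Gamma))$ from $(\phi\psi)_* = \phi_*\psi_*$, so we get a genuine left action of $\mathrm{Diff}^+(S)$.

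Next I would check the action descends to $MCG(S) = \mathrm{Diff}^+(S)/\mathrm{Diff}^0(S)$ and is well-defined on equivalence classes. If $\phi\in\mathrm{Diff}^0(S)$ is isotopic to the identity, then $\phi_*$ is the identity on $\pi_1(S,x_0)$ up to an inner automorphism coming from the basepoint track of the isotopy; conjugating $\Gamma$ by the corresponding element $\Gamma(\gamma)\in\Sl3$ (which acts on $\Omega$) shows $(\Omega,\Gamma\circ\phi_*^{-1})\sim(\Omega,\Gamma)$, exactly as in the proof of Lemma \ref{indep-basepoint}. Compatibility with $\sim$ on the source, i.e. that $\phi\cdot(A\Omega, A\Gamma A^{-1}) \sim \phi\cdot(\Omega,\Gamma)$, is immediate since $\phi_*$ commutes with conjugation by $A\in\Sl3$. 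Thus we obtain a well-defined action $MCG(S)\times\mathcal G_S\to\mathcal G_S$.

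Finally, continuity. For a fixed $\phi$, the map $(\Omega,\Gamma)\mapsto(\Omega,\Gamma\circ\phi_*^{-1})$ on the space of all pairs (before quotienting by $\Sl3$) is continuous for the Hausdorff-times-product topology described in Subsection \ref{mark-subsec}: the $\Omega$-coordinate is untouched, and since $\phi_*$ is a fixed automorphism of the finitely generated group $\pi_1(S,x_0)$, each new holonomy generator is a fixed word in the old generators, hence a continuous (indeed polynomial) function of them into $\Sl3$. This map is also $\Sl3$-equivariant, so by the universal property of the quotient topology it induces a continuous self-map of $\mathcal G_S$; I would invoke Lemma \ref{first-countable-quotient} to know $\mathcal G_S$ is first countable, so continuity could alternatively be verified on sequences via Lemma \ref{orbit-sequence}. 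Applying the same argument to $\phi^{-1}$ gives the inverse, continuous map, so $\phi$ acts by a homeomorphism. I do not expect a serious obstacle here; the one point requiring care is the basepoint bookkeeping when $\phi(x_0)\neq x_0$ and when passing to $MCG(S)$, which is handled exactly by the argument of Lemma \ref{indep-basepoint}, and the fact that continuity of the holonomy map only uses that $\phi_*$ sends generators to fixed words.
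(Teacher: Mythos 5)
Your proposal is correct and follows essentially the same route as the paper: define the action via the induced isomorphism of fundamental groups with the basepoint/path ambiguity absorbed by Lemma \ref{indep-basepoint}, note compatibility with (commutation with) the $\Sl3$ action, show isotopies act trivially, and deduce continuity from the definition of the topology. Your only addition is spelling out the continuity step (equivariance plus the quotient topology), which the paper simply declares clear.
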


\begin{proof}
Let $[(\Omega,\Gamma)]\in \mathcal G_S$, where $[\cdot]$ denotes the equivalence class under the $\Sl3$ action.
In order to consider the action of the diffeomorphism $\phi$ on $\Gamma$, each homotopy class of paths in $S$ from $x_0$ to $\phi(x_0)$ determines an isomorphism from $\pi_1(S,x_0)$ to $\pi_1(S,\phi(x_0))$.  As in Lemma \ref{indep-basepoint} above, different choices of paths lead to representations equivalent under the $\Sl3$ action.  This shows that the action of $\phi$ on $(\Omega,\Gamma)$ produces an equivalence class in $\mathcal G_S$.  Moreover, the actions of the diffeomorphism group and $\Sl3$ commute with each other, and so  the diffeomorphism group acts on $\mathcal G_S$.

All that remains is to show the diffeomorphisms isotopic to the identity act trivially.  The argument in the previous paragraph shows that we may assume such a diffeomorphism preserves the basepoint $x_0$.  In this case, an isotopy of diffeomorphisms induces a homotopy of loops based at $x_0$, and so the elements of $\pi_1(S,x_0)$ are fixed by diffeomorphisms isotopic to the identity.

It is clear from the definition of the topology on $\mathcal G_S$ that this action is by homeomorphisms.
\end{proof}

We denote the quotient $MCG(S)\backslash \mathcal G_S$ by $\mathcal R_S$.

To extend this proposition to $\mathcal G^{\rm aug}_S$, we must extend our marking to the case of separated necks.  Each $c\in C(S)$  represents a set of separated necks, and $S-c$ has a number of connected components $S_1,\dots,S_n$.  First of all, consider the case that $S_1=S-c$ is connected.  Then the action of $\Gamma(\pi_1(S,x_0))$ is restricted on $S_1$ to include only those homotopy classes of loops in $\pi_1(S_1,x_0)$, which are exactly those homotopy classes  which have representative loops which do not intersect $c$.  In other words,
$$\Gamma|_{S^1}(\pi_1(S_1,x_0))\subset \Gamma(\pi_1(S,x_0))\subset\Sl3$$
in this case.

In the case $S-c=\sqcup_{i=1}^n S_i$ is not connected, then we consider $\pi_1(S_i,x_i)$ for basepoints $x_i\in S_i$.  First of all, we may relate $\pi_1(S,x_0)$ to $\pi_1(S,x_i)$ for each $i$ by choosing a path from $x_0$ to $x_i$.  As above in Lemma \ref{indep-basepoint}, different choices of a homotopy class of each such path lead to holonomy representations conjugate by elements of $\Sl3$.  But our definition of $\mathcal G_S^c$ allows for conjugating by one element of $\Sl3$ for each connected component $S_i$ of $S-c$, and so our argument works independently of the choice of paths.  Now we restrict only to those elements in $\pi_1(S,x_i)$ which do not intersect $c$: these are exactly the elements of $\pi_1(S_i,x_i)$.

Now the following proposition follows without much difficulty from the definitions laid out above.  The assertion about $\mathcal R^{\rm aug}_S$ being first countable follows from Lemma \ref{first-countable-quotient}.

\begin{prop}
For every $c\in C(S)$, a diffeomorphism $\phi$ of $S$ induces a homeomorphism of $\mathcal G_S^{\rm aug}$ which sends each stratum $\mathcal G_S^c$ homeomorphically onto $\mathcal G_S^{\phi(c)}$. The mapping class group acts by homeomorphisms on $\mathcal G_S^{\rm aug}$, and the quotient topology on $\mathcal R^{\rm aug}_S\equiv MCG(S)\backslash \mathcal G_S^{\rm aug}$ is first countable.
\end{prop}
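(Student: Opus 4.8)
The plan is to reduce to the closed-surface case already treated (the preceding proposition, that $MCG(S)$ acts on $\mathcal G_S$ by homeomorphisms), carrying out the argument one stratum at a time and then checking that the stratumwise maps are compatible with the pulling-map neighborhood bases, so that they assemble into a homeomorphism of $\mathcal G_S^{\rm aug}$.

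First I would fix $c \in C(S)$ and a diffeomorphism $\phi$ of $S$. Since $\phi$ is a homeomorphism, $\phi(c) \in C(S)$, and $\phi$ restricts to a homeomorphism $S - c \to S - \phi(c)$ carrying each component $S_i$ of $S - c$ onto a component of $S - \phi(c)$. Exactly as in the proof for $\mathcal G_S$, choosing paths from basepoints $x_i \in S_i$ to $\phi(x_i)$ gives isomorphisms $\pi_1(S_i, x_i) \to \pi_1(\phi(S_i), \phi(x_i))$, well defined up to the conjugations permitted in the definition of $\mathcal G_{S-c}$ by Lemma \ref{indep-basepoint}; composing a holonomy representation with these isomorphisms and leaving the developing image $\Omega$ unchanged defines a map $\mathcal G_{S-c} \to \mathcal G_{S-\phi(c)}$. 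Because the geometry of the neck across a loop $\ell \in c$ is a local projective invariant of the $\rp^2$ structure near $\ell$ and $\phi$ carries a neighborhood of $\ell$ projectively isomorphically onto a neighborhood of $\phi(\ell)$, the property of forming a regular (respectively trivial) separated neck is preserved; hence this map restricts to a bijection $\mathcal G_S^c \to \mathcal G_S^{\phi(c)}$, which is a homeomorphism for the subspace topologies by the same reasoning as in the closed case, the topology being built from the (unaffected) Hausdorff topology on the $\Omega$'s and the product topology on representations transported by the fixed isomorphisms.

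Next I would check that these stratumwise maps glue to a homeomorphism $\phi_*$ of $\mathcal G_S^{\rm aug}$. The key point is that for every $d \subset c$ the diffeomorphism $\phi$ carries $S - d$ onto $S - \phi(d)$ and intertwines the restrictions of holonomy to subsurfaces, so $\phi_* \circ {\rm Pull}_d = {\rm Pull}_{\phi(d)} \circ \phi_*$ on the relevant Goldman spaces. Given $X \in \mathcal G_S^c$ and a basic neighborhood $\tilde{\mathcal O} = \bigsqcup_{d \subset c} {\rm Pull}_d^{-1}\mathcal O_{{\rm triv}, d}$, where $\mathcal O$ is a neighborhood of $X$ in the set of $\rp^2$ structures on $S-c$ with separated necks, applying $\phi_*$ and using this intertwining relation together with the fact that $\phi$ preserves the decomposition $\mathcal O = \bigsqcup_{d \subset c} \mathcal O_{{\rm triv},d}$ by neck type yields $\phi_*(\tilde{\mathcal O}) = \widetilde{\phi_*(\mathcal O)}$, a basic neighborhood of $\phi_*(X)$. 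Since the same applies to $(\phi^{-1})_*$, the map $\phi_*$ is a homeomorphism of $\mathcal G_S^{\rm aug}$ sending $\mathcal G_S^c$ onto $\mathcal G_S^{\phi(c)}$. I expect this step — continuity \emph{across} strata, i.e.\ that $\phi_*$ really does send the pulling-map neighborhood bases to neighborhood bases — to be the main obstacle, since it requires the commutation with ${\rm Pull}_d$ for all $d \subset c$ simultaneously and its compatibility with the neck-type decomposition of $\mathcal O$.

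Finally I would check that $\phi \mapsto \phi_*$ is a homomorphism (the path-choice ambiguities are absorbed stratumwise into the $\Sl3$ equivalence, as before) and that a diffeomorphism isotopic to the identity induces the identity: such a $\phi$ fixes each $c \in C(S)$ as an element of the curve complex, and the induced isomorphisms of the $\pi_1(S_i, x_i)$ are inner, hence act trivially by Lemma \ref{indep-basepoint}. This gives the action of $MCG(S)$ by homeomorphisms on $\mathcal G_S^{\rm aug}$. For the last assertion, $\mathcal G_S^{\rm aug}$ is first countable — each stratum is first countable as in Subsection \ref{mark-subsec}, and a countable subfamily of the sets $\tilde{\mathcal O}$ forms a neighborhood basis at each point — so Lemma \ref{first-countable-quotient} applies to the homeomorphism action and shows that $\mathcal R_S^{\rm aug} = MCG(S)\backslash \mathcal G_S^{\rm aug}$ is first countable.
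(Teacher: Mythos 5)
Your proposal is correct and follows essentially the same route as the paper: extend the marking to the strata via sub-representations on the components of $S-c$ (with basepoint/path ambiguities absorbed into the per-component $\Sl3$ conjugation), observe compatibility with the pulling-map neighborhood bases so that the stratumwise maps assemble into a homeomorphism of $\mathcal G_S^{\rm aug}$, and then invoke Lemma \ref{first-countable-quotient} together with the previously noted first countability of $\mathcal G_S^{\rm aug}$; the paper simply leaves these verifications to the reader. One phrasing to tighten: $\phi$ does not carry neighborhoods of $\ell$ ``projectively isomorphically'' anywhere --- rather, the action leaves the underlying pair $(\Omega,\Gamma)$, hence the unmarked $\rp^2$ structure and its local invariants at the ends, unchanged and only relabels the marking, which is why regular and trivial necks are preserved.
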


\subsection{Plumbing coordinates and the topology of regular cubic differentials} \label{plumbing-subsec}

In this section, we define the topology of the space of regular cubic differentials.  We start with a heuristic picture of the main construction.  Recall that on the regular part $\Sigma^{\rm reg}$ of a compact noded Riemann surface $\Sigma$, there is a unique complete conformal finite-area hyperbolic metric $k$.  Each hyperbolic surface can then naturally be decomposed into the thick part and the thin part, as Margulis's Lemma shows that there is a universal positive constant $\tilde c$ so that the set of points with injectivity radius less than $\tilde c$ is a disjoint union of annular cusp and collar neighborhoods.  The noded Riemann surface is smooth if and only if there are no cusp neighborhoods. Each cusp neighborhood is isometric to every other, and a single parameter, the length $l$ of the core geodesic is the only hyperbolic invariant for collar neighborhoods.  Each cusp or collar neighborhood is metrically rotationally invariant.  Allowing $l\to0$ changes a collar neighborhood to a pair of cusp neighborhoods, and heuristically provides a path to the boundary of the moduli space of Riemann surfaces.  In this setting, we would like to define a related conformal metric $ m$ on each Riemann surface given by replacing the hyperbolic metric on the thin part by conformal flat cylindrical metrics of circumference $2\tilde c$ (so that the resulting metric, hyperbolic on the thick part and flat on the thin part, is continuous). See Figure \ref{k-m-metrics}.  Then we may define regular cubic differentials as holomorphic cubic differentials on $\Sigma^{\rm reg}$ which are bounded in the $L^\infty_m$ norm and whose residues match up appropriately.  Convergence of families of regular cubic differentials over a sequence of noded Riemann surface $\Sigma_i$ which converge to $\Sigma_\infty$ in $\overline{\mathcal M}_g$ is then defined to be convergence in $L^\infty_{\rm loc}$ with respect to the $m_i$ coordinates.

\begin{figure}
\begin{center}
\scalebox{.3}{\includegraphics{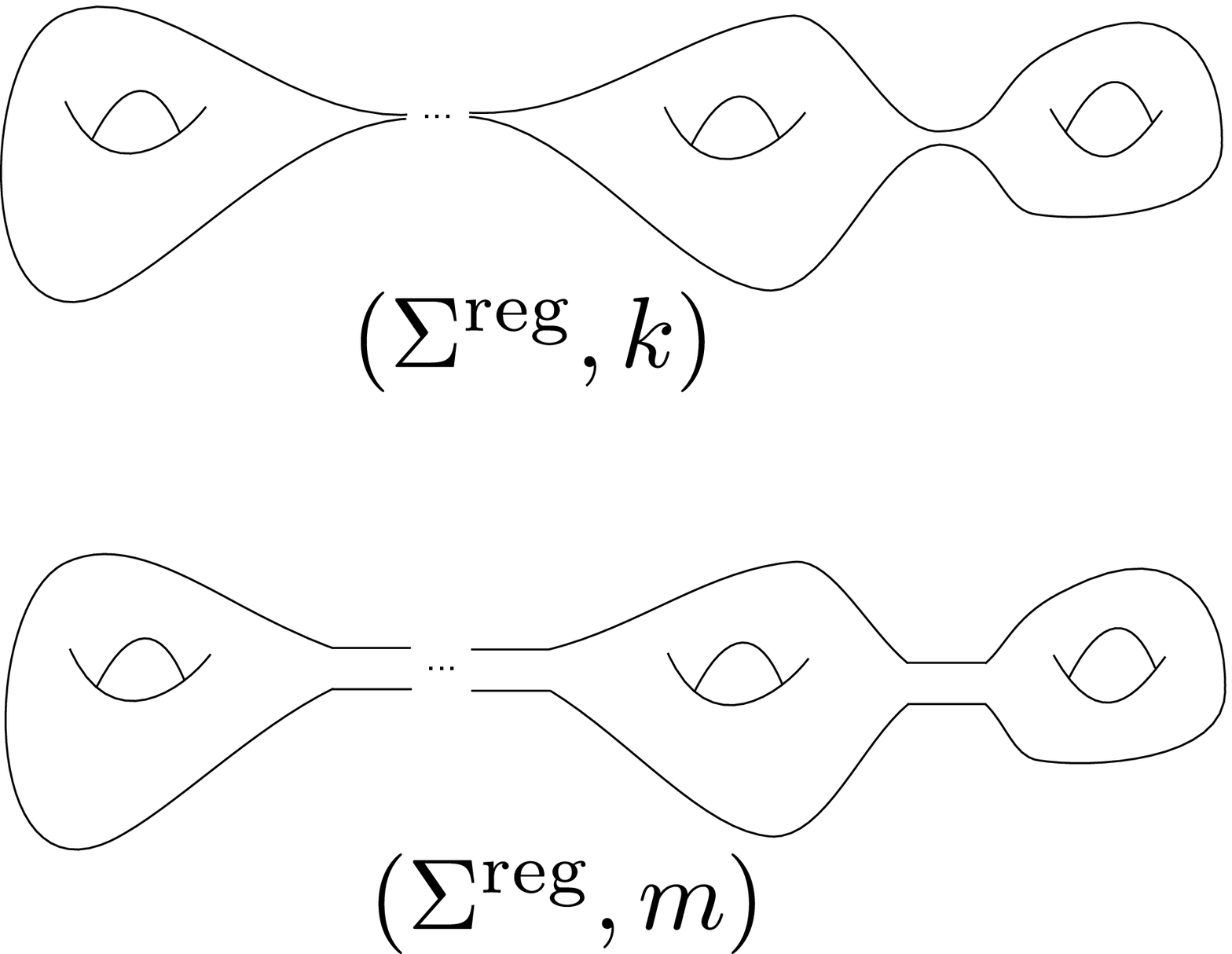}}
\end{center}
\caption{}
\label{k-m-metrics}
\end{figure}

This heuristic picture is imprecise, and using $l$ as a parameter in moduli is not well-suited to the geometry of holomorphic objects such as regular cubic differentials.  Instead, we consider Wolpert's hyperbolic metric plumbing coordinates, which describe the holomorphic moduli of noded Riemann surfaces, but also are constructed to be closely related to the hyperbolic metrics.

Consider local V-manifold cover coordinates on $\overline{\mathcal M}_g$ near a nodal curve.  These are due to Masur \cite{masur76} and refined by Wolpert.  See Wolpert \cite{wolpert10} for an overview and references.  Consider a stable noded Riemann surface $\Sigma$ with $n$ nodes.  We think of $\Sigma$ as representing a point in the boundary of the Deligne-Mumford compactification of the moduli space of closed Riemann surfaces of genus $g$.  For the $i^{\rm th}$ node there is a small \emph{cusp neighborhood} $N_i$ so that:
\begin{itemize}
\item The closures of the $N_i$ are disjoint in $\Sigma$.
\item There are coordinates $z_i,w_i$ on each part of $N_i\cap \Sigma^{\rm reg}$ and a uniform constant $c$ so that $$N_i^{\rm reg} \equiv N_i\cap\Sigma^{\rm reg} = \{|z_i|\in(0,c)\}\sqcup \{|w_i|\in(0,c)\}$$ and the complete hyperbolic metric on $\Sigma^{\rm reg}$ restricts to $N_i$ as \begin{equation} \label{hyp-met-cusp}
     \frac{|dx|^2}{(|x|\log|x|)^2}, \qquad x = w_i,z_i.
    \end{equation}
     The coordinates $z_i,w_i$ are called hyperbolic cusp coordinates.
\end{itemize}
Moreover, Wolpert \cite{wolpert92b} has constructed a real-analytic family of Beltrami differentials $\nu(s)$ on $\Sigma^{\rm reg}$ for $s$ in a neighborhood of the origin in $\co^{3g-3-n}$ so that
\begin{itemize}
\item $\nu(0)=0$.
\item The support of each $\nu(s)$ is disjoint from the closure of each cusp neighborhood $N_i$.
\item Each $\nu(s)$ is $C^\infty$, and the family of all $\nu(s)$ vary in a complex vector space of dimension $3g-3-n$.
\item There is an induced diffeomorphism of Riemann surfaces $\chi^s\!:\Sigma^{\rm reg}\to \Sigma^{s,{\rm reg}}$ satisfying $\bar\partial \chi = \nu(s)\partial \chi$.
\item On each $N_i$, $\chi^s$ restricts to a rotation (and thus a hyperbolic isometry).
\end{itemize}
Each node contributes an additional complex parameter via the plumbing construction. First of all, each cusp neighborhood $N_i$ is biholomorphic as a complex-analytic set to $\{z_iw_i=0, |z_i|<c,|w_i|<c\}\subset \co^2$.  To open the node, let $|t_i|<c^2$ and consider the annulus $$N_i^{t_i} = \{z_iw_i=t_i, |z_i|, |w_i| \in(\sfrac{|t_i|}c,c)\}\subset \co^2.$$  If we choose $t=(t_1,\dots,t_n)$ as above, we may replace $N_i$ with $N_i^{t_i}$ (by using the same $z_i,w_i$ coordinates) in order to form $\Sigma^t$.  Since the Beltrami differentials are constructed so that the hyperbolic cusp coordinates are essentially preserved, we have
\begin{itemize}
\item $(s,t)$ near $(0,0)$ form local V-manifold coordinates, the \emph{hyperbolic metric plumbing coordinates}, for $\overline{\mathcal M}_g$.
\end{itemize}

Given these hyperbolic metric plumbing coordinates, we recall Wolpert's grafting metric $g^{s,t}$.  Let $k^{s,t}$ be the complete hyperbolic metric on $\Sigma^{s,t,{\rm reg}}$. We will not use the construction of the grafting metric, but only the following properties \cite{wolpert90}:
\begin{itemize}
\item $g^{s,t}$ is a complete conformal metric on $\Sigma^{s,t,{\rm reg}}$.
\item If $t_i=0$, then $g^{s,t}=k^{s,t}$ on $N_i\cap \Sigma^{s,t,{\rm reg}}$.
\item For $t_i\neq0$, then $g^{s,t}$ is equal to
 \begin{equation} \label{hyp-met-collar}
 \left(\frac{\pi}{|z_i|\log |t_i|} \csc\left(\pi \frac{\log|z_i|}{\log|t_i|} \right)\right)^2 \, |dz_i|^2
 \end{equation}
    on $N_i^{t_i}$.
\item Away from $z_i=w_i=0$, the metrics $g^{s,t}$ on $N_i^{t_i}$ vary real-analytically in $\frac1{\log|t_i|}$ for all $|t_i|<c^2$.
\item There is a uniform constant $C$ so that
\begin{equation} \label{compare-g-k}
\left|\frac {g^{s,t}}{k^{s,t}} - 1\right| \le C \left| \sum_{i=1}^n (\log|t_i|)^{-2}\right|.
\end{equation}
\item There is a uniform constant $C'$ so that the curvature $\kappa_{g^{s,t}}$ satisfies
\begin{equation} \label{graft-curv}
\| \kappa_{g^{s,t}} + 1\|_{C^0} \le C' (\log|t_i|)^{-2}.
\end{equation}
\end{itemize}

Recall a regular cubic differential over a noded Riemann surface $\Sigma$ is given by a holomorphic cubic differential $U$ on $\Sigma^{\rm reg}$ with the following behavior at the nodes:  Let each node be given by $z_iw_i=0$ in local coordinates.  In terms of the $z_i$ and $w_i$ coordinates, we first of all require  $U$ to have a pole of order at most 3 at the origin.  For $x=z_i,w_i$, the \emph{residue} of $U$ is defined to be the $dx^3/x^3$ coefficient of $U$.  The residue does not depend on the choice of local conformal coordinate.  The second condition is that the residues of the $z_i$ and $w_i$ coordinates for each node sum to zero.

It will be useful for us to describe the convergence of cubic differentials in terms of a family of metrics constructed by modifying the grafting metrics $g^{s,t}$.  Our construction is to replace the (locally) hyperbolic grafting metrics in the thin part of each surface by a flat conformal cylindrical metric of uniformly constant diameter.  For $t_i=0$, we will replace each hyperbolic cusp end by a complete flat cylinder, while for $t_i\neq0$ small, we replace the hyperbolic collar by a flat collar.  The details are presented below.

Since the boundary of $\mathcal M_g$ is compact, it can be covered by a finite number of hyperbolic metric plumbing coordinate neighborhoods $V^\alpha$, $\alpha=1,\dots,M$ centered at nodal curves on the boundary.  Define the set $V^0$ to be an open set containing $\overline{\mathcal M}_g\setminus \cup_\alpha V^\alpha$ whose closure does not intersect $\partial{\mathcal M}_g$. $V^0$ lies in the thick part of the moduli space, as it excludes a neighborhood of the boundary.  Consider the universal curve $\pi\!: \overline{\mathcal C}_g \to \overline {\mathcal M}_g$.  For each noded Riemann surface $\Sigma^{s,t}$ in $\pi^{-1}V^\alpha$, $\alpha =0,\dots,M$, define the metric $m^{\alpha,s,t}$ as follows
\begin{itemize}
\item Let $m^{0}$ be the hyperbolic metric on the (necessarily nonsingular and closed) Riemann surface $\Sigma$.
\item For a noded Riemann surface $\Sigma=\Sigma^{s,t}$ in $U^\alpha$, define $m^{\alpha,s,t}$ to be equal to $g^{s,t}$ on $\Sigma\setminus\cup_i N_i^{s,t}$.
\item On $N_i^{t_i}$, consider the quasi-coordinate $\ell = \log x$ for $x=z_i,w_i$. Then for $t_i\neq0$, $$g^{s,t} = \left(\frac{\pi}{\log |t_i|} \csc \left(\pi\frac{{\rm Re}\,\ell}{\log|t_i|} \right) \right)^2 \,|d\ell|^2,$$ for  $\log|t_i|-\log c \le {\rm Re}\,\ell \le \log c.$
    For $t_i=0$, $g^{s,t} = ({\rm Re}\, \ell)^{-2}\,|d\ell|^2$ for ${\rm Re}\,\ell \le \log c$.
\item For the $t_i=0$ case, consider the half-cylinder $\{{\rm Re}\,\ell\le 2\log c\}$ with flat metric $f=(2\log c)^{-2}\,|d\ell|^2$.
\item For $t_i\neq0$, let $$ K = \frac{\log|t_i|}\pi \arcsin \left(\frac{\pi}{\log|t_i|} \cdot 2\log c\right),$$
    consider the annulus $\{{\rm Re}\,\ell \in[\log|t_i|-K, K] \}$ with flat metric $f=(2\log c)^{-2}\,|d\ell|^2$.  Note this metric is equal to $g^{s,t}$ on the boundary of the annulus.
\item Now on each surface we interpolate between the two metrics.  Let $\eta$ be a smooth nonnegative function of ${\rm Re} \,\ell$ which is equal to 1 for ${\rm Re}\,\ell \le 2\log c$ and equal to 0 for ${\rm Re}\,\ell \ge \log c$.  On each connected component of $N_i^{\rm reg}$ for $t_i=0$, define the metric $m^{\alpha,s,t}$ to be $(g^{s,t})^{1-\eta({\rm Re}\,\ell)}\cdot f^{\eta({\rm Re}\,\ell)}$.
\item We make a similar definition for $t_i\neq0$, shifting the interpolating factor and adjusting for the fact that $N_i^{t_i}$ is connected.  Let $$\phi({\rm Re}\,\ell) = \eta({\rm Re}\,\ell - K + 2\log c) \cdot \eta(2\log c +\log|t_i|-K-{\rm Re}\,\ell ).$$
    Then the metric $m^{\alpha,s,t}$ restricted to $N_i^{t_i}\subset\Sigma=\Sigma^{s,t}$ is defined to be $(g^{s,t})^{1-\phi({\rm Re}\,\ell)}\cdot f^{\phi({\rm Re}\,\ell)}$.
\item Note that $m^{\alpha,s,t}$ is always a complete conformal metric on $\Sigma^{\rm reg}$. It is always equal to $g^{s,t}$ outside cusp and collar neighborhoods, and well inside these small neighborhoods, the metric is flat cylindrical of uniform circumference.  These two regions are glued together along annual regions using a uniform partition of unity, and so the metric $m^{\alpha,s,t}$ on these annular regions is smooth and has uniform geometry.  In particular, the $m^{\alpha,s,t}$ metrics have uniformly bounded Gauss curvature.
\item Moreover, there is a uniform positive constant $C>0$ so that for on every Riemann surface $\Sigma=\Sigma^{s,t}$ represented in $V^\alpha$,
    \begin{equation}
    \frac{m^{\alpha,s,t}}{g^{s,t}}\ge C. \label{bound-m-g}
    \end{equation}
    By our construction, $m^{\alpha,s,t}/g^{s,t}\ge 1$ in the region where $m^{\alpha,s,t}$ is flat.  The existence of such a bound $C$ on the region of the interpolation follows from compactness considerations, while outside these two regions, the two metrics are equal.
\end{itemize}

We will also use a basic description of the thick-thin decomposition of hyperbolic surfaces and of the universal curve. See e.g.\ \cite{wolpert10}. For positive $\epsilon$ small enough, the locus of points Thin$_\epsilon$ on a complete hyperbolic surface with injectivity radius less that $\epsilon$ is called the thin part of the moduli space, while the complement is the thick part Thick$_{\epsilon}$.  The thin part is a disjoint union of punctured disks (cusps) and annuli (collars).  Margulis's Lemma shows there is a fixed $\epsilon_0>0$ so that this is true for all $0<\epsilon<\epsilon_0$, while Mumford's Compactness Theorem shows Thick$_\epsilon$ is compact. We will need to relate this to the hyperbolic metric plumbing coordinates.  In particular, in each $V^\alpha$ neighborhood, (\ref{compare-g-k}) shows that for any sequence of points in $\overline{\mathcal C}_g$, the injectivity radius of the hyperbolic metric goes to zero if and only if the plumbing coordinates $z_j,w_j$ for the appropriate collar go to zero.
\begin{lem} \label{thick-lemma}
For any convergent sequence in $\overline{\mathcal C}_g$, either it converges to a node (in which some $z_j,w_j$ coordinates are 0) or there is an $\epsilon>0$ so that all but a finite number of elements of the sequence lie in Thick$_\epsilon$.
\end{lem}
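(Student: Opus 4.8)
The plan is to derive this from the equivalence recorded just before the statement — that within a plumbing coordinate chart $V^\alpha$ the injectivity radius at a point of a fiber tends to zero exactly when the plumbing coordinates $z_j,w_j$ of the collar containing it tend to zero — together with the observation that this last condition is precisely the condition that the point converge in $\overline{\mathcal C}_g$ to a node. We may assume the points of the sequence lie in the regular parts of their fibers, since a sequence of nodes converges to a node and then the first alternative holds. Write $\{p_k\}$ for the sequence, $p_\infty$ for its limit, and $\Sigma_\infty=\pi(p_\infty)$. If $\Sigma_\infty$ is smooth and closed, then $p_\infty$ lies over an interior point of $\overline{\mathcal M}_g$, every nearby fiber is a smooth closed hyperbolic surface, and the hyperbolic metrics vary continuously there, so $\mathrm{injrad}(p_k)\to\mathrm{injrad}(p_\infty)>0$ and all but finitely many $p_k$ lie in $\mathrm{Thick}_\epsilon$ with $\epsilon=\tfrac12\mathrm{injrad}(p_\infty)$; the second alternative holds. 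So assume $\Sigma_\infty$ is noded, fix a plumbing chart $V^\alpha$ with $\Sigma_\infty=\Sigma^{0,0}$, and note that for $k$ large $\pi(p_k)=\Sigma^{s_k,t_k}$ with $(s_k,t_k)\to(0,0)$.

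Now I would split into two cases. If $p_\infty$ is one of the nodes of $\Sigma_\infty$, that is $z_j(p_\infty)=w_j(p_\infty)=0$ for some $j$, the first alternative holds. Otherwise $p_\infty\in\Sigma_\infty^{\rm reg}$, and it suffices to show $\liminf_k\mathrm{injrad}(p_k)>0$, since then the second alternative follows with any $\epsilon$ below this liminf and below the Margulis constant. Suppose not; passing to a subsequence, $\mathrm{injrad}(p_k)\to0$. By the cited equivalence — which is the content of Wolpert's comparison (\ref{compare-g-k}) of the hyperbolic and grafting metrics together with the explicit collar and cusp forms (\ref{hyp-met-collar}) and (\ref{hyp-met-cusp}) — for $k$ large $p_k$ lies in a plumbing collar, and, after a further subsequence to fix its index $j$, the same equivalence gives $z_j(p_k)\to0$ and $w_j(p_k)\to0$. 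In the local model $\{z_jw_j=t_j\}$ for $\overline{\mathcal C}_g$ near the $j$-th node this says exactly that $p_k$ converges to the node $\{z_j=w_j=0\}$ of $\Sigma^{0,0}=\Sigma_\infty$; since also $p_k\to p_\infty$, this forces $p_\infty$ to be that node, contradicting $p_\infty\in\Sigma_\infty^{\rm reg}$. Hence $\liminf_k\mathrm{injrad}(p_k)>0$, and the lemma follows.

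The only substantive ingredient is the equivalence between vanishing injectivity radius and vanishing plumbing coordinates, and if pressed I would justify it through the collar computation behind (\ref{hyp-met-collar}): on a plumbing collar $N_j^{t_j}$ the hyperbolic circumference of $\{|z_j|=r\}$ is, up to the error controlled by (\ref{compare-g-k}), equal to $\frac{2\pi^2}{|\log|t_j||}\csc\!\bigl(\pi\tfrac{\log r}{\log|t_j|}\bigr)$, which is small precisely when $r$ places the point deep in the collar, i.e. when $|z_j|$ and $|w_j|=|t_j|/|z_j|$ are both small, whereas for a point confined to a compact subset of $\Sigma_\infty^{\rm reg}$ one has $k^{s,t}=g^{s,t}$ there for $(s,t)$ near $0$ with $g^{s,t}$ real-analytic, so the injectivity radius stays bounded below. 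Granting that equivalence, the remaining work above is only the bookkeeping of passing to subsequences so that the relevant node stabilizes and of recognizing ``plumbing coordinates tend to zero'' as ``the point converges to a node in $\overline{\mathcal C}_g$''; I expect this last identification, via the local model $\{z_jw_j=t_j\}$ of the universal curve near a node, to be the only step requiring care, and it is routine.
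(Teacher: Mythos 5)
Your proof is correct and follows the same approach the paper intends: the paper leaves the lemma unproved beyond the sentence immediately preceding it, which records (via (\ref{compare-g-k}), (\ref{hyp-met-cusp}), (\ref{hyp-met-collar})) exactly the equivalence you invoke between vanishing injectivity radius and vanishing plumbing coordinates in a chart $V^\alpha$. Your write-up simply fleshes out the case analysis (smooth limit fiber, noded limit fiber with $p_\infty$ a node, noded limit fiber with $p_\infty$ regular) and the subsequence bookkeeping that the paper treats as immediate, so there is nothing to add or correct.
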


Now we describe the convergence of sequences in the total space of the bundle of regular cubic differentials over $\overline{\mathcal M}_g$. See e.g.\ \cite{wolpert12}. Since this space is a V-manifold vector bundle (and so naturally has a first-countable topology), convergence of sequences determines the topology.  Consider a sequence $(\Sigma_j,U_j)$ of pairs of noded Riemann surfaces $\Sigma_j$ and regular cubic differentials $U_j$ on $\Sigma_j$.  Then we say $(\Sigma_j,U_j)\to(\Sigma,U)$ if and only if
\begin{itemize}
\item $\Sigma_j\to\Sigma$ in $\overline{\mathcal M}_g$.
\item For $j$ large, there is an $\alpha$ so that $\Sigma_j,\Sigma$ are in the neighborhood $V^\alpha$. Thus there are hyperbolic plumbing coordinates $(s,t)$ for $\Sigma$, $(s_j,t_j)$ for $\Sigma_j$, and $(s_j,t_j)\to (s,t)$.  Also consider the neighborhoods $N_i^{s_j,t_j}$ as above.
\item On $\Sigma\setminus\cup_i N_i$, we assume $[(\chi^{s_j})^{-1}]^* U_j \to [(\chi^s)^{-1}]^* U$ uniformly with respect to the hyperbolic metrics.
\item On each $N_i$, consider the hyperbolic cusp coordinates $z_i,w_i$ on the collar $N_i^{s_j,t_j}$. Recall that $z_i$ and $w_i$ have domains $|z_i|,|w_i| \in (\frac{|t_j|}c,c)$.  Then for $x=z_i,w_i$, write $U_j = \hat U_j\,dx^3$.  The convergence condition is that for all $x\in\{z_i,w_i\}$,  $\hat U_j\to \hat U$ normally in the domains $\{|x|\in (\frac{|t_j|}c,c)\}$.
\end{itemize}

We have the following description of the convergence of families of regular cubic differentials.
\begin{lem}
$(\Sigma_i,U_i)\to (\Sigma,U)$ if and only if $\Sigma_i\to\Sigma$ in $\overline{\mathcal M}_g$ and (with respect to the appropriate member $V^\alpha$ of the cover of $\overline{\mathcal M}_g$) $[(\chi^{s_j})^{-1}]^* U_j \to [(\chi^s)^{-1}]^* U$ in $L^\infty$ with respect to the $m^{\alpha,\Sigma_i}$ metrics with the additional caveat that on the $N_i^{t_i}$ regions, the convergence in the hyperbolic metric plumbing coordinates is normal convergence on the domains $\{|x|\in (\frac{|t_j|}c,c)\}$, which vary in size as $t_j$ varies.
\end{lem}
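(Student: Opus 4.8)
The plan is to derive the statement directly from the four-item definition of $(\Sigma_i,U_i)\to(\Sigma,U)$ by matching the two descriptions region by region. The first item, $\Sigma_i\to\Sigma$ in $\overline{\mathcal M}_g$, appears on both sides; and since $\partial\mathcal M_g$ is compact and is covered by the finitely many charts $V^1,\dots,V^M$ together with the thick set $V^0$, this first item already forces $\Sigma_i$ and $\Sigma$ into a common $V^\alpha$ for $i$ large, with plumbing parameters $(s_i,t_i)\to(s,t)$, which is the second item. The fourth item, normal convergence of the coefficients $\hat U_i\to\hat U$ in the hyperbolic cusp coordinates on the annuli $\{|x|\in(|t_i|/c,c)\}$, is verbatim the ``additional caveat'' in the lemma: on each $N_i$ the map $\chi^{s_i}$ restricts to a rotation (a hyperbolic isometry), so $[(\chi^{s_i})^{-1}]^*U_i$ differs from $U_i$, in these coordinates, only by precomposition of $\hat U_i$ with a convergent rotation of the $z_i$- or $w_i$-coordinate, and this affects neither normal convergence nor the rotationally symmetric metric $m^{\alpha,s,t}$ on $N_i$. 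Thus the whole content of the lemma reduces to the equivalence, on the thick region $\Sigma\setminus\bigcup_i N_i$, of the third item --- uniform convergence of the transported differentials with respect to the \emph{hyperbolic} metrics --- with $L^\infty$ convergence there with respect to the $m^{\alpha,\Sigma_i}$ metrics.

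First I would establish the uniform comparability of $k^{s,t}$, $g^{s,t}$ and $m^{\alpha,s,t}$ on the thick region. By construction $m^{\alpha,s,t}=g^{s,t}$ on $\Sigma\setminus\bigcup_i N_i$ (and $m^0$ is the hyperbolic metric in the $V^0$ case, where there is nothing to do), while Wolpert's estimate (\ref{compare-g-k}) gives $|g^{s,t}/k^{s,t}-1|\le C\sum_i(\log|t_i|)^{-2}$, which is bounded by a fixed small constant since $|t_i|<c^2$ with $c$ small. Hence along the convergent sequence the metrics $m^{\alpha,\Sigma_i}$ are uniformly bi-Lipschitz to the hyperbolic metrics $k^{s_i,t_i}$; and since Wolpert's family $\chi^{s}$ is real-analytic in $s$, the $\chi^{s_i}$ converge in $C^\infty_{\rm loc}$ to $\chi^s$ on the fixed compact thick region, so pulling back is a continuous operation that preserves these uniform comparisons. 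Consequently the $L^\infty$ norm of $[(\chi^{s_i})^{-1}]^*U_i-[(\chi^{s})^{-1}]^*U$ measured in the $m$-metrics and measured in the hyperbolic metrics are bounded by fixed multiples of one another, and the two convergence statements on the thick region coincide. Together with the first paragraph this proves the equivalence claimed by the lemma.

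I would then add a short remark justifying why the lemma must exclude the collars $N_i^{t_i}$ from the $L^\infty_m$ requirement, i.e.\ why the caveat is needed and not merely a convenient reformulation. In the quasi-coordinate $\ell=\log x$ the metric $m^{\alpha,s,t}$ on $N_i^{t_i}$ has the form $\rho^2\,|d\ell|^2$ with $\rho$ bounded above and below by uniform positive constants: it equals the flat factor $(2\log c)^{-1}$ well inside, while near $|x|\sim c$ the grafting factor from (\ref{hyp-met-cusp})--(\ref{hyp-met-collar}) and the interpolation recipe both take place over a fixed, $t$-independent $\ell$-interval on which that factor is uniformly comparable to $1$, using also (\ref{bound-m-g}) and the uniform geometry of $m^{\alpha,s,t}$ there. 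Writing $U=\hat U\,dx^3=\hat U\,x^3\,d\ell^3$, this gives $\|U\|_{m}\asymp|\hat U|\,|x|^3$ uniformly on $N_i^{t_i}$, so $\|U\|_{L^\infty_m(N_i^{t_i})}<\infty$ is exactly the condition that $U$ have a pole of order at most three, and this quantity is typically nonzero (e.g.\ whenever the residue is nonzero). Global $L^\infty_m$ convergence on $N_i^{t_i}$ would therefore force control of $\hat U_i$ uniformly up to $|x|=|t_i|/c$, that is, right up to the node --- too rigid when the annuli $\{|x|\in(|t_i|/c,c)\}$ themselves shrink with $i$. Normal convergence of $\hat U_i$ on these varying domains is the correct substitute, and it is precisely the fourth item of the definition.

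The only genuinely analytic ingredient is the uniform bi-Lipschitz comparison $m^{\alpha,\Sigma_i}\asymp k^{s_i,t_i}$ on the thick region, which is handed to us by (\ref{compare-g-k}); everything else is unwinding the two topologies. I expect the main point requiring care to be simply keeping all these comparisons uniform along the sequence when some $t_{i,j}\to 0$ and the corresponding collars grow to cusp neighborhoods, so that no constants degenerate in the limit.
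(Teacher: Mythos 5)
Your reduction to the thick part misreads where the content of the lemma lies, and in doing so you drop the one step the paper's proof actually needs. The paper treats everything away from the collars as routine and isolates a single nontrivial point: that for \emph{regular} cubic differentials the convergence on the flat (cusp/collar) regions really is $L^\infty$ with respect to the $m^{\alpha,s,t}$ metrics, uniformly as $t_j\to 0$. Concretely, writing $U_j=\tilde U_j\,d\ell^3=\tilde U_j\,dx^3/x^3$ on $N_i^{t_j}$, the assumed normal convergence of the coefficients gives uniform convergence, hence uniform bounds, on annular bands near $|z_i|=c$ and near $|w_i|=c$, i.e.\ near \emph{both} ends of the annulus $\{|x|\in(|t_j|/c,c)\}$ (Wolpert's ``band bounded'' condition); since $\tilde U_j$ is holomorphic there, the maximum modulus principle propagates these bounds across the whole annulus with constants independent of $t_j$, and in the case $t_j=0$ the regularity of $U_j$ makes $x=0$ a removable singularity for $\tilde U_j$, so the same argument runs on the punctured disk. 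Combined with your (correct) observation that $\|U\|_{m}\asymp|\tilde U|$ on the flat region, this is what converts the coefficient statement into the asserted $L^\infty_m$ statement, and it is what later supplies the uniform bound $\|U_j\|_{m_j}\le C$ used to build super-solutions in Theorem \ref{u-limit-thm}. Your proposal never performs this maximum-modulus step.

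Moreover, your third paragraph argues that such control would be ``too rigid'' and that the collars must simply be excluded from the $L^\infty_m$ requirement; this asserts the opposite of what the lemma (and its later use) requires. What genuinely fails is uniform convergence of $\tilde U_j$ to the single holomorphic extension of $\tilde U_\infty$ in the $z_i$-coordinate over the entire annulus, because the inner end of the $z_i$-annulus converges to the $w_i$-side of the node; that is precisely what the caveat about normal convergence on the varying domains accommodates. What does \emph{not} fail is the uniform $L^\infty_m$ control up to $|x|=|t_j|/c$: the maximum principle derives it from the band control at the two ends. Your thick-part metric comparison ($m^{\alpha,s,t}=g^{s,t}$ there and $|g^{s,t}/k^{s,t}-1|\le C\sum_i(\log|t_i|)^{-2}$) is fine, but it only reproduces the easy half of the equivalence; the substance of the lemma on the collars is absent from your argument, and your justification for omitting it is based on a conflation of these two different statements.
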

\begin{proof}
This lemma follows fairly easily from the considerations above except for one point, namely that the convergence with respect to the flat metrics is $L^\infty$ for regular cubic differentials.  For the case of $t_j\neq0$, in the $\ell$ quasicoordinate, consider $U_j = \tilde U_j \,d\ell^3 = \tilde U_j \,\frac{dx^3}{x^3}$ for $x=z_j,w_j$.  Our convergence assumption implies that in annular bands near $|x|=c$, $\tilde U_j$ converges uniformly (and so the $\tilde U_j$ are uniformly bounded there).  Wolpert calls this condition \emph{band bounded} \cite{wolpert12}.  We have these bounds at both ends of the annulus, and so the maximum modulus principle implies that there are uniform bounds on $\tilde U_j$ across the annulus $\{|x|\in(\frac{|t_j|}c,c)\}$, and moreover, the bounds are independent of the $t_j$ as $t_j\to0$.  But the metrics $m^{\alpha,s,t}$ are constructed so that uniform convergence of the $\tilde U_j$ is the same as $L^\infty$ convergence with respect to the metrics in the annulus.

For the case of $t_j=0$, the definition of regular cubic differential implies $\tilde U_j$ is bounded near $x=0$. Thus $x=0$ is a removable singularity for $\tilde U_j$. The maximum modulus principle still applies and the convergence is again in $L^\infty$ with respect to the metrics.
\end{proof}

\section{Hyperbolic affine spheres}

\subsection{Relationship to convex $\rp^2$ structures}

For $\Omega\subset\re^n\subset\rp^n$ a properly convex domain, consider the cone $\mathcal C$ in $\re^{n+1}$ given by $\{t(x,1): t\in\re^+, x\in\Omega\}$.  $\Omega$ is the image of $\mathcal C$ under the projection $\pi\!:\re^{n+1}\setminus\{0\} \to\rp^n$. The \emph{proper} convexity of $\Omega$ is equivalent to $\mathcal C$ being properly convex, in that it is convex and contains no lines.  There is a unique (properly normalized) hyperbolic affine sphere $\mathcal H$ asymptotic to the boundary of $\mathcal C$ which is invariant under special linear automorphisms of $\mathcal C$ \cite{cheng-yau77,cheng-yau86}. Then $\pi$ restricts to a diffeomorphism from $\mathcal H$ to $\Omega$, and projective automorphisms of $\Omega$ lift to special linear automorphisms of $\mathcal C$, which act on $\mathcal H$.

In order to define the hyperbolic affine sphere, we introduced the \emph{affine normal}. To any strictly convex smooth hypersurface $\mathcal H$ in $\re^{n+1}$, the affine normal $\xi$ is a smooth transverse vector field which is invariant under unimodular affine transformations of $\re^{n+1}$.  The hyperbolic affine sphere (which we take to be normalized so the center is at the origin and the affine mean curvature is $-1$) can be defined as a convex hypersurface $\mathcal H$ so that at all points, the affine normal is equal to the position vector.   See e.g. \cite{blaschke,calabi72,cheng-yau86,li-simon-zhao,loftin02c,nomizu-sasaki}. A hyperbolic affine sphere is always equivalent to one normalized as above by an affine motion in $\re^{n+1}$.  For the rest of this work, we will always assume all hyperbolic affine spheres are so normalized.

The first natural structure equation on $\mathcal H$ is the following formula of Gauss type:
\begin{equation} \label{gauss-eq}
D_XY = \na_XY + h(X,Y)f,
\end{equation}
where $D$ is the flat connection on $\re^{n+1}$, $X$ and $Y$ are tangent vector fields on $\mathcal H$, $f$ is the position vector of points on $\mathcal H$ (which is transverse to the tangent space).  Then $D_XY$ is split into $\na_XY$, the part in the tangent space, and $h(X,Y)f$, the part in the span of $f$.  $\na$ is a projectively-flat torsion-free connection on $\mathcal H$, while $h(X,Y)$ is a positive-definite symmetric tensor called the \emph{Blaschke metric} or the \emph{affine metric}.  Another import local invariant is the \emph{cubic tensor}, or \emph{Pick form}, which is the difference of the Levi-Civita connection of $h$ and the connection $\na$.  The cubic tensor measures how far a hypersurface is from a hyperquadric, as a general theorem of Maschke, Pick, and Berwald implies
\begin{thm}
$\mathcal H$ is a hyperboloid if and only if its cubic tensor vanishes identically.
\end{thm}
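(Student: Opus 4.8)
The plan is to derive both directions of this classical result of Maschke, Pick, and Berwald directly from the structure equation (\ref{gauss-eq}); since in our setting $\mathcal H$ is a \emph{complete} hyperbolic affine sphere, vanishing of the cubic tensor will in fact force $\mathcal H$ to be an entire hyperboloid, not merely a patch of a quadric. Suppose first that the cubic tensor $C = \hat\na - \na$ vanishes identically, where $\hat\na$ is the Levi-Civita connection of the Blaschke metric $h$ and $\na$ the induced connection of (\ref{gauss-eq}). The associated Pick form $(X,Y,Z)\mapsto h(C(X,Y),Z)$ is totally symmetric --- a standard consequence of the torsion-freeness of $\na$ and $\hat\na$ and the Codazzi equation for affine spheres --- and expanding $0 = (\hat\na_X h)(Y,Z)$ via $\hat\na = \na + C$ then yields $(\na_X h)(Y,Z) = 2\,h(C(X,Y),Z)$. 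Hence $C\equiv0$ forces $\na h\equiv0$; since $\na$ is torsion-free (it is induced from the flat torsion-free connection $D$ together with the symmetry of $h$), uniqueness of the metric torsion-free connection gives $\na = \hat\na$.

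I would next read off the curvature. Differentiating (\ref{gauss-eq}) once more and using that $f$ is the position vector, so $D_Xf = X$, the flatness of $D$ splits into a tangential part, the Gauss equation $R^\na(X,Y)Z = h(X,Z)Y - h(Y,Z)X$, and a transverse part, the Codazzi equation invoked above. Together with $\na = \hat\na$, the Gauss equation says precisely that $h$ has constant sectional curvature $-1$. As $\mathcal H$ is diffeomorphic to the convex domain $\Omega$ it is simply connected, and it is complete, so $(\mathcal H,h)$ is isometric to $\mathbb{H}^n$. It then remains to integrate the Gauss-Weingarten system: along any curve in $\mathcal H$, the moving frame $(e_1,\dots,e_n,f)\in\gl{n+1}$ satisfies a linear first-order ODE whose coefficients depend only on $h$ and the Christoffel symbols of $\hat\na$. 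The standard hyperboloid model $\mathbb{H}^n\hookrightarrow\re^{n+1}$, with transverse field the position vector, satisfies the same system with identical data; fixing a basepoint, an isometry of the tangent spaces, and a linear map $L\in\gl{n+1}$ inducing it and matching the two position vectors, uniqueness for linear ODEs forces $L\circ f$ to agree with the hyperboloid embedding precomposed with the corresponding isometry of $\mathbb{H}^n$. Hence $f(\mathcal H)$ is a linear image of a hyperboloid, and therefore itself a hyperboloid. This integration step is essentially the fundamental theorem of affine hypersurface theory (see e.g.\ \cite{nomizu-sasaki}); it is the one substantive point in the argument, but it is entirely classical, so one could alternatively just cite it.

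For the converse, suppose $\mathcal H$ is a hyperboloid. Since the Blaschke metric and the cubic tensor are natural under affine maps, we may take $\mathcal H$ to be the convex sheet of $\{Q=-1\}$ for the standard quadratic form $Q$ of signature $(n,1)$, normalized so that its affine normal is the position vector. At a point $x\in\mathcal H$, the tangent space $T_x\mathcal H$ is the $Q$-orthogonal complement of $x$, so the line $\re f(x)=\re x$ spanned by the position vector is exactly the $Q$-normal line. Therefore the splitting $\re^{n+1} = T_x\mathcal H \oplus \re f(x)$ used in (\ref{gauss-eq}) to define $\na$ coincides with the $Q$-orthogonal splitting used to define the Levi-Civita connection of $h = Q|_{\mathcal H}$, whence $\na = \hat\na$, i.e.\ $C\equiv0$. (Alternatively: $\mathcal H$ is homogeneous under $\mathbf{SO}(Q)$, and the symmetric cube of the standard representation of the point stabilizer $\so n$ carries no nonzero invariant vector for $n\ge2$, so the Pick form must vanish.) This proves both implications.
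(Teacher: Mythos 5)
Your proof is correct. Note, though, that the paper does not prove this statement at all: it simply attributes it to Maschke, Pick, and Berwald (with the remark that it is a special case of the general characterization of hyperquadrics by the vanishing of the cubic form with respect to the affine normal), so your contribution is a self-contained argument where the paper offers only a citation. Your route is the natural one: since the paper defines the cubic tensor as the difference $\hat\na-\na$, its vanishing gives $\na=\hat\na$ immediately — your detour through $(\na_X h)(Y,Z)=2h(C(X,Y),Z)$ is correct but redundant — after which the tangential part of the flatness of $D$, using $D_Xf=X$, yields constant curvature $-1$ for the Blaschke metric; completeness and simple connectedness (as $\mathcal H$ is the Cheng--Yau affine sphere, diffeomorphic to $\Omega$) identify $(\mathcal H,h)$ with hyperbolic space, and the ODE/uniqueness step is exactly the rigidity half of the fundamental theorem of affine hypersurface theory, which you rightly say could simply be cited from \cite{nomizu-sasaki}; this is also where completeness upgrades the classical local conclusion (a piece of a quadric) to the assertion that $\mathcal H$ is an entire hyperboloid. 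The converse is also fine: for the sheet of $\{Q=-1\}$ the position vector is $Q$-normal, so the splitting in (\ref{gauss-eq}) is the $Q$-orthogonal one, $h=Q|_{T\mathcal H}$, and $\na$ is its Levi-Civita connection, whence $C\equiv0$; your representation-theoretic alternative (no $\mathbf{SO}(n)$-invariant vectors in the symmetric cube of the standard representation for $n\ge2$) is a clean second argument. One small point to make explicit if you keep the first direction as written: the model hyperboloid must be taken with the centro-affine decomposition (transverse field the position vector), for which the induced $h$ is exactly the curvature $-1$ hyperbolic metric, so the two frame ODE systems really do have identical coefficients; as you note, the particular affine normalization of the quadric is immaterial since a linear image of a hyperboloid is again a hyperboloid.
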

(This is a special case of the more general theorem that any nondegenerate smooth hypersurface is a hyperquadric if and only if its cubic tensor, when defined with respect to the affine normal $\xi$, vanishes identically.)

For a hyperbolic affine sphere $\mathcal H$, the completeness of the Blaschke metric is equivalent to $\mathcal H$ being properly embedded.  In fact, we have the following theorem of Cheng-Yau \cite{cheng-yau77,cheng-yau86} and Calabi-Nirenberg (unpublished), with clarifications by Gigena \cite{gigena81}, Sasaki \cite{sasaki80}, and A.M.\ Li \cite{li90,li92}.
\begin{thm} \label{complete-has}
For $\Omega$ a properly convex domain in $\rp^n$, consider the cone $\mathcal C\subset\re^{n+1}$ over $\Omega$. Then there is a unique properly embedded hyperbolic affine sphere $\mathcal H\subset \mathcal C$ which is centered at the origin, has affine mean curvature $-1$, and which is asymptotic to $\partial \mathcal C$. $\mathcal H$ is invariant under volume-preserving linear automorphisms of $\mathcal C$, and $\mathcal H$ is diffeomorphic to $\Omega$ under projection. The Blaschke metric on $\mathcal H$ is complete.

Conversely, let $\mathcal H$ be an immersed hyperbolic affine sphere normalized to have center 0 and affine mean curvature $-1$. If the Blaschke metric on $\mathcal H$ is complete, then $\mathcal H$ is properly embedded in a proper convex cone $\mathcal C$ centered at the origin and is asymptotic to the boundary $\partial \mathcal C$.
\end{thm}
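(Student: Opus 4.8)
The plan is to reduce both directions to a Monge--Amp\`ere equation. For the first statement, I would seek the affine sphere as a radial graph over $\Omega$: given a negative convex function $v$ on $\Omega$, set $F(x) = -\frac1{v(x)}\,(x,1)$, which lies in the cone $\mathcal C$ since $-1/v>0$; imposing $v\to0$ at $\partial\Omega$ makes $F$ run to infinity exactly along the generators of $\partial\mathcal C$, so $F(\Omega)$ is automatically asymptotic to $\partial\mathcal C$. Taking the transverse field in the Gauss-type decomposition (\ref{gauss-eq}) to be the position vector $f$ produces a symmetric form $h$ and a torsion-free connection $\na$ on $F(\Omega)$; the requirement that $f$ actually be the affine normal, with center $0$ and affine mean curvature $-1$, unwinds (via the equi-affine volume normalization) to the Dirichlet problem
\begin{equation} \label{ma-eq}
\det(v_{ij}) = \left(-\frac1v\right)^{n+2} \ \text{ in } \Omega, \qquad v = 0 \ \text{ on } \partial\Omega.
\end{equation}
So the first statement becomes: there is a unique convex solution of (\ref{ma-eq}).

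For existence I would exhaust $\Omega$ by smooth strictly convex domains $\Omega_k\nearrow\Omega$, solve on each the nondegenerate (hence classically solvable) problem $\det(v_{ij}) = (-1/v)^{n+2}$ with $v|_{\partial\Omega_k} = -\epsilon_k$, and pass to the limit. Proper convexity of $\Omega$ supplies uniform $C^0$ barriers for the $v_k$; since the right-hand side stays bounded on compact subsets of $\Omega$, the interior Monge--Amp\`ere estimates --- a gradient bound, a Pogorelov-type bound on $D^2 v$, then Schauder --- give $C^\infty_{\rm loc}$ bounds uniform in $k$, and a monotone subsequence converges to a solution $v$ on $\Omega$. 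Uniqueness is the comparison principle for the Monge--Amp\`ere operator. The graph $\mathcal H = F(\Omega)$ is then diffeomorphic to $\Omega$ under $\pi$ and asymptotic to $\partial\mathcal C$ by construction. For completeness of the Blaschke metric $h$ I would bound $h$ from below near $\partial\Omega$: sub-solutions of (\ref{ma-eq}) built from the convexity of $\Omega$ control $v$, $Dv$ and hence $\det(v_{ij})$ strongly enough to force every curve tending to $\partial\Omega$ to have infinite $h$-length. Invariance under a volume-preserving linear automorphism $A$ of $\mathcal C$ is then immediate from uniqueness: $A\mathcal H$ is again a hyperbolic affine sphere asymptotic to $\partial\mathcal C$ (the affine normal is an equi-affine invariant), so $A\mathcal H = \mathcal H$.

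For the converse, let $\mathcal H$ be an immersed hyperbolic affine sphere with center $0$, affine mean curvature $-1$, and complete Blaschke metric $h$. Since the affine normal equals the (transverse) position vector, $\mathcal H$ is locally a radial graph, so the radial projection $\pi|_{\mathcal H}\!:\mathcal H\to\rp^n$ is a local diffeomorphism. The work is to promote this to a global statement using completeness of $h$: arguing along $h$-geodesics and using the flatness of $\na$ in (\ref{gauss-eq}), one shows $\pi|_{\mathcal H}$ is injective with properly convex image $\Omega$, that $\mathcal H$ is a complete radial graph over $\Omega$ asymptotic to the boundary of the cone $\mathcal C$ over $\Omega$, and hence that $\mathcal H$ is properly embedded; the first part then identifies it with the canonical affine sphere of $\Omega$. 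This is where I would lean on the refinements of Calabi--Nirenberg, Gigena, Sasaki, and A.M.\ Li.

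I expect the main obstacle to be the boundary degeneracy of (\ref{ma-eq}): its right-hand side blows up on $\partial\Omega$, which forces every a priori estimate in the existence argument to be interior and uniform under the exhaustion, and makes the completeness of $h$ in the forward direction --- and the matching global convexity and boundary asymptotics in the converse --- the genuinely delicate points. These are exactly the technical issues carried out in the works cited in the statement.
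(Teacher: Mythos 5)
The paper does not prove this statement at all: Theorem \ref{complete-has} is quoted as background, attributed to Cheng--Yau and Calabi--Nirenberg with clarifications by Gigena, Sasaki and A.M.~Li, so there is no internal proof to measure yours against. Your reduction to the degenerate Dirichlet problem $\det(v_{ij})=(-1/v)^{n+2}$, $v=0$ on $\partial\Omega$, is the standard route and is precisely what the paper itself records later as Theorem \ref{has-ma-thm} (Calabi, Cheng--Yau, Gigena); your existence and uniqueness outline for that problem (exhaustion by nondegenerate problems, interior gradient, Pogorelov and Schauder estimates, comparison principle) is reasonable as a sketch, and the invariance-by-uniqueness argument is correct.

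Judged as a proof, however, the proposal has genuine gaps at exactly the two places where the theorem's content lies. First, completeness of the Blaschke metric $-\frac1v v_{ij}\,dx^i\,dx^j$ is not a routine barrier computation near $\partial\Omega$: lower bounds on this metric along curves tending to the boundary require control of the Hessian well beyond the $C^0$ and $C^1$ information your sub-solutions provide, and historically this is the precise point where Cheng--Yau's original treatment required the clarifications of Gigena, Sasaki and Li (disentangling Euclidean properness, completeness of related metrics, and completeness of the affine metric). Second, the converse direction --- that completeness of the Blaschke metric forces an immersed hyperbolic affine sphere to be properly embedded and asymptotic to the boundary of a proper convex cone --- is the heart of the Cheng--Yau/Calabi--Nirenberg theorem, and in your write-up it is only named (``one shows \dots''), with the substance deferred to the cited works; it cannot be dispatched by arguing along $h$-geodesics plus ``flatness of $\na$'' --- note in any case that the induced connection $\na$ in (\ref{gauss-eq}) is projectively flat, not flat (the flat connection there is $D$). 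Since you explicitly lean on the literature for both points, what you have is an accurate roadmap of where the theorem comes from rather than a self-contained proof --- which, to be fair, is also how the paper treats it.
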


If $M  = \Gamma\backslash\Omega$ is a properly convex $\rp^n$ manifold, then we may lift the representation $\Gamma$ to $ \pgl{n+1}$ to volume-preserving linear actions $\tilde\Gamma$ on the cone $\mathcal C$ over $\Omega$.  By the invariance of $H$, we find $\tilde\Gamma$ acts on $\mathcal H$, and $M$ is naturally diffeomorphic to $\tilde\Gamma\backslash \mathcal H$.  The invariant tensors on $\mathcal H$ (the Blaschke metric and the cubic form) descend to $M$.

Given a properly convex cone $\mathcal C\subset \re^{n+1}$, the dual cone $\mathcal C^*$ is the cone in the dual vector space $\re_{n+1}$ to $\re^{n+1}$ given by all $\ell\in\re_{n+1}$ so that $\ell(x)>0$ for all $x\in\mathcal C$. Upon projecting to projective space, if $\Omega = \pi(\mathcal C)\subset \rp^n$, the we also have a dual convex projective domain $\Omega^* \subset \rp_n$.  From this formulation, we remark
\begin{lem} \label{proj-dual-inclusion}
If $\Omega_1\subset\Omega_2$ are properly convex domains in $\rp^n$, then $\Omega_1^*\supset\Omega_2^*$.
\end{lem}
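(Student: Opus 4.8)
The plan is to reduce this projective statement to the corresponding statement about convex cones in $\re^{n+1}$, where duality is visibly inclusion-reversing. First I would lift $\Omega_1$ to one of its two antipodal properly convex cones $\mathcal C_1\subset\re^{n+1}\setminus\{0\}$, i.e., to a connected component of $\pi^{-1}(\Omega_1)$. Since $\Omega_1\subset\Omega_2$, the connected set $\mathcal C_1$ is contained in $\pi^{-1}(\Omega_2)$, hence in exactly one of its two connected components; call that component $\mathcal C_2$, the cone over $\Omega_2$. Thus $\mathcal C_1\subset\mathcal C_2$, and both are properly convex (they contain no line, as noted in the paragraph preceding Theorem \ref{complete-has}), so the dual cones $\mathcal C_i^*\subset\re_{n+1}$ are defined as in the text.

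Next I would observe that the assignment $\mathcal C\mapsto\mathcal C^*$ reverses inclusions. Indeed, if $\ell\in\mathcal C_2^*$, then $\ell(x)>0$ for every $x\in\mathcal C_2$, and in particular for every $x\in\mathcal C_1\subset\mathcal C_2$, so $\ell\in\mathcal C_1^*$. Hence $\mathcal C_2^*\subset\mathcal C_1^*$. Projecting to $\rp_n$ preserves inclusions, so $\Omega_2^*=\pi(\mathcal C_2^*)\subset\pi(\mathcal C_1^*)=\Omega_1^*$, which is the assertion.

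I do not anticipate any real obstacle here; the statement is essentially elementary convex geometry. The only two points deserving a word of care are that the chosen lifts are compatible — handled by the connectedness argument above, which forces $\mathcal C_1\subset\mathcal C_2$ rather than $\mathcal C_1\subset-\mathcal C_2$ — and that proper convexity of the $\mathcal C_i$ guarantees the $\Omega_i^*$ are themselves properly convex domains, so that the conclusion is a genuine inclusion of domains of the same type rather than a vacuous containment.
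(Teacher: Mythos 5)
Your proof is correct and matches the paper's intent: the lemma is stated there as an immediate remark following from the cone formulation of projective duality, which is exactly the inclusion-reversing argument you give (if $\ell>0$ on the larger cone then $\ell>0$ on the smaller one). Your extra care about choosing compatible lifts is fine but not needed in the paper's setup, since the cone over $\Omega$ is fixed by the convention $\{t(x,1):t\in\re^+,\ x\in\Omega\}$.
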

There is a related duality result on hyperbolic affine spheres due to Calabi (see \cite{gigena78}, and \cite{loftin10} for an exposition).  For $\mathcal H$ a hyperbolic affine sphere, consider the conormal map $\mathcal H\to \re_{n+1}$ given by $$ x\mapsto \ell, \quad \mbox{where} \quad \ell(x)=1\quad \mbox{and} \quad \ell(T_xH) = 0.$$
\begin{thm} \label{dual-has}
Given a properly convex cone $\mathcal C\subset\re^{n+1}$ with corresponding hyperbolic affine sphere $H\subset \mathcal C$, then the conormal map maps $\mathcal H$ diffeomorphically onto the unique hyperbolic affine  sphere $\mathcal H^*$ corresponding to the dual cone $\mathcal C^*\subset \re_{n+1}$.  The conormal map is an isometry with respect to the Blaschke metrics on $\mathcal H$ and $\mathcal H^*$ and takes the cubic form $C\mapsto -C$.
\end{thm}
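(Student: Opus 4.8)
The plan is to recognize the conormal map as the classical affine (Blaschke) conormal map of $\mathcal H$. Indeed, for a hyperbolic affine sphere the affine normal $\xi$ is the position vector $f$, so the normalization $\ell(x)=1$, $\ell(T_x\mathcal H)=0$ defining $\nu(x)=\ell$ is exactly the affine conormal normalization; strict local convexity of $\mathcal H$ and transversality of $f$ make $x+T_x\mathcal H$ a nondegenerate affine hyperplane missing the origin, so $\nu\colon\mathcal H\to\re_{n+1}$ is well defined and smooth. The goal is to show its image is the hyperbolic affine sphere of $\mathcal C^*$, that the induced Blaschke metric is $h$, and that the induced cubic form is $-C$.

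First I would check that $\nu(x)\in\mathcal C^*$. The hyperplane $\{\nu(x)=1\}$ is the affine tangent plane to $\mathcal H$ at $x$; since $\mathcal H$ bounds the convex region lying radially beyond it and asymptotic to $\partial\mathcal C$, this plane is a global supporting hyperplane, with the region in $\{\nu(x)\ge 1\}$. Evaluating on recession directions gives $\nu(x)\ge 0$ on $\mathcal C$, and strict convexity upgrades this to $\nu(x)>0$ on $\mathcal C\setminus\{0\}$, i.e.\ $\nu(x)\in\mathcal C^*$. Letting $x$ exhaust $\mathcal H$, which (being complete and asymptotic to the boundary of the proper cone $\mathcal C$) has conormal image approaching every supporting functional of $\mathcal C$, shows the conormal image meets every boundary ray of $\mathcal C^*$; I would cite \cite{gigena78,loftin10} for this last point.

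The computational core is to differentiate the defining relations $\nu(f)=1$ and $\nu(Y)=0$ ($Y$ tangent) along a tangent vector $X$, using $D_Xf=X$ and the Gauss equation (\ref{gauss-eq}): this yields $(D_X\nu)(f)=0$ and $(D_X\nu)(Y)=-h(X,Y)$. From the first, the covector $f(x)$ kills $T_{\nu(x)}\mathcal M$ and satisfies $f(x)(\nu(x))=1$, so it is the affine conormal of $\mathcal M:=\nu(\mathcal H)$; in particular the position vector $\nu$ is transverse to $\mathcal M$, and pairing the Gauss equation of $\mathcal M$ against $f(x)$ together with $(D_X\nu)(f)=0$ shows the induced metric of $\mathcal M$ equals $h$, which is nondegenerate, so $\nu$ is an immersion. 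Since $D_X\nu=\nu_*X$ has no component along $\nu$, the transverse field $\nu$ is equiaffine with shape operator $-\mathrm{Id}$; hence $\mathcal M$ is a proper affine sphere centered at the origin with affine mean curvature $-1$ and complete Blaschke metric $h$. Once one checks the apolarity/volume normalization — or, more cleanly, invokes the converse half of Theorem \ref{complete-has} to conclude $\mathcal M$ is properly embedded in a proper cone asymptotic to its boundary, which by the previous paragraph is $\mathcal C^*$ — the forward half identifies $\mathcal M$ with the unique hyperbolic affine sphere $\mathcal H^*$ of $\mathcal C^*$. Finally, $\nu$ is injective because a supporting functional touches the strictly convex $\mathcal H$ at one point, and the conormal computation applied to $\mathcal M$ exhibits the conormal map of $\mathcal M$ as a smooth left inverse of $\nu$; hence $\nu$ is a diffeomorphism $\mathcal H\to\mathcal H^*$, and an isometry since the induced metric equals $h$.

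For the cubic form, differentiating $(D_X\nu)(Y)=-h(X,Y)$ once more and using the Gauss equation again shows that the connection $\na^*$ induced on $\mathcal H^*$ by the conormal immersion is the $h$-conjugate of $\na$, i.e.\ $X\,h(Y,Z)=h(\na^*_XY,Z)+h(Y,\na_XZ)$. Since two conjugate torsion-free connections average to the Levi-Civita connection $\na^h$ of $h=h^*$, and the cubic (Pick) tensor is the difference between $\na^h$ and the induced connection ($\na$ on $\mathcal H$, $\na^*$ on $\mathcal H^*$), we obtain $C^*=-C$. The main obstacle I anticipate is the normalization bookkeeping in the step identifying the transverse field $\nu$ with the genuine Blaschke normal of $\mathcal M$ — justifying the apolarity/volume condition rather than merely recording that the shape operator is a multiple of the identity — which is precisely why I would route that step through the Cheng–Yau uniqueness theorem, at the price of the standard but slightly delicate claim that the conormal image meets every boundary ray of $\mathcal C^*$.
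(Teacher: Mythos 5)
The paper itself gives no proof of Theorem \ref{dual-has}: it is quoted as a result of Calabi, with \cite{gigena78} and \cite{loftin10} cited for an exposition, so your argument can only be judged on its own merits. The computational core of what you write is correct and standard: $(D_X\nu)(f)=0$ and $(D_X\nu)(Y)=-h(X,Y)$, the identification of $f$ as the conormal of the image $\mathcal M=\nu(\mathcal H)$, the equality $h^*=h$ of induced metrics, the conjugate-connection identity $Xh(Y,Z)=h(\na^*_XY,Z)+h(Y,\na_XZ)$ and hence $C^*=-C$, and the injectivity argument via supporting hyperplanes.

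The genuine gap is the step ``the transverse field $\nu$ is equiaffine with shape operator $-\mathrm{Id}$; hence $\mathcal M$ is a proper affine sphere \ldots with Blaschke metric $h$.'' Relative to the position-vector transversal, \emph{every} hypersurface transverse to the radial directions has $\tau=0$ and shape operator $-\mathrm{Id}$; being an affine sphere is exactly the assertion that the position vector is the Blaschke normal, i.e. the volume/apolarity normalization you defer. Your proposed workaround is circular: both halves of Theorem \ref{complete-has} are statements about hyperbolic affine spheres in the Blaschke sense, so neither can be applied to $\mathcal M$ until you already know its affine normal is the position vector, which is precisely the point at issue. The gap is fillable by a short computation rather than by uniqueness: the pairing matrix of the bases $(X_1,\dots,X_n,f)$ of $\re^{n+1}$ and $(\nu_*X_1,\dots,\nu_*X_n,\nu)$ of $\re_{n+1}$ is block diagonal with blocks $\bigl(-h(X_i,X_j)\bigr)$ and $(1)$, so $\det{}^*(\nu_*X_1,\dots,\nu_*X_n,\nu)\,\det(X_1,\dots,X_n,f)=\pm\det\bigl(h(X_i,X_j)\bigr)$, and combined with apolarity for $\mathcal H$ this gives apolarity for $\mathcal M$ with respect to $\nu$; only then do Theorem \ref{complete-has} and Cheng--Yau uniqueness legitimately enter. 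A secondary point: your first paragraph yields only $\nu(\mathcal H)\subset\mathcal C^*$, hence that the cone over $\mathcal M$ is contained in $\mathcal C^*$; equality of the asymptotic cone with $\mathcal C^*$ is a real additional step (the inclusion alone does not force it), and deferring it to \cite{gigena78,loftin10} is defensible given the paper cites the same sources for the whole theorem, but it should be flagged as an appeal to the literature rather than folded into ``by the previous paragraph.''
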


We will also use the relationship between the conormal map and the \emph{Legendre transform}. If $v\!:\Omega\to\re$ is a smooth convex function on a convex domain $\Omega\subset\re^n$ with coordinates $x^i$, the we define the Legendre transform function $v^*$ by
$$ v^* + v = x^i\,\frac{\partial v}{\partial x^i}.$$
The function $v^*$ is considered primarily as a function of the variables $\frac{\partial v}{\partial x^i}$, and as such it is an involution on the space of convex functions. If $\mathcal H$ is a hypersurface given by a radial graph of $-\frac1v$ for  a convex function $v$, $$\mathcal H = \left\{-\frac1{v(x)}(x^1,\dots,x^n,1) : x\in\Omega\right\},$$
Then the image of the conormal map of $\mathcal H$ is given by
\begin{equation} \label{leg-conormal}
\left\{\left(-\frac{\partial v}{\partial x^1}, \cdots, -\frac{\partial v}{\partial x^n}, v^*\right)\right\}.
\end{equation}
Therefore, the conormal map essentially (up to a few minus signs) interchanges the radial graph of $-\frac1v$ with the Cartesian graph of the Legendre transform $v^*$.

We also mention here the relationship between hyperbolic affine spheres and a real Monge-Amp\`ere equation, which is due to Calabi \cite{calabi72}.  The formulation here also depends on results in Gigena \cite{gigena81}.

\begin{thm} \label{has-ma-thm}
Given a properly convex domain $\Omega\subset\re^n\subset\rp^n$, the hyperbolic affine sphere asymptotic to the boundary of the cone over $\Omega$ is given by the radial graph of $-\frac1v$ $$\left\{-\frac1{v(x)}(x,1): x\in \Omega\right\},$$ for $v$ the convex solution unique solution of the Dirichlet problem $v$ continuous on $\bar\Omega$, $v=0$ on $\partial\Omega$, and
\begin{equation} \label{ma-eq}
\det \left(v_{ij} \right) = \left(-\frac1v\right)^{n+2},
\end{equation}
for $v_{ij}$ the Hessian matrix of $v$.
The Blaschke metric is $-\frac1v v_{ij} \,dx^i\,dx^j$.
\end{thm}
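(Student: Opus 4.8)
The plan is to reduce the theorem to a single local differential-geometric identity: for any radial graph $-1/v$ over $\Omega$ the position vector field is automatically an equiaffine transversal, and it is the \emph{affine normal} precisely when $v$ satisfies the Monge--Amp\`ere equation (\ref{ma-eq}). Throughout I take as given the existence, uniqueness and boundary regularity of the convex solution $v$ of the Dirichlet problem; this is the content of the Cheng--Yau analysis \cite{cheng-yau77,cheng-yau86} together with the clarifications of Gigena \cite{gigena81} already cited, and it can also be extracted from Theorem \ref{complete-has}.

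Step 1: set-up. Since the right-hand side $(-1/v)^{n+2}$ of (\ref{ma-eq}) is positive, $v$ is strictly convex, so the Hessian $(v_{ij})$ is positive definite and $h:=-\frac1v v_{ij}\,dx^i\,dx^j$ is a Riemannian metric on $\Omega$. Put $\rho=-1/v>0$ and $f(x)=\rho(x)\,(x,1)\in\re^{n+1}$, with image $\mathcal H=f(\Omega)$. Because $v<0$ in $\Omega$ and $v\to0$ at $\partial\Omega$ (Cheng--Yau boundary regularity), $\rho\to+\infty$ there, so $\mathcal H$ lies inside the cone $\mathcal C$ over $\Omega$ and recedes to infinity along the boundary rays of $\mathcal C$; thus $\mathcal H$ is asymptotic to $\partial\mathcal C$, and it is locally strictly convex since its affine second fundamental form (computed next) is positive definite.

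Step 2: equiaffine data relative to $\zeta:=f$. Writing $u=(x,1)$ and $e_i=\partial_i u$, one has $f_i=\rho_i u+\rho e_i$ and $D_Xf=f_*X$, so $\zeta=f$ is already an equiaffine transversal ($\tau=0$) with shape operator $S=-\mathrm{id}$; consequently, once $\zeta$ is shown to be the affine normal, $\mathcal H$ is automatically an affine sphere centered at the origin with affine mean curvature $-1$, i.e.\ a hyperbolic affine sphere. Expanding $f_{ij}=\rho_{ij}u+\rho_ie_j+\rho_je_i$ in the basis $\{f_1,\dots,f_n,f\}$ of $\re^{n+1}$ and substituting $\rho=-1/v$ gives the second fundamental form $h^\zeta_{ij}=\rho^{-1}\rho_{ij}-2\rho^{-2}\rho_i\rho_j=-v_{ij}/v$, while one column reduction gives the induced volume element $\det(f_1,\dots,f_n,f)=\rho^{\,n+1}=(-1/v)^{n+1}$. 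Step 3: identify $\zeta$ with the affine normal. The affine normal is the unique equiaffine transversal (up to the sign fixed by local convexity) whose induced volume element equals the Riemannian volume element of its second fundamental form; with the quantities just found this condition reads $(-1/v)^{n+1}=\big(\det(-v_{ij}/v)\big)^{1/2}=(-1/v)^{n/2}(\det v_{ij})^{1/2}$, which after squaring is exactly (\ref{ma-eq}). Hence $\zeta=f$ is the affine normal, $h^\zeta=h$ is the Blaschke metric, and $\mathcal H$ is the hyperbolic affine sphere with center $0$ and affine mean curvature $-1$ asymptotic to $\partial\mathcal C$; by the uniqueness clause of Theorem \ref{complete-has} it coincides with the hyperbolic affine sphere of the statement. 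For the converse, one writes the hyperbolic affine sphere of Theorem \ref{complete-has} as a radial graph $-1/v$ over $\Omega$ (it projects diffeomorphically to $\Omega$); strict convexity of $\mathcal H$ gives $h^\zeta_{ij}=-v_{ij}/v>0$, hence $v$ is convex, and since $\zeta=f$ is now known to be the affine normal the volume identity of Step 3 holds, yielding (\ref{ma-eq}), while the asymptotic condition together with Cheng--Yau's boundary behavior forces $v\in C^0(\bar\Omega)$ with $v|_{\partial\Omega}=0$.

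I expect the only genuinely delicate points to be: (i) the characterization of the affine normal by the volume-matching condition, which I would justify by the standard change-of-transversal formulas --- under $\zeta\mapsto\phi\zeta+(\text{tangent})$ one has $h_\zeta\mapsto\phi^{-1}h_\zeta$, the induced volume $\mapsto\phi\cdot(\text{induced volume})$, and the metric volume $\mapsto\phi^{-n/2}\cdot(\text{metric volume})$, so the two normalizations plus equiaffineness force $\phi=1$ and then kill the tangential part; and (ii) the boundary regularity $v\in C^0(\bar\Omega)$, $v|_{\partial\Omega}=0$, and completeness of $h$, which are the deep analytic input and are quoted from Cheng--Yau and Theorem \ref{complete-has} rather than reproved. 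The frame expansion and the determinant in Step 2 are routine. An alternative to Steps 2--3 would be to apply the conormal map (\ref{leg-conormal}) to convert the radial graph of $-1/v$ into the Cartesian graph of the Legendre transform $v^*$ and invoke the classical formula for the Blaschke metric of a convex graph, but the direct radial computation seems cleaner and more self-contained.
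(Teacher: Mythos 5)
Your computation is correct, but there is nothing in the paper to compare it against: Theorem \ref{has-ma-thm} is stated as a classical result, attributed to Calabi \cite{calabi72} with the formulation depending on Gigena \cite{gigena81}, and with the Dirichlet problem solved by Loewner--Nirenberg \cite{loewner-nirenberg} and Cheng--Yau \cite{cheng-yau77}; no proof is given in the text. What you wrote is, in substance, the standard argument from those sources. The local identity checks out: for the radial graph $f=-\frac1v(x,1)$ with transversal $\zeta=f$ one indeed gets $h^{\zeta}_{ij}=\rho^{-1}\rho_{ij}-2\rho^{-2}\rho_i\rho_j=-v_{ij}/v$ and $\det(f_1,\dots,f_n,f)=(-1/v)^{n+1}$, the change-of-transversal scalings you quote ($h\mapsto\phi^{-1}h$, induced volume $\mapsto\phi\cdot$, metric volume $\mapsto\phi^{-n/2}\cdot$, and $\tau'(X)=\tau(X)+h(X,Z)$ killing the tangential part) do characterize the affine normal, and the volume-matching condition squares to exactly (\ref{ma-eq}); since $D_Xf=f_*X$ gives $S=-\mathrm{id}$, the graph is an affine sphere centered at $0$ with affine mean curvature $-1$ and Blaschke metric $-\frac1v v_{ij}\,dx^i\,dx^j$, as claimed.

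Two points to tighten. First, when you invoke the uniqueness clause of Theorem \ref{complete-has} you should justify ``properly embedded in $\mathcal C$ and asymptotic to $\partial\mathcal C$'' more carefully than ``$\rho\to+\infty$ so it recedes to infinity'': the clean statement is that $v\in C^0(\bar\Omega)$, $v|_{\partial\Omega}=0$, $v<0$ in $\Omega$ makes the graph map proper (a bounded last coordinate confines $x$ to a sublevel set $\{v\le-\epsilon\}$, which is compact), and asymptotic to $\partial\mathcal C$ here means the open cone $\mathcal C$ is the interior of the convex hull of $\mathcal H\cup\{0\}$, not a Euclidean-distance assertion, which your one-line argument does not quite address. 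Second, be aware that all of the genuinely hard content of the theorem --- existence, uniqueness and boundary continuity of the convex solution, and completeness of the Blaschke metric --- is quoted from Cheng--Yau and Gigena in your write-up exactly as in the paper, so your contribution is the (correct) differential-geometric reduction, not the analytic core; that is a perfectly reasonable division of labor, but it should be stated as such.
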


Loewner-Nirenberg first solved this equation for convex domains with regular boundary in dimension two \cite{loewner-nirenberg}. Cheng-Yau solved this equation in the general case \cite{cheng-yau77}.

\subsection{Benoist-Hulin's convergence of invariant tensors}

For the reader's convenience we provide a new proof of Benoist-Hulin's theorem.  The new proof is different in its treatment of the higher-order estimates of the Monge-Amp\`ere equation.

\begin{thm}\cite{benoist-hulin13} \label{converge-has-potential}
Let $\Omega_j,\Omega_\infty$ be bounded convex domains in $\re^n\subset\rp^n$. Assume $\Omega_j\to\Omega_\infty$ converges in the Hausdorff topology with respect to the Fubini-Study metric on $\rp^n$. Then the solutions $v_j$ to the Dirichlet problem (\ref{ma-eq}) on $\Omega_j$ converge in $C^\infty_{\rm loc}$ to the solutions $v_\infty$ on $\Omega_\infty$.
\end{thm}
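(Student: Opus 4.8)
The plan is to combine a comparison principle for the Monge--Amp\`ere equation (\ref{ma-eq}) with interior a priori estimates that are uniform along the sequence, and then to identify the limit by the uniqueness half of Theorem \ref{has-ma-thm}. First I would record that (\ref{ma-eq}) obeys a comparison principle: if $v,w$ are convex solutions on a common domain $D$, smooth and strictly convex in the interior, with $v\le w$ on $\partial D$, then $v\le w$ in $D$. Indeed, at an interior maximum of $v-w$ one has $D^2v\le D^2w$ as positive symmetric matrices, hence $\det D^2v\le\det D^2w$, i.e.\ $(-v)^{-(n+2)}\le(-w)^{-(n+2)}$, i.e.\ $v\le w$ there. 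Applied on subdomains this gives: if $\Omega'\subset\Omega''$ then $v_{\Omega''}\le v_{\Omega'}$ on $\Omega'$, so enlarging the domain makes the solution more negative. Now fix $K\Subset\Omega_\infty$ and choose fixed bounded convex domains $\Omega_-,\Omega_+$ with $K\Subset\Omega_-\Subset\Omega_\infty$ and $\overline{\Omega_\infty}\subset\Omega_+$; Hausdorff convergence gives $\Omega_-\subset\Omega_j\subset\Omega_+$ for $j$ large, so $v_{\Omega_+}\le v_j\le v_{\Omega_-}$ on $\Omega_-$. Thus on each $K\Subset\Omega_\infty$ the $v_j$ are bounded, and bounded away from $0$, uniformly in $j$, and consequently so is the right-hand side $(-1/v_j)^{n+2}$ of (\ref{ma-eq}).

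Next I would upgrade this to uniform interior estimates of all orders. Since each $v_j$ has \emph{constant} boundary values, Pogorelov's interior $C^2$ estimate applies; together with the interior gradient bound coming from convexity and the $C^0$ bound, it yields a bound on $D^2v_j$ on each $K\Subset\Omega_\infty$ uniform in $j$, and since $D^2v_j\ge cI$ by strict convexity the equation is uniformly elliptic there. An Evans--Krylov--Caffarelli-type interior estimate then gives uniform $C^{2,\alpha}_{\rm loc}$ bounds, and — the coefficient $(-1/v_j)^{n+2}$ now being controlled in $C^\alpha_{\rm loc}$ by the $C^0$ bound away from $0$ — Schauder theory bootstraps to uniform $C^k_{\rm loc}$ bounds for every $k$. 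This higher-order step is the technical heart and the place where the present treatment departs from earlier ones; one natural way to organize it is to pass to the affine sphere picture of Theorems \ref{complete-has} and \ref{has-ma-thm}, where, once the surfaces converge in $C^{1,1}_{\rm loc}$, the Gauss and Codazzi equations for the Blaschke metric and the cubic form bootstrap the regularity, with all relevant quantities controlled purely in terms of the $C^0$ bounds from the first step. By Arzel\`a--Ascoli and a diagonal argument, any subsequence of $(v_j)$ has a further subsequence converging in $C^\infty_{\rm loc}(\Omega_\infty)$ to a convex function $v_*$ solving (\ref{ma-eq}) on $\Omega_\infty$.

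It then remains to see $v_*=v_\infty$, and by the uniqueness asserted in Theorem \ref{has-ma-thm} it suffices that $v_*$ extend continuously to $\overline{\Omega_\infty}$ with $v_*|_{\partial\Omega_\infty}=0$. For this I would use the boundary estimates underlying the Loewner--Nirenberg/Cheng--Yau existence theorem \cite{loewner-nirenberg,cheng-yau77}: the $v_j$ are equicontinuous up to $\partial\Omega_j$ with a modulus of continuity at the boundary depending only on the diameter and inradius of $\Omega_j$ (which are uniformly controlled, since $\Omega_j$ is uniformly fat for $j$ large). Extending each $v_j$ by $0$ across $\partial\Omega_j$ and applying Arzel\`a--Ascoli on a fixed compact neighborhood of $\overline{\Omega_\infty}$ upgrades the convergence to uniform convergence on $\overline{\Omega_\infty}$; the limit is continuous, vanishes on $\partial\Omega_\infty$, and solves (\ref{ma-eq}) inside, so it equals both $v_*$ and $v_\infty$. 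Since every subsequence of $(v_j)$ has a sub-subsequence converging to $v_\infty$, the full sequence converges to $v_\infty$ in $C^\infty_{\rm loc}$.

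The main obstacle I anticipate is making the a priori estimates genuinely \emph{uniform in $j$}: the interior $C^2$ (Pogorelov) estimate, whose usual proof involves the $C^2$ norm of the right-hand side and is therefore circular for (\ref{ma-eq}) unless run with care for the specific structure of the equation (the affine sphere reformulation is one way to break the circularity); and, relatedly, ensuring the boundary modulus of continuity in the last step does not degenerate as $\Omega_j\to\Omega_\infty$. Once these two uniformities are secured, the comparison principle and the uniqueness theorem do the rest.
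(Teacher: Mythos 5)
Your overall architecture --- comparison principle for the $C^0$ bounds, convexity for the gradient bound, Pogorelov for $C^2$, Evans--Krylov plus a Schauder bootstrap, Arzel\`a--Ascoli, identification of the limit --- is essentially the architecture of the paper's proof, but the step you yourself single out as the heart of the matter is not closed as written. You cannot apply Pogorelov's interior estimate directly on $\Omega_j$ merely because $v_j$ has constant boundary values: the estimate (Theorem 17.19 of \cite{gilbarg-trudinger}) needs control of the nonlinearity $(-1/v)^{n+2}$ and its first two derivatives over the range of values of the solution, and this function blows up as $v\to 0$, i.e.\ exactly at $\partial\Omega_j$. Your proposed escape route --- pass to the affine sphere picture ``once the surfaces converge in $C^{1,1}_{\rm loc}$'' and bootstrap via Gauss--Codazzi --- presupposes precisely the uniform $C^2_{\rm loc}$ control you are trying to establish, so it does not break the circularity. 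The fix used in the paper is simple: apply the Pogorelov-type estimate not on $\Omega_j$ but on the sublevel domains $\{v_j<-\epsilon\}$, which still have constant boundary values; there $v_j\in[-M,-\epsilon]$, so the right-hand side and its derivatives are controlled by the $C^0$ and $C^1$ bounds already in hand, and the $C^0_{\rm loc}$ convergence guarantees that a fixed compact $K\subset\subset\Omega_\infty$ sits inside $\{v_j<-\epsilon\}$ at a distance from its boundary that is uniform in $j$. With that modification your $C^{2,\alpha}$ and higher-order steps go through as you describe (the lower eigenvalue bound for $D^2v_j$ coming from the equation itself).

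Your identification of the limit also takes a different, and more laborious, route than necessary. The asserted equicontinuity of $v_j$ up to $\partial\Omega_j$ with a modulus depending only on diameter and inradius is not automatic from convexity and is not quoted in that uniform-in-the-domain form in \cite{loewner-nirenberg,cheng-yau77}; to justify it you would need barriers, e.g.\ comparison with the explicit affine-sphere solution over an enclosing simplex touching $\partial\Omega_j$ at the given boundary point (as in Subsection \ref{triangle-has} and its higher-dimensional analogue) --- keep in mind that in the intended applications $\Omega_\infty$ can be a triangle, so no boundary regularity of the domain is available. The paper bypasses boundary behavior altogether with a scaling identity you do not use: $t^{n/(n+1)}v_j(t^{-1}x)$ solves (\ref{ma-eq}) on $t\Omega_j$, and Hausdorff convergence gives $(1-\epsilon_j)\Omega_j\subset\Omega_\infty\subset(1+\epsilon_j)\Omega_j$, so domain monotonicity squeezes $v_j$ and $v_\infty$ between rescalings of one another and yields $v_j\to v_\infty$ in $C^0_{\rm loc}$ outright; the limit is pinned before any compactness argument, and the uniqueness statement of Theorem \ref{has-ma-thm} on $\Omega_\infty$ is never needed. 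Your route can be repaired with the simplex barriers, but the scaling squeeze is shorter and avoids the delicate uniformity at the boundary.
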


Since the projectively-invariant tensors the Blaschke metric and cubic tensor are formed from $v$ and its derivatives, we have the following result of Benoist-Hulin:
\begin{thm}\label{local-converge}
Under the hypotheses of the theorem, the Blaschke metrics and cubic tensors converge in $C^\infty_{\rm loc}$.
\end{thm}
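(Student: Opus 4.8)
The plan is to treat Theorem \ref{local-converge} as an immediate corollary of Theorem \ref{converge-has-potential}. Working in affine coordinates $x=(x^1,\dots,x^n)$ on a fixed $\re^n\subset\rp^n$ containing $\Omega_\infty$, Theorem \ref{has-ma-thm} exhibits the hyperbolic affine sphere over $\Omega$ as the radial graph of $-\frac1v$, with Blaschke metric $h=-\frac1v v_{ij}\,dx^i\,dx^j$; thus $h$ is a universal expression in $v$ and its second derivatives, carrying only the factor $-\frac1v$ in a denominator. The connection $\na$ is read off from the Gauss equation (\ref{gauss-eq}) applied to the immersion $f=-\frac1v(x,1)$: the Christoffel symbols $\Gamma^k_{ij}$ and the metric coefficients $h_{ij}$ are obtained by decomposing $\partial_i\partial_j f$ along the frame $\partial_1 f,\dots,\partial_n f,f$, so they are smooth functions of $v$ and its derivatives of order $\le 2$, with denominators that are positive powers of $-v$ (equivalently of $\det(v_{ij})$, which by (\ref{ma-eq}) equals $(-\frac1v)^{n+2}$). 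The cubic tensor, being the difference of the Levi-Civita connection of $h$ and $\na$, then involves derivatives of the $h_{ij}$, hence derivatives of $v$ of order $\le 3$, again with only powers of $-v$ in the denominator.

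With these formulas in hand the argument is short. Fix a compact set $K\subset\Omega_\infty$. Since $v_\infty$ is the convex solution of (\ref{ma-eq}) vanishing on $\partial\Omega_\infty$, the finiteness of the right-hand side $(-\frac1{v_\infty})^{n+2}$ (together with convexity and $v_\infty\le 0$) forces $v_\infty<0$ throughout the interior, so $v_\infty\le-\delta$ on $K$ for some $\delta>0$. By Theorem \ref{converge-has-potential}, for all large $j$ we have $K\subset\Omega_j$ and $v_j\to v_\infty$ in $C^\infty(K)$; in particular $v_j\le-\delta/2$ on $K$ once $j$ is large. Hence the denominators $-\frac1{v_j}$ (and $\det((v_j)_{ij})^{-1}=(-v_j)^{n+2}$) are bounded on $K$ and converge in $C^\infty(K)$, and feeding the $C^\infty(K)$-convergent data $(v_j,\partial v_j,\partial^2 v_j,\partial^3 v_j,-\frac1{v_j})$ into the continuous algebraic operations described above yields $h_j\to h_\infty$ and, for the cubic tensors, $C_j\to C_\infty$ in $C^\infty(K)$. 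Since $K\subset\Omega_\infty$ was arbitrary, this is exactly the asserted $C^\infty_{\rm loc}$ convergence.

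The substantive analytic work — interior regularity and compactness for the Monge-Amp\`ere equation on a varying sequence of domains — is entirely contained in Theorem \ref{converge-has-potential}, so there is no serious obstacle here. The one point meriting attention is that the closed-form expressions for $h$ and for the cubic tensor carry $v$ (equivalently $\det(v_{ij})$) in the denominator, so one must know that the potential stays bounded away from $0$ on compact subsets of $\Omega_\infty$; as noted, this is forced by the Monge-Amp\`ere equation itself and, via the convergence $v_j\to v_\infty$, holds uniformly in $j$. Equivalently, one could argue intrinsically on $\mathcal H$: the immersions $f_j$ converge in $C^\infty_{\rm loc}$, the position vector $f$ is everywhere transverse to the tangent space of a hyperbolic affine sphere, and the Gauss equation (\ref{gauss-eq}) shows $h$ and $\na$ — hence their difference, the cubic tensor — depend continuously on finitely many derivatives of such a transverse immersion.
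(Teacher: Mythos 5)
Your proposal is correct and is essentially the paper's own argument: the paper deduces Theorem \ref{local-converge} directly from Theorem \ref{converge-has-potential} with the remark that the Blaschke metric and cubic tensor are formed from $v$ and its derivatives, which is exactly your route. Your added details (the formula $h=-\frac1v v_{ij}\,dx^i\,dx^j$, the Gauss-equation expression for $\na$ and the cubic tensor, and the uniform interior bound $v_j\le-\delta/2$ controlling the denominators) simply make explicit what the paper leaves implicit.
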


\begin{proof}[Proof of Theorem \ref{converge-has-potential}]
Our new proof of Benoist-Hulin's theorem differs in its treatment of the $C^2_{\rm loc}$ estimates.  For $C^0_{\rm loc}$ estimates, we follow \cite{benoist-hulin13} by using the maximum principle.
Pick an inhomogeneous affine coordinate chart $\re^2\subset\rp^2$ so that $0\in\Omega_\infty$ and $\Omega_\infty$ is bounded in $\re^2$.
This implies there are $\epsilon_j\to0$ so that
$$(1+\epsilon_j)\Omega_j\supset\Omega_\infty \supset (1-\epsilon_j)\Omega_j.$$
For $v_j$ the solutions to the Dirichlet problem for (\ref{ma-eq}) on $\Omega_j$, the corresponding solution on $t\Omega_j$ is $t^{\frac n{n+1}} v_j (t^{-1}x)$, and the maximum principle implies that if $\mathcal O\subset \mathcal U$, then $v_{\mathcal O}\ge v_{\mathcal U}$. In particular, this shows
$$ (1+\epsilon_j)^{\frac n{n+1}} v_j(x/(1+\epsilon_j)) \le v_\infty(x) \le  (1-\epsilon_j)^{\frac n{n+1}} v_j(x/(1-\epsilon_j)),$$
which in turns shows $v_j\to v_\infty$ in $C^0_{\rm loc}$ on $\Omega_\infty$. Define
$$v_j^+(x) = (1+\epsilon_j)^{\frac n{n+1}} v_j(x/(1+\epsilon_j)), \quad v_j^-(x) = (1-\epsilon_j)^{\frac n{n+1}} v_j(x/(1-\epsilon_j)).$$
Then $v_j^-\to v_\infty$ in $C^0_{\rm loc}$, $v_j^+\to v_\infty$ in $C^0_{\rm loc}$, and $v_j^+(x)\ge v_j(x)\ge v_j^-(x)$.

The $C^1_{\rm loc}$ estimates depend only on convexity.  Let $T$ be a large triangle in $\re^2$ which contains $\Omega_\infty$ and all the $\Omega_k$ for $k$ large. Then the solution $w_T$ to the Monge-Amp\`ere equation has a minimum value $-M$, and the maximum principle shows that the solutions $v_\Omega$ and $w_{\Omega_k}$ also must satisfy $w\ge -M$.  Now for such a $w$, let $g=w(y+tv)$, where $t\in\re$, $y$ is a boundary point, and $v$ is a unit vector pointing into the domain.  Then $g$ is continuous, $g(0)=0$, and $g$ is smooth and strictly convex on an interval $(0,R)$. In particular, $g'$ is increasing on $(0,R)$. For $t\in(0,R),$
$$ g'(t) \ge \frac{g(t)-g(0)}{t-0} = \frac{g(t)}t \ge -\frac Mt.$$
Together with the estimate along the same ray traversed in the opposite direction, this shows $|g'|$ is uniformly bounded on any compact set, with the bound depending on the diameter of the domain and the distance to the boundary.

For the interior $C^2$ estimates, we have the following standard result following Pogorelov.  The following theorem we quote is a direct application of Theorem 17.19 in \cite{gilbarg-trudinger}.  We note that we must restrict to a sub-level domain $\{v<-\epsilon\}$, as estimates on the function $v\mapsto \left(-\frac1v\right)^{n+2}$ and its first two derivatives are need to apply Theorem 17.19.

\begin{thm}\label{c2-est}
Consider the solution $v$ to the Dirichlet problem (\ref{ma-eq}) on a bounded convex domain $\Omega\subset\re^n$.  Let $\epsilon>0$, and let $\mathcal O = \{v<-\epsilon\}$. Then there is a constant $C$ depending on $n$, $\epsilon$, $\|v\|_{C^1(\mathcal O)}$ and the diameter of $\mathcal O$ so that if $\mathcal O' \subset\subset \mathcal O$, then
$$ \sup_{\mathcal O'} |D^2v|\le \frac C{{\rm dist}(\mathcal O',\partial \mathcal O)}.$$
\end{thm}

By the arguments above, we have $v_k^+ \to v$,  $v_k^- \to v$, and $v_k^+\ge v\ge v_k^-$.  On each compact set, convexity shows that the convergence is uniform (this follows from the $C^1$ estimates above and Ascoli-Arzela). Since $v_k^+\ge v_k \ge v_k^-$, we have that $v_k\to v$ uniformly on compact subsets of $\Omega_\infty$. Now let $K$ be a compact subset of $\Omega_\infty$.  For large $k$, $K\subset\Omega_k$ as well.  By continuity, $\max_K v= -3\epsilon$ for some $\epsilon>0$.  The set $\{v\le -2\epsilon\}$ is also compact, and by uniform convergence, we can see that for large enough $k$, we have
$$\{v_k<-\epsilon\}\supset \{v < -2\epsilon\} \supset \{v \le -3\epsilon\} \supset K.$$
So if we define $\mathcal O_k = \{v_k<-\epsilon\}$,
$${\rm dist}(K,\partial \mathcal O_k) >{\rm dist}(K,\{v=-2\epsilon\})>0.$$
Moreover, the diameter of $\mathcal O_k$ is bounded by that of $ \Omega_k$, which is uniformly bounded.  Thus we have estimates for Theorem \ref{c2-est} which are independent of $k$, and on $K$, we have uniform $C^2$ estimates on $v_k$.

Finally, we use the Evans-Krylov estimates to find interior $C^{2,\alpha}$ estimates.  See Theorem 17.14 in \cite{gilbarg-trudinger}. In particular, on any compact subset $K$ of $\Omega$, there is an $\alpha\in(0,1)$ so that the $C^{2,\alpha}$ estimates on a slightly smaller compact subset $K'$ depends only on the distance ${\rm dist}(K',\partial K)$, the $C^2$ estimates of $v$ on $K$, and bounds on the eigenvalues of the Hessian matrix of $v$.  These estimates are similar to but easier to apply than the Pogorelov estimates above.  The main new ingredient is to bound the eigenvalues of the Hessian matrix of $v_k$. The largest eigenvalue is bounded by Pogorelov's bounds on the second derivatives, while the smallest eigenvalue is bounded away from 0 by using the Monge-Amp\`ere equation $\det(v_{k,ij}) = (-v_k)^{-n-2}$ and the bounds away from 0 on $v_k$.

We have shown so far that on every compact $K\subset \Omega_\infty$, there are uniform $C^{2,\alpha}$ bounds on $v$ and $v_k$ for $k$ large, and also $v_k\to v$ uniformly.  This means that the Ascoli-Arzela Theorem applies to show that (subsequentially at least), $v_k\to v$ in $C^2$ on $K$. But every subsequence of $\{v_k\}$ then has a subsequence converging to $v$ in $C^2$, and so we see $v_k\to v$ in $C^2$ on $K$.

Higher-order interior estimates and convergence are standard once $C^{2,\alpha}$ estimates are in place, and so $v_k\to v$ in $C^\infty_{\rm loc}$ on $\Omega_\infty$.
\end{proof}

Benoist-Hulin's $C^\infty_{\rm loc}$ convergence of affine invariants is quite strong.  However, the conformal structure at the end of a surface is not quite local in this sense, and so we will need to expend a more effort to compute the conformal structures at the ends.

\subsection{An estimate on Blaschke metrics}

We begin this subsection with a quantitative version of the following theorem of Cheng-Yau \cite{cheng-yau86} (as clarified by Li \cite{li90}): A hyperbolic affine sphere $\mathcal H$ with complete Blaschke metric is properly embedded in $\re^{n+1}$ and is asymptotic to the boundary of the convex cone given by the convex hull of $\mathcal H$ and its center.  We consider a quantitative version involving geodesic balls of large radius.

\begin{prop} \label{cone-quant}
Let $\mathcal H$ be a hyperbolic affine sphere given by the radial graph of $-\frac1v$, where $v$ solves the Dirichlet problem (\ref{ma-eq}) over a convex domain $\Omega$. Let  $v$ be normalized so that $v(0)=-1$ and $dv(0)=0$.  Assume there are positive constants $\gamma,\delta,\epsilon$, and consider the ball in $\Omega$ the ball $\{x:|x|<\epsilon\}$. Assume the Blaschke arc-length $\ell(0,x)$ of radial paths $\{tx:t\in[0,1]\}$ satisfies
\begin{equation}\label{arc-length-lower-bound}
\ell(0,x)>\delta \quad \mbox{for}\quad |x|=\epsilon.
\end{equation}
Also assume
\begin{equation} \label{v-grad-bound-below}
v(x)>-1+\gamma,\quad \frac1\epsilon x^i\frac{\partial v}{\partial x^i}(x)>\gamma \quad \mbox{for}\quad |x|=\epsilon.
\end{equation}
Then there are constants $C=C(n)$ and $A=A(C,\gamma,\delta)$ so that
if $B^h_0(Q)$ is the geodesic ball centered at $x=0$ of radius $Q$, then
$$
 \{x : v(x) < - e^{(A-Q)/C} \} \subset B_0^h(Q).
$$
\end{prop}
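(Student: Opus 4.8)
The plan is to reduce the set containment to a single pointwise estimate: it suffices to produce $C=C(n)$ and $A=A(C,\gamma,\delta)$ with
\begin{equation*}
d^h(0,x)\ \le\ A+C\log\tfrac1{-v(x)}\qquad\text{for every }x\in\Omega,
\end{equation*}
where $d^h$ is Blaschke distance, since then $v(x)<-e^{(A-Q)/C}$ forces the right side to be $<Q$ and hence $x\in B^h_0(Q)$. To bound $d^h(0,x)$ I would use the radial segment. Writing $x=s_x\omega$ with $|\omega|=1$ and $g(s)=v(s\omega)$, the function $g$ is strictly convex on $[0,s_x]$ with $g(0)=-1$, $g'(0)=0$ (because $dv(0)=0$), and $g'>0$ on $(0,s_x]$, while the Blaschke metric restricts to this ray as $\tfrac{g''}{-g}\,ds^2$. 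Thus $d^h(0,x)\le\ell(0,x)$, and for $s_x>\epsilon$ we split $\ell(0,x)=\ell(0,\epsilon\omega)+\int_\epsilon^{s_x}\sqrt{g''/(-g)}\,ds$ (the case $s_x\le\epsilon$ being handled by estimating $\ell(0,x)\le\ell(0,\epsilon\omega)$ directly).

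The crux is a differential inequality of the form
\begin{equation*}
(-g)\,g''\ \le\ C^2(g')^2\qquad\text{for }s\ge\epsilon,
\end{equation*}
with $C=C(n)$. It is equivalent to $\sqrt{g''/(-g)}\le C\,g'/(-g)=C\,\tfrac{d}{ds}\log\tfrac1{-g}$, so granting it the outer integral is at most $C\log\tfrac1{-v(x)}-C\log\tfrac1{-v(\epsilon\omega)}$, and one is reduced to controlling $\ell(0,\epsilon\omega)$. I would establish the inequality from the convexity of $g$ together with the Monge-Amp\`ere equation~\eqref{ma-eq} and, where needed, the affine-sphere structure equation~\eqref{gauss-eq}: near $\partial\Omega$ one reads off directly from~\eqref{ma-eq} that $(-g)g''$ and $(g')^2$ are comparable with a purely dimensional constant, and on the bulk the comparison is made uniform by the estimates of the next paragraph. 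The restriction $s\ge\epsilon$ is forced — the inequality necessarily degenerates as $s\to0^+$, where $g'\to0$ but $g''\not\to0$ — and the hypothesis $\tfrac1\epsilon x^i v_i>\gamma$ on $\{|x|=\epsilon\}$, together with convexity (which then gives $g'\ge\gamma$ for all $s\ge\epsilon$), is exactly what guarantees the radial ray has moved far enough off the critical point $s=0$.

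For the remaining term I would show that the hypotheses force a uniform bound $\ell(0,\epsilon\omega)+C\log(-v(\epsilon\omega))\le A'(C,\gamma,\delta)$ for every relevant unit $\omega$; taking $A=A'$ and combining then gives the pointwise estimate. The key point is that the normalizations $v(0)=-1$, $dv(0)=0$ together with the bounds $v>-1+\gamma$ and $\tfrac1\epsilon x^iv_i>\gamma$ on $\{|x|=\epsilon\}$ confine $\Omega$ to a compact family of convex bodies (roughly, they trap $\Omega$ between two fixed ellipsoids), so the associated hyperbolic affine spheres lie in a compact family and Benoist-Hulin's $C^\infty_{\rm loc}$ convergence, Theorem~\ref{local-converge}, makes all comparisons uniform; the term $C\log(-v(\epsilon\omega))$ absorbs the (otherwise unbounded) growth of $\ell(0,\epsilon\omega)$ when the radial ray happens to exit $\Omega$ just past $s=\epsilon$, the lower bound~\eqref{arc-length-lower-bound} being used to control the residue. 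The step I expect to be the main obstacle is precisely the differential inequality $(-g)g''\le C^2(g')^2$ for $s\ge\epsilon$ \emph{with $C$ depending on $n$ alone}, uniformly over this family of domains — and this is exactly why both the quantitative hypotheses on $v$ and its radial derivative on $\{|x|=\epsilon\}$ and the Benoist-Hulin convergence theorem are indispensable.
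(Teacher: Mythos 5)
Your high-level plan — reduce to a pointwise distance estimate, integrate the Blaschke length element along a radial ray, and split the ray at $|x|=\epsilon$ — is the same as the paper's. But the crux of your argument, the differential inequality $(-g)\,g''\le C^2(g')^2$ for $s\ge\epsilon$ with $C=C(n)$, is false. On the hyperboloid over the unit ball one has $g(s)=-\sqrt{1-s^2}$, giving $(-g)g''/(g')^2 = 1/s^2$, which is $1/\epsilon^2$ at $s=\epsilon$; a dimensional constant cannot absorb this, and in general no such pointwise bound holds without an extra term. The correct inequality, which the paper derives, has an additional summand: the Blaschke length element along the ray satisfies (in the paper's parametrization) $\sqrt{h_{ij}\bar x^i\bar x^j}\le C\bigl[\tfrac{d}{dt}\log\tfrac1{-v(t\bar x)}+\tfrac1t\bigr]$. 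The extra $C/t$ piece integrates from $\epsilon/|\bar x|$ to $1$ to $C\log(|\bar x|/\epsilon)$, which is then bounded by $C\log(1/\gamma)$ using the consequence $|\bar x|<\epsilon/\gamma$ of \eqref{v-grad-bound-below} and convexity. It contributes to $A$ rather than wrecking the estimate, but your version omits it entirely, and the suggested derivation (``one reads off directly from \eqref{ma-eq} that $(-g)g''$ and $(g')^2$ are comparable with a purely dimensional constant'') is not correct: the Monge--Amp\`ere equation only constrains the product of the eigenvalues of $v_{ij}$, not the individual radial entry, and near a flat boundary piece these eigenvalues are wildly anisotropic.

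The real mechanism is also different from what you propose. The paper obtains the dimensional constant not from compactness but from Calabi duality (Theorem \ref{dual-has}) and Yau's gradient estimate: pass to the dual affine sphere, note that its height function $p$ (the Legendre transform of $v$, see \eqref{leg-conormal} and \eqref{leg-transform}) is a positive eigenfunction of the Blaschke Laplacian on a complete surface with Ricci bounded below, so $\|d\log p\|_h\le C(n)$. Unwinding the Legendre duality relation $\partial p/\partial x^i = x^j v_{ij}$ and $p = x^iv_i - v$ then gives exactly the inequality above, with the $1/t$ term arising from the $-v$ part of $p$. Your appeal to Benoist--Hulin's Theorem \ref{local-converge} to ``make all comparisons uniform'' cannot replace this: that theorem gives $C^\infty_{\rm loc}$ convergence on compact interior sets, whereas the estimate you need must persist as $v(x)\to 0$, i.e.\ all the way to $\partial\Omega$. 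Without the duality/gradient-estimate input there is no way to get a boundary-uniform constant depending only on $n$, and that is precisely what the conclusion of the proposition requires.
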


\begin{proof}
Consider a hyperbolic affine sphere $\mathcal H$ normalized with its center at that origin, affine mean curvature $-1$. Moreover, assume $(0,\dots,0,1)\in\mathcal H$ and that the tangent plane at that point is horizontal. Consider the dual affine sphere $\mathcal H^*$ can be written as the graph $\{(y,p(y))\}$.  $\mathcal K = \{(-y,p(y))\}$, as the image of $\mathcal H^*$ under a volume-preserving linear map, is also a hyperbolic affine sphere.

We follow a suggestion in \cite{li-simon-zhao}, Remark 2.7.2.6(ii). The height function $p$ on $\mathcal K$ is a positive eigenfunction of the Laplacian with respect to the Blaschke metric $h$, which is complete and has Ricci curvature uniformly bounded below.  A gradient estimate of Yau \cite{schoen-yau94} then applies to show that there is a uniform constant $C$ depending only on the dimension so that
\begin{equation} \label{grad-estimate}
\|d(\log p)\|_h \le C
\end{equation}
Consider $v$ the Legendre transform of $p$.   Theorem \ref{dual-has} and  (\ref{leg-conormal}) show that $\mathcal H = \{-\frac1{v(x)}(-x,1):x\in\Omega\}$ is essentially the radial graph of $-\frac1v$. Recall the Legendre transform is given by
\begin{equation} \label{leg-transform}
p+v = x^iy_i,\qquad y_i = \frac{\partial v}{\partial x^i},\quad x^i = \frac{\partial p}{\partial y_i}.
\end{equation}
We primarily consider $p=p(y)$ and $v=v(x)$. Choose coordinates on $\re^{n+1}$ so that $v(0)=-1$ and $dv(0)=0$.   Since $v$ is convex, it has its minimum at $x=0$.
Differentiating (\ref{leg-transform}) shows that
\begin{equation} \label{dp-dx-equation}
 \frac{\partial p}{\partial x^i} = x^j\, \frac{\partial^2v}{\partial x^i\partial x^j}.
\end{equation}

We follow the proof of Theorem 2.7.1.9 in \cite{li-simon-zhao}. Use the expression for the Blaschke metric in Theorem \ref{has-ma-thm} above to compute for $\bar x\in\Omega$ the Blaschke length $\ell$ of the path $\mathcal P = \{t\bar x: 0\le t\le1\}$ to be
$$ \ell(0,\bar x) = \int_0^1 \left(-\frac1v\, \frac{\partial^2v}{\partial x^i \partial x^j} \bar x^i\bar x^j \right)^{\frac12}\,dt.$$

Assume (\ref{arc-length-lower-bound}) and (\ref{v-grad-bound-below}) and use (\ref{grad-estimate}), (\ref{leg-transform}) and (\ref{dp-dx-equation}). Let $v^{ij}$ denote the inverse matrix of $\frac{\partial^2v}{\partial x^i\partial x^j}$. Compute
\begin{eqnarray*}
\ell(0,\bar x) &\le& \delta + \int_{\epsilon/|\bar x|}^1 \left(-\frac1v\, \frac{\partial^2v}{\partial x^i \partial x^j} \bar x^i\bar x^j \right)^{\frac12}\,dt \\
&=& \delta + \int_{\epsilon/|\bar x|}^1 \left(-\frac1{vt^2} \, \frac{\partial^2v}{\partial x^i \partial x^j} (t\bar x^i)(t\bar x^j)\right)^{\frac12}\,dt \\
&=& \delta + \int_{\epsilon/|\bar x|}^1 \left(-\frac1{vt^2} \, v^{ij}\,\frac{\partial p} {\partial x^i} \,\frac{\partial p} {\partial x^j} \right)^{\frac12}\,dt \\
&=& \delta + \int_{\epsilon/|\bar x|}^1 -\frac1{vt} \, \|dp\|_h \,dt \\
&\le& \delta + \int_{\epsilon/|\bar x|}^1 -\frac C{vt} \, p \,dt \\
&=& \delta + \int_{\epsilon/|\bar x|}^1 -\frac C{vt}\left(t \bar x^j\frac{\partial v}{\partial x^j} - v\right) \,dt \\
&=& \delta - C\int_{\epsilon/|\bar x|}^1 d(\log|v|)(\mathcal P(t))  + \int_{\epsilon/|\bar x|}^1 \frac Ct\,dt \\
&=& \delta + C\left[-\log|v(\bar x)| + \log\left|v\left(\frac{\epsilon\bar x}{|\bar x|}\right)\right| - \log\left|\frac{\epsilon}{|\bar x|}\right|\right]\\
&\le& \delta - C\log|v(\bar x)| + C\log(1-\gamma) - C\log\epsilon + C \log |\bar x|.
\end{eqnarray*}
Now the convexity of $v$, together with (\ref{v-grad-bound-below}) and  $\Omega =\{v<0\}$, implies $|\bar x| < \frac\epsilon\gamma$, and so there is a constant $A = A(C,\gamma,\delta)$ so that
$$
d_h(0,\bar x)\le \ell(0,\bar x) \le A - C\log|v(\bar x)|
$$
for $d_h$ the Blaschke distance.
\end{proof}

Now we use Proposition \ref{cone-quant} to show that sequences of points in convex domains must be separated from each other if their Blaschke distance approaches $\infty$.  In fact, the set of sequences of points in $\Omega_j$ which converge in the Benz\'ecri sense may be partitioned into equivalence classes according to whether their Blaschke distances remain bounded.

\begin{prop} \label{separate-O}
Let $\Omega_j\to\mathcal O$ be a convergent sequence of properly convex domains in $\rp^n$ with respect to the Hausdorff topology.  Assume $\rho_j\Omega_j\to\mathcal U$ for $\rho_j\in\Sl{n+1}$. Assume $q_j\to q$ for $q_j\in\Omega_j$ and $q\in\mathcal O$, and $r_j\to r$ for $r_j\in\rho_j\Omega_j$ and $r\in\mathcal U$.  Assume that the Blaschke distance $d_{\Omega_j}(q_j,\rho_j^{-1}r_j)\to \infty$. Then there does not exist a sequence $z_j\in\Omega_j$ so that $z_j\to z\in\mathcal O$ and $\rho_j z_j\to w\in\mathcal U$.
\end{prop}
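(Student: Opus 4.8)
The plan is to argue by contradiction, using the projective invariance of the Blaschke metric together with the uniform distance estimate of Proposition~\ref{cone-quant}. Suppose there were a sequence $z_j\in\Omega_j$ with $z_j\to z\in\mathcal O$ and $\rho_j z_j\to w\in\mathcal U$. Since the Blaschke distance $d_{\Omega_j}$ is invariant under $\Sl{n+1}$, one has $d_{\Omega_j}(z_j,\rho_j^{-1}r_j)=d_{\rho_j\Omega_j}(\rho_j z_j,r_j)$, so the triangle inequality on $\Omega_j$ would give
$$ d_{\Omega_j}(q_j,\rho_j^{-1}r_j)\ \le\ d_{\Omega_j}(q_j,z_j)\ +\ d_{\rho_j\Omega_j}(\rho_j z_j,r_j). $$
Thus I would only need to show that each term on the right stays bounded as $j\to\infty$; this contradicts the hypothesis $d_{\Omega_j}(q_j,\rho_j^{-1}r_j)\to\infty$.

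To bound each term I would establish the following general fact: if $\Lambda_j\to\Lambda$ is a Hausdorff-convergent sequence of bounded properly convex domains, and $a_j\to a$, $b_j\to b$ with $a_j,b_j\in\Lambda_j$ and $a,b$ interior points of $\Lambda$, then $\limsup_j d_{\Lambda_j}(a_j,b_j)<\infty$. To prove this I would apply, for each $j$, a projective transformation $g_j$ carrying the point of the hyperbolic affine sphere over $\Lambda_j$ lying above $a_j$ to $(0,\dots,0,1)$ with horizontal affine tangent plane, so that Proposition~\ref{cone-quant} applies to $g_j\Lambda_j$ with center $a_j$. Because $a_j\to a\in\Lambda$ and $\Lambda_j\to\Lambda$, these $g_j$ can be taken to converge, so that $g_j\Lambda_j\to g\Lambda$ in the Hausdorff sense and $g_jb_j$ converges to an interior point of $g\Lambda$. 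By Benoist--Hulin's Theorem~\ref{converge-has-potential} the potentials $v_j$ of $g_j\Lambda_j$ then converge in $C^\infty_{\rm loc}$, so $v_j(g_jb_j)\le-\epsilon_0$ for a $j$-independent $\epsilon_0>0$ (equivalently, $g_jb_j$ stays a definite Euclidean distance from $\partial(g_j\Lambda_j)$, which also follows from comparison with an inscribed ball and the maximum principle), and the normalization hypotheses (\ref{arc-length-lower-bound})--(\ref{v-grad-bound-below}) hold with constants $\gamma,\delta,\epsilon$ independent of $j$. Proposition~\ref{cone-quant} then gives $d_{g_j\Lambda_j}(a_j,g_jb_j)\le A+C\log(1/\epsilon_0)$ uniformly in $j$, and by invariance $d_{\Lambda_j}(a_j,b_j)=d_{g_j\Lambda_j}(a_j,g_jb_j)$.

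Finally I would apply this fact twice: to $\Lambda_j=\Omega_j\to\mathcal O$ with $(a_j,b_j)=(q_j,z_j)$ to bound $d_{\Omega_j}(q_j,z_j)$, and to $\Lambda_j=\rho_j\Omega_j\to\mathcal U$ with $(a_j,b_j)=(r_j,\rho_j z_j)$ to bound $d_{\rho_j\Omega_j}(\rho_j z_j,r_j)$; combined with the displayed triangle inequality this produces the contradiction. (An alternative to Proposition~\ref{cone-quant} here is Theorem~\ref{local-converge}: $C^0_{\rm loc}$ convergence of the Blaschke metrics lets one compare, in $\Lambda_j$ and in $\Lambda$, the length of a fixed path joining the limiting interior points; the route via Proposition~\ref{cone-quant} is preferable because it packages exactly the uniform control that is reused later when the base point is allowed to run into the thin part of moduli.) The step I expect to be the crux is the uniformity claimed in the middle paragraph: one must verify that after normalizing so that $a_j$ is the affine-sphere center, the domains $g_j\Lambda_j$ do not degenerate, so that $g_jb_j$ remains interior and the constants in Proposition~\ref{cone-quant} can be chosen independently of $j$. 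This is precisely where the hypotheses $\Lambda_j\to\Lambda$ and $a,b\in\Lambda$ enter, through the $C^\infty_{\rm loc}$ convergence of Theorem~\ref{converge-has-potential}.
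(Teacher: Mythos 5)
Your proposal is correct and follows essentially the same route as the paper: the contradiction comes from the invariance identity $d_{\Omega_j}(z_j,\rho_j^{-1}r_j)=d_{\rho_j\Omega_j}(\rho_jz_j,r_j)$, the triangle inequality, and uniform bounds on $d_{\Omega_j}(q_j,z_j)$ and $d_{\rho_j\Omega_j}(r_j,\rho_jz_j)$ forced by the Hausdorff convergence of the domains. The only difference is where the tools are deployed: the paper gets those two distance bounds directly from Theorem \ref{local-converge} (the alternative you mention), and uses Proposition \ref{cone-quant} -- with exactly your normalization at the basepoint, justified by Theorem \ref{converge-has-potential} -- only to record that $v_j(\rho_j^{-1}r_j)\to0$, i.e.\ that $\rho_j^{-1}r_j$ escapes every compact subset of $\mathcal O$.
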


\begin{proof}
Choose coordinates on $\mathcal O$ so that $q=0$ and so that the hyperbolic affine sphere $\mathcal H_{\mathcal O}$ is given by the radial graph of $-\frac1v$.  Also assume  $v(0)=-1$ and $dv(0)=0$.  In these coordinates,  Theorem \ref{converge-has-potential} shows that the corresponding functions $v_j$ on $\Omega_j$ converge to $v$ in $C^\infty_{\rm loc}$. In particular, we can change coordinates on $\Omega_j$ to assume $q_j=0$, $v_j(0)=-1$, and $dv_j(0)=0$, while still maintaining $\Omega_j\to\mathcal O$. The $C^\infty_{\rm loc}$ convergence of $v_j\to v$ implies that there are positive constants $\gamma,\delta,\epsilon$ so that the hypotheses of Proposition \ref{cone-quant} are satisfied.  If $d_{\Omega_j}(q_j,\rho_j^{-1}r_j) \ge Q_j$, then $v_j(\rho_j^{-1}r_j)\ge - e^{A-Q_j}$. Thus $v_j(\rho^{-1}r_j)\to 0$ and $\rho_j^{-1}r_j$ has no limit in $\mathcal O$.

To prove the result, assume there is such a sequence $z_j$. Note that $z$ and $q$ are of finite Blaschke distance from each other in $\mathcal O$, as are $r$ and $w$ in $\mathcal U$.  Thus Theorem \ref{local-converge} implies there is a constant $C$ so that for all $j$ large enough, the Blaschke distances $d_{\Omega_j}(z_j,q_j)\le C$ and $d_{\Omega_j}(z_j,\rho^{-1}_jr_j) = d_{\rho_j\Omega_j}(\rho_jz_j,r_j)\le C$.  Then the triangle inequality implies that $d_{\Omega_j}(z_j,z_j)\to\infty$, which provides a contradiction.
\end{proof}

\begin{cor}
Proposition \ref{separate-O} holds with the Hilbert metric in place of the Blaschke metric.
\end{cor}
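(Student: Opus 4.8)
The plan is to deduce the corollary at once from Proposition \ref{separate-O} by comparing the two metrics. The key input is the theorem of Benoist--Hulin \cite{benoist-hulin13,benoist-hulin14} that on \emph{every} properly convex domain $\Omega\subset\rp^n$ the Blaschke metric $h_\Omega$ and the Hilbert metric are bi-Lipschitz equivalent, with a bi-Lipschitz constant $C_n$ that depends only on the dimension $n$; this is proved by comparing the Finsler norm of the Hilbert metric with the Riemannian norm of the Blaschke metric at each point and integrating along paths. In particular, writing $d^{\rm B}_\Omega$ and $d^{\rm H}_\Omega$ for the Blaschke and Hilbert distances,
\begin{equation*}
C_n^{-1}\, d^{\rm H}_\Omega(x,y) \;\le\; d^{\rm B}_\Omega(x,y) \;\le\; C_n\, d^{\rm H}_\Omega(x,y) \qquad \text{for all } x,y\in\Omega,
\end{equation*}
with $C_n$ independent of $\Omega$.

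Granting this, suppose the hypotheses of the corollary hold, so in particular $d^{\rm H}_{\Omega_j}(q_j,\rho_j^{-1}r_j)\to\infty$. Since $C_n$ does not depend on $j$, the left-hand inequality gives $d^{\rm B}_{\Omega_j}(q_j,\rho_j^{-1}r_j)\ge C_n^{-1}\,d^{\rm H}_{\Omega_j}(q_j,\rho_j^{-1}r_j)\to\infty$. Hence every hypothesis of Proposition \ref{separate-O} is satisfied, and its conclusion --- which makes no reference to any metric --- applies verbatim: there is no sequence $z_j\in\Omega_j$ with $z_j\to z\in\mathcal O$ and $\rho_j z_j\to w\in\mathcal U$.

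The only ingredient beyond the excerpt is this uniform, dimension-only comparison of the Hilbert and Blaschke metrics, and this is the one point that deserves care: one must know that the bi-Lipschitz constant is the same for all of the domains $\Omega_j$. If one wishes to avoid quoting the comparison, one can instead rerun the (short) proof of Proposition \ref{separate-O} with ``Blaschke'' replaced by ``Hilbert'' everywhere; the only place the Blaschke metric enters that argument is through Theorem \ref{local-converge}, and in its place one uses the elementary fact that the Hilbert distance between two fixed interior points varies continuously under Hausdorff convergence of the domains --- immediate from the cross-ratio formula together with $\partial\Omega_j\to\partial\mathcal O$, or from Benoist--Hulin's $C^\infty_{\rm loc}$ convergence applied to the Hilbert Finsler metric. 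The triangle-inequality step ending that proof is then unchanged, so either route gives the result, the substantive content being purely a soft comparison of metrics with uniform constants.
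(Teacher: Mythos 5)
Your proposal is correct and is essentially the paper's own argument: the paper's proof of this corollary is precisely the one-line observation that Benoist--Hulin prove the Blaschke and Hilbert metrics are uniformly bi-Lipschitz, so divergence of Hilbert distances forces divergence of Blaschke distances and Proposition \ref{separate-O} applies verbatim. The alternative route you sketch (rerunning the proof with the Hilbert metric) is a fine remark but not needed.
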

\begin{proof}
Benoist-Hulin \cite{benoist-hulin13} prove that these two metrics are uniformly bi-Lipschitz.
\end{proof}

\subsection{Wang's developing map}
For a two-dimensional hyperbolic affine sphere $\mathcal H$, one may consider the conformal structure with respect to the Blaschke metric to give $\mathcal H$ the structure of a simply-connected open Riemann surface.  As the geometry is derived from the elliptic Monge-Amp\`ere equation (\ref{ma-eq}), it should not be surprising that holomorphic data on the Riemann surface comes into play.  In particular, with respect to a local conformal coordinate $z$, the cubic form, upon lowering an index with the metric, is of the form $C = U\,dz^3 + \bar U d\bar z ^3$, for $U\,dz^3$ a holomorphic cubic differential.

C.P.\ Wang worked out the developing map for hyperbolic affine spheres in $\re^3$ \cite{wang91}.  (Much earlier, \c{T}i\c{t}eica analyzed a slightly different case of non-convex proper affine spheres \cite{tzitzeica08,tzitzeica09}.)  Below, we present a synopsis of Wang's theory, as presented in \cite{loftin02c}.

Let $\mathcal D$ be a simply-connected domain in $\co$, and let $f\!:\mathcal D \to \re^3$ represent an immersed surface so that $f$ is conformal with respect to the Blaschke metric. Let $z$ be a conformal coordinate on $\mathcal D$. Let $\mathcal H=f(\mathcal D)$ be the immersed surface.  Then $f_z,f_{\bar z}$ span the complexified tangent space $T_f\mathcal H\otimes_\re\co$, and thus $\{f,f_z,f_{\bar z}\}$ is a frame of $\re^3\otimes_{\re}\co$ at each point of $\mathcal H$.

Consider $f,f_z,f_{\bar z}$ as column vectors, and form the frame matrix
$$ F = (f,f_z, f_{\bar z}).$$
Let $e^\psi|dz|^2$ and $U$ be a conformal metric and cubic differential respectively on $\mathcal D$.  Then the structure equation (\ref{gauss-eq}) is equivalent to the following first-order system
\begin{equation} \label{evolve-frame}
F_z = F \left(\begin{array}{ccc} 0&0&\frac12e^\psi \\ 1&\psi_z &0 \\ 0 &Ue^{-\psi}&0 \end{array} \right), \qquad
F_{\bar z} = F\left(\begin{array}{ccc} 0&\frac12e^\psi &0 \\ 0 & 0&\bar U e^{-\psi} \\ 1& 0 & \psi_{\bar z} \end{array} \right).
\end{equation}
This system of equations is integrable if and only if the following two conditions hold:
\begin{eqnarray} \label{psi-eq}
\psi_{z\bar z} + |U|^2 e^{-2\psi} -\sfrac12 e^\psi &=& 0, \\
U_{\bar z} &=& 0.
\end{eqnarray}
In this case, if at a point $z_0\in\mathcal D$  initial conditions
\begin{equation} \label{initial-frame}
f(z_0) \in \re^3, \qquad f_{\bar z}(z_0) = \overline{f_z(z_0)}, \qquad
\det F(z_0) = \sfrac12ie^{\psi(z_0)}
\end{equation}
are satisfied, then the frame $F$ can be uniquely defined on all of $\mathcal D$ and $f$ is an immersion of a hyperbolic affine sphere in $\re^3$ with Blaschke metric $e^\psi$ and cubic form (with index lowered by the metric) $U\,dz^3 + \bar U d\bar z^3$.  Note the integrability conditions can be thought of as the flatness condition for the connection $D$ in (\ref{gauss-eq}). The map $f\!:\mathcal D \to \re^3$ is the \emph{developing map} for the affine sphere, and $[f]$ is a developing map for the corresponding $\rp^2$ structure (see e.g.\ \cite{loftin-mcintosh-survey}).  We will use more details of this developing map below: Given appropriate initial conditions as above, the equations (\ref{evolve-frame}) become a linear system of ODEs along any path from $z_0$ in $\mathcal D$. The integrability conditions ensure that the solution to the ODE initial value problem is independent of paths in a given homotopy class.

The initial value problem is particularly useful in the case the Blaschke metric $e^\psi|dz|^2$ is complete, as then Theorem \ref{complete-has} above shows the developed image $f(\mathcal D)$ is a hyperbolic affine sphere asymptotic to the boundary of a convex cone $\mathcal C\subset\re^3$ and diffeomorphic  to a properly convex domain $\Omega\subset \rp^2$.

We also consider a Riemann surface $\Sigma$ with universal cover $\mathcal D$.  Assume $\Sigma$ is a Riemann surface equipped with a holomorphic cubic differential $U$ and background conformal metric $g$. Then if $e^\psi|dz|^2 = e^ug$, the elliptic equation (\ref{psi-eq}) becomes
\begin{equation} \label{u-eq}
\Delta u + 4e^{-2u}\|U\|^2 -2e^u - 2\kappa = 0,
\end{equation}
where $\Delta$ is the Laplacian, $\|\cdot\|$ is the induced norm on cubic differentials, and $\kappa$ is the Gauss curvature, all with respect to $g$.
\begin{thm}\label{complete-metric-convex-rp2}
Let $\Sigma$ be a Riemman surface equipped with a holomorphic cubic differential $U$ and a conformal background metric $g$.  Let $u$ be a solution to (\ref{u-eq}) so that $e^ug$ is complete.  For a basepoint $z_0\in\tilde\Sigma$ and initial frame $F(z_0)$ satisfying (\ref{initial-frame}), we have a complete hyperbolic affine sphere $f(\tilde\Sigma)$ asymptotic to the boundary of a properly convex cone in $\re^3$. Different choices of $z_0$ and $F(z_0)$ lead to moving $f(\tilde\Sigma)$  by a motion of an element of $\Sl3$ acting on $\re^3$.

Upon projection to $\rp^2$, the universal cover $\tilde\Sigma$ is identified with a convex domain $\Omega$. Holomorphic deck transformations of $\tilde\Sigma$ correspond to orientation-preserving projective automorphisms of $\Omega$.  In this way, the triple $(\Sigma,U,e^ug)$ corresponds to a convex $\rp^2$ structure on $\Sigma$.
\end{thm}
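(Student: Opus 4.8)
The plan is to construct the affine sphere on the universal cover by integrating Wang's frame equations, to read off convexity from the completeness hypothesis via the converse half of Theorem~\ref{complete-has}, and then to produce the holonomy by an equivariance argument. First I would pass to the universal cover $\tilde\Sigma$ (with a global uniformizing coordinate $z$) and pull back $U$, $g$, and $u$; setting $e^\psi|dz|^2 = e^ug$ locally, the fact that $U$ is holomorphic together with the equation (\ref{u-eq}) for $u$ says exactly that the integrability conditions (\ref{psi-eq}) hold. Since $\tilde\Sigma$ is simply connected, integrating the linear system (\ref{evolve-frame}) from a basepoint $z_0$ with an initial frame satisfying (\ref{initial-frame}) along paths produces a path-independent global frame $F$ on $\tilde\Sigma$, hence a map $f\colon\tilde\Sigma\to\re^3$ immersing a hyperbolic affine sphere with Blaschke metric $e^ug$ and cubic form $U\,dz^3+\bar U\,d\bar z^3$. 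Because $e^ug$ is assumed complete, the converse statement in Theorem~\ref{complete-has} applies: $f$ is a proper embedding onto an affine sphere $\mathcal H$ asymptotic to the boundary of a properly convex cone $\mathcal C\subset\re^3$, and projection restricts to a diffeomorphism $\mathcal H\to\Omega:=\pi(\mathcal C)$ with $\Omega$ a properly convex domain; thus $[f]$ identifies $\tilde\Sigma$ with $\Omega$.

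Next I would pin down the dependence on the choices. If $\tilde F$ is a second global frame arising from another basepoint and initial frame, path-continuation (again using integrability) reduces matters to comparing two frames at the \emph{same} point $z_0$, each satisfying (\ref{initial-frame}). Writing $f_z(z_0)=p+iq$ with $p,q\in\re^3$, the normalization forces $f,p,q$ to be a real basis of $\re^3$, so there is a unique \emph{real} matrix $A$ carrying $(f,p,q)$ to the corresponding basis of the other frame; it carries $F(z_0)$ to $\tilde F(z_0)$ columnwise, and the common determinant normalization in (\ref{initial-frame}) forces $\det A=1$, i.e.\ $A\in\Sl3$. Since (\ref{evolve-frame}) is linear and acts on the frame on the right by matrices depending only on $\psi$ and $U$, left multiplication by the constant $A$ sends solutions to solutions, so uniqueness of solutions gives $\tilde F=AF$, hence $\tilde f=Af$ and $\tilde\Omega=A\Omega$ in $\rp^2$. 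So the pair $(\mathcal H,\Omega)$, and the developing map $[f]$, are well-defined up to the $\Sl3$-action, as claimed.

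Finally I would establish equivariance. A holomorphic deck transformation $\gamma$ of $\tilde\Sigma\to\Sigma$ preserves the descended data, so $\gamma^*U=U$ and $\gamma^*(e^ug)=e^ug$; consequently $f\circ\gamma$ is again a developing map for the \emph{same} $\psi,U$ in the coordinate $z$ (the Gauss equation (\ref{gauss-eq}), equivalently (\ref{evolve-frame}), is determined by the immersion), and a short computation using the invariance of $e^\psi$ and of $U$ shows that the frame of $f\circ\gamma$ at $z_0$ again satisfies (\ref{initial-frame}). By the previous paragraph there is a unique $A_\gamma\in\Sl3$ with $f\circ\gamma=A_\gamma f$, and $A_{\gamma_1\gamma_2}=A_{\gamma_1}A_{\gamma_2}$, so $\gamma\mapsto A_\gamma$ defines a homomorphism $\Gamma\!:\pi_1\Sigma\to\Sl3$. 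Projecting to $\pgl3$ and using that $[f]\colon\tilde\Sigma\to\Omega$ is an orientation-preserving diffeomorphism intertwining the deck action with the $A_\gamma$-action, we conclude that the holonomy acts freely and properly discontinuously on $\Omega$ with quotient diffeomorphic to $\Sigma$, by orientation-preserving projective automorphisms; this is precisely a convex $\rp^2$ structure on $\Sigma$ with developing pair $([f],[\Gamma])$.

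The step I expect to be the main obstacle is the equivariance argument in the last paragraph: one must be careful that invariance of the \emph{tensor} data $(U,e^ug)$ under $\gamma$ genuinely forces $f\circ\gamma$ to differ from $f$ by a fixed element of $\Sl3$ and that the assignment is multiplicative, which rests on the linearity and left-$\mathrm{GL}$-equivariance of Wang's frame system (\ref{evolve-frame}) and on the rigidity of the normalization (\ref{initial-frame}). By contrast, the convexity and proper embeddedness are not obstacles here, being imported directly from the converse direction of Theorem~\ref{complete-has}, whose proof (Cheng--Yau, Calabi--Nirenberg, with refinements of Gigena, Sasaki, and Li) is the deep analytic input standing behind this statement.
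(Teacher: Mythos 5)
Your proof is correct, and it takes the same route the paper gestures at: the paper does not actually spell out a proof of this theorem (it cites Wang, Labourie, and the author's earlier work and lets the preceding setup carry the weight), so your argument is essentially a careful reconstruction of the intended one. You correctly identify the three moving parts — integrability of (\ref{evolve-frame}) on the simply connected cover, the Cheng--Yau/Calabi--Nirenberg converse in Theorem~\ref{complete-has} to get proper embeddedness and convexity from completeness, and the rigidity of the normalization (\ref{initial-frame}) to produce the unique $A\in\Sl3$ and hence the holonomy homomorphism.

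One small point worth making explicit in the equivariance step: the frame of $f\circ\gamma$ at $z_0$ is $F(\gamma z_0)\cdot D\bigl(1,\gamma'(z_0),\overline{\gamma'(z_0)}\bigr)$, so it is the deck-invariance identities $e^{\psi\circ\gamma}|\gamma'|^2=e^\psi$ and $(U\circ\gamma)(\gamma')^3=U$ that (i) make the local coefficients of (\ref{evolve-frame}) coincide for $f$ and $f\circ\gamma$ in the $z$-coordinate, and (ii) make the determinant condition in (\ref{initial-frame}) hold again, since $\det\bigl[F(\gamma z_0)D\bigr]=\tfrac i2 e^{\psi(\gamma z_0)}|\gamma'(z_0)|^2=\tfrac i2 e^{\psi(z_0)}$. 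You allude to this with "a short computation using the invariance of $e^\psi$ and of $U$," which is exactly the computation; spelling out the diagonal discrepancy matrix would close that gap cleanly. (The paper itself records this discrepancy matrix $D(1,\partial\iota/\partial z,\partial\bar\iota/\partial\bar z)$ later, in the proof of Theorem~\ref{hol-limits}.)
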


\begin{rem}
In the case of $\Sigma$ closed, this theorem is due independently to Labourie and the author \cite{labourie97,labourie07,loftin01}. See also \cite{wang91}.  In this case, existence of solutions to (\ref{u-eq}) is also proved and uniqueness is a straightforward application of the maximum principle, and thus a properly convex $\rp^2$ structure on a closed surface $S$ of genus at least two is equivalent to a pair $(\Sigma,U)$ of a conformal structure and holomorphic cubic differential.
\end{rem}

\subsection{\c{T}i\c{t}eica's example} \label{triangle-has}
Consider the first octant in $\re^3$ as a convex cone.  The hyperbolic affine sphere associated to this cone was discovered by \c{T}i\c{t}eica \cite{tzitzeica09} (and generalized to any dimension by Calabi \cite{calabi72}). As we will use this example below, we summarize the basics of its construction.  See e.g.\ \cite{loftin02c} or \cite{dumas-wolf14} for justification.

The hyperbolic affine sphere $\mathcal H$ is equal to the set $\{(x^1,x^2,x^3) : x^i>0, x^1x^2x^3 = 3^{-\frac32}\}$. With respect to the induced Blaschke metric, $\mathcal H$ is conformally equivalent to $\co$.  If $z$ is a complex coordinate on $\co$, the Blaschke metric is given by $h=2\,|dz|^2$ and the cubic differential is $U = 2\,dz^3$.  If $z=\sigma+i\tau$, an embedding $f$ of $\mathcal H$ is given by
$$ f(z) = \sfrac1{\sqrt3} \left( e^{2\sigma}, e^{-\sigma+\sqrt3\tau}, e^{-\sigma-\sqrt3\tau}\right).$$

\section{Regular cubic differentials to regular convex $\rp^2$ structures} \label{cubic-to-rp2-sec}

\subsection{The regular limits}
We recall our earlier work in \cite{loftin02c}. For every pair $(\Sigma,U)$ for $\Sigma$ a noded Riemann surface and $U$ a regular cubic differential, a regular convex $\rp^2$ structure is constructed on $\Sigma^{\rm reg}$.  In particular, if we view $\Sigma^{\rm reg}$ as a punctured Riemann surface, then at each puncture, the residue of the cubic differential determines the local $\rp^2$ geometry of the end.  In particular, we have the following
\begin{thm} \label{thm04}
Let $\Sigma=\bar\Sigma-\{p_1,\dots,p_n\}$ be a Riemann surface of finite hyperbolic type, and let $U$ be a cubic differential on $\Sigma$ with poles of order at most 3 and residue $R_i$ at each puncture $p_i$. Then there is a background metric $g$ on $\Sigma$ and a solution $u$ to (\ref{u-eq}) so that $e^ug$ is complete. Then for the corresponding convex $\rp^2$ structure on $\Sigma$ provided by Theorem \ref{complete-metric-convex-rp2}, the $\rp^2$ holonomy and developing map of each end is determined in the following way:

For a residue $R\in\co$, consider the roots $\lambda_1,\lambda_2,\lambda_3$ of the cubic equation
$$\lambda^3 - 3(2^{-\frac23})|R|^{\frac23}\lambda - {\rm Im}\,R = 0.$$ Then the eigenvalues of the holonomy of the $\rp^2$ structure along an oriented loop around $p_i$ are given by $\alpha^i = \exp(2\pi\lambda_i)$.  When there are repeated eigenvalues, the Jordan blocks are all maximal.  In the cases where the eigenvalues are distinct (hyperbolic holonomy), the bulge parameter is $\pm\infty$, with the sign coinciding with the sign of ${\rm Re}\,R$.

More specifically, there are four cases.  First of all if $R=0$, then all $\alpha_i=1$ and the $\rp^2$ holonomy is parabolic, conjugate to $$\left(\begin{array}{ccc} 1&1&0 \\ 0&1&1 \\ 0&0&1 \end{array} \right).$$  If ${\rm Re}\,R=0$ but $R\neq0$, then there are two positive repeated eigenvalues $\alpha_1=\alpha_2$, and the holonomy is quasi-hyperbolic, conjugate to $$\left(\begin{array}{ccc} \alpha_1 & 1 & 0 \\ 0 & \alpha_1 & 0 \\ 0&0& \alpha_3\end{array} \right), \qquad \alpha_1^2\alpha_3=1.$$ If ${\rm Re}\,R\neq0$, then the eigenvalues $\alpha_1,\alpha_2,\alpha_3$ are positive and distinct, and so the holonomy is hyperbolic.  The holonomy matrix is conjugate to $$\left(\begin{array}{ccc} \alpha_1&0&0 \\ 0&\alpha_2&0 \\ 0&0&\alpha_3 \end{array} \right), \qquad \alpha_1\alpha_2\alpha_3=1.$$ The bulge parameter is $\pm\infty$, with the same sign as ${\rm Re}\,R$.
\end{thm}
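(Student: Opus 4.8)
Following \cite{loftin02c}, the plan is to reduce the entire statement to an explicit local model near each puncture. First I would fix, for each $p_i$, a holomorphic coordinate $z$ with $p_i=\{z=0\}$, so that $U=(Rz^{-3}+c_{-2}z^{-2}+c_{-1}z^{-1}+c_0+\cdots)\,dz^3$ with $R=R_i$, and pass to the half-infinite cylinder coordinate $w=\log z$, on which the puncture becomes the end ${\rm Re}\,w\to-\infty$. There $U=(R+O(e^{{\rm Re}\,w}))\,dw^3$ is an exponentially small perturbation of the \emph{constant} cubic differential $U_0=R\,dw^3$. I would choose the background metric $g$ to agree, near each end, with the rotationally invariant model Blaschke metric for $(U_0,0)$: for $R\neq0$ this is the flat cylindrical metric $e^{\psi_0}|dw|^2$ with $e^{3\psi_0}=2|R|^2$, which solves (\ref{psi-eq}) exactly for $U_0$ and whose affine sphere is, after a complex linear change of the $w$-coordinate, the one of \c{T}i\c{t}eica from Subsection \ref{triangle-has} restricted to a cylinder; for $R=0$ it is the hyperbolic cusp metric $|{\rm Re}\,w|^{-2}|dw|^2$. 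Both model metrics are complete, and suitable constant functions serve as sub- and super-solutions of (\ref{u-eq}) near the ends; gluing these to the sub/super-solutions from the closed-surface theory recalled in \cite{loftin02c} and running the method of sub- and super-solutions produces a solution $u$ of (\ref{u-eq}) with $u\to0$ at each end (exponentially when $R\neq0$, at the classical cusp rate when $R=0$). Since $e^ug$ is then uniformly comparable near every end to the complete model metric, $e^ug$ is complete, and Theorem \ref{complete-metric-convex-rp2} produces the convex $\rp^2$ structure, with developing map and holonomy obtained by integrating (\ref{evolve-frame}).

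It remains to identify the holonomy around each $p_i$. The decay of $u-\psi_0$ and of $U-U_0$, with all derivatives, makes (\ref{evolve-frame}) near the end an integrable exponentially small perturbation of the constant-coefficient system $F_w=F\tilde A_0$, $F_{\bar w}=F\tilde B_0$ of the model. Integrating around the loop $w\mapsto w+2\pi i$ and letting ${\rm Re}\,w\to-\infty$, the monodromy limits to $\exp(2\pi M_0)$ with $M_0=i(\tilde A_0-\tilde B_0)$; since all loops around $p_i$ are freely homotopic, their holonomies are conjugate and share the eigenvalues of $\exp(2\pi M_0)$. A direct computation — most transparently via \c{T}i\c{t}eica's explicit developing map, under which the relevant translation $z\mapsto z+2\pi i\,(R/2)^{1/3}$ acts by the diagonal matrix $D(e^{2a},e^{-a+\sqrt3 b},e^{-a-\sqrt3 b})$, followed by the triple-angle identity for $\sin$ — shows that the characteristic polynomial of $M_0$ is exactly $\lambda^3-3\cdot 2^{-2/3}|R|^{2/3}\lambda-{\rm Im}\,R$. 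Its discriminant equals $27({\rm Re}\,R)^2$, which sorts the four cases at once: a triple root $0$ (all $\alpha_i=1$) when $R=0$; a double root, with $\alpha_1\alpha_2\alpha_3=1$ because there is no $\lambda^2$ term, when ${\rm Re}\,R=0\neq R$; and three distinct positive roots when ${\rm Re}\,R\neq0$. This gives the eigenvalues $\alpha_i=\exp(2\pi\lambda_i)$ as claimed.

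For ${\rm Re}\,R\neq0$ the distinct eigenvalues already force hyperbolic holonomy, so what remains is the sign of the bulge parameter. I would read this off the model: \c{T}i\c{t}eica's developed image is a full principal triangle, and from the explicit formula one checks that it lies on the side of the principal geodesic $\tilde\ell$ containing the standard triangle $T$ (adjacent to the saddle fixed point $[0,1,0]$) exactly when ${\rm Re}\,R>0$, and on the side of $\bar T$ when ${\rm Re}\,R<0$; by the description of the bulge parameter in Subsection \ref{sep-neck-subsection} this is precisely the assertion that the bulge is $+\infty$, respectively $-\infty$, and the exponentially small corrections do not disturb this one-sided invariant. In the degenerate cases ($R=0$ and ${\rm Re}\,R=0\neq R$) the repeated eigenvalues are positive, and it remains to show the Jordan blocks are maximal. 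Here the bare model computation does not suffice — $\exp(2\pi M_0)$ is in fact diagonalizable when ${\rm Re}\,R=0$ — so I would instead use that these ends occur as geometric limits of compact convex $\rp^2$ surfaces (as in \cite{loftin02c}) together with the rigidity result of Choi \cite{choi94b} and the author \cite{loftin04} (see also \cite{nie15}), by which a non-hyperbolic limit of hyperbolic holonomies with repeated eigenvalues must have maximal Jordan blocks; a refined Levinson-type asymptotic integration of the perturbed frame system gives the same conclusion directly, reading the nontrivial off-diagonal monodromy off the two-dimensional generalized eigenspace.

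The main obstacle is this analytic input together with the last subtlety. One needs the exponential decay of $u-\psi_0$, with derivatives, at each $R\neq0$ end, which comes from a barrier argument exploiting the strictly positive bottom of spectrum of the Schr\"odinger-type operator obtained by linearizing (\ref{u-eq}) at the model on the cylinder. And because conjugacy classes in $\Sl3$ are not closed, the clean model ODE yields the holonomy \emph{eigenvalues} correctly but not its \emph{Jordan type} in the degenerate cases; so the delicate point is exactly the refined control — via the degeneration and the rigidity result, or a sharpened asymptotic integration — that distinguishes the quasi-hyperbolic and parabolic normal forms from their diagonalizable look-alikes, along with the genuinely global determination of the bulge parameter. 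By contrast, the algebraic identification of the cubic equation and the case division by its discriminant are finite computations.
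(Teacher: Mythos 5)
The paper does not re-prove Theorem~\ref{thm04}; it is imported verbatim from \cite{loftin02c}, so there is no internal proof for your attempt to be measured against. Your outline — pass to the cylinder coordinate $w=\log z$, treat $U$ as an exponentially small perturbation of the constant model $R\,dw^3$, glue a model flat (or cusp) metric at the ends, run sub/super-solutions, and integrate (\ref{evolve-frame}) as a near-constant-coefficient linear system — is precisely the analytic strategy of \cite{loftin02c}, and your algebra is right: the identification of $\lambda^3-3\cdot 2^{-2/3}|R|^{2/3}\lambda-{\rm Im}\,R$ as the characteristic polynomial of the model monodromy generator (via the \c{T}i\c{t}eica translation $D(e^{2a},e^{-a+\sqrt3 b},e^{-a-\sqrt3 b})$ with $a+ib=2\pi i(R/2)^{1/3}$), the vanishing of the $\lambda^2$ coefficient giving $\alpha_1\alpha_2\alpha_3=1$, and the discriminant $27({\rm Re}\,R)^2$ sorting the four cases are all correct.

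The one place your proposal misfires logically is the shortcut for the Jordan blocks. You suggest invoking that "these ends occur as geometric limits of compact convex $\rp^2$ surfaces (as in \cite{loftin02c}) together with the rigidity result." But that these pairs $(\Sigma,U)$ arise as geometric limits is precisely what Theorem~\ref{thm04} is a step toward proving; using it here is circular. What you actually want is the \emph{direct} form of the rigidity statement: the constructed Blaschke metric $e^{u}g$ is complete, so by Theorem~\ref{complete-has} the developed image $\Omega$ is properly convex and the holonomy group acts freely and properly discontinuously on it; in particular the subgroup generated by the loop around $p_i$ does so, and the classification of such single elements (Choi \cite{choi94b}, \cite{loftin04}, \cite{nie15}) already restricts it to hyperbolic, quasi-hyperbolic, or parabolic with maximal Jordan blocks, with no need to pass through limits of closed surfaces. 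Combined with the eigenvalue computation this pins down the conjugacy class in the degenerate cases. Your alternative of a refined asymptotic (Levinson-type) integration of the perturbed frame system is in fact the route taken in \cite{loftin02c}, and is what also delivers the developing-map statement and the sign of the bulge parameter; note that "bulge $=\pm\infty$" is not an open condition, so the claim that "exponentially small corrections do not disturb this one-sided invariant" needs the same quantitative control of the developing map — it cannot be waved through as a perturbation argument. The exponential decay $u-\psi_0\to0$ with derivatives at $R\neq0$ ends, which you correctly flag as the analytic crux, is indeed where the real work lies, and the barrier/linearization sketch you give is the right instrument.
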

\begin{rem}
In \cite{loftin02c}, the bulge parameter is called the vertical twist parameter.
\end{rem}
\begin{rem}
On a noded Riemann surface equipped with a regular cubic differential, the $\rp^2$ structures of the ends pair up to form regular separated necks.  Consider each node as two punctures glued together.  Then a regular cubic differential near the node has residues around the punctures which sum to zero.  Then we may apply the dictionary in Theorem \ref{thm04} to show the $\rp^2$ structures near each puncture satisfy the conditions for a regular separated neck as in Subsection \ref{sep-neck-subsection} above.  In particular, if the residue changes $R\mapsto -R$, then the eigenvalues of the holonomy change by $\{\alpha_i\}\mapsto \{\alpha_i^{-1}\}$. In the cases above, if $R=0$ at one puncture, the other puncture also has residue 0, and both holonomies are parabolic.  If ${\rm Re}\,R=0$ but $R\neq0$, then both holonomies are quasi-hyperbolic with inverse holonomy type.  And finally if ${\rm Re}\,R\neq0$, then the holonomies are hyperbolic inverses of each other, and the bulge parameter $\pm\infty \mapsto \mp\infty$ under $R\mapsto-R$.
\end{rem}

In the context of the present work, we may summarize the main results of \cite{loftin02c} in
\begin{thm}
Let $(\Sigma,U)$ be a pair of a noded Riemann surface $\Sigma$ and a cubic differential $U$. Then there is a corresponding regular $\rp^2$ structure on $\Sigma^{\rm reg}$ the type of whose regular separated necks is determined by the residue of $U$ at each node.  This map $\Phi$ from $(\Sigma,U)$ to regular $\rp^2$ structures is injective, and, if a mild technical condition is satisfied, the local invariants of the regular separated necks (the holonomy and bulge parameters) depend continuously on $(\Sigma,U)$ with the topology described above.
\end{thm}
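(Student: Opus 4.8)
The plan is to assemble this statement as a synthesis of the construction and the end analysis of \cite{loftin02c} with the two rigidity inputs already recorded above: uniqueness of the complete hyperbolic affine sphere over a properly convex domain (Theorem \ref{complete-has}) and the end dictionary of Theorem \ref{thm04}. For the existence half, given a noded Riemann surface $\Sigma=\bar\Sigma-\{p_1,\dots,p_n\}$ and a regular cubic differential $U$, I would first fix a conformal background metric $g$ on $\Sigma^{\rm reg}$ adapted to the node geometry (hyperbolic on the thick part, flat cylindrical or cuspidal near each $p_i$, in the spirit of the metric $m$ of Subsection \ref{plumbing-subsec}); Theorem \ref{thm04} then produces a solution $u$ of (\ref{u-eq}) with $e^ug$ complete, and Theorem \ref{complete-metric-convex-rp2} converts $(\Sigma,U,e^ug)$ into a convex $\rp^2$ structure on $\Sigma^{\rm reg}$. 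The holonomy eigenvalues and bulge parameter at the end around $p_i$ are read off from the residue $R_i$ via the cubic of Theorem \ref{thm04}; since $U$ is regular the two residues at each node are negatives of one another, and because $R\mapsto -R$ sends the eigenvalue set $\{\alpha_k\}$ to $\{\alpha_k^{-1}\}$ and flips the bulge-parameter sign, the two ends of each node fall into exactly one of the three cases in the definition of a regular separated neck in Subsection \ref{sep-neck-subsection}. Hence $\Phi(\Sigma,U)$ is a genuine regular convex $\rp^2$ structure whose neck types are prescribed by the residues.

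For injectivity, observe that a regular convex $\rp^2$ structure records, on each component of the complement of its separated necks, the convex domain $\Omega$ and the holonomy, together with the pairing of ends into necks. Theorem \ref{complete-has} attaches to $\Omega$ a unique complete hyperbolic affine sphere, hence a Blaschke metric and a cubic tensor, which descend to the quotient surface and determine a conformal structure and a holomorphic cubic differential on each $\Sigma_i^{\rm reg}$. The end analysis of \cite{loftin02c}, which proceeds by comparison with the model affine sphere of Subsection \ref{triangle-has}, shows that near a regular end the Blaschke metric is asymptotic to one of the three models; this forces the conformal structure to extend holomorphically across the puncture and the cubic differential to acquire a pole of order at most three there, with residue prescribing the neck data and with opposite residues at paired ends. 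Thus the noded Riemann surface, its nodal identifications, and the regular cubic differential are canonically reconstructed from $\Phi(\Sigma,U)$, so $\Phi$ is injective.

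For continuity, if $(\Sigma_j,U_j)\to(\Sigma_\infty,U_\infty)$ in the topology of Subsection \ref{plumbing-subsec}, then the assumed $L^\infty_{\rm loc}$ convergence of the $U_j$ in the plumbing coordinates controls the Laurent coefficient of $dx^3/x^3$ at each node, so the residues converge $R_{i,j}\to R_{i,\infty}$. The roots of the defining cubic are continuous in the residue, so the holonomy eigenvalues converge; where ${\rm Re}\,R_{i,\infty}\neq 0$ the sign of ${\rm Re}\,R_{i,j}$ is eventually constant, so the bulge parameter is eventually the constant $\pm\infty$, while the quasi-hyperbolic walls ${\rm Re}\,R=0$ carry no bulge parameter and are approached continuously. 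The ``mild technical condition'' is what permits the conformal factors $u_j$ to be chosen so that the full developing and holonomy data, not merely the discrete end invariants, vary continuously along the family. I expect this to be the real obstacle and the reason the statement carries a hypothesis: controlling solutions of Wang's equation (\ref{u-eq}) uniformly along a degenerating family, and verifying that the induced conformal structures extend across the forming nodes with the expected pole order, are precisely the estimates developed in Sections \ref{cubic-to-rp2-sec} and \ref{rp2-to-cubic-sec}. At this point in the paper, then, the theorem is best cited from \cite{loftin02c} in its conditional form, with the unconditional version deferred to the proof of Theorem \ref{main-thm}.
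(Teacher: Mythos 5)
The paper gives no proof of this statement: it is presented purely as a summary of the main results of \cite{loftin02c}, so your closing recommendation to quote it from that paper in its conditional form is exactly what the author does, and your assembly of the existence half from Theorem \ref{thm04} and Theorem \ref{complete-metric-convex-rp2}, the neck-type pairing from the sign change $R\mapsto -R$, and injectivity from the uniqueness of the complete affine sphere/Blaschke data all match the ingredients the paper itself attributes to \cite{loftin02c}.

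One caveat on your continuity paragraph: the residue--holonomy dictionary of Theorem \ref{thm04} holds only at an actual puncture or node, so for the smooth members of a degenerating family the holonomy around the pinching loop is \emph{not} computed by the roots of the cubic in the (annular) $dx^3/x^3$ coefficient, and convergence of residues therefore does not by itself yield continuity of the neck invariants. That continuity is the analytic heart of \cite{loftin02c}, obtained from the precise sub- and super-solutions to (\ref{u-eq}) near the neck, and the ``mild technical condition'' is precisely the hypothesis under which those barriers work --- the paper indicates it fails exactly for degenerations in which the residue at a node is $0$, which is the hypothesis removed in the present work by the uniform barriers of Section \ref{cubic-to-rp2-sec}. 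Since you ultimately defer to the citation, this is a flaw only in the heuristic mechanism you propose, not in the logical structure of your write-up, but as stated that mechanism would not prove the continuity assertion.
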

Our present work improves this result in many ways: A natural topology is described on the space of regular convex $\rp^2$ structures, for which $\Phi$ is shown to be a homeomorphism.  $\Phi$ is onto. In the case of residue $0$, this is due to  Benoist-Hulin \cite{benoist-hulin13}, and recently the surjectivity of $\Phi$ is due to Nie \cite{nie15}. We also construct $\Phi^{-1}$ to prove $\Phi$ is onto. Under the topology on the space of convex $\rp^2$ structures, the local invariants addressed in \cite{loftin02c} also vary continuously, and so we have a better understanding of the geometry of the regular convex $\rp^2$ structures.  We can also remove the technical hypothesis needed in \cite{loftin02c}.  The continuity of $\Phi^{-1}$ is novel.

\subsection{Uniform estimates}
One of the main steps to construct the $\rp^2$ structures in \cite{loftin02c} is to find sub- and super-solutions to (\ref{u-eq}) which are quite precise near the punctures.  These sub- and super-solutions work well in most degenerating families of $(\Sigma,U)$, except for those in which the residue at a node is 0.  In this paper, we take a different tack: We find sub- and super-solutions which are not particularly precise but which have the virtue of being uniformly bounded in the universal curve away from singularities.  This allows us to take limits of families of solutions without the restrictions above.  Then we use a uniqueness theorem of Dumas-Wolf \cite{dumas-wolf14} to show that the limiting Blaschke metric is the one constructed in \cite{loftin02c}.  We record Dumas-Wolf's result here:

\begin{thm} \label{unique-thm}
Let $\Sigma$ be a Riemann surface which may or may not be compact, and let $U$ be a holomorphic cubic differential on $\Sigma$.  Let $g$ be a conformal background metric on $\Sigma$. Then there is at most one solution $u$ to (\ref{u-eq}) so that $e^ug$ is complete.
\end{thm}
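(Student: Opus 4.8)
The plan is to reduce the statement to a maximum-principle argument on the difference of the two conformal factors, using completeness of the Blaschke metrics to control the behavior at infinity. The compact case is the classical maximum principle, and all the content is in the noncompact case.

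Suppose $u_1$ and $u_2$ both solve (\ref{u-eq}) with $h_i := e^{u_i}g$ complete, and set $v = u_1 - u_2$, so that $h_1 = e^v h_2$. \textbf{Step 1 (change of background).} A function $w$ solves (\ref{u-eq}) relative to a background metric if and only if $e^w$ times that metric is the Blaschke metric determined by $U$; in particular $v$ solves (\ref{u-eq}) with background $h_2$. Moreover, since $u_2$ solves (\ref{u-eq}) with respect to $g$, the standard conformal transformation laws give $\kappa_{h_2} = 2\|U\|_{h_2}^2 - 1$. Substituting this curvature identity into the equation for $v$ yields
\[ \Delta_{h_2} v \;=\; 4\,\|U\|_{h_2}^2\bigl(1 - e^{-2v}\bigr) + 2\bigl(e^v - 1\bigr). \]
Two features matter: the right-hand side has the same sign as $v$, and on $\{v\ge 0\}$ it is bounded below by $2(e^v-1)$ (symmetrically, bounded above by $2(e^v-1)$ on $\{v\le 0\}$); moreover $\kappa_{h_2}=2\|U\|_{h_2}^2-1\ge -1$, so $(\Sigma,h_2)$ is complete with Ricci curvature bounded below.

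\textbf{Step 2 (maximum principle at infinity).} First I would show $\sup_\Sigma v<\infty$: on $\{v\ge0\}$ the inequality $\Delta_{h_2}v\ge 2(e^v-1)$ has a nonlinearity of Keller--Osserman type (as $\int^\infty(e^t-1)^{-1/2}\,dt<\infty$), and since $(\Sigma,h_2)$ is complete with volume growth at most exponential by Bishop--Gromov, a Keller--Osserman estimate forces $v$ to be bounded above (in the sharp form, $\sup v\le 0$). If one extracts only boundedness, the Omori--Yau generalized maximum principle on $(\Sigma,h_2)$ then produces points $p_k$ with $v(p_k)\to\sup v$ and $\limsup_k\Delta_{h_2}v(p_k)\le 0$; were $\sup v>0$, the nonnegativity of the $\|U\|_{h_2}^2$-term on $\{v>0\}$ would give $\liminf_k\Delta_{h_2}v(p_k)\ge 2(e^{\sup v}-1)>0$, a contradiction, so $\sup v\le 0$ in either case. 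Running the argument with $u_1$ and $u_2$ exchanged --- using $h_1$ (complete, with $\kappa_{h_1}=2\|U\|_{h_1}^2-1\ge -1$) as background and $-v$ as the unknown --- gives $\sup(-v)\le 0$. Hence $v\equiv 0$ and $u_1=u_2$.

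\textbf{Main obstacle.} The essential difficulty is exactly the noncompactness: ruling out that $v$ is unbounded or only approaches its supremum ``at infinity.'' This is where both hypotheses enter --- completeness of the Blaschke metrics, and the exponential form of the nonlinearity in (\ref{u-eq}), which supplies the Keller--Osserman condition; the Ricci lower bound needed for Omori--Yau and Bishop--Gromov comes for free from the equation. The remaining ingredients (the conformal computation of Step 1 and the sign analysis of the nonlinearity) are routine.
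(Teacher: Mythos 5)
Your proposal is essentially correct, but note that the paper does not prove this theorem at all: it is imported from Dumas--Wolf \cite{dumas-wolf14}, whose argument in turn closely follows Wan \cite{wan92}, and the only remark made here is that the version stated on a Riemann surface follows from the domain version by passing to the universal cover. What you have written is, in substance, a reconstruction of that cited argument: rewrite (\ref{u-eq}) over the complete background $h_2=e^{u_2}g$, use the curvature identity $\kappa_{h_2}=2\|U\|_{h_2}^2-1$ (so Ricci is bounded below by $-1$), and apply a maximum principle at infinity to $v=u_1-u_2$; this is exactly the Cheng--Yau/Omori--Yau strategy of Wan and Dumas--Wolf. Working directly on $\Sigma$ is a small simplification, since Omori--Yau and the Keller--Osserman-type bound hold on any complete manifold with Ricci bounded below, so no universal-cover reduction or equivariance bookkeeping is needed; note also that each direction of your argument uses completeness only of the background metric, so you actually show that a complete solution dominates every solution, which is slightly stronger.

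Two points should be tightened, though neither is a genuine gap. First, the inequality $\Delta_{h_2}v\ge 2(e^v-1)$ holds only on $\{v\ge 0\}$; to invoke a Keller--Osserman/Cheng--Yau upper bound you should either use a version requiring the inequality only where $v$ is large, or observe that since $0$ is itself a solution of $\Delta w = 2(e^w-1)$, the function $\max(v,0)$ is a weak subsolution globally, to which the boundedness theorem applies (also, the Keller--Osserman condition is properly stated for the primitive $F(t)=\int_0^t 2(e^s-1)\,ds$, not for $f$ itself, though both integrals converge here). Second, the Omori--Yau principle and the a priori upper bound on complete surfaces with curvature $\ge -1$ are the real analytic content of the proof and must be cited precisely (Omori, Yau, Cheng--Yau, or Pigola--Rigoli--Setti); with those references in place your outline closes correctly.
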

We remark the proof of this theorem closely follows Wan \cite{wan92}, who studies similar equations for quadratic differentials.  The theorems in \cite{dumas-wolf14,wan92} are phrased in terms of differential equations on domains in $\co$.  The theorem as we state it here follows from passing to the universal cover of $\Sigma$.

Recall the finite cover of $\overline{\mathcal M}_g$ by $\{V^\alpha\}$ from Subsection \ref{plumbing-subsec} above, and consider the universal curve $\pi\!:\overline{\mathcal C}_g \to \overline{\mathcal M}_g$. Let $K^\alpha = \pi^{-1}V^\alpha$, and let $K^{\alpha,{\rm reg}}$ denote $K^\alpha$ with the nodes removed.  Recall each $V^\alpha$ consists of an $(s,t)$ neighborhood of a noded Riemann surface $\Sigma$, where the $s$ parameters represent by Beltrami differentials $\nu(s)$ which are supported away from the nodes and which preserve hyperbolic cusp coordinates, and the $t$ parameters open the nodes by taking $\{zw=0\}$ to $\{zw=t\}$.

\begin{thm} \label{u-limit-thm}
Let $(\Sigma_j,U_j)\to(\Sigma_\infty,U_\infty)$ in the total space of regular cubic differentials.  Then the corresponding Blaschke metrics $h_j$ converge in $C^\infty$ in the following sense:  We may assume the elements of the sequence all lie in one $V^\alpha\subset \overline{\mathcal M}_g$.  Then the Blaschke metrics $h_j$ converge in the same manner that the cubic differentials $U_i$ do:

In particular, there is a fixed noded $\Sigma$ so that $\Sigma_j = \Sigma_{s_j,t_j}$ with respect to the hyperbolic-metric plumbing coordinates.  On the thick part of each Riemann surface, we require that the Blaschke metrics converge upon being pulled back by the quasi-conformal diffeomorphisms induced by the Beltrami coefficients so that $[(\chi^{s_j})^{-1}]^*h_i \to [(\chi^{s_\infty})^{-1}]^*h_\infty$ in $C^\infty$. On the cusp and collar neighborhoods, we require that the Blaschke metric converges in $C^\infty_{\rm loc}$ with respect to the cusp coordinates $z$ and $w$.
\end{thm}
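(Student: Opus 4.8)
The plan is to realize each Blaschke metric as $h_j=e^{u_j}m_j$, where $m_j=m^{\alpha,s_j,t_j}$ is the metric of Subsection~\ref{plumbing-subsec} (hyperbolic on the thick part, flat cylindrical deep in the thin part, interpolated on fixed annuli), and $u_j$ solves Wang's equation (\ref{u-eq}) with background $g=m_j$; then to trap the $u_j$ between sub- and super-solutions that are uniform on compacta away from the nodes and invoke the Dumas--Wolf uniqueness Theorem~\ref{unique-thm} to identify the limit. The reason to use $m_j$ rather than the hyperbolic or grafting metric is that $\|U_j\|_{m_j}$ stays uniformly bounded on all of $\Sigma_j^{\rm reg}$ --- this is exactly the band-bounded/maximum-modulus estimate recorded in the lemma just above --- whereas the hyperbolic norm of a regular cubic differential blows up logarithmically at a cusp and without bound as $t_j\to 0$ at a collar; likewise $\kappa_{m_j}$ is uniformly bounded. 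Rewriting (\ref{u-eq}) as $\Delta_{m_j}u = 2e^u+2\kappa_{m_j}-4e^{-2u}\|U_j\|^2_{m_j}$, the right-hand side is increasing in $u$, so the classical method of sub- and super-solutions applies on the complete noncompact surface $\Sigma_j^{\rm reg}$ once matching barriers are produced.

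\textbf{Barriers.} A sufficiently large constant $u^+\equiv C_0$, with $C_0$ depending only on $\sup_j\|U_j\|_{m_j}$, $\sup_j|\kappa_{m_j}|$ and the fixed plumbing constant $c$, is a super-solution everywhere, since the $-2e^{C_0}$ term dominates. The sub-solution is the delicate object. On the thick part (and the bulk of each collar), where $\kappa_{m_j}$ is negative and close to $-1$, a constant $-B_0$ is a sub-solution. On the flat cylindrical part of a cusp or collar, writing $m_j$ in the flat quasi-coordinate $\ell$ and letting $s$ denote the resulting flat distance out from the thick region (measured from each end, symmetrically across a collar), the function $-2\log s-B_0$ is a sub-solution for $B_0$ large: there $\kappa_{m_j}=0$, $\Delta_{m_j}(-2\log s)$ is a positive multiple of $s^{-2}$, and the remaining terms have the right sign. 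This logarithmic profile is also what forces $e^{u^-}m_j$, hence any solution $u_j\ge u^-$, to be complete at the ends (infinite length toward a puncture), which is precisely what lets Theorem~\ref{unique-thm} be applied. One then patches these pieces across the interpolation annuli, where $m_j$ is a blend of the locally hyperbolic grafting metric and the flat metric: these annuli have uniformly bounded geometry with uniformly bounded but a priori sign-indefinite curvature, so one adds a fixed nonnegative correction supported there (solving a fixed auxiliary elliptic inequality) to make the patched, Lipschitz, globally $\le C_0$ function $u^-$ a genuine weak sub-solution with uniform constants. Making this patching uniform as $\Sigma_j$ degenerates and the collar domains $N_i^{t_j}$ elongate is the main obstacle; it is the substitute for the precise-but-nonuniform barriers of \cite{loftin02c}.

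\textbf{Solving and passing to the limit.} With $u^-\le u^+$ in hand, the sub/super-solution method (exhaust $\Sigma_j^{\rm reg}$ by compacta, solve Dirichlet problems with boundary data $u^+$, take a monotone limit via interior elliptic estimates) yields $u_j$ with $u^-\le u_j\le u^+$; since $u_j\ge u^-$ the metric $e^{u_j}m_j$ is complete, so by Theorem~\ref{complete-metric-convex-rp2} it is the Blaschke metric of the $\rp^2$ structure $\Phi(\Sigma_j,U_j)$, and by Theorem~\ref{unique-thm} it is the only complete solution. On any compact $K\subset\Sigma_\infty^{\rm reg}$ (hence bounded away from the nodes of $\Sigma_\infty$), we have $K\subset\Sigma_j^{\rm reg}$ for $j$ large, the $u_j$ are uniformly bounded on $K$ by $u^\pm$, and the coefficients of (\ref{u-eq}) --- $\kappa_{m_j}$, $\|U_j\|^2_{m_j}$ and the operator $\Delta_{m_j}$ --- converge in $C^\infty$ on $K$, using Wolpert's real-analytic dependence of $g^{s,t}$ on $1/\log|t|$ and of the Beltrami family $\nu(s)$ on $s$, together with $U_j\to U_\infty$ in the sense of the last lemma of Subsection~\ref{plumbing-subsec}. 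Schauder estimates and bootstrapping give uniform $C^k$ bounds on $u_j$ over slightly smaller compacta, so by Arzel\`a--Ascoli and a diagonal argument some subsequence converges in $C^\infty_{\rm loc}$ to a solution $u_\infty$ of (\ref{u-eq}) on $\Sigma_\infty^{\rm reg}$ with $u^-\le u_\infty\le u^+$; completeness of $e^{u_\infty}m_\infty$ and Theorem~\ref{unique-thm} force $u_\infty$ to be the solution attached to $(\Sigma_\infty,U_\infty)$ in \cite{loftin02c} and Theorem~\ref{thm04}, in particular independent of the subsequence. Hence the full sequence converges, $h_j=e^{u_j}m_j\to e^{u_\infty}m_\infty=h_\infty$ in $C^\infty_{\rm loc}$; translating this through the quasiconformal maps $\chi^{s_j}$ on the thick part (where $C^\infty_{\rm loc}$ upgrades to $C^\infty$ by Mumford compactness) and through the hyperbolic cusp coordinates $z_i,w_i$ on the thin part yields the two modes of convergence asserted.
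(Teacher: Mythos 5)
Your overall scheme --- uniform barriers for Wang's equation away from the nodes, interior elliptic estimates plus Arzel\`a--Ascoli, identification of the limit via the Dumas--Wolf uniqueness theorem, and the subsequence-of-subsequences argument to get full convergence --- is the same as the paper's. But your implementation has a genuine gap at exactly the point you yourself flag as ``the main obstacle'': the global sub-solution is never actually constructed. With the background $m_j$, the constant $-B_0$ is a sub-solution where $\kappa_{m_j}\le -e^{-B_0}$ (the thick part), and $-2\log s - B_0$ is one where $m_j$ is genuinely flat; but on the interpolation annuli $\kappa_{m_j}$ is merely bounded (it can be positive) and $U_j$ may vanish, so neither candidate need be a sub-solution there, and the max-gluing of the two pieces fails precisely on those annuli. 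The ``fixed nonnegative correction supported there (solving a fixed auxiliary elliptic inequality)'' is not specified; it is not shown that it can be chosen with constants independent of $j$ as $t_j\to0$ and the collars elongate, nor that after adding it the patched Lipschitz function stays below the constant super-solution and still glues as a weak sub-solution across the interfaces. Since everything downstream (existence of $u_j$ between the barriers, completeness of $e^{u_j}m_j$ at the punctures, the locally uniform bounds feeding the Schauder estimates) rests on this function, the proof as written is incomplete.

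The gap is avoidable, and the paper's proof shows how: take the complete hyperbolic metric $k_j$ as the background instead of $m_j$. Then (\ref{u-eq}) becomes (\ref{u-hyp-eq}), $L_j(u)=\Delta_j u+4e^{-2u}\|U_j\|_j^2-2e^u+2$, and $u\equiv 0$ is automatically a \emph{global} sub-solution, because $-2\kappa_{k_j}=+2$ exactly cancels $-2e^{0}$ and $4\|U_j\|^2\ge0$; no patching is needed, and completeness of $h_j=e^{u_j}k_j\ge k_j$ is immediate, rather than extracted from a logarithmic lower barrier. The uniform data you correctly isolate ($\|U_j\|_{m_j}\le C$, $|\kappa_{m_j}|$ bounded, the uniform comparability of $m_j$, $g^{s_j,t_j}$ and $k_j$) is then spent only on the super-solution $S_j=\phi_j+B$, where $e^{\phi_j}=m_j/k_j$ and $B$ is a large constant: one checks $L_j(\phi_j+B)=e^{\phi_j}\bigl(4\|U_j\|^2_{m_j}e^{-2B}-2e^B-2\kappa_{m_j}\bigr)\le0$ and $\phi_j+B\ge0$. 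With $0\le u_j\le S_j$ in hand, your limiting argument (locally uniform bounds, $C^{2,\beta}$ interior estimates, subsequential $C^\infty_{\rm loc}$ limits, Dumas--Wolf to pin the limit, then the full sequence) goes through essentially verbatim.
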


\begin{proof}
We use the method of sub- and super-solutions.
Consider the hyperbolic metric $k_j$ on $\Sigma_j$ as a background.  In this case, the equation for the conformal factor (\ref{u-eq}) becomes
\begin{equation} \label{u-hyp-eq}
L_j(u)\equiv\Delta_j u +4e^{-2u}\|U_j\|_j^2 - 2e^u + 2 = 0.
\end{equation}
Note that $L_j(u)\ge0$ always, and we use the hyperbolic metric as a sub-solution for our equations.  In order to find a family of super-solutions $S_j$, we need to ensure $$ S_j\ge0, \qquad L(S_j)\le 0.$$ Then we will always be able to find a solution $u_j$ satisfying $0\le u_j\le S_j$ everywhere.  Note the method of sub- and super-solutions works on non-compact Riemann surfaces (see e.g.\ \cite{loftin02c}), and it is not necessary to have an $L^\infty$ bound on the difference  $S_j-0$ of the super- and sub-solutions.

To construct a family of super-solutions, recall that with respect to the metrics $m_j$ on $\Sigma_j^{\rm reg}$, the convergence of $(\Sigma_j,U_j)$ implies there is a uniform constant $C$ so that $\|U_j\|_{m_j}\le C$. Write our metric $m_j = e^{\phi_j} k_j$. Then for a constant $B$,
$$ L_j(\phi_j+B) = e^{\phi_j}(  4\|U_j\|^2_{m_j} e^{-2B} - 2e^B - 2 \kappa_{m_j}).$$
Moreover,  (\ref{compare-g-k}) implies the grafting metric $g_j$ and the hyperbolic metric $k_j$ are uniformly comparable. By the construction of $m_j$ above, $\phi_j$ is uniformly bounded on the region of interpolation, $\frac {k_j}{g_j}\ge 1$ where $k_j$ is flat, and $\kappa_{m_j}$ is uniformly bounded.  In particular, this shows $\phi_j$ has a uniform lower bound, and so for $B$ large enough, $\phi_j+B\ge0$. Therefore, $S_j=\phi_j+B$ is a super-solution.

$S_j$ is a smooth function on each $\Sigma_j^{\rm reg}$.  Note that the $S_j$ can be chosen to vary continuously as $\Sigma_j$ changes for $t_j$ small (within our coordinate neighborhood $V^\alpha\subset \overline{\mathcal M}_g$), but within each $N^{t_i}$ neighborhood, $S_j$ varies discontinuously for values of $|t_j|$ large enough.  This is not a concern, however, as the there are still uniform bounds.  This follows since for $|t_j|$ bounded away from zero, the $g_j$ metrics on the hyperbolic collars are uniformly equivalent (depending on the bound on $|t_j|$) to the flat metrics we glue in to form $m_j$. In other words, there is a uniform positive constant $C$ so that $Cg_j\le m_j\le C^{-1}g_j$.

With the sub-solution 0 and super-solution $S_j$ in place, there is a solution $u_j$ to (\ref{u-hyp-eq}) satisfying $0\le u_j \le S_j$. This implies the Blaschke metric $h_j=e^{u_j}k_j$ is complete, since the hyperbolic metric $k_j$ is complete.

Now consider the sequence of Blaschke metrics $h_j = e^{u_j}k_j$ on $\Sigma_j^{\rm reg}$. It is a theorem of Bers that the hyperbolic metrics $k_j$ vary smoothly on compact sets of $K^{\alpha,{\rm reg}}$ \cite{bers74}. The uniform local bounds on $S_j$ on $K^{\alpha,{\rm reg}}$, together with interior elliptic estimates, imply that the $u_j$ have locally uniform $C^{2,\beta}$ estimates on each $\Sigma_j^{\rm reg}\subset K^{\alpha,{\rm reg}}.$ (See e.g. \cite{loftin02c} for the elliptic regularity argument.) This implies by Ascoli-Arzela that there is a limit (up to a subsequence) in $C^2_{\rm loc}$: $u_{j_k}\to w$ on $\Sigma_\infty^{\rm reg}$.  Since each $u_{j_k}\ge0$, $w\ge0$ as well, and $e^w k_\infty$ is  complete.  Since the convergence is $C^2_{\rm loc}$, $w$ satisfies (\ref{u-hyp-eq}) and so $e^w k_\infty$ is a complete Blaschke metric on $\Sigma_\infty$.  But Dumas-Wolf's uniqueness Theorem \ref{unique-thm} above then   shows that $w=u_\infty$.  Moreover, the same argument shows that every subsequence of $u_j$ has a subsequence which converges to $u_\infty$ in $C^2_{\rm loc}$.  This shows $u_j\to u_\infty$ in $C^2_{\rm loc}$ (and in $C^\infty_{\rm loc}$ by elliptic regularity).
\end{proof}

Since $0$ is a sub-solution to (\ref{u-hyp-eq}), we have the following
\begin{prop} \label{blaschke-dominates-hyperbolic}
Let $\Sigma$ be a Riemann surface equipped with a complete conformal hyperbolic metric.  Let $U$ be a cubic differential over $\Sigma$, and let $h=e^uk$ be a complete Blaschke metric for which $u$ satisfies (\ref{u-hyp-eq}). Then $h\ge k$ on $\Sigma$.
\end{prop}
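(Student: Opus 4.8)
The plan is to run a maximum-principle comparison between the sub-solution $0$ and the solution $u$ of the proposition. Write the equation (\ref{u-hyp-eq}) with background metric $k$ as $L(u):=\Delta_k u + 4e^{-2u}\|U\|_k^2 - 2e^u + 2 = 0$. As already observed, $0$ is a sub-solution, $L(0)=4\|U\|_k^2\ge 0$; moreover the zeroth-order term $u\mapsto 4e^{-2u}\|U\|_k^2 - 2e^u + 2$ is strictly decreasing in $u$, which is the structural feature that makes such a comparison work. If $\Sigma$ were closed the argument would be immediate: at a point $p$ where $u$ attained a negative minimum one would have $\Delta_k u(p)\ge 0$, whereas the equation gives $\Delta_k u(p) = 2e^{u(p)} - 2 - 4e^{-2u(p)}\|U(p)\|_k^2 < 0$, since $u(p)<0$ makes $2e^{u(p)}-2<0$ and the remaining term is $\le 0$; this contradiction forces $u\ge 0$, i.e. $h\ge k$.

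For the non-compact complete $\Sigma$ of the proposition one replaces the minimum by an almost-minimizing sequence. Since $(\Sigma,k)$ is complete with Gauss curvature $\equiv -1$, hence with Ricci curvature bounded below, the Omori--Yau maximum principle applies to $u$ whenever $\inf_\Sigma u > -\infty$: it produces points $p_j\in\Sigma$ with $u(p_j)\to\inf_\Sigma u$, $|\nabla_k u(p_j)|\to 0$, and $\liminf_j \Delta_k u(p_j)\ge 0$. Feeding this into (\ref{u-hyp-eq}) and discarding the nonnegative term $4e^{-2u}\|U\|_k^2$ gives $\Delta_k u(p_j)\le 2e^{u(p_j)}-2$, whence $0\le \liminf_j \Delta_k u(p_j)\le 2e^{\inf_\Sigma u}-2$, and therefore $\inf_\Sigma u\ge 0$, which is the claim.

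The main obstacle is thus only to know that $\inf_\Sigma u > -\infty$, so that the Omori--Yau principle may be invoked. For the surfaces of interest here this is supplied by the asymptotic description in Theorem \ref{thm04}: near a parabolic end $h$ is asymptotic to $k$, while near a quasi-hyperbolic or hyperbolic end $h$ is asymptotically flat of the shape $c\,|dz|^2/|z|^2$, which dominates the hyperbolic cusp metric $|dz|^2/(|z|\log|z|)^2$, so that $u\to +\infty$ there; in either case $u$ is bounded below near each end and hence globally. Alternatively --- and this is perhaps the cleanest route --- one may bypass the issue entirely: by the Dumas--Wolf uniqueness Theorem \ref{unique-thm} the function $u$ in the proposition is the unique solution of (\ref{u-hyp-eq}) with $e^u k$ complete, and such a solution is produced (as in the proof of Theorem \ref{u-limit-thm}, or in \cite{loftin02c}) by the method of sub- and super-solutions with lower sub-solution $0$; any solution obtained in this way automatically satisfies $0\le u$.
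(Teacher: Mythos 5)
Your final paragraph --- invoke Dumas--Wolf uniqueness (Theorem \ref{unique-thm}) and observe that the unique complete solution is the one produced by the method of sub- and super-solutions with $0$ as the sub-solution --- is exactly the one-line argument the paper intends by the sentence ``Since $0$ is a sub-solution to (\ref{u-hyp-eq}), we have the following,'' so on that route you and the paper coincide.

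Your primary route via the Omori--Yau minimum principle is a genuinely different presentation and is correctly set up: $(\Sigma,k)$ has Ricci curvature bounded below, the zeroth-order nonlinearity $u \mapsto 4e^{-2u}\|U\|_k^2 - 2e^u + 2$ is strictly decreasing, and feeding $\liminf_j \Delta_k u(p_j) \ge 0$ into (\ref{u-hyp-eq}) along an almost-minimizing sequence forces $\inf_\Sigma u \ge 0$. The gap you flag --- that Omori--Yau needs $\inf_\Sigma u > -\infty$ in advance, while the hypothesis only gives completeness of $e^u k$ --- is real, and neither of your two patches closes it at the level of generality at which the proposition is stated. The asymptotics from Theorem \ref{thm04} apply only to cubic differentials with poles of order at most $3$, not to an arbitrary $(\Sigma,U)$, and the ``cleanest route'' works but is the paper's own argument and tacitly presupposes that a nonnegative super-solution is available, which the paper supplies only in the noded-surface setting of Theorem \ref{u-limit-thm}.

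There is a way to run your comparison argument that needs no a priori lower bound on $u$, no super-solution, and no uniqueness theorem, and it covers the proposition exactly as stated. Wang's equation forces the Gauss curvature of the Blaschke metric to satisfy $\kappa_h = -1 + 2\|U\|_h^2 \ge -1$ (take $g=h$, $u=0$ in (\ref{u-eq})). Thus $h$ is a complete conformal metric of curvature $\ge -1$ and $k$ is a conformal metric of curvature $\equiv -1$, and Yau's generalization of the Ahlfors--Schwarz lemma, applied to the identity map $(\Sigma,h)\to(\Sigma,k)$, yields $k \le h$ directly. The lower bound on $\log(k/h)$ is established internally in Yau's argument (via precisely the Omori--Yau principle you invoke, but applied to a cleverly cut-off auxiliary function) rather than assumed, which is exactly the ingredient your outline was missing. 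The price is a heavier named theorem; the gain is a self-contained proof of the proposition that does not route through \cite{dumas-wolf14}, the sub/super-solution construction, or the asymptotic analysis of the ends.
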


\subsection{Convergence of the holonomy}

\begin{thm} \label{hol-limits}
Let $(\Sigma_j,U_j)\to (\Sigma_\infty,U_\infty)$ be a convergent family of pairs of (possibly) noded Riemann surfaces and regular cubic differentials.  Consider a family of parametrized smooth loops $\mathcal L_j$ based at points $p_j$ in $\Sigma_j^{\rm reg}$ which converge uniformly to $(\mathcal L_\infty,p_\infty)$ on $\Sigma_\infty^{\rm reg}$.  Then given a choice of a continuous family of initial frames at the basepoints, consider the $\rp_2$ holonomy ${\rm hol}_j\in \Sl3$. Then ${\rm hol}_j\to {\rm hol}_\infty$ up to conjugation.
\end{thm}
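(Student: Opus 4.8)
The plan is to express the holonomy as the monodromy of Wang's linear frame system (\ref{evolve-frame}) integrated around the loop, and then to invoke the continuous dependence of solutions of linear ODEs on their coefficients, initial data, and path. The two things that make this non-trivial are that the loops $\mathcal L_j$ live on different surfaces $\Sigma_j$ and that (\ref{evolve-frame}) involves the Blaschke conformal factor and its first derivatives, which must be controlled uniformly.

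\emph{Step 1: transport to a fixed region.} Since $\mathcal L_\infty$ is a compact loop in $\Sigma_\infty^{\rm reg}$, it avoids every node of $\Sigma_\infty$, hence lies in a compact set; enlarge this to a compact neighborhood $\mathcal N\subset\Sigma_\infty^{\rm reg}$. Working in a single plumbing chart $V^\alpha$ containing the tail of the sequence, $\mathcal N$ can only meet the thick part of $\Sigma_\infty^{\rm reg}$ together with collars $N_i^{t_i}$ with $t_i\ne0$; along such a collar the cusp coordinates $z_i,w_i$ persist and $\mathcal N\cap N_i^{t_i}$ is compactly contained, while on the thick part the plumbing diffeomorphisms $\chi^{s_j}$ identify a fixed neighborhood of $\mathcal N$ with a region of $\Sigma_j^{\rm reg}$. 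Under these identifications the uniform convergence $\mathcal L_j\to\mathcal L_\infty$ forces $\mathcal L_j\subset\mathcal N$ for $j$ large; Bers's theorem gives smooth convergence of the conformal structures on $\mathcal N$; Theorem \ref{u-limit-thm} gives $C^\infty$ convergence on $\mathcal N$ of the Blaschke metrics, hence uniform convergence of the conformal factors $\psi_j$ together with $\psi_{j,z},\psi_{j,\bar z}$; and the definition of convergence of regular cubic differentials gives $U_j\to U_\infty$ uniformly on $\mathcal N$.

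\emph{Step 2: continuous dependence of the ODE.} Cover $\mathcal L_\infty$ by finitely many conformal coordinate charts; in each, the two coefficient matrices in (\ref{evolve-frame}) --- whose entries lie among $0$, $1$, $\tfrac12 e^{\psi_j}$, $\psi_{j,z}$, $\psi_{j,\bar z}$, $U_je^{-\psi_j}$, $\bar U_je^{-\psi_j}$ --- converge uniformly on $\mathcal N$. Parametrize a lift of $\mathcal L_j$ in the universal cover by $t\in[0,1]$; after reparametrizing so the parametrizations converge in $C^1$ (or simply invoking the homotopy invariance of the holonomy, guaranteed by the integrability conditions (\ref{psi-eq})), the restriction of (\ref{evolve-frame}) to the lift is a linear ODE $\dot F_j = A_j(t)F_j$ with $A_j\to A_\infty$ uniformly on $[0,1]$ and with initial frame $F_j(0)\to F_\infty(0)$ by continuity of the chosen family of frames and the normalization (\ref{initial-frame}). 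Gronwall's inequality then gives $F_j(1)\to F_\infty(1)$. Since the trace of the two coefficient matrices is $\psi_{j,z}$ resp.\ $\psi_{j,\bar z}$, one has $\det F_j = \tfrac12 i\, e^{\psi_j}$ throughout, so the monodromy --- recovered from $F_j(0)$, $F_j(1)$ and the conformal transition data at the endpoints of the lift via the equivariance of the developing map --- is an element ${\rm hol}_j\in\Sl3$ depending continuously on all of this data. Hence ${\rm hol}_j\to{\rm hol}_\infty$, where ${\rm hol}_\infty$ is the holonomy of $(\Sigma_\infty,U_\infty)$ computed from the limiting initial frame. Any other admissible family of initial frames (or basepoints) changes each ${\rm hol}_j$, and the limit, by conjugation in $\Sl3$ by Theorem \ref{complete-metric-convex-rp2}; this is the asserted ``up to conjugation.''

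The main obstacle is Step 1: pinning down the sense in which the loops, living on distinct surfaces $\Sigma_j$ and possibly running through the long collars, can all be realized inside one fixed region of $\Sigma_\infty^{\rm reg}$ on which the convergence statements of Theorem \ref{u-limit-thm} (and of the $U_j$) apply. Once this is arranged, Step 2 is the standard continuous-dependence argument for linear ODEs.
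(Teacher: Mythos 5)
Your plan is the same as the paper's: reduce to the linear frame system (\ref{evolve-frame}) along a path, apply Theorem \ref{u-limit-thm} to control the coefficients, and invoke continuous dependence of linear ODEs on parameters. The "main obstacle" you flag in Step 1 is real and you resolve it the same way the paper does, by localizing in a plumbing chart $V^\alpha$ and noting the loops stay uniformly in the thick part.

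The gap is in Step 2, in the phrase ``the conformal transition data at the endpoints of the lift via the equivariance of the developing map.'' This is exactly where the content of the proof lives and it is not spelled out. The holonomy along $\mathcal L_j$ is \emph{not} simply the monodromy $F_j(1)F_j(0)^{-1}$ of the integrated frame: the frame $F=(f,f_z,f_{\bar z})$ is coordinate-dependent, and at the endpoint of the lift the frame is expressed in the coordinate $w=\iota_j z$, where $\iota_j$ is the deck transformation. The paper isolates two correction factors — a diagonal matrix $D(1,\partial\iota_j/\partial z,\partial\bar\iota_j/\partial\bar z)$ coming from the change of conformal coordinate, and a rotation coming from the corner angle $\theta_j$ of the (geodesic replacement of the) loop at the basepoint — and then has to prove that $\iota_j$ and $\theta_j$ converge. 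You assert that the holonomy ``depends continuously on all of this data'' but never show the data converges; in particular you never establish $\iota_j\to\iota_\infty$, which is not automatic from the hypothesis and is the point of the lemma.

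A second, related soft spot: the loops $\mathcal L_j$ are only assumed to converge \emph{uniformly} (i.e.\ $C^0$), so the restriction of (\ref{evolve-frame}) to a $C^0$-convergent path does not give $C^0$-convergent coefficient matrices $A_j(t)$ without further work — the matrices involve the velocity of the path. You offer ``reparametrize so the parametrizations converge in $C^1$'' or ``invoke homotopy invariance,'' but neither is carried out. The paper's device is exactly the second option made concrete: replace $\tilde{\mathcal L}_j$ by the hyperbolic geodesic $\hat{\mathcal L}_j$ from $\tilde p_j$ to $\iota_j\tilde p_j$ in $\mathcal D$ (after fixing lifts via the normalized quasi-conformal maps $\tilde\chi^\nu$). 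Geodesics depend smoothly on their endpoints, so $\hat{\mathcal L}_j\to\hat{\mathcal L}_\infty$ in $C^\infty$, and moreover this gives the convergence of both $\iota_j$ (determined by the endpoints of the geodesic) and $\theta_j$ (the angle at $\iota_j\tilde p_j$ between $\hat{\mathcal L}_j$ and $\iota_j\hat{\mathcal L}_j$). In other words, the geodesic replacement simultaneously fixes the parametrization problem and supplies the convergence of the transition data you left implicit.
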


\begin{proof}
The idea of the proof is that since all the loops avoid the nodes, we remain in the thick part of the universal curve $\overline{\mathcal C}_g$, where we have uniform estimates, and convergence, of the cubic differentials $U_j$, the conformal factors $u_j$ and the hyperbolic metrics.  Then the convergence of the holonomy will follow from the theory of linear ODEs with parameters.

Consider the Poincar\'e disk $\mathcal D$ as a universal cover of $\Sigma_1$ so that $\tilde p_1=0$ is a lift of $p_1$. We may assume all $\Sigma_j$ for $j$ large enough are represented in a single neighborhood $V^\alpha\subset \overline{\mathcal M}_g$. Then for each Beltrami differential $\nu$ in the definition of the hyperbolic metric plumbing coordinates, consider the diffeomorphism $\tilde\chi^\nu$ of $\mathcal D$ which fixes three points (such as $1,i,-1$) on the boundary $\partial\mathcal D$. Each $\tilde\chi^\nu$ is a lift of the quasi-conformal diffeomorphism $\tilde\chi^\nu$ as in Subsection \ref{plumbing-subsec} above.  In this way, we can choose lifts $\tilde p_j\in\mathcal D$ of $p_j$ so that $\tilde p_j\to\tilde p_\infty$. Also consider lifts $\tilde{\mathcal L}_j$  based at $\tilde p_j$ of the loop $\mathcal L_j$. Let $\iota_j$ denote the deck transformation corresponding to $\mathcal L_j$.  Then $\tilde {\mathcal L}_j$ has endpoints $\tilde p_j$ and $\iota_j\tilde p_j$.

Let $\hat{\mathcal L_j}\!:[0,1]\to \mathcal D$ be the hyperbolic-geodesic constant-speed path from $\tilde p_j$ to $\iota_j\tilde p_j$. $\hat{\mathcal L}_j$ and $\tilde{\mathcal L}_j$ are homotopic, but $\hat{\mathcal L}_j$ enjoys better convergence properties: $\hat{\mathcal L}_j\to \hat {\mathcal L}_\infty$ in $C^\infty$, while $\tilde {\mathcal L}_j \to {\mathcal L}_\infty$ only uniformly.  Upon projecting to $\Sigma_j^{\rm reg}$, the image of $\hat{\mathcal L}_j$ is a hyperbolic geodesic typically with a corner at the basepoint $p_j$. The angle $\theta_j$ at this corner at $p_j$ satisfies $\theta_j\to\theta_\infty$, as $\theta_j$ is the angle at $\iota_j\tilde p_j$ between $\hat{\mathcal L}_j$ and $\iota_j\hat{\mathcal L}_j$.

Along any path (\ref{evolve-frame}) becomes a linear system of ODEs.
In the special case (which is possible on some local complex coordinate chart, but not on $\mathcal D$) in which there is a deck transformation of the form $z\mapsto z+c$,  the solution of an appropriate initial-value problem along the path is the $\rp^2$ holonomy. See \cite{loftin02c}. The reason for this is that the frame $F = (f,\,f_z,\,f_{\bar z})$ remains a frame under the holomorphic coordinate change $w=z+c$, since $f_z=f_w$ and $f_{\bar z}=f_{\bar w}$.    For more general paths, (\ref{evolve-frame}) defines a flat connection on a rank-3 vector bundle. Around each loop, the inverse of the parallel transport map of this connection is the $\rp^2$ holonomy. See  \cite{goldman90b,loftin-mcintosh-survey} for details.

To compute the $\rp^2$ holonomy along $\mathcal L_j$, first lift to the loop $\tilde{\mathcal L}_j$ as above.  Since the holonomy is independent of the choice of path in the homotopy class, we consider the geodesic path $\hat{\mathcal L}_j$.  We first solve an initial-value problem for (\ref{evolve-frame}) along $\hat{\mathcal L}_j$. Then the values of the evolved frame  at $\tilde p_j$ and $\iota_j \tilde p_j$ are not compatible, as we need a second and third contribution.  The second contribution is this: for $F=(f,\,f_z,\,f_{\bar z})$, consider the conformal coordinate $w=\iota_j z$. There is a discrepancy given by the  diagonal matrix $D(1,\frac{\partial \iota_j}{\partial z}, \frac{\partial \bar\iota_j} {\partial \bar z})$.  For the third contribution, the angle $\theta_j$ shows we must have a similar transformation for a conformal coordinate rotation of angle $\theta_j$.  We will show the $\rp^2$ holonomy converges by showing these three contributions each converge as $j\to\infty$.

First of all, the fact that each loop $\mathcal L_j$ avoids the nodes in the universal curve $\overline{\mathcal C}_g$ shows that there is an $\epsilon>0$ so that $\mathcal L_j$ lies in Thick$_\epsilon$ inside the noded Riemann surface $\Sigma_j$.  Since $(\Sigma_j,U_j)\to (\Sigma_\infty, U_\infty)$, we have $U_j\to U_\infty$ uniformly on the paths $\hat {\mathcal L}_j\to\hat{\mathcal L}_\infty$.  The conformal factors $u_j$ for each Blaschke metric, and their first derivatives, also converge in this way by Theorem \ref{u-limit-thm}.  Recall the Blaschke metric $h_j=e^{u_j}k_j$ for $k_j$ the hyperbolic metric.  On $\mathcal D$, the union of the $\hat{\mathcal L}_j$ for $j=1,2,\dots,\infty$ is compact, and so there is a $\delta>0$ so that they all lie in $\{z:|z|<1-\delta\} \subset \mathcal D$.  This shows the hyperbolic metric and $|dz|^2$ are uniformly bounded in terms of each other on the $\hat{\mathcal L}_j$, and so for $h_j=e^{\psi_j}|dz|^2$, $\psi_j$ and its first derivative converge uniformly as $j\to\infty$. As these cover all the terms in the coefficients in (\ref{evolve-frame}),
the theory of linear systems of ODE's with parameters shows that the solutions to the appropriate initial-value problems converge as $j\to\infty$.

For the second contribution, note that $\iota_j$ is determined by the hyperbolic geodesic between the endpoints $\tilde p_j$ and $\iota_j\tilde p_j$.  The convergence of $\iota_j\to\iota_\infty$ then follows since $\tilde p_j \to \tilde p_\infty$ and $\iota_j\tilde p_j \to \iota_j \tilde p_\infty$.  Finally, for the third contribution, we have already shown $\theta_j\to\theta_\infty$.
Thus the holonomy converges as $j\to\infty$.
\end{proof}

\subsection{Convergence of the developing map}
The convergence of the developing map as $(\Sigma_j,U_j)\to (\Sigma_\infty, U_\infty)$ is a little trickier, as it is necessary to consider paths that go to the boundary of the universal cover, and thus the standard theory of linear systems of ODE's with parameters does not directly apply.  Our proof is similar to Dumas-Wolf (\cite{dumas-wolf14}, the proof of Theorem 8.1).

\begin{thm}\label{conv-dev-map}
Let $(\Sigma_j,U_j)\to (\Sigma_\infty,U_\infty)$ be a convergent family of pairs of (possibly) noded Riemann surfaces and regular cubic differentials.  Let $p_j\in\Sigma_j^{\rm reg}$ and let $p_j\to p_\infty\in \Sigma_\infty^{\rm reg}$.  Consider the connected component of $\Sigma_j^{\rm reg}$ containing $p_j$, and take a universal cover of this component to be the unit disk $\mathcal D$, with a lift of $p_j$ placed at 0.  Let $\Omega_j\subset \rp^2$ be the convex domain determined by projecting the complete affine sphere determined by the initial value problem (\ref{evolve-frame}) with a fixed initial frame from $\re^3$ to $\rp^2$. Then $\Omega_j\to\Omega_\infty$ in the Hausdorff sense.
\end{thm}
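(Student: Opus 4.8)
\emph{Proof proposal.}

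The plan is to pass to a subsequence along which the compact sets $\overline{\Omega_j}$ converge in the Hausdorff metric on $\rp^2$ to a compact convex set $\Omega_*$, and to prove $\Omega_*=\overline{\Omega_\infty}$; since every subsequence has such a sub-subsequence, this yields $\overline{\Omega_j}\to\overline{\Omega_\infty}$. Fixing a common initial frame, $p_0:=[f_j](0)$ is independent of $j$, so $p_0=[f_\infty](0)\in\Omega_\infty$, and by $u_j(p_j)\to u_\infty(p_\infty)$ (Theorem \ref{u-limit-thm}) the $1$-jet of the affine sphere $\mathcal H_j$ at $p_0$ (center, tangent plane, Blaschke metric value) converges. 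Recall each $\Omega_j$ is genuinely properly convex by Theorems \ref{complete-metric-convex-rp2} and \ref{complete-has}, since $h_j=e^{u_j}k_j$ is complete by Proposition \ref{blaschke-dominates-hyperbolic}, and $[f_j]\!:\mathcal D\to\Omega_j$ is a diffeomorphism. Throughout I will stay inside one coordinate patch $V^\alpha\subset\overline{\mathcal M}_g$ and use Wolpert's plumbing/Beltrami framework together with Bers' theorem exactly as in the proof of Theorem \ref{hol-limits}, so that over the portion of the thick part of $\Sigma_j$ near $p_j$ the hyperbolic metrics $k_j$, the conformal factors $u_j$, and the cubic differentials $U_j$ all converge in $C^\infty$ on compacta of $\Sigma_\infty^{\rm reg}$.

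First I record two preliminary facts. (i) The domains $\Omega_j$ are trapped between fixed properly convex domains: the pinning of $\mathcal H_j$ at $p_0$ with a convergent $1$-jet, the interior Monge--Amp\`ere estimates behind Theorem \ref{converge-has-potential}, and the \c{T}i\c{t}eica barrier of Subsection \ref{triangle-has} give a fixed ball $\Delta_0$ with $\Delta_0\subseteq\Omega_j$; applying the same reasoning to the dual affine spheres $\mathcal H_j^*$ (Theorem \ref{dual-has}) and dualizing (Lemma \ref{proj-dual-inclusion}) gives a fixed properly convex $\Delta_1\supseteq\Omega_j$. Hence $\Delta_0\subseteq\Omega_*\subseteq\Delta_1$, so $\Omega_*$ is properly convex with nonempty interior, and $\Omega_*=\overline{{\rm int}\,\Omega_*}$. (ii) The easy inclusion $\overline{\Omega_\infty}\subseteq\Omega_*$ holds: for $x=[f_\infty](\tilde x)\in\Omega_\infty$, the argument of Theorem \ref{hol-limits} — integrating the linear system (\ref{evolve-frame}) along paths staying in the thick part, where all coefficients converge in $C^\infty$ — produces $x_j\in\Omega_j$ with $x_j\to x$, so $x\in\Omega_*$; taking closures gives $\overline{\Omega_\infty}\subseteq\Omega_*$, and since $\Omega_\infty$ is open, $\Omega_\infty\subseteq{\rm int}\,\Omega_*$.

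The heart of the matter is the reverse inclusion ${\rm int}\,\Omega_*\subseteq\Omega_\infty$. Fix $x_*\in{\rm int}\,\Omega_*$; then $x_*\in\Omega_j$ for all large $j$, write $x_*=[f_j](\tilde q_j)$ with $\tilde q_j\in\mathcal D$, and split on the Blaschke distance $d_j:=d_{\Omega_j}(p_0,x_*)=d_{h_j,\mathcal D}(0,\tilde q_j)$. If $d_j$ is bounded along a subsequence, then $d_{k_j,\mathcal D}(0,\tilde q_j)\le d_j$ is bounded by Proposition \ref{blaschke-dominates-hyperbolic}, so the $\tilde q_j$ lie in a fixed compact subset of $\mathcal D$ whose image in $\Sigma_j$ is a bounded hyperbolic ball about $p_j$, hence a compact subset of the thick part converging into $\Sigma_\infty^{\rm reg}$; the argument of (ii) then gives $\tilde q_j\to\tilde q_\infty$ and $x_*=[f_\infty](\tilde q_\infty)\in\Omega_\infty$. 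If instead $d_j\to\infty$, apply Proposition \ref{separate-O} with $\rho_j=I$ (so $\mathcal O=\mathcal U={\rm int}\,\Omega_*$, properly convex by (i)), $q_j=p_0\to p_0\in{\rm int}\,\Omega_*$, and $r_j=x_*\to x_*\in{\rm int}\,\Omega_*$: its hypothesis $d_{\Omega_j}(q_j,\rho_j^{-1}r_j)=d_j\to\infty$ holds, but its conclusion forbids any sequence $z_j\in\Omega_j$ converging to an interior point of $\Omega_*$, and the constant sequence $z_j:=p_0$ is such — a contradiction. (Alternatively, the hypotheses of Proposition \ref{cone-quant} hold uniformly in $j$ because of the pinning, so $d_j\to\infty$ would force $v_j(x_*)\to0$, contradicting the convergence $v_j\to\bar v$ of Theorem \ref{converge-has-potential} at the interior point $x_*$, where $\bar v<0$ is the potential of ${\rm int}\,\Omega_*$.) So $d_j$ is bounded and $x_*\in\Omega_\infty$. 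Combining the two inclusions, $\Omega_*=\overline{{\rm int}\,\Omega_*}\subseteq\overline{\Omega_\infty}\subseteq\Omega_*$, hence $\Omega_*=\overline{\Omega_\infty}$, which completes the proof.

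I expect the main obstacle to be preliminary fact (i): ruling out degeneration of the limit domain to a non-properly-convex set, i.e.\ ruling out that $\Omega_j$ ``grows'' as $j\to\infty$. This is precisely the phenomenon that the quantitative Blaschke estimates of Propositions \ref{cone-quant} and \ref{separate-O}, together with conormal duality and the Wolpert plumbing bookkeeping behind Theorems \ref{hol-limits} and \ref{u-limit-thm}, are designed to control; once (i) is in hand, the $d_j\to\infty$ case of the reverse inclusion is immediate from Proposition \ref{separate-O}, and the remaining pieces are a direct re-run of the ODE-with-parameters argument already used for the holonomy.
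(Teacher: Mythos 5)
Your proposal is correct, but it handles the hard inclusion by a genuinely different mechanism than the paper. The paper proves both Hausdorff inclusions directly, with no subsequence extraction: $\Omega_\infty\subset N_\epsilon(\Omega_j)$ comes from the ODE-with-parameters convergence of the frames of (\ref{evolve-frame}) on a large compact disk $\overline{B(R)}\subset\mathcal D$ (this is your step (ii)), and the reverse inclusion is obtained by running the very same argument on the dual affine spheres --- the data $(\Sigma_j,-U_j)$ with the same Blaschke metric, via Theorem \ref{dual-has} --- to get $\Omega_\infty^*\subset N_\epsilon(\Omega_j^*)$, and then dualizing with Lemma \ref{proj-dual-inclusion} together with continuity of projective polarity in the Hausdorff distance for uniformly bounded convex domains containing a fixed ball. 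You instead extract Hausdorff-convergent subsequences, use duality only to trap the $\Omega_j$ between fixed properly convex bodies (so the subsequential limit $\Omega_*$ is nondegenerate), and then identify ${\rm int}\,\Omega_*$ with $\Omega_\infty$ by the Blaschke-distance dichotomy, resolved by Proposition \ref{separate-O} (your use of it with $\rho_j=I$ is degenerate but formally valid; the parenthetical route via Proposition \ref{cone-quant} and Theorem \ref{converge-has-potential} is the cleaner way to say it). The paper's route buys brevity and independence from the quantitative estimates of Propositions \ref{cone-quant} and \ref{separate-O}; your route buys uniform inner and outer bounds on the family $\{\Omega_j\}$ as a byproduct and runs on the same engine the paper later uses in the opposite direction (compare Proposition \ref{unique-regular-convex-limit}), so it unifies the two halves of the main theorem. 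Two small repairs: the fixed inner ball $\Delta_0$ is justified not by the \c{T}i\c{t}eica barrier or interior Monge--Amp\`ere estimates but simply by your step (ii) plus a convex-hull argument (three points of $\Omega_\infty$ surrounding $p_0$ are eventually approximated inside $\Omega_j$, so convexity forces a fixed ball about $p_0$ into $\Omega_j$); and for the outer bound, the dual developing maps are the conormal compositions, whose initial frames are not fixed but merely convergent (they are determined by the $2$-jet of $(u_j,U_j)$ at the basepoint, which converges by Theorem \ref{u-limit-thm}), which is all the ODE-with-parameters argument needs. Neither point affects the validity of your proof.
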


\begin{proof}
We need to prove that for each $\epsilon>0$, there is a  $J$ so that for all $j\ge J$, $\Omega_n\subset N_\epsilon(\Omega_\infty)$ and $\Omega_\infty \subset N_\epsilon(\Omega_j)$, where $N_\epsilon$ is an $\epsilon$-neighborhood with the respect to the Fubini-Study metric on $\rp^2$.

Let $F_j(z)$ denote the frame for $z\in\mathcal D$ corresponding to $(\Sigma_j,U_j)$ as above. Then the component $f_j(z)$ is the parametrization of the hyperbolic affine sphere in $\re^3$, and $[f_j(z)]$ is the projection to $\rp^2$.  For $R<1$, consider the closed disk centered at the origin $\overline{B(R)}\subset \mathcal D$.  Choose $R$ so that $\Omega\subset N_{\epsilon/2} ([f_\infty](\overline{B(R)})).$

Now for $z\in\overline{B(R)}$, $F_j(z)$ can be determined by a linear system of ODE's given by integrating the frame along a radial path path from $0$ to $z$.  As above in the proof of Theorem \ref{hol-limits}, the coefficients of these ODE systems on the compact set $\overline{B(R)}$ converge uniformly as $j\to\infty$. Therefore, the theory of linear ODE systems with parameters shows $F_j\to F_\infty$ (and thus $[f_j] \to [f_\infty]$) uniformly on $\overline{B(R)}$. So there is a $J$ so that for $j\ge J$, $$[f_n](\overline{B(R)}) \subset N_{\epsilon/2}([ f_\infty](\overline{B(R)})) \subset N_{\epsilon/2}([f_\infty](\mathcal D)) = N_{\epsilon/2}(\Omega_j),$$
and thus $\Omega_\infty \subset N_\epsilon(\Omega_j)$.

To prove the opposite inclusion, we consider the dual hyperbolic affine sphere, which has the same metric $e^{u_j}k_j$ on $\Sigma_j$ and the opposite cubic differential $-U_j$, by Theorem \ref{dual-has} above.  Now we can lift the data to the universal cover $\mathcal D$ as above, and consider an appropriate initial frame to form the dual hyperbolic affine sphere and projective dual convex domain.  Then the previous case implies there is a $J$ so that if $j\ge J$, then
$$ \Omega^* \subset N_\epsilon(\Omega_j^*).$$ But then Lemma \ref{proj-dual-inclusion} implies that
$$ \Omega \supset (N_\epsilon(\Omega_j^*))^* \supset N_{\epsilon'}(\Omega_j),$$ where $\epsilon'\to0$ if and only if $\epsilon\to0$. This follows from the continuity under the Hausdorff distance of the projective duality of uniformly bounded convex domains which contain a fixed ball. This in turn implies (for convex domains) that there is an $\epsilon''$ which approaches 0 if and only if $\epsilon$ does so that
$$\Omega_j\subset N_{\epsilon''}(\Omega_\infty)$$ for $j\ge J$. This is enough to prove the theorem.
\end{proof}

\section{Regular convex $\rp^2$ structures to regular cubic differentials} \label{rp2-to-cubic-sec}

\subsection{The singular limit cases}
In this subsection, we show the regular convex $\rp^2$ structures each correspond to a pair $(\Sigma,U)$ of a noded Riemann surface $\Sigma$ and regular cubic differential $U$ on $\Sigma$.  It suffices to consider each connected component of $\Sigma^{\rm reg}$ separately. Consider a connected oriented properly convex $\rp^2$ surface each of whose ends is regular.  Then use the hyperbolic affine sphere to construct a Riemann surface of finite type and regular cubic differential so that the $\rp^2$ geometry of each end corresponds to the residue of the cubic differential as in Theorem \ref{thm04} above.  The results in this subsection are also recently due to Nie \cite{nie15}, using similar techniques.  We include our version, as we find the material both short and instructive.

Consider a single end $\mathcal E$ of $X$.  We proceed by considering the four cases of regular ends separately.

\begin{thm}\cite{benoist-hulin13} \label{parabolic-case}
Let $\mathcal E$ be an end of parabolic type. Then with respect to the Blaschke metric, $\mathcal E$ can be conformally compactified by adding a single point. The cubic differential $U$ has at worst a pole of order 2 at this point, and so the residue is 0.
\end{thm}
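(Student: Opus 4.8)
The plan is to put the end in a normal form near its parabolic fixed point, use a blow-up together with Benoist--Hulin's convergence (Theorem~\ref{local-converge}) to show the Blaschke metric on $\mathcal E$ is asymptotic to a hyperbolic cusp metric, and then read off the order of the pole of $U$ from the decay of $\|U\|$.

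First I would normalize coordinates in $\rp^2$ so that the holonomy $\gamma$ around $\mathcal E$ is the standard unipotent, with unique fixed point $e_1=[1:0:0]$ and unique invariant line $\ell_\infty=\{x_3=0\}$. Proper convexity of $\Omega$ forces $e_1\in\partial\Omega$ with $\ell_\infty$ a supporting line there, so in the affine chart with $\ell_\infty$ at infinity $\gamma$ acts by $(y_1,y_2)\mapsto(y_1+y_2,y_2+1)$ and a developed cusp neighbourhood is a $\gamma$-invariant subregion of $\Omega$ with $y_2\to+\infty$. Using $\gamma$-invariance together with convexity one checks directly that near $e_1$ the boundary arcs of $\Omega$ have the form $y_1=\tfrac12y_2^2-\tfrac12y_2+p_\pm(y_2)$ with $p_\pm$ one-periodic (hence bounded) and $p_-\le p_+$, the upper arc possibly absent. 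In particular $\partial\Omega$ has a well-defined osculating conic at $e_1$, namely the parabola $y_1=\tfrac12y_2^2$, tangent to $\ell_\infty$; equivalently, in the affine chart $(\epsilon,\eta)=(x_2/x_1,x_3/x_1)$ centred at $e_1$, $\Omega$ is squeezed near $e_1$ between two translates of the parabola-interior $\{\eta>\tfrac12\epsilon^2\}$ differing by a bounded amount.

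The core of the argument is a blow-up at $e_1$. Applying the one-parameter group $D(R^{-1},1,R)\in\Sl3$, which in the $(\epsilon,\eta)$ chart acts by $(\epsilon,\eta)\mapsto(R\epsilon,R^2\eta)$ and preserves $\{\eta>\tfrac12\epsilon^2\}$, the squeezing above shows that $D(R^{-1},1,R)\,\Omega$ converges in the Hausdorff sense as $R\to\infty$ to the parabola-interior $\Omega_0=\{\eta>\tfrac12\epsilon^2\}$, which is projectively a disk, the model of $\mathbb H^2$. By Theorem~\ref{local-converge} the Blaschke metrics and cubic tensors converge in $C^\infty_{\rm loc}$ to those of $\Omega_0$, whose Blaschke metric is hyperbolic with $\kappa\equiv-1$ and $\|U\|\equiv0$. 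Since $\kappa_h$ and $\|U\|_h$ are projectively natural (hence unchanged by the renormalizing maps) and $\gamma$-invariant, this gives $\kappa_h\to-1$ and $\|U\|_h\to0$ as one exits the end; combining this with monotonicity of the Hilbert metric under inclusion, applied to the two bracketing parabola-interiors, and the comparison of Blaschke and Hilbert metrics (the corollary to Proposition~\ref{separate-O}), the Blaschke metric on $\mathcal E$ is uniformly comparable near the end to a hyperbolic cusp metric $k=\tfrac{|dz|^2}{(|z|\log|z|)^2}$. A complete conformal metric bi-Lipschitz to such a cusp metric on an annular end makes that end conformally a punctured disk, so $\mathcal E$ is compactified by adding a single point; fix a conformal coordinate $z$ with the puncture at $z=0$. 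Writing $U=\hat U(z)\,dz^3$ with $\hat U$ holomorphic on $0<|z|<1$, the comparison $h\le C\,k$ gives $\|U\|^2_k\le C^3\|U\|^2_h\to0$; since $\|U\|^2_k=|\hat U(z)|^2(|z|\log|z|)^6$, this forces $|z|^3|\hat U(z)|\to0$, i.e.\ $z^3\hat U(z)\to0$ as $z\to0$. Hence $z^3\hat U$ extends holomorphically across $0$ with a zero there, so $\hat U$ has a pole of order at most $2$; equivalently the coefficient of $\tfrac{dz^3}{z^3}$, the residue, vanishes. (As a consistency check, the residue dictionary of Theorem~\ref{thm04} shows that parabolic holonomy forces the residue to be $0$.)

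The main obstacle is the blow-up step: verifying rigorously that the Benz\'ecri limit of the renormalized cusp neighbourhoods is exactly the interior of the osculating conic—so that the limiting Blaschke metric is genuinely hyperbolic, with explicit cuspidal behaviour—and promoting the resulting $C^\infty_{\rm loc}$ convergence to a comparison estimate uniform all the way down the end. This is precisely where proper convexity of $\Omega$ and the parabolic-invariance-forced structure of $\partial\Omega$ near $e_1$ are really used; the remaining steps are bookkeeping with Benoist--Hulin's theorem and elementary complex analysis.
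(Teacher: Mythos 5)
You should first note what the paper actually does here: it does not prove Theorem \ref{parabolic-case} at all, but imports it from Benoist--Hulin \cite{benoist-hulin13}, adding only the remark (via Marquis \cite{marquis12}) that their finite-volume hypothesis just means all ends are parabolic and that their cusp analysis is local to each end. So your proposal is not competing with an argument in this paper; it is a reconstruction of the cited proof, and in outline it follows the same strategy. The core of it is sound: the normalization of the parabolic, the cocycle equation $f(s+1)=f(s)+s$ forcing the boundary graph to be $\tfrac12 s^2-\tfrac12 s+p(s)$ with $p$ periodic, the squeezing of $\Omega$ between two $\gamma$-invariant conic interiors, the Hausdorff convergence of the rescaled domains to the conic interior (this follows from the squeezing alone, no Benz\'ecri compactness needed), and the use of Theorem \ref{local-converge} to get $\kappa_h\to-1$ and $\|U\|_h\to0$ at the end. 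The uniformity ``all the way down the end'' that you flag as the main obstacle is handled by first applying a power of $\gamma$ so the escaping points have $y_2$ bounded and $y_1\to\infty$, then rescaling with $R\sim\sqrt{y_1}$, which places them in a fixed compact subset of the limit conic; projective naturality of $\kappa_h$ and $\|U\|_h$ does the rest. One detail you hedge on must actually be settled: there is no ``upper arc'' at all, because a concave upper graph would satisfy the same cocycle and hence grow quadratically, which is impossible; your inclusion $P_+\subset\Omega\subset P_-$ (and hence the blow-up limit being the full conic interior rather than a degenerate sliver) depends on this.

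The genuine gap is the final comparison ``$h\le C\,k$ with $k=\frac{|dz|^2}{(|z|\log|z|)^2}$'' taken \emph{in the Blaschke-conformal coordinate} $z$. What the bracketing domains, Hilbert-metric monotonicity, and the Blaschke--Hilbert comparison give you is a pointwise tensor comparison $C^{-1}k'\le h\le C\,k'$, where $k'$ is a hyperbolic cusp metric in a possibly different conformal class. That suffices for the compactification: the identity map is then quasiconformal, quasiconformal maps preserve the infinite modulus of the end, so the end is a punctured disk. It does not suffice for the pointwise upper bound on $h$ in the coordinate $z$, because rewriting $k'$ in the $z$-coordinate involves the derivative of a merely quasiconformal change of uniformizing coordinate, which is not controlled; and it is precisely this upper bound that your estimate $\|U\|^2_k\le C^3\|U\|^2_h$ uses. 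The fix is cheap with what you have already proved: Wang's equation (\ref{psi-eq}) gives $\kappa_h=-1+2\|U\|^2_h$, so your decay statement yields $\kappa_h\le-\tfrac12$ near the puncture, and the Ahlfors--Schwarz lemma on the punctured-disk neighborhood then gives $h\le C\,\lambda$ with $\lambda$ the complete hyperbolic metric there, which has exactly the cusp form (\ref{hyp-met-cusp}). With that bound in place, $|z|^3|\hat U(z)|\to0$, the singularity of $z^3\hat U$ is removable with a zero at $0$, and the pole order is at most $2$ with vanishing residue, as claimed; and you are right that the appeal to Theorem \ref{thm04} is only a consistency check, since that dictionary runs from residues to holonomies.
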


This case is settled by Benoist-Hulin \cite{benoist-hulin13}, who prove that the conformal structure at the end can be compactified by adding a single point, and that the corresponding cubic differential $U$ has a pole of order at most 2.  In our language, this corresponds to the residue's being 0.  To be more specific, Benoist-Hulin consider convex $\rp^2$ surfaces with finite area with respect to the Hilbert metric, which Marquis \cite{marquis12} has proved are equivalent to having a finite number of ends each with parabolic holonomy.  Thus  \cite{benoist-hulin13} is concerned with convex $\rp^2$ surface \emph{all} of whose ends are parabolic.  But the techniques used to analyze each end are essentially local, and apply to each end separately, and indeed they prove that each such end has finite conformal type and has cubic differential with residue 0.

\begin{prop}
Let $\mathcal E$ be a regular end of quasi-hyperbolic type, or of hyperbolic type with bulge parameter $\pm\infty$. Then there is a family of loops $L_s$ around $\mathcal E$ which depend on a parameter $s\to0^+$ so that
\begin{itemize}
\item $L_s$ uniformly approaches the end as $s\to0^+$. More precisely, represent $\mathcal E$ as homeomorphic to a closed half-cylinder $[0,\infty)\times S^1$. Then for every compact $K\subset \mathcal E$, there is an $\epsilon>0$ so that if $s<\epsilon$, $L_s\cap K = \emptyset$.
\item There is a family of elements $M_s\in\Sl3$ so that $M_s\,\Omega\to T$ in the Hausdorff topology as $s\to0^+$ and $M_s\,L_s$ lies in a compact subset of $T$ for $s$ small enough. Here $T$ is a triangle in $\rp^2$.
\end{itemize}
\end{prop}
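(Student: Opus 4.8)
The plan is to route the argument through the affine-sphere dictionary: realize $\mathcal E$ as the end produced by a cubic differential with a genuine order-three pole, and renormalize the developing map so that the rescaled picture near $\mathcal E$ converges to \c{T}i\c{t}eica's flat model (Subsection~\ref{triangle-has}), whose convex domain is a triangle. Concretely, by Theorems~\ref{complete-metric-convex-rp2} and \ref{thm04} the regular end $\mathcal E$ of $X=\Gamma\backslash\Omega$ comes from a holomorphic cubic differential with a pole of order exactly three and residue $R$ at a puncture, where $R\ne0$ since the parabolic case is excluded here. I would choose a conformal coordinate $z$ with the puncture at $0$ and $\{0<|z|<c\}$ a collar neighborhood of $\mathcal E$, and set $w=\log z$, so that the universal cover $\widetilde{\mathcal E}$ is the half-plane $\{\operatorname{Re}w<\log c\}$ with deck transformation $w\mapsto w+2\pi i$ and $U=R\,dw^{3}(1+O(e^{w}))$; after a conformal rotation and rescaling of $w$ (harmless) assume $R=2$. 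Write $\widetilde{\mathrm{dev}}$ for the developing map of the universal cover of $X$, so that $[\widetilde{\mathrm{dev}}]$ has image $\Omega$.

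I would take $L_s=\{|z|=s\}\subset\mathcal E$ for $s\in(0,c)$. The first bullet is immediate: a compact $K\subset\mathcal E$ lies in some $\{|z|\ge\delta\}$, so $L_s\cap K=\emptyset$ once $s<\delta$. For the rest, translate the data to be centered at $\operatorname{Re}w=\log s$; the domains $\{\operatorname{Re}w<\log(c/s)\}$ exhaust $\co$ as $s\to0^{+}$, and the translated cubic differentials converge in $C^\infty_{\rm loc}$ to the constant differential $2\,dw^{3}$. The key input is that the rescaled Blaschke metrics and developing maps converge accordingly: there are $M_s\in\Sl3$ (absorbing the initial-frame ambiguity) with
\[
M_s\circ[\widetilde{\mathrm{dev}}]\circ(w\mapsto w+\log s)\longrightarrow [f]\quad\text{in }C^\infty_{\rm loc}\text{ on }\co,
\]
where $f$ is \c{T}i\c{t}eica's developing map of Subsection~\ref{triangle-has}; put $T:=[f](\co)$, a triangle. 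This renormalized convergence is exactly the end asymptotics of \cite{loftin02c} underlying Theorem~\ref{thm04} (alternatively it can be obtained by the uniform sub/super-solution method of Theorem~\ref{u-limit-thm} with flat background $|dw|^{2}$, the limit being pinned down by Dumas--Wolf's uniqueness Theorem~\ref{unique-thm}, the developing-map convergence then following the ODE argument of Theorem~\ref{conv-dev-map}).

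Granting this, both bullets follow. The lift of $L_s$ is $\{\operatorname{Re}w=\log s,\ 0\le\operatorname{Im}w\le 2\pi\}$, which the translation carries to the fixed compact set $Q=\{\operatorname{Re}w=0,\ 0\le\operatorname{Im}w\le 2\pi\}$, so $M_sL_s\to[f](Q)\Subset T$ and $M_sL_s$ lies in a fixed compact subset of $T$ for $s$ small. On any compact $P\subset\co$ the renormalized maps land in $M_s\Omega$ for $s$ small and converge to $[f](P)$, so letting $P$ exhaust $\co$ gives $\liminf_{s\to0}M_s\Omega\supseteq T$. For the reverse inclusion I would run the identical renormalization on the dual affine sphere: by Theorem~\ref{dual-has} it carries cubic differential $-U$ (residue $-R\ne0$) and unchanged Blaschke metric, with limiting domain the projective dual $T^{*}$ (Calabi duality interchanges the \c{T}i\c{t}eica affine spheres of residues $R$ and $-R$), so with $M_s^{-\top}$ the induced action on the dual space, $\liminf_{s\to0}M_s^{-\top}\Omega^{*}\supseteq T^{*}$. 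Since $T^{*}$ contains a ball, $M_s^{-\top}\Omega^{*}$ eventually contains a fixed ball, hence $M_s\Omega=(M_s^{-\top}\Omega^{*})^{*}$ is uniformly bounded; and any subsequential Hausdorff limit $\Psi$ of $M_s\Omega$ has dual $\Psi^{*}=\lim M_s^{-\top}\Omega^{*}\supseteq T^{*}$ by continuity of projective duality (as in the proof of Theorem~\ref{conv-dev-map}) and Lemma~\ref{proj-dual-inclusion}, whence $\Psi\subseteq\bar T$; together with $\liminf M_s\Omega\supseteq T$ this forces $\Psi=\bar T$, i.e.\ $M_s\Omega\to T$.

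The main obstacle is the middle step: that the rescaled affine spheres near a regular end converge to \c{T}i\c{t}eica's flat triangle. This is precisely where the hypothesis that $\mathcal E$ is quasi-hyperbolic, or hyperbolic with bulge $\pm\infty$ (equivalently $R\ne0$), is used. Making it rigorous requires either adapting the uniform sub/super-solution estimates of Theorem~\ref{u-limit-thm} to the noncompact flat model (the sub- and super-solutions need only be locally bounded on the exhausting domains, the limit being identified via Theorem~\ref{unique-thm}) or invoking the asymptotic analysis of ends from \cite{loftin02c} together with the convergence theorems of \cite{dumas-wolf14}; everything else is the duality bookkeeping above plus elementary properties of exhaustions of $\co$.
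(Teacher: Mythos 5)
Your proposal is circular in a way that undermines its use at this point in the paper. You begin by choosing a conformal coordinate $z$ on a collar of $\mathcal E$ with the puncture at $0$ and writing $U = R\,\frac{dz^3}{z^3} + \cdots$, i.e.\ you assume at the outset that the conformal structure induced by the Blaschke metric on the end is a punctured disk and that the cubic differential extends meromorphically with a pole of order exactly $3$. But establishing exactly those two facts is the purpose of this proposition and the two that follow it: the Hausdorff convergence $M_s\Omega\to T$ is used (via Benoist--Hulin's $C^\infty_{\rm loc}$ convergence, Theorem \ref{local-converge}) to show $\|U\|_h^2\to\frac12$ at the end, which shows $|U|^{2/3}$ is a complete flat metric of bounded circumference and hence that the end is conformally a punctured disk; only then does the Osserman/Dumas--Wolf argument give pole order exactly $3$. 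Before this chain is run, all you know is that $\mathcal E$ is conformally an annular Riemann surface with a complete Blaschke metric; it could a priori be conformally a finite-modulus annulus, in which case there is no puncture and no residue. So the appeal to Theorem \ref{thm04} (and equivalently to the end asymptotics in \cite{loftin02c}) presupposes that $X$ is in the image of $\Phi$ near the end, which is the content of Theorem \ref{regular-to-cubic} --- the result this whole chain is building toward.

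Your alternate route, ``uniform sub/super-solutions with flat background $|dw|^2$ pinned down by Dumas--Wolf uniqueness,'' has the same problem: to set up the flat background, the exhaustion by $\{\operatorname{Re}w<\log(c/s)\}$, and the translated cubic differentials converging to $2\,dw^3$, you need the same a priori conformal and pole-order information. Dumas--Wolf's uniqueness (Theorem \ref{unique-thm}) can only identify the two complete Blaschke metrics if you first realize $X$ as coming from a pair $(\Sigma,U)$ of finite type, which is what you are proving. The paper's proof sidesteps all of this: it is purely $\rp^2$-dynamical. The loops $L_s$ are holonomy orbits $\{H^t p : t\in[0,1)\}$ for $p$ approaching the boundary of $\Omega$ in the three cases, the $M_s$ are explicit diagonal matrices $D(s^{\pm 1/3}, s^{\mp 2/3}, s^{\pm 1/3})$, and the Hausdorff convergence $M_s\Omega\to T$ follows directly from convexity and the position of the principal geodesic in $\partial\Omega$, needing no conformal information whatsoever. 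That order of logic --- $\rp^2$ dynamics first, then conformal/cubic conclusions --- is essential here, and your proof inverts it.
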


\begin{proof}
The proof is broken into 3 cases.

First, we consider the case in which $\mathcal E$ is of hyperbolic type with bulge parameter $-\infty$. Choose a based loop $\mathcal L$ in $X$ freely homotopic to a loop around $\mathcal E$, and coordinates on $\rp^2$ so that the $\Sl3$ holonomy along a lift $\tilde{\mathcal L}$ of $\mathcal L$ is represented by $H =  D(\lambda,\mu,\nu)$ so that $\lambda>\mu>\nu>0$ and $\lambda\mu\nu = 1$. Let $T$ denote the principal triangle given by the projection of the first octant in $\re^3$ to $\rp^2$. Note the vertices of $T$ are the fixed points of $H$, and since the bulge parameter is $-\infty$, we may assume $\Omega$ the image of the developing map of $X$, is contained in $T$ and the boundary of $\Omega$ contains the principal geodesic $\tilde\ell=\{[t,0,1-t] : 0\le t\le 1\}$.
For $p=[1,s,1]$ as $s\to0^+$, consider the lift of a loop
$L_s = \{H^t\,p: t\in[0,1)\}$.
Let $M_s = D(s^{\frac13},s^{-\frac23}, s^{\frac13})$ so that $M_s$ acts on the hyperbolic affine sphere $\mathcal H$ by sending $ p$ to $[1,1,1]$.  For $t\in[0,1)$, we have $M_s\,H^t\,p = [1,1,1]\, H^t$. Thus the limit of of $M_s\,L_s$ lies in a bounded neighborhood of $[1,1,1]$ in $T$.

Since $\partial\Omega$ contains the principal geodesic $\tilde\ell$, $\Omega\subset T$, and $\Omega$ is convex, $M_s\,\Omega \to T$ in the Hausdorff topology as $s\to0$.  One can see this by noting $\tilde\ell$ is fixed by $M_s$, and all other points in $\bar\Omega$ approach $[0,1,0]$ as $s\to0^+$. The interior of the convex hull of $\tilde\ell$ and $[0,1,0]$ is $T$.  In fact, $M_s\,\Omega $ increases to $T$ as $s\to0^+$. See Figure \ref{bulge-minus-figure}.

\begin{figure}
\begin{center}
\scalebox{.3}{\includegraphics{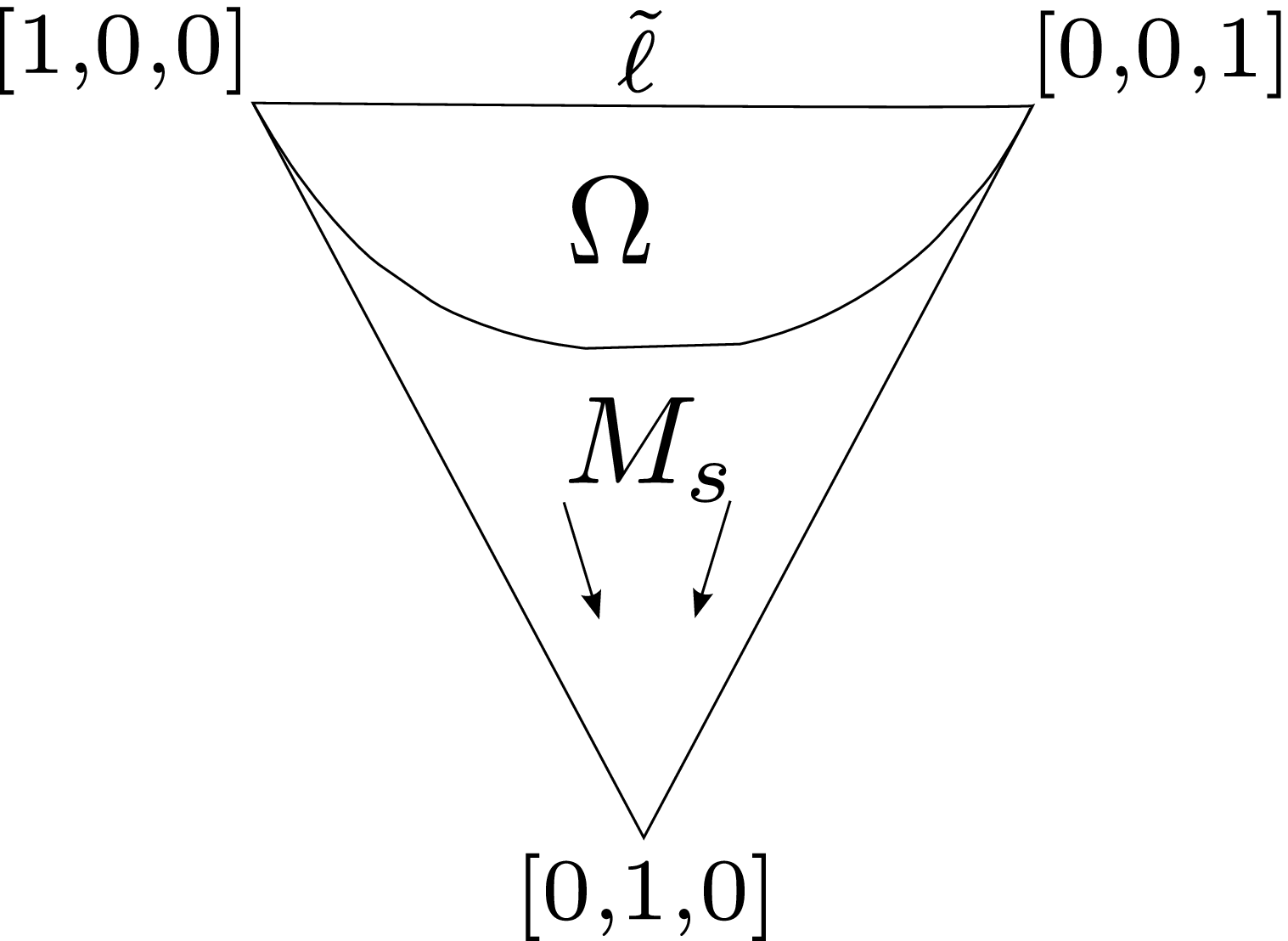}}
\end{center}
\caption{}
\label{bulge-minus-figure}
\end{figure}

The second case is of hyperbolic holonomy $H$ with bulge parameter $+\infty$. In this case, we choose coordinates so that the convex domain $\Omega$ contains $T$, $\tilde \ell$, and a proper nontrivial subset of $\bar T$.  Consider the point $p=[s,1,s]\in T$ as $s\to0^+$. Consider the map $M_s = D(s^{-\frac13},s^{\frac23},s^{-\frac13})$, which sends $p$ to $[1,1,1]$. As $s\to0$, the action of $M_s$ is essentially a bulge parameter approaching $-\infty$. Since $\Omega\cap \bar T$ is bounded away from  $[0,1,0]$, we see that $M_s\,\Omega \to T$ as $s\to 0$.

Moreover, as $s\to 0$, $p\to [0,1,0]\in\partial \Omega$, and the points in the lift of the  loop $L_s = \{H^t\,p:t\in[0,1)\}$ approach $H^t\,[0,1,0] = [0,1,0]$.  Thus the family of loops do approach the end as $s\to0$. Also,
$M_sH^t p  \to H^t\,[1,1,1]$.  Since $t\in[0,1)$, $\lim_{s\to0^+}M_s\,L_s$ lies in a compact subset $T$. See Figure \ref{bulge-plus-figure}.

\begin{figure}
\begin{center}
\scalebox{.3}{\includegraphics{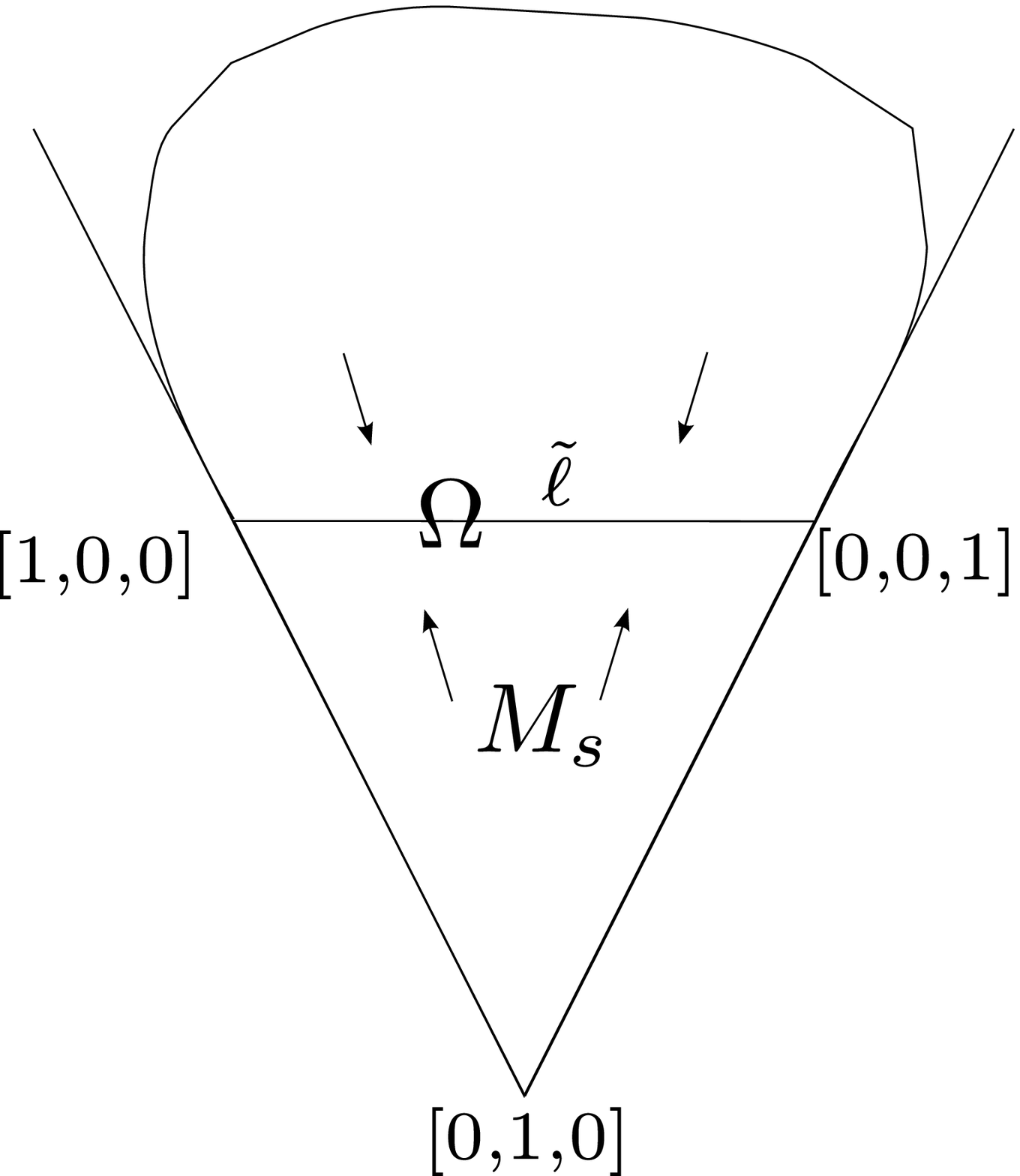}}
\end{center}
\caption{}
\label{bulge-plus-figure}
\end{figure}

The remaining case is that of quasi-hyperbolic holonomy.  It is a bit different, in that the dynamics do not involve a principal triangle.  Nevertheless, we analyze this case in terms of $T$  as well.  We may assume the holonomy matrix $H = \left(\begin{array}{ccc} \lambda & 1 & 0 \\ 0 & \lambda & 0 \\ 0&0&\mu \end{array} \right)$, with $\lambda,\mu$ positive and $\lambda^2\mu=1$. Assume without loss of generality that $\lambda>\mu$ (otherwise, we could analyze $H^{-1}$ similarly).  Then the fixed points of $H$ are the attracting fixed point $[1,0,0]$ and the repelling fixed point $[0,0,1]$.  Consider the geodesic $\tilde\ell = \{[t,0,1-t]: t\in[0,1]\}$. The proper convexity of $\Omega$ and a simple analysis of the dynamics of $H$ imply that we can choose coordinates so that $\partial\Omega\cap\{[x,y,z]:y=0\} = \tilde\ell$ and $\Omega \subset \{[x,y,z]: y,z>0\}$.

As above, we consider $p=[1,s,1]\in\Omega$ as $s\to0^+$, and note $\lim_{s\to0}p=[1,0,1]\in\partial\Omega$. Then the map $M_s = D(s^{\frac13},s^{-\frac23},s^\frac13)$ takes $p$ to $[1,1,1]$. Moreover, as $s\to0^+$, $M_s\,\Omega\to T$ in the Hausdorff sense. This can be seen because $\tilde\ell$ is fixed under the action of $M_s$.  Moreover, for every $q=[x,y,z]\in\rp^2$ with $y\neq0$, the orbit $M_s\,q$ is a straight line approaching $[0,1,0]$ as $s\to0$. This implies that any point $q\in\Omega$ which is $\epsilon$-close to $\tilde\ell$ remains $\epsilon$-close to $T$ under the action of $M_s$ as $s\to0^+$.  On the other hand, there is a $\sigma>0$ so that if $0<s<\sigma$, and $r\in\Omega$ is not $\epsilon$-close to $\tilde\ell$, then $M_s\,r$ is $\epsilon$-close to $[0,1,0]$.  Thus all points of $ M_s\,\Omega$ are within $\epsilon$ of $T$ for $s$ small enough.

Consider a family of loops $L_s = \{H^t\,p:t \in[0,1)\}$ which uniformly approaches $\partial\Omega$ as $s\to0^+$. Compute $$M_s\,H^t\,p = [\lambda^t+st\lambda^{t-1},\lambda^t,\mu^t] \to [\lambda^t,\lambda^t,\mu^t]$$ as $s\to0$. This shows that the closure of $\lim_{s\to0}M_s\,L_s$ is compactly contained in $T$. See Figure \ref{qh-figure}.

\begin{figure}
\begin{center}
\scalebox{.3}{\includegraphics{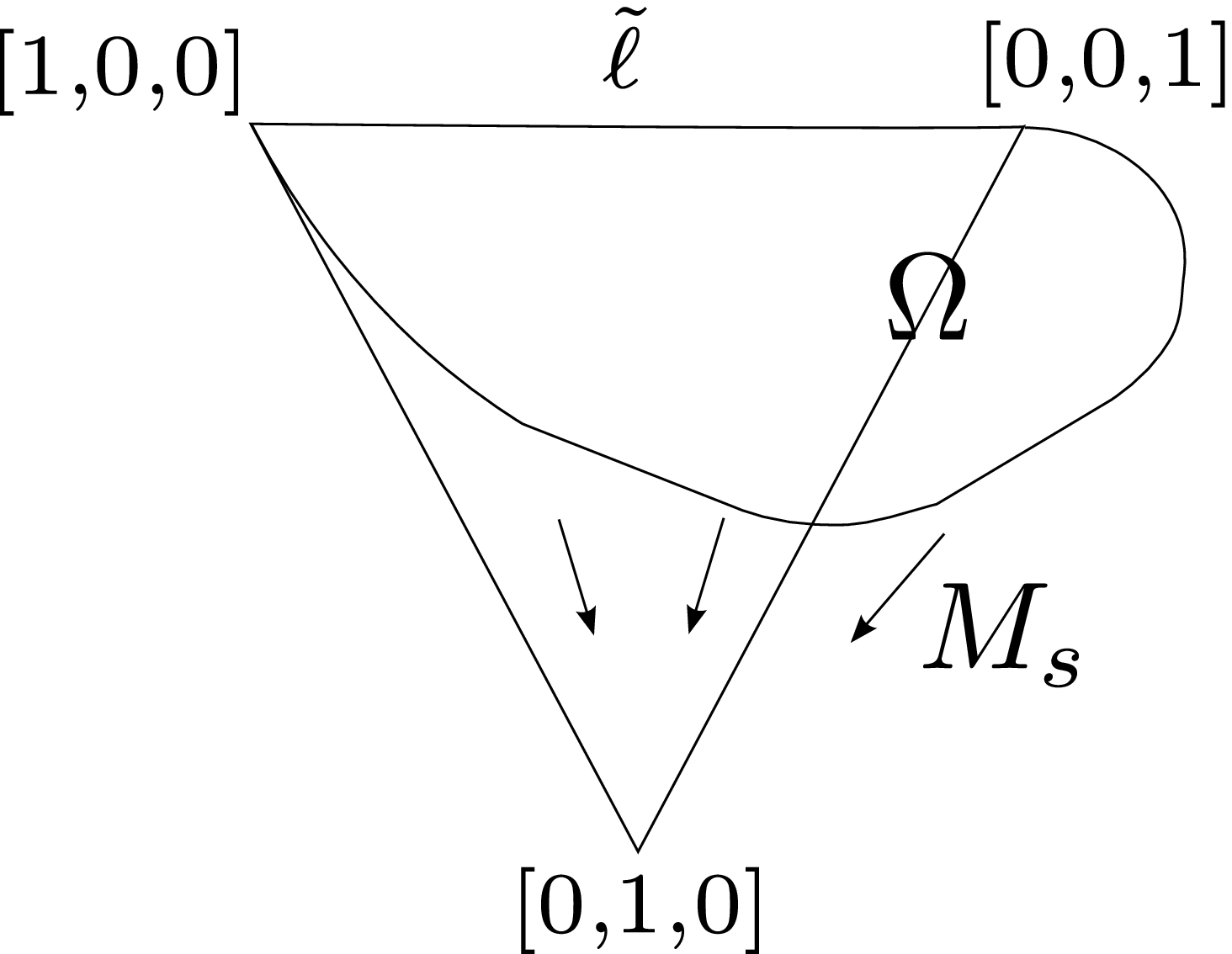}}
\end{center}
\caption{}
\label{qh-figure}
\end{figure}

\end{proof}

\begin{prop} Let $X$ be a convex $\rp^2$ surface. Let $\mathcal E$ be a regular end of $X$ of quasi-hyperbolic type, or of hyperbolic type with bulge parameter $\pm\infty$.
For the cubic differential $U$ and Blaschke metric $h$, the norm squared $\|U\|^2_h$ approaches the constant $\frac12$ uniformly at the end $\mathcal E$.  Moreover, the metric $|U|^{\frac23}$ is a flat Riemannian metric which is complete and has bounded circumference on $\mathcal E$. The induced conformal structure of the end can be compactified by adding a single point.
\end{prop}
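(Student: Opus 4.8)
The plan is to turn the non-local question of the behavior of $\|U\|_h^2$ and of the intrinsic geometry near $\mathcal E$ into a \emph{local} question on the triangle $T$, using the preceding proposition together with Benoist--Hulin's local convergence of affine invariants (Theorem~\ref{local-converge}) and \c{T}i\c{t}eica's explicit affine sphere (Subsection~\ref{triangle-has}). The observation driving everything is that $\|U\|_h^2$ is a projective invariant, since both the Blaschke metric $h$ and the cubic tensor descend from the affine sphere and are preserved by $\Sl3$. By the preceding proposition there are loops $L_s$ around $\mathcal E$, exhausting the end as $s\to0^+$, and elements $M_s\in\Sl3$ with $M_s\Omega\to T$ in the Hausdorff topology and with $M_sL_s$ contained in a fixed compact set $K\subset T$ for all small $s$. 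The $L_s$ are the images of a fundamental domain for the holonomy of $\mathcal E$ acting on the developed collar, so they fill out a neighborhood of the end and leave every compact subset of $\mathcal E$ as $s\to0^+$.

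First I would apply Theorem~\ref{local-converge} to $M_s\Omega\to T$: the Blaschke metrics and cubic tensors converge in $C^\infty_{\rm loc}$, hence the function $\|U\|_h^2$ associated to $M_s\Omega$ converges uniformly on $K$ to the one associated to $T$. For $T$, \c{T}i\c{t}eica's computation gives $h=2\,|dz|^2$ and $U=2\,dz^3$, so $\|U\|_h^2\equiv\tfrac12$ on $T$. Since the projective map $M_s$ carries the Blaschke metric and cubic tensor of $\Omega$ to those of $M_s\Omega$, the value of $\|U\|_h^2$ along $L_s$ agrees with its value along $M_sL_s$, and therefore $\sup_{L_s}\big|\|U\|_h^2-\tfrac12\big|\to0$ as $s\to0^+$. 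Since the $L_s$ fill out a neighborhood of $\mathcal E$, this yields $\|U\|_h^2\to\tfrac12$ uniformly at $\mathcal E$, the first assertion.

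Next I would read off the statements about $|U|^{2/3}$. In a local conformal coordinate $z$, writing $h=\lambda\,|dz|^2$ and $U=u\,dz^3$, one has the pointwise identity $|U|^{2/3}=(\|U\|_h^2)^{1/3}\,h$. Since $\|U\|_h^2\to\tfrac12$ uniformly at the end, $|U|^{2/3}$ and $h$ are uniformly comparable on a neighborhood of $\mathcal E$, so completeness of $h$ (Theorem~\ref{complete-has}, applied to the affine sphere over $\Omega$) gives completeness of $|U|^{2/3}$ toward the end; and $\|U\|_h^2\to\tfrac12\ne0$ shows $U$ has no zeros near the end, so there $|U|^{2/3}=|u|^{2/3}|dz|^2$ is a genuine metric whose conformal factor $|u|^{2/3}$ has harmonic logarithm where $U\neq0$, i.e.\ $|U|^{2/3}$ is flat. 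For the circumference I would again use the normalizations $M_s$: by Theorem~\ref{local-converge} the $h$-length of $M_sL_s$ in $M_s\Omega$ tends to the finite $h$-length in $T$ of the explicit limiting compact arc $\lim_s M_sL_s$, and by projective invariance this equals $\operatorname{length}_h(L_s)$; comparability then bounds $\operatorname{length}_{|U|^{2/3}}(L_s)$ uniformly in $s$. As the $L_s$ are freely homotopic to the core of $\mathcal E$ and exhaust the end, the waist curves of $\mathcal E$ have uniformly bounded $|U|^{2/3}$-length. Finally, a complete flat metric on a cylindrical end whose core-homotopic curves have bounded length is, up to isometry, a flat half-cylinder: the holonomy of the flat structure around the core is an orientation-preserving Euclidean isometry which cannot be the identity or a nontrivial rotation (in each case the waist curves approaching the end would have unbounded length, and an irrational rotation would not generate a properly discontinuous group), hence is a nonzero translation, and completeness then identifies a neighborhood of the end isometrically with a flat half-cylinder. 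In a suitable holomorphic coordinate such a half-cylinder is $\{0<|z|<1\}$, so $\mathcal E$ is conformally a punctured disk, compactified by adjoining the puncture $z=0$. (One can then further read off that $U$ extends over the puncture with a pole of order at most $3$ and nonzero residue, but this is not needed here.)

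The main obstacle is precisely the tension between the non-local character of these three assertions — they all concern behavior out to the ideal point of $\mathcal E$ — and the fact that Benoist--Hulin's theorem provides only $C^\infty_{\rm loc}$ control; the work done by the preceding proposition is to resolve this by pushing the end into a fixed compact region of $T$ via the $M_s$, after which every estimate becomes local. The only other point requiring care, the passage from ``complete flat metric with bounded waist'' to ``conformally a punctured disk'', is routine once the flat holonomy has been pinned down.
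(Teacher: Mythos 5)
Your proposal is correct and follows essentially the same route as the paper: push the end into a fixed compact subset of the triangle via the $M_s$, invoke Theorem \ref{local-converge} together with \c{T}i\c{t}eica's example and projective invariance to get $\|U\|^2_h\to\tfrac12$ uniformly, deduce flatness and completeness of $|U|^{\frac23}$ and bounded circumference from the $L_s$, and finish by identifying the end with a flat half-cylinder via its Euclidean developing map. Your extra paragraph pinning down the flat holonomy as a nonzero translation just spells out what the paper leaves implicit in that last step.
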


\begin{proof}
Under the previous proposition, we know that $M_s\,\Omega\to T$ in the Hausdorff topology.  Then Theorem \ref{local-converge} applies to show the Blaschke metric and cubic tensor on $M_s\,\Omega$ converge to those on $T$ in $C^\infty_{\rm loc}$.  We know by Subsection \ref{triangle-has} above that the norm squared $\|U\|^2_h = \frac  12$ on $T$, and so on each compact subset of $T$, the same quantities on $M_s\,\Omega$ satisfy $\lim_{s\to0}\|U\|^2_{h} = \frac12$. By our construction of $M_s$, such a compact set $K\subset T$ pulls back to $M_s^{-1}\,K$, which approaches a lift of the end $\mathcal E$ in $\Omega$.  This shows $\|U\|_2^h\to\frac12$ uniformly approaching the end $\mathcal E$ on $X$.

This implies that there are no zeros of $U$ in a neighborhood of $\mathcal E$. Also, the Blaschke metric $h$ is complete by Theorem \ref{complete-has}. Then $\|U\|^2_h = |U|^2h^{-3}\to\frac12$, which shows that $|U|^{\frac23}$ is complete at $\mathcal E$. Away from the zeros of $U$, $|U|^\frac23$ is a flat metric, and thus $|U|^{\frac23}$ is a flat metric on $\mathcal E$ which is complete at the end.

Since the loops $L_s$ from the previous proposition converge to the end and the $M_s\,L_s$ lie in a compact subset of $T$ for $s$ near 0, the length with respect to the flat metric $|U|^\frac23$ (or the Blaschke metric $h$) of $L_s$ remains bounded as $s\to0$. This shows the $|U|^\frac23$-circumference of the $\mathcal E$ is bounded.  Since the metric is flat and complete, the circumference must be constant.  In fact, by considering the Euclidean developing map of $(\mathcal E,|U|^{\frac23})$, we find it must be a flat half-cylinder. This flat half-cylinder can be conformally compactified by adding one point, as one can choose a conformal coordinate $z$ so that $|U|^\frac23 = C\,|z|^{-2}\,|dz|^2$ for a constant $C$ and $0<|z|\le1$.
\end{proof}

\begin{prop}
Let $\mathcal E$ be a quasi-hyperbolic end, or hyperbolic end with bulge parameter $\pm\infty$ on a convex $\rp^2$ surface.  At the puncture on the Riemann surface induced by the Blaschke metric $h$, the cubic differential $U$ has a pole of order exactly three.
\end{prop}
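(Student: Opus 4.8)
The plan is to read off the pole order directly from the preceding proposition, which has already done all of the analytic work: it establishes that $\|U\|_h^2\to\frac12$ uniformly at $\mathcal E$ and that the flat metric $|U|^{\frac23}$ is a complete flat half-cylinder of constant circumference, so that the conformal structure of $\mathcal E$ compactifies by a single added point. Concretely, I would fix the holomorphic coordinate $z$ on the punctured disk $\{0<|z|\le1\}$ produced in that proof, in which the end sits at $z\to0$ and
$$ |U|^{\frac23} = C\,|z|^{-2}\,|dz|^2 $$
for a constant $C$. Since $|U|^{\frac23}$ is a genuine nondegenerate complete Riemannian metric near $\mathcal E$, we have $C>0$; and $z$ is a holomorphic coordinate for the Blaschke conformal structure because $|U|^{\frac23}$ is a conformal metric for that structure away from the zeros of $U$, while $\|U\|_h^2\to\frac12\neq0$ shows $U$ has no zeros near the puncture.

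Next I would write $U=\hat U(z)\,dz^3$ with $\hat U$ holomorphic and nonvanishing on $\{0<|z|\le1\}$. Comparing with the displayed formula gives $|\hat U(z)|^{\frac23}=C\,|z|^{-2}$, i.e.
$$ |z^3\,\hat U(z)| = C^{\frac32} \qquad \text{on } \{0<|z|\le1\}. $$
The key step is then a Liouville-type observation: $g(z):=z^3\hat U(z)$ is holomorphic on the punctured disk with image contained in the circle of radius $C^{\frac32}$, which has empty interior, so the open mapping theorem forces $g$ to be constant. Writing $g\equiv a$ with $|a|=C^{\frac32}>0$, we obtain $\hat U(z)=a\,z^{-3}$, hence
$$ U = a\,\frac{dz^3}{z^3}, $$
which has a pole of order exactly three — exactly three, not less, precisely because $a\neq0$ — with nonzero residue $a$. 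This is consistent with Theorem \ref{thm04} and the remarks following it, which predict nonzero residue at quasi-hyperbolic and infinite-bulge hyperbolic ends.

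I do not expect a genuine obstacle here: all the substantive geometry (that the rescaled domains $M_s\Omega$ converge in the Hausdorff sense to the principal triangle $T$, that $\|U\|_h^2\to\frac12$ via Theorem \ref{local-converge}, and that the induced flat metric is a complete half-cylinder) is carried out in the preceding two propositions. The only points demanding care are confirming that the coordinate $z$ is holomorphic for the Blaschke conformal structure and that $C>0$ strictly, both of which follow from completeness of the metric together with the nonvanishing of $U$. If one prefers to avoid invoking the exact normalization of $|U|^{\frac23}$, the same conclusion follows by applying the removable-singularity theorem to the bounded holomorphic function $z^3\hat U(z)$ and then using that its modulus is constant.
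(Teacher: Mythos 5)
Your argument is correct, but it is not the route the paper takes. The paper's proof is qualitative: it cites Dumas--Wolf \cite[Lemma 7.6]{dumas-wolf14} (an Osserman-type argument, cf.\ \cite{osserman86}) to exclude an essential singularity from the completeness of $|U|^{\frac23}$, then uses completeness again to force pole order at least $3$ and the bounded circumference to force pole order at most $3$. You instead exploit the sharpest output of the preceding proof --- that a neighborhood of the end is isometrically a standard flat half-cylinder, so that in a conformal coordinate $z$ one has $|U|^{\frac23}=C\,|z|^{-2}\,|dz|^2$ exactly --- and then the elementary observation that $z^3\hat U(z)$ has constant nonzero modulus, hence is constant, gives $U=a\,dz^3/z^3$ with $a\neq0$. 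This buys you a self-contained argument with no external lemma and a stronger conclusion (an exact local normal form, with nonzero residue in that coordinate, and the meromorphic extension comes for free), whereas the paper's argument needs only the weaker, more robust facts recorded in the statement of the preceding proposition (flatness, completeness, bounded circumference). The one point to flag is exactly this dependence: your proof hinges on the precise coordinate normalization asserted in the last sentence of the preceding proof, not merely on that proposition's statement; since the paper does assert and justify that normalization via the Euclidean developing map, your use of it is legitimate, but if one only granted the proposition as stated, your argument would need the half-cylinder identification restored, while the paper's argument would go through unchanged.
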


\begin{proof}
Dumas-Wolf show that the completeness of $|U|^{\frac23}$ implies $U$ cannot have an essential singularity at $z=0$ \cite[Lemma 7.6]{dumas-wolf14} (see also Osserman \cite{osserman86}).  Moreover, the completeness implies $U$ has a pole of order at least 3.  The finite circumference then shows that the pole order of $U$ is at most 3.
\end{proof}

\begin{thm} \label{regular-to-cubic}
Let $X$ be a connected oriented properly convex $\rp^2$ surface of genus $g$ and $n$ ends, so that $2g+n>2$. Assume the $\rp^2$ structure of each end is regular.  Then conformal structure $\Sigma$ induced by the Blascke metric on $X$ is of finite type, and the induced cubic differential $U$ has poles of order at most 3 at each puncture of $\Sigma$.  The residue of $U$ at each puncture corresponds to the $\rp^2$ structure of the end as in Theorem \ref{thm04} above.
\end{thm}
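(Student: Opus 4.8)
The plan is to combine the four end-by-end results just proved with the Dumas--Wolf uniqueness theorem. First I would dispose of the topological and complex-analytic assertions. By hypothesis $X$ has exactly $n$ ends, each regular. Theorem \ref{parabolic-case} handles the parabolic ends: such an end is conformally a once-punctured disk and $U$ extends with a pole of order at most $2$, hence residue $0$, there. The three preceding propositions handle each quasi-hyperbolic end and each hyperbolic end of bulge $\pm\infty$: on such an end $\|U\|_h^2\to\frac12$, the metric $|U|^{2/3}$ is a complete flat cylindrical metric of bounded circumference, so the end is again conformally a once-punctured disk, and $U$ has a pole of order exactly $3$. Filling in one point per end therefore produces a closed Riemann surface $\bar\Sigma$ of genus $g$ with $\Sigma=\bar\Sigma-\{p_1,\dots,p_n\}$, so the Blaschke conformal structure is of finite type and $U$ is a holomorphic cubic differential on $\Sigma$ with a pole of order at most $3$ at each $p_i$. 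Note that $2g+n>2$ guarantees $\Sigma$ carries a complete hyperbolic metric, which is what is needed to apply Theorem \ref{thm04} in the next step.

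Next I would identify the residues by running the affine-sphere correspondence backwards and invoking uniqueness. Since $X$ is properly convex, its associated hyperbolic affine sphere is properly embedded, so by Theorem \ref{complete-has} the Blaschke metric $h$ of $X$ is complete. Apply Theorem \ref{thm04} (equivalently Theorem \ref{complete-metric-convex-rp2}) to the pair $(\Sigma,U)$ just constructed: there is a background metric $g$ and a solution $u$ of (\ref{u-eq}) with $e^ug$ complete, and the resulting convex $\rp^2$ structure $X'$ on $\Sigma$ has the $\rp^2$ geometry of its end at $p_i$ — the holonomy eigenvalues $e^{2\pi\lambda_k}$, the Jordan type, and the sign of the bulge parameter — read off from the residue $R_i$ of $U$ at $p_i$ via the cubic equation of Theorem \ref{thm04}. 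Both $h$ and $e^ug$ are complete conformal metrics on $\Sigma$ solving Wang's structure equation (\ref{psi-eq}) for the cubic differential $U$; by the Dumas--Wolf uniqueness Theorem \ref{unique-thm} they coincide, $h=e^ug$. Hence the data $(\Sigma,U,h)$ and $(\Sigma,U,e^ug)$ are the same, and feeding this data into Wang's developing-map construction (Theorem \ref{complete-metric-convex-rp2}) yields the same convex $\rp^2$ structure. In particular $X=X'$ as marked convex $\rp^2$ structures, so the $\rp^2$ geometry of the end of $X$ at $p_i$ is precisely the one assigned to $R_i$ by Theorem \ref{thm04}, which is the final assertion.

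The one point that deserves care — and which I expect to be the only non-immediate step — is the last implication: that the convex domain $\Omega$ and the holonomy recovered from $X$ really are the ones produced by the initial-value problem (\ref{evolve-frame}) for $(\Sigma,U,h)$. This is exactly the content of how $(\Sigma,U)$ and $h$ were extracted from $X$: in a local conformal coordinate the Gauss equation (\ref{gauss-eq}) for the affine sphere over the cone on $\Omega$ is the system (\ref{evolve-frame}) with $e^\psi=h$ and cubic term $U$, so the affine sphere of $X$ solves the same ODE as the affine sphere of $X'$ for a suitable initial frame, and the solution of (\ref{evolve-frame}) is unique given its initial frame; a different choice of basepoint or initial frame moves $\Omega$ only by an element of $\Sl3$, which is precisely the equivalence defining $\mathcal G_S$ and hence $\mathcal R_S$. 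It is also worth a sentence to note that the uniqueness statement, phrased for a fixed background metric, is insensitive to the conformal change between $g$ and $h$. Everything else — the finite-type conclusion, the pole-order bounds, and the vanishing of the residue at parabolic ends — follows directly from the propositions above.
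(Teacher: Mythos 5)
Your proof is correct and follows essentially the same route as the paper's: settle the parabolic ends by Benoist--Hulin (Theorem~\ref{parabolic-case}), use the preceding three propositions to get finite conformal type and pole order three at the other ends, construct $\tilde X$ from $(\Sigma,U)$ via Theorem~\ref{thm04}, and invoke Dumas--Wolf uniqueness (Theorem~\ref{unique-thm}) to identify the Blaschke metrics and hence the $\rp^2$ structures. Your elaboration of the final step --- that the developing map is recovered from $(\Sigma,U,h)$ by the ODE system (\ref{evolve-frame}), uniquely up to $\Sl3$ --- makes explicit a point the paper states in a single clause, but the argument is the same.
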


\begin{proof}
The case of parabolic holonomy was settled by Benoist-Hulin's Theorem \ref{parabolic-case} above.

Denote the conformal structure by $\Sigma$.
For the other cases, we have shown that they each lead to a regular cubic differential $U$ of pole order 3.  We proved in \cite{loftin02c} that given such a pair $(\Sigma,U)$, we can construct from a background metric the complete Blaschke metric $\tilde h$, and also integrate the equations to determine an $\rp^2$ structure $\tilde X$ of the surface. On $\tilde X$, the residue of the corresponding cubic differential determines the holonomy and bulge parameters of the end as in Theorem \ref{thm04}.  Theorem \ref{unique-thm} shows $\tilde h=h$ and, as the $\rp^2$ structure is determined by $(\Sigma,U,h)$, the ends of the $\rp^2$ structure $X$ conform to Theorem \ref{thm04}.
\end{proof}

\subsection{Convergence in families} This paragraph contains an overview of the present subsection.
In order to show convergence of $(\Sigma_j,U_j)$ given the convergence of the corresponding regular convex $\rp^2$ structures, we rely on the thick-thin decomposition of hyperbolic surfaces.  We consider convergent Benz\'ecri sequences $(\Omega_j,x_j)\to(\mathcal O,x)$.  If the $x_j$ (subsequentially) converge to the thin part of moduli, then the hyperbolic length from $x_j$ to a bounded sequence in the thick part must diverge to infinity.  Thus the Blaschke length does the same by Proposition \ref{blaschke-dominates-hyperbolic}. Thus Proposition \ref{separate-O} shows $\mathcal O$ must remain ``disjoint" from the limits of the thick part.  On the other hand, if $x_j$ remains in the same component the thick part, we use the fact that each such component has bounded diameter.  We also derive uniform estimates on the norm of the cubic differentials $U_j$ in this case, and so we also get uniform bounds on the Blaschke metric and its derivatives on the thick part, including near $x_j$. These uniform bounds show that we can use the ODE theory as in Theorem \ref{conv-dev-map} to control the $\rp^2$ developing map near $x_j$.  These sequences will converge subsequentially, and we can use the uniqueness results to show that all convergent subsequences must converge to the same limit $(\Sigma_\infty,U_\infty)$.

$\mathcal G_S^{\rm aug}$ is a stratified space with strata $\mathcal G_S^c$ for $c\in C(S)$.  The closure of each stratum
$$\overline{\mathcal G_S^c} = \bigsqcup_{d\supset c} \mathcal G_S^d.$$
The lowest strata $\mathcal G_S^c$, in which $c$ splits $S-c$ into a disjoint collection of pairs of pants, are closed.  Thus, by considering subsequences, we may assume the limit either remains within one stratum or the limit point is on a smaller stratum (by separating one or more necks).  This means that in considering limits of sequences in $\mathcal G_S^{\rm aug}$, we may consider, by taking subsequences if necessary, only the case of elements of $\mathcal G_S^c$ approaching a limit in $\mathcal G_S^{d\cup c}$ for $d$ and $c$ fixed disjoint sets of free homotopy classes of simple loops satisfying $d\cup c \in C(S)$.  In particular, we may focus precisely on separating the necks in $d$.

For the case of families, consider a family of regular convex $\rp^2$ structures converging in $\mathcal R_S^{\rm aug}$.  The associated unmarked conformal structures must converge (subsequentially) in $\overline{\mathcal M}_g$, as it is compact.  Our first task is to show that this subsequential convergence can be extended to the regular cubic differentials as well.  For a convergent sequence in $\mathcal R_S^{\rm aug}$, we consider by Lemma \ref{orbit-sequence} convergent sequences of the form $$\oplus_k (\Omega_j,\Gamma_j|_{S_k}).$$  The induced conformal structures given by the Blaschke metric associates to each pair $(\Omega_j,\Gamma_j|_{S_k})$ a conformal structure.  We then attach the separated necks by adding a node to attach the ends of the Riemann surfaces.  Thus we have a sequence $\Sigma_j$ of noded Riemann surfaces.  Since $\overline{\mathcal M}_S$ is compact, there is a convergent subsequence $\Sigma_{j_\ell}$.

For the cubic differentials, we have by Theorem \ref{local-converge} that the norm-squared of the cubic tensor with respect to the Blaschke metric converges in $C^\infty_{\rm loc}$.  We can also prove the following universal bound

\begin{prop}
Let $S$ be an oriented surface of genus $g\ge2$.
Consider a convergent sequence in $\mathcal R_S^{\rm aug}$. Assume, by possibly taking a subsequence, that the associated conformal structures $\Sigma_j$ converge to a limit $\Sigma_\infty$ in $\overline{\mathcal M}_g$.  Assume $\Sigma_\infty$ is an element of the chart $V^\alpha$ as above in Subsection \ref{plumbing-subsec}. In terms of the metrics $m_j^\alpha$, there is a constant $C$ so that the cubic differentials $U_j$ satisfy $\|U_j\|_{m_j^\alpha}\le C$ for all $j$ large.
\end{prop}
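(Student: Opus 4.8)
The plan is to combine the universal pointwise bound $\|U\|^2_h\le\tfrac12$, valid on any convex $\rp^2$ surface, with a thick--thin analysis of the surfaces $\Sigma_j^{\rm reg}$. First I would record the universal bound: on a hyperbolic affine sphere Wang's equation gives that the Gauss curvature of the Blaschke metric is $\kappa_h=-1+2\|U\|^2_h$, and since for the surfaces at hand every end is regular we showed above that $\|U\|^2_h$ tends to $0$ or to $\tfrac12$ at each end; the identity $\Delta_h\log\|U\|^2_h = 12\|U\|^2_h-6$ (valid away from the zeros of $U$, where $\|U\|^2_h$ vanishes) together with the maximum principle then forces $\|U\|^2_h\le\tfrac12$ everywhere. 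Consequently, for any conformal comparison metric $m$ one has $\|U_j\|^2_{m}=\|U_j\|^2_{h_j}\,(h_j/m)^3\le\tfrac12(h_j/m)^3$, so that away from the collars and cusps it suffices to compare $h_j$ with $m_j^\alpha$.

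On the thick part this comparison is soft. By Lemma~\ref{orbit-sequence} the hypothesis of convergence in $\mathcal R_S^{\rm aug}$ gives, componentwise, representatives whose lifted convex domains converge in the Hausdorff sense; Benoist--Hulin's Theorem~\ref{local-converge} then yields $h_j\to h_\infty$ and $\|U_j\|_{h_j}\to\|U_\infty\|_{h_\infty}$ in $C^\infty_{\rm loc}$. Since the thick part of each $\Sigma_j^{\rm reg}$ is uniformly compact (Mumford) and on it $m_j^\alpha\asymp g^{s_j,t_j}\asymp k^{s_j,t_j}$, this gives a uniform bound $\|U_j\|_{m_j^\alpha}\le C$ on the thick part, and in fact on every region $\{|z_i|\ge\delta\}$ bounded away from the nodes, $\delta$ fixed.

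To reach the thin part I would pass to the quasi-coordinate $\ell=\log x$, $x=z_i,w_i$, on each $N_i^{t_i}$, in which $U_j=\tilde U_j\,d\ell^3$ and, by the construction of $m_j^\alpha$ in Subsection~\ref{plumbing-subsec}, $m_j^\alpha$ is uniformly comparable to $|d\ell|^2$; hence $\|U_j\|_{m_j^\alpha}\asymp|\tilde U_j|$, and it is enough to bound $|\tilde U_j|$ uniformly on each $N_i^{t_i}$. By the thick-part estimate, $|\tilde U_j|$ is uniformly bounded on the outer boundary annuli, and $\tilde U_j$ is holomorphic in the interior. If the node is already open in $\Sigma_j$ ($t_i\ne0$), this is exactly the \emph{band bounded} situation treated at the end of Subsection~\ref{plumbing-subsec}, and the maximum modulus principle on the annulus bounds $|\tilde U_j|$ uniformly, independently of $t_i\to0$. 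If the node is a genuine cusp of $\Sigma_j$ ($t_i=0$), then $\tilde U_j$ extends holomorphically across the puncture with $\tilde U_j(0)=R_j$ the residue, since $U_j$ is a regular cubic differential by Theorem~\ref{regular-to-cubic}; the maximum modulus principle on the disk then bounds $|\tilde U_j|$ by the maximum of its boundary values and $|R_j|$, and $|R_j|$ is uniformly bounded because Theorem~\ref{thm04} expresses it as a continuous function of the eigenvalues of the $\rp^2$ holonomy around that end, which converge along the sequence. Assembling the thick-part and thin-part bounds gives $\|U_j\|_{m_j^\alpha}\le C$ on all of $\Sigma_j^{\rm reg}$.

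The point at which real work is needed is the thin part: first, checking that the thick-part bound genuinely survives up to the outer boundary of each $N_i^{t_i}$, where $m_j^\alpha$ is an interpolation between $g^{s_j,t_j}$ and the flat cylinder metric, so that the maximum modulus principle has uniform boundary data to feed on; and second, the uniform control of the residues $R_j$ at the pre-existing cusps, which is the one place where the convergence of the $\rp^2$ structures must be converted into a bound on analytic data through the dictionary of Theorem~\ref{thm04}. A subsidiary point is that the Benoist--Hulin convergence on the thick part must be used in tandem with Mumford compactness so as to be uniform over the varying thick parts of the $\Sigma_j$.
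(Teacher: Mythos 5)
Your proposal is correct and follows essentially the same route as the paper: the thick-part bound comes from Lemma \ref{orbit-sequence} together with the $C^\infty_{\rm loc}$ convergence of Blaschke metrics and cubic tensors (Theorem \ref{local-converge}) and the uniform comparability of the Blaschke, hyperbolic and $m_j^\alpha$ metrics there, while the extension across the collars and cusps is the maximum-modulus argument in the quasi-coordinate $\ell$, which is exactly the paper's Lemma \ref{cubic-diff-bound} (whose bounded-hyperbolic-diameter observation is what fills the interpolation-region issue you flag). The two extra ingredients you add are not load-bearing: the universal bound $\|U_j\|^2_{h_j}\le\tfrac12$ is never used where it would matter, and at a cusp no separate control of the residue via Theorem \ref{thm04} is needed, since $\tilde U_j$ extends holomorphically across the puncture and the maximum modulus principle (equivalently, the Cauchy integral formula from the boundary circle, as in the paper) already bounds it by the boundary values.
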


\begin{proof}
First of all, the convergence in $\mathcal R_S^{\rm aug}$ implies by Lemma \ref{orbit-sequence} that we may lift to a convergent sequence in $\mathcal G_S^{\rm aug}$.  Let $n$ be the number of connected components of the regular limit $\rp^2$ surface.  Assume, by taking subsequences, that all the surfaces in the sequence lie in the same stratum of $\mathcal G_S^{\rm aug}$.  In particular, assume that the surface $S$ is the disjoint union of a set of loops $c\in C(S)$ and open subsurfaces $S_1,\dots,S_n$. Along each loop in $c$, there is a regular separated neck, and for $k=1,\dots,n$, there are pairs $(\Omega^k_j,\Gamma^k_j)$ of properly convex domains and discrete subgroups of $\Sl3$ acting on the domains so that the quotient $\Gamma^k_j\backslash\Omega^k_j$ is diffeomorphic to $S_k$. Moreover, the induced projective structure at each end of an $S_k$ is regular and is paired appropriately with another end of an $S_{\tilde k}$ to form a regular separated neck.

In the limit as $j\to\infty$, we may have more necks being separated.  Consider a set of homotopy classes of loops $d$ so that $d$ and $c$ are disjoint and $d\cup c\in C(S)$.  We will separate the necks along $d$.  For simplicity, we only consider the case of a single loop in $d$ which separates $S_1$ into two pieces $\tilde S_0$ and $\tilde S_1$.  In this case, we have  $\rho_j,\sigma_j\in\Sl3$ so that $\rho_j(\Omega^1_j,\Gamma^1_j|_{\tilde S_0}) \to (\mathcal O,G)$ and $\sigma_j(\Omega^1_j,\Gamma^1_j|_{\tilde S_1}) \to (\mathcal U, H)$, so that $\tilde S_0$ and $\tilde S_1$ are diffeomorphic to $G\backslash\mathcal O$ and $H\backslash \mathcal U$ respectively.

In particular, $\rho_j\Omega^1_j\to \mathcal O$ in the Hausdorff topology.  Theorem \ref{regular-to-cubic} implies that $(\mathcal O,G)$ is topologically conjugate to a non-elementary finitely-generated Fuchsian group of the first kind. In particular, there is a diffeomorphism $\phi\!:\mathcal O \to \mathcal D$ for the Poincar\'e disk $\mathcal D$ so that $\phi\circ G\circ \phi^{-1}$. Moreover, the Riemann surface $(\phi\circ G\circ \phi^{-1})\backslash \mathcal D$ has finite hyperbolic area. Consider the Dirichlet domain, which a convex ideal polygonal fundamental region $\mathcal P$ for $\phi\circ G\circ \phi^{-1}$ with finitely many sides and for which each ideal vertex corresponds to an end of the quotient surface $\tilde S_0$; see e.g.\ \cite{beardon83}.  Let $K\subset \mathcal O$ be a compact set large enough so that all of $\mathcal P$ outside  neighborhoods of the ideal vertices is in the interior of $\phi(K)$.  Theorem \ref{local-converge} implies the Blaschke metrics and cubic tensors of $\rho_j\Omega^1_j$ converge on $K$ in $C^\infty$ to those on $\mathcal O$.

Upon passing to the quotient surface $\tilde S_0$, the convergence on $K$ descends to the quotient surface to show $C^\infty_{\rm loc}$ convergence of the Blaschke metrics and cubic tensors on $\tilde S_0$ outside the ends (which are topological annuli).  The same sort of convergence is true on $\tilde S_1$ and all the other connected components of $S-(d\cup c)$. On all of $S$, then, there exist disjoint annular neighborhoods $\mathcal A_k$, one for each homotopy class of loops in $d\cup c$, so that the Blaschke metrics and cubic tensors converge in $C^\infty$ on $S-\cup_k\mathcal A_k$.

By our assumption, the necks in $d$ are precisely those which are conformally pinched as $\Sigma_j\to\Sigma_\infty$. So we may assume each $\mathcal A_k$ contains the thin part of each collar neighborhood in $\Sigma_j$. In other words, there is an $\epsilon>0$ so that $\Sigma_j-\cup_k\mathcal A_k \subset {\rm Thick}_\epsilon$.  The Blaschke metric, the hyperbolic metric, and the modified metrics $m^\alpha$ are thus all uniformly equivalent (depending on $\epsilon$) on $\Sigma_j-\cup_k\mathcal A_k$.  Therefore, the uniform convergence of the cubic tensors and Blaschke metrics on $\Sigma_j-\cup_k\mathcal A_k$ implies the uniform convergence of $\|U_j\|_{m^{\alpha,j}}$ on $\Sigma_\infty-\cup_k\mathcal A_k$.  So for large enough $j$, there is a uniform bound on  $\|U_j\|_{m^{\alpha,j}}$ when restricted to $\Sigma_j-\cup_k\mathcal A_k$.

The next lemma shows this uniform bound can be extended to a uniform bound of $\|U_j\|_{m^{\alpha,j}}$ on all of $\Sigma_j^{\rm reg}$.
\end{proof}

\begin{lem} \label{cubic-diff-bound}
Let $\Sigma$ be a noded Riemann surface represented by a point in $V^\alpha\subset \overline{\mathcal M}_g$ with metric $m^\alpha$. Let $U$ be a regular cubic differential on $\Sigma$.  Let $\mathcal A_k$ be a collection of disjoint sets of the following forms: either 1) an annular subset of $\Sigma$ or 2) a neighborhood of a node which is homeomorphic to $\{zw=0:|z|,|w|<1\}$ with respect to the plumbing coordinates.  Assume  $\mathcal A_k$ contains a component of the locus where the $m^\alpha$ metric is flat.   Then there is a constant $C$ depending only on the genus so that for all $x\in\Sigma^{\rm reg}$, $$\|U(x)\|_{m^\alpha} \le C \sup\{\|U(z)\| _{m^\alpha} : z\in\Sigma^{\rm reg} - \cup_k\mathcal A_k\}.$$
\end{lem}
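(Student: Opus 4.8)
The plan is to reduce the estimate to the maximum modulus principle for holomorphic functions in the plumbing quasi-coordinate $\ell=\log x$, where $x=z_i$ or $w_i$, on the cusp and collar neighborhoods $N_i$. Put $M=\sup\{\,\|U(z)\|_{m^\alpha}:z\in\Sigma^{\rm reg}-\bigcup_k\mathcal A_k\,\}$. On the complement of $\bigcup_k\mathcal A_k$ there is nothing to prove, so it suffices to bound $\|U\|_{m^\alpha}$ by $CM$ on each $\mathcal A_k$ separately. Each $\mathcal A_k$ contains a component of the flat locus of $m^\alpha$, and by the construction of $m^\alpha$ that component lies well inside one of the neighborhoods $N_{i(k)}$; I would reduce to the case $\mathcal A_k\subseteq N_{i(k)}$, and in the nodal case (a neighborhood of a node) treat one branch $\{0<|x|<c\}$ at a time. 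Since $\partial\mathcal A_k$ lies in the closure of $\Sigma^{\rm reg}-\bigcup_k\mathcal A_k$, continuity gives $\|U\|_{m^\alpha}\le M$ on $\partial\mathcal A_k$.

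The key auxiliary observation is that on each $N_i$ the conformal factor of $m^\alpha$ with respect to $|d\ell|^2=|dx/x|^2$ lies in a fixed interval $[c_0,c_1]\subset(0,\infty)$ depending only on the uniform plumbing constant $c$: in the flat core $m^\alpha$ equals $(2\log c)^{-2}|d\ell|^2$; on the interpolation bands it has uniform geometry; and near $\partial N_i$ it equals $g^{s,t}$, which by the formulas (\ref{hyp-met-cusp}) and (\ref{hyp-met-collar}) is uniformly comparable to $|d\ell|^2$ there because $|x|$ is bounded away from the node, uniformly as $t_i\to0$. Writing $U=\tilde U\,dx^3/x^3$ on $N_i^{\rm reg}$, the coefficient $\tilde U$ is holomorphic on the annulus $|x|\in(|t_i|/c,c)$ in the collar case, and on the punctured disk $0<|x|<c$ in the cusp case; in the latter case the definition of a regular cubic differential (pole of order at most $3$ for $U$) forces $\tilde U$ to be bounded near $x=0$, so $x=0$ is a removable singularity and $\tilde U$ extends holomorphically to $|x|<c$. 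Since $\|U\|_{m^\alpha}=|\tilde U|\,\rho_m^{-3/2}$ with $\rho_m\in[c_0,c_1]$ the conformal factor above, the boundary bound becomes $|\tilde U|\le c_1^{3/2}M$ on $\partial\mathcal A_k$.

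Finally, the maximum modulus principle applied to $\tilde U$ on the annulus (collar case) or disk (cusp case, after filling in the removable singularity at $x=0$) bounded by $\partial\mathcal A_k$ gives $|\tilde U|\le c_1^{3/2}M$ throughout $\mathcal A_k$; converting back with the lower bound $c_0$ yields $\|U\|_{m^\alpha}\le(c_1/c_0)^{3/2}M$ on $\mathcal A_k$, which is the asserted estimate with $C=(c_1/c_0)^{3/2}$, a constant fixed by the plumbing data and hence by the genus. I expect the main obstacle to be the uniform two-sided comparison of $m^\alpha$ with $|dx/x|^2$ on $N_i$ as $t_i\to0$, which requires unwinding the $\csc$-formula (\ref{hyp-met-collar}) together with the interpolation, and the bookkeeping that the boundary curves of $\mathcal A_k$ actually lie inside the portion of $N_i$ where this comparison holds (or the reduction to that case); the rest is standard complex analysis applied to the explicit structure of $m^\alpha$ recalled in Subsection \ref{plumbing-subsec}.
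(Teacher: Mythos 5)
Your central mechanism is the right one, and for the case $\mathcal A_k\subseteq N_{i(k)}$ it coincides with the paper's argument: write $U=\tilde U\,d\ell^3$ in the quasi-coordinate $\ell=\log x$, note that regularity makes $x=0$ a removable singularity of $\tilde U$ in the cusp case, use the uniform two-sided comparison of $m^\alpha$ with $|d\ell|^2$ inside $N_i$, and apply the maximum modulus principle. The gap is the opening step ``reduce to the case $\mathcal A_k\subseteq N_{i(k)}$,'' which goes in the wrong direction. If you shrink $\mathcal A_k$ to $\mathcal A_k\cap N_{i(k)}$, the complement $\Sigma^{\rm reg}-\cup_k\mathcal A_k$ gets \emph{larger}, and the boundary of the shrunken annulus contains points of the original $\mathcal A_k$ lying outside $N_{i(k)}$, in the thick part, where you have no bound by $M=\sup\{\|U\|_{m^\alpha}:z\in\Sigma^{\rm reg}-\cup_k\mathcal A_k\}$. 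So the boundary data for your maximum-modulus argument is uncontrolled, and what you actually prove is only the weaker inequality with the supremum taken over $\Sigma^{\rm reg}-\cup_k(\mathcal A_k\cap N_{i(k)})$. (Enlarging the $\mathcal A_k$ would be the legitimate reduction, but an arbitrary annulus containing a flat component need not sit inside $N_{i(k)}$, so there is no containment in either direction and the thick-part portion cannot be wished away.)

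That thick-part portion is precisely where the genus dependence of $C$ enters and where the paper does additional work: there $m^\alpha$ equals the grafting metric, which is uniformly comparable to the hyperbolic metric, and no single quasi-coordinate $\ell$ is available, so your claim that $m^\alpha$ is two-sidedly comparable to $|dx/x|^2$ with constants depending only on the plumbing constant has no meaning on that region. The paper instead extends the flat cylindrical metric conformally over all of $\mathcal A$, observes via (\ref{hyp-met-collar}) and (\ref{hyp-met-cusp}) that its conformal discrepancy from $m^\alpha$ is controlled by the hyperbolic distance to the flat part, and then invokes the universal bound on the hyperbolic diameter of the thick part (Wolpert) to make this distance, hence the constant $C$, depend only on the genus; the maximum principle is then applied on the whole annulus with boundary in $\Sigma^{\rm reg}-\cup_k\mathcal A_k$. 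You flag the boundary-curve issue as ``bookkeeping,'' but this diameter bound is the essential extra ingredient, not a technicality; without it your argument does not yield the stated estimate for general $\mathcal A_k$.
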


\begin{proof}
See e.g.\ \cite{wolpert10, wolpert12}. For simplicity, we consider the case of a single domain $\mathcal A$.

We consider two cases.  First of all, let $\mathcal A$ be an annulus.  If $\mathcal A$ is equal to $\mathcal{F}\equiv\{\ell : m^\alpha = (2\log c)^{-2}|d\ell|^2\}$, then the $m^\alpha$ metric is flat on $\mathcal A$.  For the quasi-coordinate $\ell = \log z$, we have $m^\alpha = 2(\log c)^{-2} |d\ell|^2$.  Thus $\|U\|_{m^\alpha}$ is, up to a constant, the same as $|\tilde U|$, for $U $ represented locally as $\tilde U\,d\ell^3$.  Thus  the maximum modulus principle implies that $\sup\{\|U(x)\|_{m^\alpha} : x \in \mathcal A\} \le \sup\{\|U(x)\|_{m^\alpha} : x \in \partial\mathcal A \}$.

On the other hand, if the annulus $\mathcal A$ is not contained in $\mathcal F$, then outside this set, the metric $m^\alpha$ is uniformly equivalent to the hyperbolic metric.   Thus if we attempt to extend the flat metric $(2\log c)^{-2}|d\ell|^2$ to all of $\mathcal A$, the hyperbolic metric differs from the flat metric by a conformal discrepancy whose size depends is bounded by a function only of the hyperbolic distance to the flat part, as the metric is given by (\ref{hyp-met-collar}) above.  The universal bound on the hyperbolic diameter on the thick part (see e.g.\ p.\ 9 in Wolpert \cite{wolpert10}) then provides the constant $C$ as needed.

The remaining case is in which $\mathcal A$ is a neighborhood of the node the regular part of which is two punctured disks. If $\mathcal A$ is exactly the locus $\mathcal F$ in which $m^\alpha = 2(\log c)^{-2} |d\ell|^2$, the maximum of $\|U\|_{m^\alpha}$  must occur at the boundary.  Moreover,  the asymptotic value $\|U\|_{m^\alpha}$ at the node when $z=w=t=0$  is equal to $|R|\cdot|\log c|^3$, where $R$ is the residue of $U$ and $c$ is a uniform constant. But $R$ is determined by a Cauchy integral formula for $\tilde U$ integrated along the boundary of the disk. Thus in this case, we have the same sort of bounds as above.  The analysis involving the hyperbolic distance is also valid by (\ref{hyp-met-cusp}) above, and we may produce the uniform constant $C$ needed.
\end{proof}

Now as the cubic differentials remain uniformly bounded in the $m^{\alpha,j}$ metrics, they subsequentially converge to a regular limit $(\Sigma_j,U_j)\to(\Sigma_\infty,U_\infty)$ (ignoring the subsequence in the notation). Then Theorems \ref{hol-limits} and \ref{conv-dev-map} above imply that $(\Omega_j,\Gamma_j|_{S_k}) \to (\mathcal O_k,G_k)$ for $k=1,\dots,n$, where $n$ is the number of components of $\Sigma_\infty^{\rm reg}$.  Let $\{\Sigma^k_\infty\}$ denote the corresponding components of   $\Sigma_\infty^{\rm reg}$.

We investigate these regular limits of convex $\rp^2$ structures. Assume again that the regular convex $\rp^2$ structures lie in a single stratum, in which the surface $S$ is already separated into pieces as $S-c$ for $c\in C(S)$.  Then additional necks may be separated by choosing $d$ disjoint from $c$ and $d\cup c\subset C(S)$.  We consider a single connected component $S_1$ of $S-c$, and let $\tilde S_k$ be a connected component of $S_1-d$. In this case, we have $(\Omega_j,\Gamma_j)$ so  that $\Gamma_j \backslash \Omega_j$ is diffeomorphic to $S_1$ and $(\Omega_j,\Gamma_j|_{\tilde S_k}) \to (\mathcal O, G)$ with $G$ acting properly discontinuously and discretely on $\mathcal O$ so that the quotient is diffeomorphic to $\tilde S_k$.  We will prove any other limit is equivalent up the the action of $\Sl3$ and the mapping class group.

First of all, recall that, given a basepoint in $\tilde S_k$, the fundamental group of the open subsurface $\tilde S_k$ of $S_1$ can be represented as a conjugacy class of subgroups of $\pi_1S_1$. We have shown above that we may pick one element of this conjugacy class to represent $\Gamma_j|_{\tilde S_k}$ as a sub-representation of $\Gamma_j$.  If we further characterize the boundary of $\tilde S_k\subset S_1$ to be given by a collection of principal geodesics, then we may choose the image of the developing map $\Omega_j^k$ as a subset of $\Omega_j$ on which $\Gamma_j|_{\tilde S_k}$ acts so that the quotient $\Gamma_j|_{\tilde S_k}\backslash\Omega_j^k$ is diffeomorphic to $\tilde S_k$ with principal geodesic boundary.  Goldman's Theorem \ref{glue-rp2} then shows that the surface $S_1$ is reconstructed from gluing the subsurfaces $\tilde S_k$ together in a standard combinatorial way, which we detail in the next three paragraphs.
In the three paragraphs which follow, we suppress the dependence on the index $j$ in our sequence of domains.

We can represent $S_1$ as the disjoint union of open subsurfaces $\tilde S_k$, $k=1,\dots,m$ and free homotopy class of loops $\ell_i\in c$. Combinatorially, we may represent $S_1$ as a connected graph with nodes $\tilde S_k$ and connected by edges $\ell_i$.  Now consider an image of the developing map $\Omega^k$ for each $\tilde S_k$.  Then we follow Goldman \cite{goldman90a} to reconstruct the image $\Omega$ of the developing map of $S_1$ from many copies of the $\Omega^k$.  Begin by analyzing a single loop $\ell_1$ which connects $\tilde S_1$ to $\tilde S_2$.  Fix $\Omega^1$ and pick a lift $b\subset \partial \Omega$ of $\ell_1$. Then choose $\gamma\in\Sl3$ which acts on $\Omega^1$ by a hyperbolic action on the principal segment $b$.  Similarly, there is a $\rho\in \Sl3$ so that the closures $\overline{\Omega^1} \cap\overline{\rho\Omega^2} = \bar b$ and $\overline{\Omega^1}\cup \overline{\rho\Omega^2}$ is convex.  We may repeat this attaching process along all copies of $\ell_1$ in order to glue $\tilde S_1$ and $\tilde S_2$ along $\ell_1$. Then this process can be repeated for all the other copies of the same principal segment, which can be enumerated by $\delta b$ for $\delta$ in the coset space $\Gamma(\pi_1S_1)/\langle \gamma\rangle$.  (We have assumed in our notation that $\tilde S_1\neq \tilde S_2$.  The case in which $\tilde S_1 = \tilde S_2$, and thus $\tilde S_1$ is attached to itself across $\ell_1$, is essentially the same.)

We repeat this process with other loops in $d$, and then describe $\Omega$ as a disjoint union of copies of $\Omega^1,\dots,\Omega^m$ and lifts of loops in $d$.  The combinatorial structure of this union can be described by an infinite-valence tree, with each vertex corresponding to a copy of $\Omega^k$ and each edge corresponding to a lift of a principal geodesic segment across which the two domains represented by the vertices are attached.  The fact that this graph is a \emph{tree} is a consequence of the injectivity of the developing map \cite{goldman90a}. For $\Omega^1$ as in the previous paragraph, there is one adjacent edge for each $\delta\in\Gamma(\pi_1S_1)/\langle\gamma\rangle$, which corresponds to the principal geodesic segment $\delta b$. The other vertex of this edge corresponds to the domain $\delta\rho\Omega^2$.  (If there are other loops in $c$ which border $\tilde S_1$, then there will be corresponding edges from $\Omega^1$ as well.)  Denote the domains represented by vertices in the graph by $\mathcal O_i$. Each $\mathcal O_i = \sigma\Omega^k$ for $\sigma\in\Sl3$ and $1\le k\le m$.

Now we consider the action of $\Gamma^1$ on $\Omega$.  $\Gamma^1$ acts on the sub-domain $\Omega^1$. For $I$ the identity matrix, we have
\begin{lem} \label{action-glued-domains}
\begin{itemize}
\item
 $\gamma\in\Gamma^1(\pi_1S_1)-\{I\}$ acts on the boundary segment $b$ of $\Omega^1$ if and only if $\gamma$ is a hyperbolic action on the principal geodesic segment $b$.
\item $\gamma\in\Gamma^1(\pi_1S_1)-\{I\}$ acts on $\mathcal O_i$ if and only if $\mathcal O_i$ is adjacent to  $\Omega^1$ and $\gamma$ is a hyperbolic action on the principal geodesic segment separating $\Omega^1$ and $\mathcal O_i$.
\end{itemize}
\end{lem}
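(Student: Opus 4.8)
The plan is to recast the statement as a combinatorial fact about the action of $\Gamma^1$ on the tree $\mathcal T$ introduced above, whose vertices are the tiles $\mathcal O_i$ and whose edges are the principal geodesic segments along which adjacent tiles are glued. Since $\Gamma^1$ acts on $\Omega$ by projective maps permuting the tiles and the separating walls, it acts on $\mathcal T$ by automorphisms, and since it preserves its own developing image $\Omega^1$ it fixes the vertex $v_1$ of $\mathcal T$ corresponding to $\Omega^1$. The first thing I would record is that an automorphism of a tree fixing a vertex acts without inversions: if $\gamma\in\Gamma^1$ inverted an edge $\{v,w\}$ with $d_{\mathcal T}(v_1,v)=d_{\mathcal T}(v_1,w)-1$, then applying $\gamma$ and using $\gamma v_1=v_1$ gives $d_{\mathcal T}(v_1,w)=d_{\mathcal T}(v_1,\gamma v)=d_{\mathcal T}(v_1,v)=d_{\mathcal T}(v_1,w)-1$, which is absurd. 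Hence any $\gamma\in\Gamma^1$ that preserves an edge fixes both of its endpoints, and the problem reduces entirely to computing edge stabilizers.

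The key computation is that for a wall $b\subset\partial\Omega^1$ — an edge of $\mathcal T$ at $v_1$ — one has $\mathrm{Stab}_{\Gamma^1}(b)=\langle H_b\rangle$, where $H_b=\Gamma^1([\ell])$ and $\ell$ is the boundary geodesic of $\tilde S_1$ of which $b$ is a lift. This is the standard covering-space description of the stabilizer of a lift of a simple loop, using that distinct boundary components of $\tilde S_1$ (which is not an annulus) are not freely homotopic, so that $\langle[\ell]\rangle$ is maximal cyclic and nothing larger stabilizes $b$. Because $\tilde S_1$ is built with principal geodesic boundary, $H_b$ is hyperbolic and $b$ is its principal geodesic segment, so $\langle H_b\rangle\setminus\{I\}$ consists exactly of the hyperbolic translations of $\Omega$ along $b$. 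This gives the first bullet: a nontrivial $\gamma\in\Gamma^1$ preserves $b$ precisely when $\gamma\in\langle H_b\rangle\setminus\{I\}$, i.e.\ precisely when $\gamma$ is a hyperbolic translation along the principal geodesic $b$; conversely any hyperbolic element with principal geodesic segment $b$ fixes the two endpoints of $b$ and the projective line through them, hence automatically preserves $b$.

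For the second bullet, suppose $\gamma\in\Gamma^1\setminus\{I\}$ preserves a tile $\mathcal O_i\neq\Omega^1$, with vertex $v_i\neq v_1$. Then $\gamma$ fixes both $v_1$ and $v_i$, hence preserves the unique reduced path $P$ between them in $\mathcal T$; being without inversions and fixing the endpoints, $\gamma$ fixes every vertex and edge of $P$. Let $b$ be the first edge of $P$ at $v_1$ and $v'$ the next vertex, so $\gamma\in\mathrm{Stab}_{\Gamma^1}(b)=\langle H_b\rangle$. The heart of the argument is to show that the fixed subtree of $H_b^{n}$ ($n\neq0$) is precisely $\{b\}$ together with its two endpoints $v_1,v'$: it is a connected subtree containing both of these; were it to extend past $v'$, then $H_b$ would lie in $\mathrm{Stab}_{\Gamma^1}(b')\subseteq\langle H_{b'}\rangle$ for a second wall $b'$ of the tile $\mathcal O_{v'}$, forcing $H_b=H_{b'}^{m}$ and hence $\ell$ to be freely homotopic to a power of a simple closed curve distinct from $\ell$ — impossible; likewise it cannot extend past $v_1$, since distinct walls of $\Omega^1$ are lifts of non-homotopic boundary curves of $\tilde S_1$. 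Hence $v_i\in\{v_1,v'\}$, and $v_i\neq v_1$ forces $v_i=v'$: $\mathcal O_i$ is adjacent to $\Omega^1$ across $b$ and $\gamma=H_b^{n}$ is a hyperbolic translation along that wall. The reverse implication is immediate from the first bullet, since a hyperbolic translation along a wall adjacent to $\Omega^1$ preserves that wall and therefore both tiles meeting it; the degenerate case $\mathcal O_i=\Omega^1$ is excluded because every element of $\Gamma^1$ preserves $\Omega^1$. I expect this rigidity of edge stabilizers to be the main obstacle; it rests on the surface-topological fact that distinct boundary curves of $\tilde S_1$ generate distinct maximal cyclic subgroups, together with the observation that passing from the full holonomy of $S_1$ to the subgroup $\Gamma^1$ only shrinks stabilizers and so is harmless, and the self-attaching case $\tilde S_1=\tilde S_k$ requires no change since $\mathcal T$ is still a tree by injectivity of the developing map.
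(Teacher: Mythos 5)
Your overall strategy---formalizing the gluing picture as the tree $\mathcal T$, noting that $\Gamma^1$ fixes the vertex $v_1$ and hence acts without inversions, and reducing both bullets to a computation of edge stabilizers---is the natural way to make the lemma precise, and it is essentially the argument the paper leaves implicit (the lemma is stated there as a direct consequence of the combinatorial set-up, with no separate proof). The covering-space identification $\mathrm{Stab}_{\Gamma^1}(b)=\langle H_b\rangle$ for a wall $b\subset\partial\Omega^1$, and the fixed-path reduction of the second bullet to the claim that no nontrivial power of $H_b$ stabilizes a second wall, are both sound.

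The justification of that last, crucial step, however, has a genuine gap. You rule out a second fixed wall $b'$ of $\mathcal O_{v'}$ by arguing that $H_b^n=H_{b'}^m$ would make $\ell$ freely homotopic to a power of a simple closed curve \emph{distinct} from $\ell$. But the walls of $\mathcal O_{v'}$ other than $b$ are typically lifts of the \emph{same} curve $\ell$: each curve of $d$ has infinitely many lifts, and $\partial\mathcal O_{v'}$ contains infinitely many lifts of $\ell$ besides $b$ (automatically when $\ell$ is non-separating, but in the separating case as well). In that situation $H_{b'}$ is merely a different conjugate of $H_b$, and the free-homotopy argument yields no contradiction. For the same reason your auxiliary claim that distinct walls of $\Omega^1$ are lifts of non-homotopic boundary curves of $\tilde S_1$ is false (it is also unnecessary: only the ``no extension past $v'$'' half of your fixed-subtree claim is used, since the path from $v_1$ begins with a wall of $\Omega^1$). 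What is needed, and missing, is that distinct edges of $\mathcal T$ have trivially intersecting stabilizers. One quick way to get this: if $I\neq\gamma=H_b^n=H_{b'}^m$, then $\gamma$ is hyperbolic and $b,b'$ are invariant segments whose endpoints are the attracting and repelling fixed points of $\gamma$, so both lie on the same projective line; were they distinct they would be the two complementary segments of that line, and their union with the endpoints would be an entire projective line contained in $\partial\mathcal O_{v'}$ (both are walls of the tile $\mathcal O_{v'}$), contradicting proper convexity. Alternatively one can appeal to uniqueness of axes (malnormality of the maximal cyclic subgroups generated by simple closed curves) in $\pi_1S_1$. With this supplied---and with the convention $\mathcal O_i\neq\Omega^1$ in the second bullet, as you note, and the correction that it is $H_b^n$ rather than $H_b$ that lies in $\mathrm{Stab}_{\Gamma^1}(b')$---your argument goes through.
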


With this combinatorial picture set up, we assume $(\Omega_j,\Gamma_j|_{S_k})\to (\mathcal O, G)$ so that $G\backslash \mathcal O$ is diffeomorphic to $S_k$.

Recall Benz\'ecri's compactness theorem \cite{benzecri60}, that for every sequence $(\Omega_j,x_j)$ for $\Omega_j$ properly convex and $x_j\in\Omega_j$, that upon passing to a subsequence, there are $\rho_j\in\Sl3$ so that $\rho_j(\Omega_j,x_j)\to (\mathcal O, x)$ in the Hausdorff topology.  We analyze our limits of $(\Omega_j,\Gamma_j|_{S_k})$ in terms of these Benz\'ecri limits of pointed convex domains.  Recall that the surface $S_1$ has genus $\tilde g$ and $n$ punctures, where $\tilde g+2n\le g$ the genus of $S$.
In order to analyze the limits, we consider the conformal structure induced by the Blaschke metric on $\Gamma_j\backslash \Omega_j$ as an element of the compact space $\overline{\mathcal M}_{\tilde g,n}$, and then consider sequences of points in the corresponding universal curve.

Consider a convergent sequence in Benz\'ecri's sense $(\Omega_j,x_j)\to (\mathcal O,x)$.  By taking a subsequence if necessary, assume $x_j$ converges in the universal curve $\overline{\mathcal C}_{\tilde g,n}$.  Denote by $R_j$ the noded Riemann surface containing $x_j$.

\begin{prop} \label{unique-regular-convex-limit}
Up to the actions of $\Sl3$ and the mapping class group, there is exactly one limit of the sequence of pairs  $(\Omega_j,\Gamma_j|_{S_k})$ for each $k$.
\end{prop}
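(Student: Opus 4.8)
The plan is to reduce the uniqueness of the Benz\'ecri limit to the uniqueness of the limiting Blaschke metric provided by Dumas-Wolf (Theorem \ref{unique-thm}), together with the rigidity of the combinatorial picture of glued domains set up in Lemma \ref{action-glued-domains}. First I would separate the two possible behaviors of the basepoint sequence $x_j$ in the universal curve $\overline{\mathcal C}_{\tilde g,n}$, using Lemma \ref{thick-lemma}: either $x_j$ converges to a node, or there is an $\epsilon>0$ with $x_j\in{\rm Thick}_\epsilon$ for all large $j$. In the first case, $x_j$ must lie in the thin part of a collar whose core curve is one of the loops being pinched, so the hyperbolic distance from $x_j$ to any fixed point of the thick part of $R_j$ diverges to $\infty$; by Proposition \ref{blaschke-dominates-hyperbolic} the Blaschke distance diverges as well, and then Proposition \ref{separate-O} shows that the Benz\'ecri limit $(\mathcal O,x)$ is disjoint (in the precise sense of that proposition) from any Benz\'ecri limit obtained from a bounded sequence in the thick part. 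This means a ``node'' basepoint produces a limit domain lying in a strictly separated piece of the glued tree, and hence corresponds, under the combinatorial description preceding Lemma \ref{action-glued-domains}, to a vertex domain $\mathcal O_i$ on the far side of a separated neck — i.e., to a convex $\rp^2$ structure on one of the $\tilde S_k$ — and the ambiguity is exactly the choice of which translate $\sigma\Omega^k$ one lands on, which is absorbed by the $\Sl3$ action and by relabeling (the mapping class group).

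Next, in the case $x_j\in{\rm Thick}_\epsilon$, I would invoke the uniform bound $\|U_j\|_{m_j^\alpha}\le C$ proved in the preceding Proposition, together with Theorem \ref{u-limit-thm} (after passing to the subsequence along which $R_j\to R_\infty$ in $\overline{\mathcal M}_{\tilde g,n}$, which exists by compactness), to obtain uniform $C^\infty$ control of the Blaschke metric $h_j=e^{u_j}k_j$ and of the cubic differentials near $x_j$. Since the thick part has uniformly bounded hyperbolic diameter (Wolpert) and the Blaschke, hyperbolic, and $m^\alpha$ metrics are uniformly comparable there, a lift $\tilde x_j$ to the universal cover sees all the ODE data of (\ref{evolve-frame}) converging uniformly on compact sets, exactly as in the proofs of Theorems \ref{hol-limits} and \ref{conv-dev-map}. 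Hence $(\Omega_j,x_j)$ converges (subsequentially) in the Benz\'ecri sense, and the holonomy subrepresentation $\Gamma_j|_{S_k}$ converges up to conjugation. Applying the uniqueness Theorem \ref{unique-thm} to the limiting triple $(\Sigma_\infty^k,U_\infty^k,h_\infty^k)$ identifies the limiting Blaschke metric uniquely, and then Theorem \ref{complete-metric-convex-rp2} shows the developed image $\mathcal O$ is determined by $(\Sigma_\infty^k,U_\infty^k)$ up to the $\Sl3$ action — so any two subsequential limits agree up to $\Sl3$, i.e., represent the same point of $\mathcal G_{S_k}$.

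Finally I would assemble these local statements into the global one. A priori, different Benz\'ecri-normalizing sequences $\rho_j$ could send the \emph{same} $(\Omega_j,\Gamma_j|_{S_k})$ to limits sitting at different vertices of the glued tree; the first paragraph shows that moving between vertices separated by a pinched neck costs an $\Sl3$ element together with a mapping-class relabeling of which component is ``$\tilde S_k$,'' while within a single non-pinched piece the second paragraph gives a genuinely unique limit up to $\Sl3$. Combining, every limit of $(\Omega_j,\Gamma_j|_{S_k})$ is carried to every other by an element of $\Sl3$ and an element of the mapping class group, which is the assertion. The main obstacle, I expect, is the bookkeeping in the first paragraph: one must verify that a basepoint escaping into a pinching collar forces the limit domain to be precisely one of the combinatorial pieces $\sigma\Omega^k$ adjacent across the separated principal geodesic (using convexity and Lemma \ref{action-glued-domains} to pin down which group acts and which segment is shared), rather than some degenerate or unbounded object — this is where Proposition \ref{separate-O} and the proper convexity, rather than any soft compactness, do the real work.
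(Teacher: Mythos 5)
Your handling of the node case is the wrong way round, and this is a genuine gap. You treat a basepoint sequence escaping into a pinching collar as producing a legitimate alternative Benz\'ecri limit — ``a vertex domain $\mathcal O_i$ on the far side of a separated neck,'' whose ambiguity is then ``absorbed'' by $\Sl3$ and the mapping class group. But the proposition fixes the group $\Gamma_j|_{S_k}$, and the point of the node case is that no such Benz\'ecri limit of the \emph{pair} exists at all: once $[x_j]$ escapes to a collar, the hyperbolic (hence Blaschke, by Proposition \ref{blaschke-dominates-hyperbolic}) distance from $x_j$ to $\delta x_j$ diverges for every $\delta\in\Gamma_j|_{S_k}(\pi_1 S_k)\setminus\langle\gamma_j\rangle$, as Lemma \ref{action-glued-domains} identifies precisely which elements act near the collar. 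Proposition \ref{separate-O} then forbids $\Gamma_j|_{S_k}$ from converging to act on the limiting domain $\mathcal U$. So the collar case is a \emph{contradiction}, not an alternative answer to be identified with the others; similarly for a basepoint landing in the thick part of a different component. The proof's structure is: rule out every basepoint behavior except staying in the thick part of $S_k$'s component, and only then argue uniqueness.

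Your thick-part argument is pointed in the right direction but is too quick at the end. Invoking Theorem \ref{unique-thm} and Theorem \ref{complete-metric-convex-rp2} gives you uniqueness of the \emph{intrinsic} triple $(\Sigma_\infty^k,U_\infty^k,h_\infty^k)$ and its associated $\rp^2$ structure; it does not by itself identify an arbitrary Benz\'ecri limit $\sigma_j(\Omega_j,\Gamma_j|_{S_k})\to(\mathcal O,H)$ with that intrinsic object. The paper supplies this identification directly: it fixes four points $x^a,\dots,x^d$ in general position in $\mathcal O$, uses uniform control on a thick-part neighborhood (via the inverse-function-theorem estimates coming from the bounds on $U_j$, $u_j$, $du_j$) to show the $\sigma_j$ stay in a compact subset of $\Sl3$, extracts a limit $\sigma$, and checks that $\sigma$ is an automorphism of $\mathcal O$ conjugating $G$ to $H$. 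You would need an argument at this level of explicitness — simply knowing the Blaschke metric and cubic differential converge does not immediately control the normalizing sequence $\sigma_j$ or guarantee it converges in $\Sl3$.
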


\begin{proof}
Consider a convergent Benz\'ecri sequence $(\Omega_j,x_j)\to(\mathcal U,x)$.  By choosing a subsequence if necessary, we consider two cases, as in Lemma \ref{thick-lemma} above.

First of all, consider the case in which $[x_j]$ converges to a node or a puncture in $\overline{\mathcal C}_{\tilde g,n}$, where $[x_j]$ denotes the image of $x_j$ in the Riemann surface conformal to the quotient $\Gamma_j\backslash\Omega_j$ equipped with the Blaschke metric. In this case, as follows from Subsection \ref{plumbing-subsec} above, hyperbolic geodesic balls of fixed radius centered at $[x_j]$ are all contained in cusp/collar neighborhoods in $R_j$.  Since the Blaschke metric is bounded from below by the hyperbolic metric  (Proposition \ref{blaschke-dominates-hyperbolic}), the same is true of geodesic balls with respect to the Blaschke metrics on $\Gamma_j\backslash\Omega_j$. In fact, the Blaschke distance from $x_j$ to any point in the thick part of $R_j$ has an infinite limit as $j\to\infty$.  Then Lemma \ref{action-glued-domains} implies that, for every $\delta_j\in\Gamma_j|_{S_k}(\pi_1S_k)-\langle\gamma_j\rangle$,  the Blaschke distance from $x_j$ to $\delta x_j$ diverges to infinity, where $\gamma_j$ is the projective holonomy around the neck determined by cusp/collar neighborhood.  Proposition \ref{separate-O} implies that $\Gamma_j|_{S_k}$ cannot converge to act on the limiting domain $\mathcal U$.  This rules out this case.

Second, consider the case in which $[x_j]$ converges to a limit in $\overline{\mathcal C}_{\tilde g,n}$ which is not a node or puncture, then for all large $j$, the $[x_j]$ lie uniformly in the thick part of the Riemann surfaces $R_j$.  Now if the $[x_j]$ lie in a different connected component of the thick part of $R_j$ from $S_k$, then the same considerations as in the previous paragraph apply to rule this out.

Therefore, we may assume that $[x_j]$ converges to a limit in $\overline{\mathcal C}_{\tilde g,n}$ so that $[x_j]$ is in a component of the thick part of the $R_j$ which overlaps with $S_k$. Recall that we have already taken a subsequence to show $(\Sigma_j,U_j)\to(\Sigma_\infty,U_\infty)$ and that this convergence by Theorems \ref{hol-limits} and \ref{conv-dev-map} implies the convergence of $(\Omega_j,\Gamma_j|_{S_k})\to (\mathcal O,G)$. The proofs of Theorems \ref{hol-limits} and \ref{conv-dev-map} show that we may fix diffeomorphisms $\phi_j\!:\Omega_j\to \mathcal D$ so that $\mathcal D$ is the Poincar\'e disk, $\phi_j$ is conformal with respect to the Blaschke metric, $\phi_j^{-1}(0)$ lies in each $\Omega_j$, and $\phi_\infty^{-1}(0)\in\mathcal O$. We may rephrase our assumption to state that $[x_j]$ lies in the same component of the thick part as $[\phi_j^{-1}(0)]$.

But there are uniform bounds on the hyperbolic diameter of connected components of the thick part of Riemann surfaces. See e.g.\ \cite{wolpert10}, page 9.  In particular, the hyperbolic distance from $[x_j]$ to $[\phi_j^{-1}(0)]$ is uniformly bounded by a constant $C$.  Therefore, we may consider a lift $\tilde x_j$ of $[x_j]$ so that the hyperbolic distance from 0 to $\phi_j(\tilde x_j)$ is bounded by $C$.  Passing from $x_j$ to $\tilde x_j$ corresponds to the action of an element $\rho_j\in \Gamma_j|_{S_k}$.  See Subsection \ref{sep-neck-subsection} above.  Lemma \ref{cubic-diff-bound} above shows the cubic differentials $U_j$ on $R_j$ are uniformly bounded with respect to the $m^{\alpha,j}$ metric.  Moreover, on the thick part, the hyperbolic metric and the $m^{\alpha,j}$ metrics are uniformly equivalent, and the conformal factors $u_j$ of the Blaschke metrics are uniformly bounded in the $C^1$ norm.

Choosing an appropriate initial frame, we may integrate the structure equations for the affine sphere as in Theorem \ref{conv-dev-map} to show that the limit $\tilde x_\infty\in\mathcal O$ and, as the previous paragraph shows the coefficients of the relevant ODE system are uniformly bounded, $\tilde x_j\to\tilde x_\infty$ (up to a subsequence).  Now we already have assumed that $(\Omega_j,x_j)$ converges to $(\mathcal U,x)$ in the space of pointed convex domains in $\rp^2$ modulo the action of $\Sl3$.  We have just shown that a subsequence $(\Omega_{j_i},x_{j_i})$ converges to $(\mathcal O,\tilde x_\infty)$.  The Benz\'ecri space of pointed properly convex domains in $\rp^2$ modulo $\Sl3$ is Hausdorff \cite{benzecri60}; thus there is a $\rho\in\Sl3$ so that $(\mathcal U,x)=\rho(\mathcal O,\tilde x_\infty)$.  Moreover, every subsequence of $(\Omega_j,x_j)$ itself has a subsequence converging to $(\mathcal U,x)$ in the Benz\'ecri sense, and so we find $(\Omega_j,x_j)\to(\mathcal O, \tilde x_\infty)$ up to the action of $\Sl3$.

To address the convergence of the representations $\Gamma_j|_{S_k}$, recall that an element of $\Sl3$ is determined by its action on 4 points in general position in $\rp^2$.  Luckily, the estimates we have proved are strong enough to control the geometry of a uniformly large neighborhood of $\tilde x_\infty$, and points in this neighborhood will serve as our 4 points in general position.  In particular, as $j\to\infty$, we have a neighborhood $\mathcal N$ of $\phi_\infty^{-1}(\tilde x_\infty)$ in the Poincar\'e disk $\mathcal D$, and uniform estimates on $\mathcal N$ of the cubic differentials $U_j$, the conformal factors $u_j$, and their derivatives. (Proof: We have shown above that there is a uniform $\epsilon$ so that $[x_j]\in {\rm Thick}_\epsilon$ for $j=1,2,\dots,\infty$. This shows there is a uniformly large neighborhood $\mathcal N$ around $\tilde x_j$ for $j$ large so that the projection of $\mathcal N$ to $\Sigma_j^{\rm reg}$ is contained in Thick$_{\epsilon/2}$.  See e.g.\ Lemma 1.1 in \cite{wolpert10} for a justification.  On the thick part, we have uniform bounds on $U_j$, $u_j$ and $du_j$.)

Upon choosing a suitable initial frame, the diffeomorphism $\phi_j^{-1}\!: \mathcal D \to \Omega_j$ is constructed by solving the ODE system (\ref{evolve-frame}), choosing the component $f$ of the frame $F$, and finally projecting from $\re^3\to\rp^2$.  The uniform estimates on $\mathcal N$ imply that there are open sets $\mathcal A$ and $\mathcal B$ so that $\phi_\infty^{-1}(\tilde x_\infty) \in \mathcal A \subset \mathcal N$, $\tilde x_\infty \in \mathcal B \subset \phi_\infty(\mathcal N)$, and for all $j$ large, $\tilde x_j\in \mathcal B$, $\phi_j(\tilde x_j)\in \mathcal A$, $\phi_j$ and its derivatives are bounded on $\mathcal A$, and $\phi_j^{-1}$ and its derivatives are bounded on $\mathcal B$.  (This is just a quantitative version of the Inverse Function Theorem.)

We assume $\rho_j(\Omega_j,\Gamma_j|_{S_k})\to (\mathcal O,G)$, and we have shown that there is a sequence $x_j\in\Omega_j$ so that $\rho_j(x_j)\to x\in\mathcal O$ (this $x$ is referred to as $\tilde x_\infty$ above), and $x_j$ is in the same connected component of the thick part of $\Gamma_j\backslash \Omega_j$ as $S_k$ is (this follows from the uniform estimates on $\mathcal A$ and $\mathcal B$ in the previous paragraph).  Absorb the $\rho_j$ into the notation for $\Omega_j$ and $\Gamma_j|_{S_k}$, so that $(\Omega_j,\Gamma_j|_{S_k})\to (\mathcal O,G)$ and $x_j\to x$.  Let $\gamma^1,\dots,\gamma^m$ be generators of $G$.

Let $x^a=x,x^b,x^c,x^d$ be in general position in $\mathcal O$, and assume that they are in a small neighborhood of $x$.  In particular, for $p\in\{a,b,c,d\}$, let $K^p$ be the convex hull of $\{x^a,x^b,x^c,x^d\}-\{x^p\}$. Assume for a choice of an affine coordinate patch in $\rp^2$ that there are six  open disks $D_a,D_b,D_c,D_d,D_2,D_3$ so that
\begin{itemize}
\item
The closure  $\bar D_p$ is contained in the interior of  $K^p$,
\item $\bar D_2\subset\mathcal O$ and each $x^p\in D_2$, and
\item $\bar{\mathcal O}\subset D_3$.
\end{itemize}

Then for $j$ large,  all these points $x^a,x^b,x^c,x^d$  will be in the same connected component as $x_j$ of the thick part of $\Gamma_j\backslash \Omega_j$ (this is a consequence of the Inverse Function Theorem argument above). As $\Gamma_j|_{S_k}\to G$, let $\gamma_j^i\in\Gamma_j|_{S_k}$ converge to $\gamma^i$ for $i=1,\dots,m$ as $j\to\infty$. For large $j$, the $\gamma_j^1,\dots,\gamma_j^m$ still generate $\Gamma_j|_{S_k}$. Then the set $\{\gamma_j^ix^p : i=1,\dots,m; p=a,b,c,d\}$ determines the generators of $\Gamma_j|_{S_k}$ and thus also the group $\Gamma_j|_{S_k}$ itself.

Now we prove the uniqueness of $G$ (up to a possible $\Sl3$ action). Recall we assume that $(\Omega_j,\Gamma_j|_{S_k})\to (\mathcal O, G)$.  We have established there is an $x_j$ so that $\rho_j(\Omega_j,x_j)\to(\mathcal O, x)$. Now consider another sequence $\sigma_j(\Omega_j,\Gamma_j|_{S_k})\to (\mathcal O, H)$.  Consider the points $x^a,x^b,x^c,x^d$ in general position in $\mathcal O\subset \rp^2$.  Then, as above (recalling that $x^a,x^b,x^c,x^d$ and their convex hull are uniformly contained in the thick part of $\Gamma_j\backslash\Omega_j$, for large $j$), $\{\sigma_j x^p\}$ remains in general position, and there are still uniformly large ellipses $D_a,D_b,D_c,D_d,D_2,D_3$ as above (this follows from a transversality argument based on the Inverse Function Theorem analysis above).  The existence of these bounding ellipses shows that the $\sigma_j(x^p)$ remain uniformly in general position in $\rp^2$, and that the family $\sigma_j$ lies in a compact subset of $\Sl3$.  Thus there is a limit $\sigma_j\to\sigma$ (upon taking a subsequence).

For $i=1,\dots,m$, let $\eta^i=\sigma\gamma^i\sigma^{-1}$. These $\eta^i$ generate $H$. Similarly, define $\eta_j^i = \sigma_j\gamma_j^i\sigma_j^{-1}\in \sigma_j(\Gamma_j|_{S_k})\sigma_j^{-1}$. Then for large $j$, $\eta_j^i$ generate $\sigma_j(\Gamma_j|_{S_k})\sigma_j^{-1}$ and $\lim_{j\to\infty} \eta_j^i=\eta^i$. This implies $H=\sigma G\sigma^{-1}$.  Moreover, since $\mathcal O$ has already been fixed, $\sigma\in\Sl3$ is a projective automorphism of $\mathcal O$.  Therefore, the two limits $(\mathcal O, G)$ and $(\mathcal O,H)$ are equivalent up to the action of $\Sl3$. More precisely, for all subsequences of $(\Omega_j,\Gamma_j|_{S_k})$, there is a further subsequence and an element $\sigma\in\Sl3$ so that $(\mathcal O,G) = \sigma(\mathcal O,H)$.  But then these two objects are the same modulo the action of $\Sl3$.
\end{proof}

Now we complete the proof of Theorem \ref{main-thm}. Consider a convergent sequence of regular convex $\rp^2$ structures $\oplus_j( \Omega_j,\Gamma_j)\to \oplus_m(\mathcal O_m,G_m)$, and their corresponding sequence $(\Sigma_j,U_j) = \Phi^{-1}[\oplus_j( \Omega_j,\Gamma_j)]$ of noded Riemann surfaces and regular cubic differentials.  Then there is a convergent subsequence $(\Sigma_{j_\ell},U_{j_\ell}) \to (\Sigma_\infty, U_\infty)$. Moreover, $\oplus_k( \Omega_{j_\ell},\Gamma_{j_\ell})\to \oplus_m(\mathcal O_m,G_m)$, and the regular convex $\rp^2$ structure corresponding to $(\Sigma_\infty,U_\infty)$ is $\Phi(\Sigma_\infty,U_\infty)=\oplus_m(\mathcal O_m,G_m)$.  But then  Proposition \ref{unique-regular-convex-limit} shows that every subsequence of $(\Sigma_j,U_j)$ has a subsequence which converges to the same limit.  Recall $\mathcal R_S^{\rm aug}$ is first countable. This is enough to show that $(\Sigma_j,U_j)\to (\Sigma_\infty, U_\infty)$. Therefore, $\Phi^{-1}$ is continuous, and Theorem \ref{main-thm} is proved.

\def\cprime{$'$}

\end{document}